\documentclass[a4paper, 10pt, twoside, notitlepage]{amsart}

\usepackage[utf8]{inputenc}
\usepackage[dvipsnames]{xcolor}
\usepackage{amsmath} 
\usepackage{amssymb} 
\usepackage{amsthm}
\usepackage{geometry}
\usepackage{graphicx}
\usepackage{mathtools}
\usepackage{esint}
\usepackage[shortlabels]{enumitem}
\usepackage{comment}
\usepackage[colorlinks=true,linkcolor=blue]{hyperref}

\usepackage{tikz}
\usetikzlibrary{decorations.pathreplacing,hobby,calc,math}
\usepackage{ifthen}

\theoremstyle{plain}
\newtheorem{thm}{Theorem}[section]
\newtheorem{cor}[thm]{Corollary}
\newtheorem{prop}[thm]{Proposition}
\newtheorem{lem}[thm]{Lemma}

\newtheorem*{rmk*}{Remark}

\newcommand {\R} {\mathbb{R}} 
 \newcommand {\N} {\mathbb{N}}
 
\newcommand {\p} {\partial}

\newcommand {\D} {\Delta}

\newcommand\restr[2]{{% we make the whole thing an ordinary symbol
  \left.\kern-\nulldelimiterspace % automatically resize the bar with \right
  #1 % the function
  \littletaller % pretend it's a little taller at normal size
  \right|_{#2} % this is the delimiter
  }}
\newcommand{\littletaller}{\mathchoice{\vphantom{\big|}}{}{}{}}

\numberwithin{equation}{section}
\DeclareMathOperator {\dist} {dist}

\DeclareMathOperator{\Id} {Id}
\DeclareMathOperator{\F} {\mathcal{F}}

\DeclareMathOperator{\supp}{supp}

\title[Stability Local Reduction]{On the transfer of stability from the local to the fractional anisotropic Calder\'on problem with exterior measurements}

\author[H. Baers]{Hendrik Baers}
\address{Institute for Applied Mathematics, University of Bonn, Endenicher Allee 60, 53115 Bonn, Germany}
\email{hendrik.baers@uni-bonn.de}

\author[A. R\"uland]{Angkana R\"uland}
\address{Institute for Applied Mathematics and Hausdorff Center for Mathematics, University of Bonn, Endenicher Allee 60, 53115 Bonn, Germany}
\email{rueland@uni-bonn.de}

\begin{document}

\begin{abstract}
We study the quantitative transfer of uniqueness from the classical to the fractional Calder\'on problem with exterior data. This allows us to deduce the first stability estimates for the principal part of the isotropic fractional Calder\'on problem with exterior data in the absence of Liouville transforms. Our argument relies on careful quantitative unique continuation and Runge approximation estimates. Due to the unbounded geometry and the mismatch of the dimensionalities of the measurement domains (exterior data on an open set vs boundary data on a co-dimension one manifold) novel challenges arise compared to the setting of source-to-solution measurements on closed manifolds.
\end{abstract}

\maketitle
%\tableofcontents

\section{Introduction}\label{sec:intro}

We consider the relation between the fractional and the classical Calderón problem with (exterior generalized) Dirichlet-to-Neumann data. More precisely, we reduce the fractional to the classical Calderón problem in a quantitative way, proving stability of this reduction. As a consequence, any stability result on the level of the classical Calderón problem can be transferred to a stability result for the fractional formulation. With this strategy we are able to derive novel stability results for fractional Calderón type problems. In particular, we infer the first stability results for the principal part for a fractional Calderón problem for which no Liouville-type reduction is known. 

This work builds on the qualitative relation of the classical and fractional Calder\'on problems established and analysed in \cite{CGRU23}. A related quantitative result on the reduction of a nonlocal to a local Calderón type problem was derived in \cite{BR25} in the context of closed manifolds and with source-to-solution measurements. In the present whole space setting with exterior (generalized) Dirichlet-to-Neumann measurements, novel challenges arise which we address in what follows below.

\subsection{Set-up}

Let us begin by recalling the problem set-up. 

In the \emph{classical Calderón problem} one seeks to determine the (possibly anisotropic) electrical conductivity of a medium by making voltage and current measurements on its boundary. In mathematical terms, let $\Omega \subset \R^n$ be open, bounded and sufficiently smooth and let $a \in L^\infty(\Omega, \R^{n \times n}_{\text{sym}})$ be uniformly elliptic and symmetric. The goal is to determine the unknown metric $a$ based on the knowledge of the Dirichlet-to-Neumann map $\Lambda_{1,\Omega}^a$,
\begin{align}\label{eq:Local_DN_map}
\Lambda_{1,\Omega}^a: H^{\frac{1}{2}}(\partial\Omega) \to H^{-\frac{1}{2}}(\partial\Omega), \qquad g \mapsto \restr{\partial_\nu^a v_a^g}{\partial\Omega} := \restr{\nu \cdot a \nabla' v_a^g}{\partial\Omega},
\end{align}
where $\nu$ is the outward unit normal on $\partial\Omega$, $\nabla'$ denotes the gradient in $\R^n$ and $v_a^g \in H^1(\Omega)$ is the unique weak solution to
\begin{equation}\label{eq:Local_Calderon_equation}
\begin{cases}
\begin{alignedat}{2}
-\nabla'\cdot a \nabla' v_a^g &= 0 \quad &&\text{in } \Omega,\\
v_a^g &= g \quad &&\text{on } \partial\Omega.
\end{alignedat}
\end{cases}
\end{equation}
This problem has been studied intensively. Landmark results for the isotropic setting were deduced in the works \cite{SU87,HT13,CR16,Hab15} on uniqueness, in \cite{A88} on stability and in \cite{N88, No88} on reconstruction. Moreover, partial data results were derived in \cite{KS13}. Let us emphasize that in the anisotropic setting, even questions about uniqueness and stability are widely open, with only some results known under extremely strong structural conditions such as analyticity \cite{LU89, LU01, LTU03}. We refer to the survey article \cite{Uhlmann09} and the recent book \cite{FSU25} for further references on the large literature on the classical Calder\'on problem.

Next, let us turn to the \emph{fractional Calderón problem}. For this, let $\Omega \subset \R^n$ be open, bounded and Lipschitz, such that $\R^n \setminus \Omega$ is connected and let $W \subset \Omega_e := \R^n \setminus \overline{\Omega}$ be open, bounded and Lipschitz such that $\overline{\Omega} \cap \overline{W} = \emptyset$. Let $a \in L^\infty(\R^n, \R^{n \times n}_{\text{sym}})$ be uniformly elliptic and symmetric with $a = \Id$ in $\Omega_e$ and let $s\in (0,1)$. In this set-up the objective is to recover the metric $a$ based on the knowledge of the fractional Dirichlet-to-Neumann map $\Lambda_s^a$,
\begin{align}\label{eq:Fractional_DN_map}
\Lambda_s^a: \widetilde{H}^{s}(W) \rightarrow H^{-s}(W), \qquad f \mapsto \restr{(-\nabla'\cdot a \nabla')^s u}{W},
\end{align}
where $u \in H^s(\R^n)$ is a solution to the equation
\begin{equation}\label{eq:Fractional_Calderon_equation}
\begin{cases}
\begin{alignedat}{2}
(-\nabla'\cdot a \nabla')^s u &= 0 \quad &&\text{in } \Omega,\\
u &= f \quad &&\text{in } \Omega_e.
\end{alignedat}
\end{cases}
\end{equation}
Here, the fractional (or nonlocal) operator $(-\nabla'\cdot a \nabla')^s$ can be interpreted in various equivalent ways, including spectral or variable coefficient Caffarelli-Silvestre type extension perspectives (see \cite{CS07} for the constant and \cite{ST10} for the variable coefficient case). In \cite{FGKRSU25}, building on \cite{FGKU21}, the uniqueness question for this fractional Calderón problem was solved even in the anisotropic setting.

In this work we will mostly take the perspective of the variable coefficient Caffarelli-Silvestre type extension. More precisely, by virtue of the variable coefficient Caffarelli-Silvestre extension perspective, the nonlocal operator $(-\nabla'\cdot a \nabla')^s$ can be realized as
\begin{align*}
(-\nabla'\cdot a \nabla')^s u = -c_{s} \lim_{x_{n+1}\to0} x_{n+1}^{1-2s} \partial_{n+1} \tilde{u} (\cdot,x_{n+1}),
\end{align*}
where $\tilde{u} \in \dot{H}^1(\R^{n+1}_+,x_{n+1}^{1-2s})$ is the weak solution to
\begin{equation}
\label{eq:CS_intro}
\begin{cases}
\begin{alignedat}{2}
-\nabla \cdot x_{n+1}^{1-2s} \tilde{a} \nabla \tilde{u} &= 0 \quad &&\text{in } \R^{n+1}_+,\\
\tilde{u} &= u \quad &&\text{on } \R^n \times \{0\}.
\end{alignedat}
\end{cases}
\end{equation}
Here, $\tilde{a} \in L^\infty(\R^{n+1}_+,\R^{(n+1)\times(n+1)}_{\text{sym}})$ is given as $\tilde{a} = \begin{pmatrix} a & 0\\ 0 & 1 \end{pmatrix}$. In what follows, we will refer to $\tilde{u}$ as the Caffarelli-Silvestre type extension of $u$. See Section \ref{sec:prel_CSExtension} for more details on this.

In \cite{CGRU23}, using the Caffarelli-Silvestre type interpretation, it is shown that the knowledge of the fractional Dirichlet-to-Neumann map $\Lambda_s^a$ allows one to uniquely recover the local Dirichlet-to-Neumann map $\Lambda_{1,\Omega}^a$. More precisely, it is proven that the set
\begin{align*}
\{ (\restr{v^f}{\partial\Omega}, \restr{(\partial_\nu^a v^f)}{\partial\Omega}) \in H^{\frac{1}{2}}(\partial\Omega) \times H^{-\frac{1}{2}}(\partial\Omega): \ f \in C_c^\infty(W), \ v^f \text{ as in } \eqref{eq:reduction_formula} \}
\end{align*}
is densely contained in the set
\begin{align*}
\{ (g, \restr{(\partial_\nu^a v_a^g)}{\partial\Omega}) \in H^{\frac{1}{2}}(\partial\Omega) \times H^{-\frac{1}{2}}(\partial\Omega): \ v_a^g \text{ solves \eqref{eq:Local_Calderon_equation}} \}.
\end{align*}
Here, $v^f$ is defined by
\begin{align}\label{eq:reduction_formula}
v^f := \int_0^\infty t^{1-2s} \tilde{u}^f(\cdot,t) dt,
\end{align}
where $\tilde{u}^f \in \dot{H}^1(\R^{n+1}_+,x_{n+1}^{1-2s})$ is the Caffarelli-Silvestre type extension of $u$ (i.e.~a solution of \eqref{eq:CS_intro}) and $u \in H^s(\R^n)$ is a solution to \eqref{eq:Fractional_Calderon_equation} with Dirichlet data $f \in C_c^\infty(W)$.

It is our objective to quantify this reduction argument under suitable a priori bounds on the metric $a$, and, building on this quantification, to derive novel stability estimates for the principal part of the operator.

\subsection{Main results}

In what follows we discuss our main results on the quantitative transfer of uniqueness from the local to the nonlocal context. For technical reasons, we will always assume that $n\geq3$ and we impose the following a-priori assumptions on the class of admissible metrics. We will work with slightly different assumptions in the anisotropic and isotropic settings, with the latter allowing for a modest relaxation of the conditions.
\\

\textbf{Assumption (A1):} Firstly, we assume that the admissible metrics $a \in L^\infty(\R^n, \R^{n \times n}_{\text{sym}})$ satisfy the following uniform ellipticity condition. Let $\theta_1 \in (0,1)$ be given and assume that for all admissible metrics $a$ (as bilinear forms) it holds that
\begin{align*}
\theta_1 \leq a \leq \theta_1^{-1}.
\end{align*}

This is a very mild standard condition assumed in essentially all Calder\'on type problems, encoding the uniform ellipticity of the problem.\\

\textbf{Assumption (A2):} Secondly, we impose a uniform lower bound on the Ricci-curvature of $(\R^n, a)$. Let $\theta_2 > 0$ and assume that for all admissible metrics $a$ it holds that
\begin{align*}
\operatorname{Ric}((\R^n,a)) \geq - \theta_2.
\end{align*}

We will make use of this condition in deducing suitable heat kernel estimates which in turn imply estimates for the associated Poisson kernel (see Section \ref{sec:heat}). As we will assume that our metrics agree with the identity matrix outside of a compact set, under $C^2$ regularity conditions on the metric, by compactness, the Ricci curvature is always bounded from below. By assuming condition (A2), we here restrict to the class of metrics with a uniform lower bound of this type.\\

\textbf{Assumption (A3):} Thirdly, we impose a flatness condition on the metrics $a$ outside of $\Omega$. More precisely, we assume that $a \in L^\infty(\R^n, \R^{n \times n}_{\text{sym}})$ with $a = \Id$ in $\Omega_e$.
\\

The condition ensures that outside of $\Omega$ the metric is known. Similar conditions had earlier been considered in the qualitative results \cite{CGRU23} and \cite{FGKRSU25}. It is expected that the condition here can be relaxed to be valid on the complement of a compact set (instead of in the complement of $\Omega$).\\

\textbf{Assumption (A4):} We assume that the metric $a$ is uniformly bounded in the $C^1$-norm near the boundary of $\Omega$. More precisely, let $\theta_3>0$ and let $\Omega'\Subset\Omega$. We assume that for all admissible metrics $a$ it holds that
\begin{align*}
\Vert a \Vert_{C^1(\Omega\setminus\Omega')} \leq \theta_3.
\end{align*}

This technical assumption will be sufficient to prove one  of our essential ingredients, a quantitative unique continuation argument, Proposition \ref{prop:QUCP_1}. However, in order to derive our main results  on stability we need to strengthen assumption (A4). In particular, when applying this proposition and a quantitative Runge approximation, Proposition \ref{prop:QRA}, we will make use of these stronger assumptions. We distinguish between the anisotropic and isotropic settings.\\

\textbf{Assumption (A4'):} In the case of anisotropic coefficients, we assume that the metric $a$ is known near the boundary of $\Omega$, i.e. there exists $\Omega'\Subset\Omega$ such that for two metrics $a_1$, $a_2$ from the class of all admissible metrics it holds that $a_1 = a_2$ in $\Omega \setminus \Omega'$.
\\

As we will discuss below, this condition represents a technical assumption allowing us to make use of Caccioppoli's inequality and to rely on an $L^2$-based quantitative Runge approximation result.\\

In the setting of isotropic coefficients it is possible to drop the condition that $a_1 = a_2$ in a neighbourhood of $\partial \Omega$. It suffices to require a certain amount of additional regularity instead.\\

\textbf{Assumption (A4''):} In the case of isotropic coefficients, we assume that the metric $a$ is uniformly bounded in the $C^2(\Omega)$-norm. Let $\theta_3>0$ and assume that for all admissible metrics $a$ it holds that
\begin{align*}
\Vert a \Vert_{C^2(\Omega)} \leq \theta_3.
\end{align*}

The reason why we can drop the assumption (A4') is that in the isotropic case it is possible to rely on a Liouville reduction. This reduction implies that $L^2$-Runge approximation results suffice without having to rely on Caccioppoli type estimates. However, since the Liouville reduction involves second order derivatives and since we will need a uniform $L^\infty$-bound of these, in our argument below, we require the $C^2$-boundedness assumption from (A4'').\\

If for given parameters $\theta_1, \theta_2, \theta_3$ and a domain $\Omega' \Subset \Omega$ the conditions (A1)-(A4) are satisfied for a metric $a :\R^n \rightarrow \R^{n\times n}_{\text{sym}}$, we write that $a \in \mathcal{A}(\theta_1, \theta_2, \theta_3, \Omega')$. If instead, for given parameters $\theta_1, \theta_2, \theta_3$ and a domain $\Omega'\Subset\Omega$ the metric $a :\R^n \rightarrow \R^{n \times n}_{\text{sym}}$ satisfies the conditions (A1)-(A3) and (A4') or (A4''), we write that $a \in \mathcal{A}'(\theta_1, \theta_2, \Omega')$ or $a \in \mathcal{A}''(\theta_1, \theta_2, \theta_3)$, respectively.

Under these assumptions, we will prove the following quantitative reduction results. Let us first turn to the anisotropic setting.

\begin{thm}\label{thm:quantitative_reduction_anisotropic}
Let $\Omega \subset \R^n$ be open, non-empty, bounded and Lipschitz such that $\R^n\setminus\Omega$ is connected and let $W \subset \Omega_e$ be open, bounded, non-empty and Lipschitz such that $\overline{\Omega} \cap \overline{W} = \emptyset$. Let $\Omega_1 \subset \R^n$ be open, bounded, Lipschitz such that $\Omega \Subset \Omega_1$. Let $\theta_1\in(0,1)$, $\theta_2>0$ and let $\Omega'\Subset\Omega$ be open, bounded, Lipschitz. Let $a_1, a_2 \in C^2(\R^n, \R^{n \times n}_{\text{sym}}) \cap \mathcal{A}'(\theta_1, \theta_2, \Omega')$. Let $\Lambda_{1,\Omega_1}^{a_j}$ and $\Lambda_s^{a_j}$, $j\in\{1,2\}$, be the local and the fractional Dirichlet-to-Neumann maps as in \eqref{eq:Local_DN_map} and \eqref{eq:Fractional_DN_map}, respectively.

There exist constants $\mu>0$ and $C>0$ such that if $\Vert \Lambda_s^{a_1} - \Lambda_s^{a_2} \Vert_{\widetilde{H}^s(W) \to H^{-s}(W)} \leq \frac{1}{2}$, then
\begin{align*}
\Vert \Lambda_{1,\Omega_1}^{a_1} - \Lambda_{1,\Omega_1}^{a_2} \Vert_{H^{\frac{1}{2}}(\partial\Omega_1) \to H^{-\frac{1}{2}}(\partial\Omega_1)} \leq C \log( \vert \log( \Vert \Lambda_s^{a_1} - \Lambda_s^{a_2} \Vert_{\widetilde{H}^s(W) \to H^{-s}(W)} ) \vert)^{-\frac{1}{\mu}}.
\end{align*}
The constants $\mu$ and $C$ only depend on $n$, $s$, $\Omega$, $\Omega_1$, $\Omega'$, $W$, $\theta_1$, $\theta_2$ and $\Vert a_1 \Vert_{C^1(\Omega\setminus\Omega')}$.
\end{thm}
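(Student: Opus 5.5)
We will quantify the qualitative density argument of \cite{CGRU23} in three steps. First, we derive an Alessandrini-type identity relating the fractional data to the local data for the functions $v^f$ of \eqref{eq:reduction_formula}. For $f_1,f_2\in C_c^\infty(W)$ with associated extensions $\tilde u_1^{f_1},\tilde u_2^{f_2}$, we integrate the Caffarelli--Silvestre equations \eqref{eq:CS_intro} against $t^{1-2s}$ in the normal variable. By (A3) the weights agree outside $\Omega$, and by (A4') they agree on $\Omega\setminus\Omega'$. We then express $\langle (\Lambda_s^{a_1}-\Lambda_s^{a_2})f_1,f_2\rangle$ through a bulk term of the form $\int_{\Omega'} (a_1-a_2)\nabla' v_1^{f_1}\cdot\nabla' v_2^{f_2}$, which equals $\langle (\Lambda^{a_1}_{1,\Omega_1}-\Lambda^{a_2}_{1,\Omega_1}) v_1^{f_1},v_2^{f_2}\rangle_{\partial\Omega_1}$ up to errors. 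Since $v^f$ does not solve the local equation exactly (it solves $-\nabla'\cdot a\nabla' v^f = c\,u^f$, which is supported in $W$ away from $\overline\Omega_1$ after restriction, or $\Omega_1$ is chosen so that this holds), the errors can be controlled in terms of $f$.

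Second, we need a \emph{quantitative unique continuation} estimate (Proposition~\ref{prop:QUCP_1}). This estimate bounds the Cauchy data of $v^f$ on $\partial\Omega_1$, or the $L^2(\Omega_1\setminus\Omega')$ size of the difference of the two families, by the fractional measurement $\|\Lambda_s^{a_1}-\Lambda_s^{a_2}\|$ in a logarithmic way. The estimate propagates smallness from $W\times\{0\}$ through the extension into $\R^{n+1}_+$, then down via the $t$-integral to $\Omega_1\setminus\Omega$. This is where the Ricci bound (A2) enters, through heat/Poisson kernel decay estimates that control the tails of the $t$-integral.

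Third, we invoke the \emph{quantitative Runge approximation} (Proposition~\ref{prop:QRA}). Given an arbitrary local solution $v_{a_j}^g$ in $\Omega_1$ with $\|g\|_{H^{1/2}(\partial\Omega_1)}\le1$ and an accuracy $\epsilon$, it yields $f\in C_c^\infty(W)$ with $\|v^f-v^g_{a_j}\|_{L^2(\Omega_1\setminus\Omega')}\le\epsilon$ and cost $\|f\|_{\widetilde H^s(W)}\le e^{C\epsilon^{-\mu}}$. Using (A4'), Caccioppoli's inequality upgrades the $L^2$ closeness on a collar to $H^1$ closeness on a smaller collar. This suffices to pair with $(a_1-a_2)$, which is supported in $\Omega'$, after rewriting the boundary pairing as an integral over a collar via a cutoff. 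Inserting into the identity gives
\[
|\langle(\Lambda^{a_1}_{1,\Omega_1}-\Lambda^{a_2}_{1,\Omega_1})g_1,g_2\rangle|\le C\big(\epsilon + e^{2C\epsilon^{-\mu}}\,\omega(\delta)\big),\qquad \delta:=\|\Lambda_s^{a_1}-\Lambda_s^{a_2}\|.
\]
Here $\omega(\delta)$ is the logarithmic-type modulus from the unique continuation step, possibly with $\omega(\delta)=|\log\delta|^{-\sigma}$. Optimizing in $\epsilon$ gives $\epsilon\sim(\log|\log\delta|)^{-1/\mu}$, which yields the claimed double-logarithmic bound, with constants depending only on the listed quantities.

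The main obstacle will be the second step. The unbounded geometry requires uniform control of the $t$-integral defining $v^f$ at $t\to\infty$ and near $t=0$, so that the extension's smallness transfers to $v^f$ on $\Omega_1\setminus\Omega$. The dimension mismatch between data on the open set $W$ and data on the hypersurface $\partial\Omega_1$ has to be bridged by propagating smallness through an $(n+1)$-dimensional region. To do this, we would combine interior three-balls inequalities for the degenerate extension with boundary doubling near $W\times\{0\}$. We would use the Poisson kernel bounds from Section~\ref{sec:heat} for the large-$t$ tail, accepting one logarithm there and another from Runge, which is what produces the $\log|\log|$ rate.
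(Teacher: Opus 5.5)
Your two ingredients, the role of (A2), and the final balancing of $\epsilon$ against $e^{2C\epsilon^{-\mu}}\omega(\delta)$ match the paper. However, the step that glues Proposition~\ref{prop:QUCP_1} to Proposition~\ref{prop:QRA} is wrong as written, and the correct one is missing.

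Your first-step identity fails. The fractional Alessandrini identity gives $\langle(\Lambda_s^{a_1}-\Lambda_s^{a_2})f_1,f_2\rangle = c\int_{\Omega'}\int_0^\infty t^{1-2s}\nabla'\tilde u_1^{f_1}\cdot(a_1-a_2)\nabla'\tilde u_2^{f_2}\,dt\,dx'$. A $t$-integral of a product is not the product of the $t$-integrals $\nabla' v_1^{f_1}$, $\nabla' v_2^{f_2}$. Moreover, integrating the extension equation in $t$ gives $-\nabla'\cdot a\nabla' v^f = c_s^{-1}(-\nabla'\cdot a\nabla')^s u^f$, not $c\,u^f$. This vanishes in $\Omega$ but not on $\Omega_1\setminus\overline{\Omega}$. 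Hence $v^f$ is not an $a$-solution in $\Omega_1$, and its Cauchy data on $\partial\Omega_1$ do not lie in the graph of $\Lambda^{a_j}_{1,\Omega_1}$. Errors of size $O(\Vert f\Vert_{\widetilde H^s(W)})$ are worthless once $\Vert f\Vert_{\widetilde H^s(W)}\sim e^{C\epsilon^{-\mu}}$. Indeed, if your identity held with small errors, unique continuation would not be needed at all.

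Likewise, Proposition~\ref{prop:QRA} gives closeness only in $L^2(\Omega)$, not on $\Omega_1\setminus\Omega'$. The $H^1$ closeness you need is on $\Omega'$, where $a_1-a_2$ is supported, not on a collar. Rewriting the $\partial\Omega_1$-pairing on an exterior collar would require smallness of $v^{g_1}_{a_1}-v^{g_1}_{a_2}$ there, which neither ingredient provides.

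The missing idea is how to use Proposition~\ref{prop:QUCP_1}, which only controls the Cauchy data on $\partial\Omega$ of $v^{f}_{a_1}-v^{f}_{a_2}$ for a \emph{common} $f$. Runge approximation produces \emph{different} data $f_1,f_2$ for the two metrics. The paper handles this in five moves:
\begin{enumerate}
\item It starts from the local Alessandrini identity $\langle(\Lambda^{a_1}_{1,\Omega_1}-\Lambda^{a_2}_{1,\Omega_1})g_1,g_2\rangle=\int_{\Omega'}\nabla' v^{g_1}_{a_1}\cdot(a_1-a_2)\nabla' v^{g_2}_{a_2}$, which holds on $\Omega'$ by (A3) and (A4').
\item It replaces $v^{g_j}_{a_j}$ by $v^{f_j}_{a_j}$ using Runge in $L^2(\Omega)$ plus Caccioppoli from $\Omega$ to $\Omega'$. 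This is legitimate because both functions solve the $a_j$-equation in $\Omega$.
\item It extends the integral back to $\Omega$, using $a_1=a_2$ on $\Omega\setminus\Omega'$.
\item It adds and subtracts $(\nabla' v^{f_1}_{a_2},a_2\nabla' v^{f_2}_{a_2})_{L^2(\Omega)}$ and integrates by parts in $\Omega$, obtaining
\[
(\partial_\nu^a(v^{f_1}_{a_1}-v^{f_1}_{a_2}),v^{f_2}_{a_2})_{L^2(\partial\Omega)}+(v^{f_1}_{a_2}-v^{f_1}_{a_1},\partial_\nu^a v^{f_2}_{a_2})_{L^2(\partial\Omega)}.
\]
\item Proposition~\ref{prop:QUCP_1} bounds the first factors by $C|\log\delta|^{-\beta}\Vert f_1\Vert_{\widetilde H^s(W)}$, and Lemma~\ref{lem:integral_uniform_estimate} bounds the second by $C\Vert f_2\Vert_{\widetilde H^s(W)}$.
\end{enumerate}
This is what produces your final inequality. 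Without the add-and-subtract step, your argument does not close.
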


Note that, for technical reasons and in order to formulate our main results in the anisotropic and isotropic settings in parallel, in Theorem \ref{thm:quantitative_reduction_anisotropic} we relate the fractional Dirichlet-to-Neumann map (with the sets $\Omega$ and $W$ as above) to the local Dirichlet-to-Neumann map for the larger set $\Omega_1$ instead of the set $\Omega$. As the metrics $a_j$ satisfy $a_j = \Id$ in $\Omega_e$, this does not require additional information. Indeed, by virtue of the assumption that $a_j = \Id$ in $\Omega_e$, we may also consider the classical Calder\'on problem on the domain $\Omega_1$ without requiring additional information compared to the problem on $\Omega$. In view of our main purpose -- the transfer of stability from the local Calder\'on problem to the nonlocal one (see Corollary \ref{cor:stability_fract_Calderon_isotropic} for an example of this for isotropic metrics) -- by the assumption that $a_1 = a_2$ in $\Omega_e$, there is no major difference whether Theorem \ref{thm:quantitative_reduction_anisotropic} is formulated with the local Dirichlet-to-Neumann map defined on $\Omega$ or defined on $\Omega_1$. 

We emphasize that the assumption (A4') which is included in the definition of our set of admissible anisotropic metrics would also allow us to prove a version of Theorem \ref{thm:quantitative_reduction_anisotropic} in the domain $\Omega$. Since in the isotropic setting we only require the regularity condition (A4'') (which however in turn forces us to work in the larger domain $\Omega_1$  in the isotropic setting when applying the Runge approximation result) instead of the structural condition that $a_1 = a_2$ in a neighbourhood of $\partial \Omega$, we have opted to formulate already the anisotropic theorem in the setting of the larger domain $\Omega_1$.

In the isotropic setting, we can relax the structural assumption (A4') that asserts that the metric in $\Omega$ is known near the boundary. Here, it suffices to assume the regularity condition (A4''), that we have uniform $C^2(\Omega)$-bounds on the metrics.

\begin{thm}\label{thm:quantitative_reduction_isotropic}
Let $\Omega \subset \R^n$ be open, non-empty, bounded and Lipschitz such that $\R^n\setminus\Omega$ is connected and let $W \subset \Omega_e$ be open, bounded, non-empty and Lipschitz such that $\overline{\Omega} \cap \overline{W} = \emptyset$. Let $\Omega_1 \subset \R^n$ be open, bounded, Lipschitz such that $\Omega \Subset \Omega_1$. Let $\theta_1\in(0,1)$, $\theta_2>0$, $\theta_3>0$. Let $a_1 = \gamma_1 \Id$ and $a_2 = \gamma_2 \Id$ be isotropic coefficients (i.e. $\gamma_1, \gamma_2: \R^n \to \R$) satisfying $a_1, a_2 \in C^2(\R^n, \R^{n \times n}_{\text{sym}}) \cap \mathcal{A}''(\theta_1, \theta_2, \theta_3)$. Let $\Lambda_{1,\Omega_1}^{a_j}$ and $\Lambda_s^{a_j}$, $j\in\{1,2\}$, be the local and the fractional Dirichlet-to-Neumann maps as in \eqref{eq:Local_DN_map} and \eqref{eq:Fractional_DN_map}, respectively.

There exist constants $\mu>0$ and $C>0$ such that if $\Vert \Lambda_s^{a_1} - \Lambda_s^{a_2} \Vert_{\widetilde{H}^s(W) \to H^{-s}(W)} \leq \frac{1}{2}$, then
\begin{align*}
\Vert \Lambda_{1,\Omega_1}^{a_1} - \Lambda_{1,\Omega_1}^{a_2} \Vert_{H^{\frac{1}{2}}(\partial\Omega_1) \to H^{-\frac{1}{2}}(\partial\Omega_1)} \leq C \log( \vert \log( \Vert \Lambda_s^{a_1} - \Lambda_s^{a_2} \Vert_{\widetilde{H}^s(W) \to H^{-s}(W)} ) \vert)^{-\frac{1}{\mu}}.
\end{align*}
The constants $\mu$ and $C$ only depend on $n$, $s$, $\Omega$, $\Omega_1$, $W$, $\theta_1$, $\theta_2$ and $\theta_3$.
\end{thm}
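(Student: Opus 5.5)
The proof of Theorem \ref{thm:quantitative_reduction_isotropic} runs parallel to that of Theorem \ref{thm:quantitative_reduction_anisotropic}. It combines three ingredients: an Alessandrini-type integral identity, the quantitative unique continuation result (Proposition \ref{prop:QUCP_1}), and the quantitative Runge approximation (Proposition \ref{prop:QRA}). The one genuinely new point is that, since (A4') is no longer available, the Runge step is carried out in $L^2$ after a Liouville reduction.

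\textbf{Step 1: reduce to solutions on $\Omega_1$.} Fix $g_1,g_2\in H^{1/2}(\partial\Omega_1)$ and let $w_j$ solve $-\nabla'\cdot\gamma_j\nabla' w_j=0$ in $\Omega_1$ with $w_j=g_j$. Since $\gamma_1=\gamma_2=1$ in $\Omega_1\setminus\Omega$, the standard identity gives
\begin{align*}
\langle(\Lambda^{a_1}_{1,\Omega_1}-\Lambda^{a_2}_{1,\Omega_1})g_1,g_2\rangle=\int_{\Omega}(\gamma_1-\gamma_2)\nabla w_1\cdot\nabla w_2 .
\end{align*}
Here the roles of $w_1,w_2$ are adapted to the two metrics.

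\textbf{Step 2: approximate $w_j$ by reduced functions.} Apply Proposition \ref{prop:QRA} to approximate each $w_j$ by a reduced function $v^{f_j}$ from \eqref{eq:reduction_formula}. These functions solve the local equation in $\Omega$ and are harmonic in $\Omega_1\setminus(\Omega\cup W)$. The approximation is taken in $L^2$ on a set slightly larger than $\Omega$ inside $\Omega_1$, with cost $\|f_j\|\le e^{C\epsilon^{-\mu}}\|g_j\|$ for error $\epsilon\|g_j\|$.

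In the isotropic case, write $\gamma^{1/2}w$, which solves $(-\Delta+q)(\gamma^{1/2}w)=0$ with $q=\Delta\gamma^{1/2}/\gamma^{1/2}$. By (A4''), $\|q\|_{L^\infty}\le C(\theta_1,\theta_3)$. Interior $H^1$ bounds on $\Omega$ then follow from $L^2$ bounds on $\Omega_1$ by Caccioppoli/elliptic estimates for the Schrödinger equation. Because $\gamma$ is not known near $\partial\Omega$, the buffer needed for Caccioppoli must come from $\Omega_1\setminus\Omega$, where $\gamma\equiv1$; this is exactly why we work on $\Omega_1$. Inserting the approximants into the identity of Step 1 gives an error of order $\epsilon$ plus the term $\int_\Omega(\gamma_1-\gamma_2)\nabla v^{f_1}\cdot\nabla v^{f_2}$.

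\textbf{Step 3: control the main term by the fractional data.} Integrate the last term by parts back to boundary data on $\partial\Omega$ (or $\partial\Omega_1$), since $v^{f_j}$ solves the respective local equation. This expresses it through the mismatch of Cauchy data of $v^{f_1}$ for $a_1$ versus $a_2$. That mismatch comes from the difference of the Caffarelli–Silvestre extensions $\tilde u_1^{f}-\tilde u_2^{f}$. On $W\times\{0\}$ this difference has the same Dirichlet data, and its weighted Neumann data is controlled by $\delta:=\|\Lambda_s^{a_1}-\Lambda_s^{a_2}\|$. Since $a_1=a_2=\Id$ outside $\Omega$, the difference solves a constant-coefficient extension equation in $(\R^n\setminus\overline\Omega)\times\R_+$.

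Proposition \ref{prop:QUCP_1} propagates this smallness from $W\times\{0\}$ to a neighbourhood of $\partial\Omega$ after integration in $t$ as in \eqref{eq:reduction_formula}. This requires the heat/Poisson kernel decay from (A2) to control the $t$-integral at infinity. The result is a logarithmic bound of the form $\|f_1\|\|f_2\|\,|\log\delta|^{-\nu}$.

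\textbf{Step 4: optimize.} Combining the steps gives
\begin{align*}
|\langle(\Lambda^{a_1}_{1,\Omega_1}-\Lambda^{a_2}_{1,\Omega_1})g_1,g_2\rangle|\le C\big(\epsilon+e^{2C\epsilon^{-\mu}}|\log\delta|^{-\nu}\big)\|g_1\|\|g_2\|.
\end{align*}
Choosing $\epsilon\sim(\log|\log\delta|)^{-1/\mu}$ balances the two terms and yields the double-logarithmic bound claimed in Theorem \ref{thm:quantitative_reduction_isotropic}.

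\textbf{Main obstacle.} I expect Step 3 to be the hardest part. Converting the exterior-open-set smallness into smallness of the boundary Cauchy data, uniformly over the unbounded $t$-direction and with a codimension-one target, is where the log-type loss arises. I would first try to bound the tail $t\ge T$ via the Poisson kernel decay and optimize in $T$.
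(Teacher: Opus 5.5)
There is a genuine gap in Steps 1--2. You start from the gradient form of the Alessandrini identity, $\int_\Omega(\gamma_1-\gamma_2)\nabla w_1\cdot\nabla w_2$. Without (A4') the difference $\gamma_1-\gamma_2$ need not vanish near $\partial\Omega$, so you need the approximants to be close to $w_j$ in $H^1$ all the way up to $\partial\Omega$. Proposition \ref{prop:QRA} only gives closeness in $L^2(\Omega)$, not on a larger set. Your proposed upgrade is Caccioppoli with a buffer in $\Omega_1\setminus\Omega$, and it rests on the claim that $v^{f_j}$ is harmonic in $\Omega_1\setminus(\Omega\cup W)$. That claim is false. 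By Theorem 3 of \cite{CGRU23}, in $\Omega_e$ one has $-\Delta v^{f_j}=(-\nabla'\cdot a_j\nabla')^s u_j^{f_j}$. Because the operator is nonlocal, this does not vanish off $W$, and it is only bounded by $C\|f_j\|_{\widetilde H^s(W)}\lesssim e^{C\epsilon^{-\mu}}$. Hence $w_j-v^{f_j}$ solves an inhomogeneous equation across $\partial\Omega$ with an exponentially large right-hand side, and Caccioppoli produces an error term that destroys the estimate. This is exactly the obstruction the paper points out after the anisotropic proof, and it is why (A4') is needed there.

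The missing idea is to use the Liouville reduction to change the identity itself, not for interior estimates. Since $\gamma_j=1$ near $\partial\Omega_1$, we have $\Lambda^{a_j}_{1,\Omega_1}=\Lambda_{q_j,\Omega_1}$ with $q_j=\gamma_j^{-1/2}\Delta\gamma_j^{1/2}$. Moreover $q_1=q_2=0$ outside $\Omega$. The Schr\"odinger Alessandrini identity
\begin{align*}
\big((\Lambda_{q_1,\Omega_1}-\Lambda_{q_2,\Omega_1})g_1,g_2\big)_{L^2(\partial\Omega_1)}=\big(\gamma_1^{1/2}w_1,(q_1-q_2)\gamma_2^{1/2}w_2\big)_{L^2(\Omega)}
\end{align*}
contains no gradients. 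So the $L^2(\Omega)$ approximation from Proposition \ref{prop:QRA}, together with $\|q_j\|_{L^\infty}\le C(\theta_1,\theta_3)$ from (A4''), already lets you replace $w_j$ by $v^{f_j}_{a_j}$. The error is $C\epsilon\|g_1\|\|g_2\|$, and no Caccioppoli step is needed.

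The main term is then handled as follows:
\begin{itemize}
\item Insert $\pm$ terms involving $v^{f_1}_{a_2}$.
\item Integrate by parts on $\Omega$, not on $\Omega_1$, where $v^{f}$ is not a solution. This uses that $\gamma_j^{1/2}v^{f_i}_{a_j}$ solves $(-\Delta+q_j)u=0$ in $\Omega$.
\item Use that $\gamma_j=1$ and $\nabla\gamma_j=0$ on $\partial\Omega$, which follows from $C^2$ regularity and $\gamma_j\equiv1$ outside $\Omega$. Hence $\partial_\nu(\gamma_j^{1/2}v)=\partial_\nu v$ there.
\end{itemize}
This yields exactly the Cauchy-data differences on $\partial\Omega$ that Proposition \ref{prop:QUCP_1} controls, and your Steps 3--4 then go through as you describe. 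Note also that $\Omega_1$ is needed for the Runge approximation, which requires the solution to be defined in a neighbourhood of $\overline\Omega$. It is not needed as a Caccioppoli buffer.
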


Compared to the anisotropic setting, we can relax the assumption (A4') in the isotropic setting as we are able to formulate an $L^2$-based Alessandrini-type identity in the isotropic setting by relying on a Liouville reduction on the level of the local equation (see the beginning of Section \ref{sec:proof_main_results_isotropic}). In contrast, in the anisotropic setting, we only obtain an $H^1$-based Alessandrini-type identity (see the beginning of Section \ref{sec:proof_main_results_anisotropic}). Since the quantitative Runge approximation (see Proposition \ref{prop:QRA}), one of the main ingredients in the proof, only yields an $L^2$-based approximation, this requires us to apply a Caccioppoli-type inequality, which results in the additional assumption that the metrics are known near the boundary for anisotropic coefficients. In the isotropic setting this is not necessary. However, the Liouville reduction comes at the cost of imposing a uniform $C^2(\Omega)$-bound on the coefficients $a_j$.

As a consequence of the reduction theorems formulated in Theorems \ref{thm:quantitative_reduction_anisotropic} and \ref{thm:quantitative_reduction_isotropic}, any stability result on the local level implies a stability estimate for the corresponding fractional problem. We derive the following triple-$\log$ stability estimate for the fractional \textit{isotropic} Calderón problem as a consequence of our quantitative reduction argument and the well-known stability estimate for the local Calderón problem from \cite{A88}. The triple-$\log$ is then a consequence of the double-$\log$ stability of the reduction and the logarithmic stability of the local Calderón problem.

\begin{cor}\label{cor:stability_fract_Calderon_isotropic}
Let $\Omega \subset \R^n$ be open, non-empty, bounded and Lipschitz such that $\R^n\setminus\Omega$ is connected and let $W \subset \Omega_e$ be open, bounded, non-empty and Lipschitz such that $\overline{\Omega} \cap \overline{W} = \emptyset$. Let $\theta_1\in(0,1)$, $\theta_2>0$, $\theta_3>0$. Let $a_1 = \gamma_1 \Id, a_2 = \gamma_2 \Id$ satisfy $a_1, a_2 \in C^2(\R^n, \R^{n \times n}_{\text{sym}}) \cap \mathcal{A}''(\theta_1, \theta_2, \theta_3)$. Assume that $\gamma_1, \gamma_2 \in C^2(\R^n) \cap H^{t+2}(\Omega)$ for some $t > \frac{n}{2}$ with $\Vert \gamma_j \Vert_{H^{t+2}(\Omega)} \leq M$ for some $M>0$, $j \in \{1,2\}$. Let $\Lambda_s^{a_j}$, $j\in\{1,2\}$, be the fractional Dirichlet-to-Neumann maps as in \eqref{eq:Fractional_DN_map}.

There exist $C>0$ and $\sigma>0$ such that if $\Vert \Lambda_s^{a_1} - \Lambda_s^{a_2} \Vert_{\widetilde{H}^s(W) \to H^{-s}(W)} \leq \frac{1}{2}$, then
\begin{align*}
\Vert a_1 - a_2 \Vert_{L^\infty(\R^n)} \leq C \left\vert \log\left( \log\left( \big\vert \log\big( \Vert \Lambda_s^{a_1} - \Lambda_s^{a_2} \Vert_{\widetilde{H}^s(W) \to H^{-s}(W)} \big) \big\vert \right) \right) \right\vert^{-\sigma}.
\end{align*}
Here, the constant $\sigma$ only depends on $n$ and $t$, and the constant $C$ only depends on $n$, $s$, $\Omega$, $W$, $\theta_1$, $\theta_2$,  $\theta_3$, $M$ and $\sigma$.
\end{cor}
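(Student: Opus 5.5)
The corollary follows by composing Theorem \ref{thm:quantitative_reduction_isotropic} with Alessandrini's logarithmic stability estimate \cite{A88} for the classical isotropic Calder\'on problem, applied on an auxiliary domain. The domain must be enlarged because the reduction theorem only controls the local Dirichlet-to-Neumann map on a domain $\Omega_1 \Supset \Omega$.

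\textbf{Step 1: choice of $\Omega_1$.} Fix a smooth bounded domain $\Omega_1$ with $\Omega \Subset \Omega_1$, for instance a large ball containing $\overline{\Omega}$. Since $\Omega_1$ depends only on $\Omega$, it will not add dependencies to the constants. Theorem \ref{thm:quantitative_reduction_isotropic} then yields, with $\varepsilon := \Vert \Lambda_s^{a_1} - \Lambda_s^{a_2} \Vert_{\widetilde{H}^s(W) \to H^{-s}(W)} \le \frac12$,
\begin{align*}
\delta := \Vert \Lambda_{1,\Omega_1}^{a_1} - \Lambda_{1,\Omega_1}^{a_2} \Vert_{H^{1/2}(\partial\Omega_1)\to H^{-1/2}(\partial\Omega_1)} \le C_0 \log(|\log \varepsilon|)^{-1/\mu}.
\end{align*}

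\textbf{Step 2: Alessandrini's estimate on $\Omega_1$.} On $\Omega_1$ we apply Alessandrini's stability result in the form for $\gamma_j \in H^{t+2}$, $t>n/2$, with $\theta_1 \le \gamma_j \le \theta_1^{-1}$:
\begin{align*}
\Vert \gamma_1-\gamma_2\Vert_{L^\infty(\Omega_1)} \le C_1\, \omega(\delta), \qquad \omega(\delta)=|\log \delta|^{-\sigma} \text{ for } \delta<1/e,
\end{align*}
with $\sigma=\sigma(n,t)$. To use it, we need an $H^{t+2}(\Omega_1)$ bound on $\gamma_j$. The hypotheses give $\gamma_j\in H^{t+2}(\Omega)$, and $\gamma_j\equiv 1$ in $\Omega_e$ by (A3). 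This is the main obstacle: the glued function need not lie in $H^{t+2}(\Omega_1)$ unless $\gamma_j$ matches $1$ smoothly across $\partial\Omega$.

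I would handle this as follows. The hypothesis $\gamma_j\in C^2(\R^n)$ together with $\gamma_j=1$ outside $\Omega$ forces $\gamma_j-1$ to vanish to second order at $\partial\Omega$. I would then argue, as is implicit in the corollary's formulation, that $\gamma_j - 1$ extended by zero lies in $H^{t+2}(\R^n)$ with norm $\lesssim M + |\Omega|^{1/2}$. If this gluing fails for general $t$, I would instead apply the version of Alessandrini's estimate that requires the $H^{t+2}$ bound only on $\Omega$ and uses the boundary determination of $\gamma_j$ at $\partial\Omega_1$ (where $\gamma_1=\gamma_2=1$). Either way, $\Vert\gamma_1-\gamma_2\Vert_{L^\infty(\R^n)} = \Vert\gamma_1-\gamma_2\Vert_{L^\infty(\Omega)}$, since the difference vanishes outside $\Omega$.

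\textbf{Step 3: composition.} Since $\omega$ is increasing, substituting Step 1 into Step 2 gives
\begin{align*}
\Vert a_1-a_2\Vert_{L^\infty} \le C_1\left|\log\!\big(C_0 \log(|\log\varepsilon|)^{-1/\mu}\big)\right|^{-\sigma} \le C\, |\log(\log|\log\varepsilon|)|^{-\sigma}.
\end{align*}
The last inequality holds because $|\log(C_0 x^{-1/\mu})| \ge \frac{1}{2\mu}\log x$ once $x=\log|\log\varepsilon|$ is large. For $\varepsilon$ in the remaining compact range, i.e.\ $\delta$ not small, the bound is trivial: $\Vert a_1-a_2\Vert_{L^\infty}\le 2\theta_1^{-1}$ while the right-hand side is bounded below. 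Here $\log\log|\log\varepsilon|$ may be small or negative when $\varepsilon$ is near $\frac12$, and the absolute value in the statement accommodates this after enlarging $C$. The resulting constants depend only on $n,s,\Omega,W,\theta_1,\theta_2,\theta_3,M$ and $\sigma$, with $\mu$ absorbed into $C$.
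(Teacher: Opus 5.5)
Your argument is the paper's: fix $\Omega_1\Supset\Omega$, apply Theorem~\ref{thm:quantitative_reduction_isotropic} on $\Omega_1$, feed the result into Alessandrini's estimate \cite{A88} on $\Omega_1$, and use $a_1=a_2$ outside $\Omega$. The paper does this in two lines, asserting that \cite{A88} applies on $\Omega_1$ ``under the given assumptions''. Two of your side remarks need correcting, although neither affects the main small-$\varepsilon$ argument.

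\textbf{The regularity gluing.} $C^2$-matching across $\partial\Omega$ does not put the zero extension of $\gamma_j-1$ into $H^{t+2}$. Since $t+2>\frac n2+2\ge\frac72$, you would also need the third derivatives to vanish on $\partial\Omega$. For instance, a normal profile equal to $x_n^3$ for $x_n>0$ and $0$ for $x_n<0$ is $C^2$, but it lies in $H^{\sigma}_{\mathrm{loc}}$ only for $\sigma<\frac72$, and the hypotheses do not exclude such $\gamma_j$. So your first route fails and only your fallback is valid. The fallback does work, and it is exactly what the paper's citation silently requires:
\begin{itemize}
\item since $\gamma_1=\gamma_2=1$ near $\partial\Omega_1$, one has $\Lambda_{q_j,\Omega_1}=\Lambda_{1,\Omega_1}^{a_j}$ and no boundary determination is needed;
\item the complex geometrical optics/Fourier step only uses $\Vert q_j\Vert_{L^\infty}$, which (A1) and (A4'') control, together with $\supp(q_1-q_2)\subset\overline{\Omega}$;
\item the final interpolation for $\gamma_1-\gamma_2$, which vanishes off $\Omega$, can be carried out on $\Omega$ using only the $H^{t+2}(\Omega)$ bound.
\end{itemize}

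\textbf{The non-small regime.} This regime cannot be absorbed by enlarging $C$. As $\varepsilon\uparrow e^{-1}$ one has $\log|\log\varepsilon|\downarrow0$, so $\vert\log\log|\log\varepsilon|\vert\to\infty$ and the stated right-hand side tends to $0$. For $\varepsilon\in[e^{-1},\frac12]$ it is not even defined. Your claim that the right-hand side is bounded below on the remaining compact range is therefore false there. The threshold $\frac12$ has to be lowered to some $\varepsilon_0<e^{-e}$; this is a defect of the statement that the paper's proof also passes over. With that change, your trivial-bound argument on $[\varepsilon_1,\varepsilon_0]$ goes through.
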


This corollary provides the first quantitative stability result for a nonlocal equation in which for the nonlocal equation no Liouville reduction (as in \cite{C20}) is known.

Let us highlight that while we do not know whether the triple logarithmic estimate and the (double) logarithmic losses in passing from the local to the nonlocal Calder\'on problems are necessary, logarithmic losses also occur in the stability estimates for the passage from the full to the partial data Calder\'on problems. For instance, while the seminal results of Alessandrini \cite{A88} have an optimal logarithmic stability modulus \cite{M01,DCR03,KRS21}, the partial data stability results from \cite{CDSFR16} are only known with double logarithmic moduli. To the best of the authors' knowledge, it is not known, whether this additional logarithmic loss is necessary. Hence, as the fractional Calder\'on problem has a natural interpretation (for $s=1/2$) as a partial data problem for the classical Calder\'on problem \cite{LNZ24, R23}, it may be that some logarithmic loss cannot be avoided in case that an additional logarithmic loss in the partial data problem is necessary.

\subsection{Main ideas and outline of the argument}

We briefly sketch the ideas of the proof of Theorem \ref{thm:quantitative_reduction_anisotropic}.

Adopting the Caffarelli-Silvestre type extension perspective, the fractional Dirichlet-to-Neu\-mann maps $\Lambda_s^{a_j}: \widetilde{H}^s(W) \to H^{-s}(W)$ are given as (see also Section \ref{sec:prel_CSExtension} for more details on this)
\begin{align*}
\Lambda_s^{a_j}(f) = \restr{-c_s \lim_{x_{n+1}\to0} x_{n+1}^{1-2s} \partial_{n+1} \tilde{u}_j^f(\cdot,x_{n+1})}{W},
\end{align*}
where $\tilde{u}_j^f \in \dot{H}^1(\R^{n+1}_+,x_{n+1}^{1-2s})$ is the solution to the following extension problem
\begin{equation}\label{eq:CS}
\begin{cases}
\begin{alignedat}{2}
-\nabla\cdot x_{n+1}^{1-2s} \tilde{a}_j \nabla \tilde{u}_j^f & = 0 \quad &&\text{in } \R^{n+1}_+,\\
-c_s\lim\limits_{x_{n+1} \rightarrow 0} x_{n+1}^{1-2s} \partial_{n+1} \tilde{u}_j^f & = 0 \quad &&\text{on } \Omega \times \{0\},\\
\tilde{u}_j^f &= f \quad &&\text{on } \Omega_e \times \{0\},
\end{alignedat}
\end{cases}
\end{equation}
and $c_s \neq 0$.

Using this representation, we recall the qualitative result from \cite{CGRU23}: In \cite{CGRU23} it is proven that the fractional Dirichlet-to-Neumann map $\Lambda_s^{a_j}$ uniquely determines the local Dirichlet-to-Neumann map $\Lambda_{1,\Omega}^{a_j}: H^{\frac{1}{2}}(\partial\Omega) \to H^{-\frac{1}{2}}(\partial\Omega)$ with the reduction being given by the following procedure. Let $g_j := \restr{\int_0^\infty t^{1-2s} \tilde{u}_j^f(\cdot,t) dt}{\partial\Omega} \in H^{\frac{1}{2}}(\partial\Omega)$, then
\begin{align*}
\Lambda_{1,\Omega}^{a_j} (g_j) = \partial_\nu^{a_j} \restr{\left( \int_0^\infty t^{1-2s} \tilde{u}_j^f(\cdot,t) dt \right)}{\partial\Omega} \in H^{-\frac{1}{2}}(\partial\Omega),
\end{align*}
where $\partial_\nu^{a_j} v := \nu \cdot {a_j} \nabla v$ and $\nu$ is the outward unit normal on $\partial\Omega$. The functions $\tilde{u}_j^f$, in turn, can be constructed from the exterior boundary data $(f,\Lambda_{s}^{a_j}(f))$ and thus implies an associated density result.

Building on this qualitative result, the quantitative proof of Theorem \ref{thm:quantitative_reduction_anisotropic} then consists of two main ingredients. On the one hand, we  rely on a quantitative unique continuation argument (see Proposition \ref{prop:QUCP_1}), and on the other hand, we prove a quantitative Runge approximation result (see Proposition \ref{prop:QRA}). Assume for the following outline of arguments that $\Vert \Lambda_s^{a_1} - \Lambda_s^{a_2} \Vert_{\widetilde{H}^s(W) \to H^{-s}(W)}$ is small and let $\Omega_1\Supset\Omega$ be open, bounded and Lipschitz.

Let us turn to our first main ingredient, the \emph{quantitative unique continuation result}. Its result essentially asserts that for $f \in \widetilde{H}^s(W)$ and $g_j := \restr{\int_0^\infty t^{1-2s} \tilde{u}^f_j (\cdot,t) dt}{\partial\Omega}$, with $\tilde{u}_j^f$ solutions to \eqref{eq:CS} as above, it holds that for some $C>0$ and $\beta>0$
\begin{align*}
\Vert \restr{(g_1 - g_2)}{\partial\Omega} \Vert_{H^{\frac{1}{2}}(\partial\Omega)} &= \Big\Vert \restr{ \left( \int_0^\infty t^{1-2s} (\tilde{u}_1^f(\cdot,t) -\tilde{u}_2^f(\cdot,t)) dt \right)}{\partial\Omega} \Big\Vert_{H^{\frac{1}{2}}(\partial\Omega)}\\
&\leq C \vert \log( \Vert \Lambda_s^{a_1} - \Lambda_s^{a_2} \Vert_{\widetilde{H}^s(W) \to H^{-s}(W)}) \vert^{-\beta} \Vert f \Vert_{\widetilde{H}^s(W)}
\end{align*}
and that
\begin{align*}
\Vert \Lambda_{1,\Omega}^{a_1}(g_1) - \Lambda_{1,\Omega}^{a_2}(g_2) \Vert_{H^{-\frac{1}{2}}(\partial\Omega)} &= \Big\Vert \restr{ \partial_\nu^a \left( \int_0^\infty t^{1-2s} (\tilde{u}_1^f(\cdot,t) -\tilde{u}_2^f(\cdot,t)) dt \right)}{\partial\Omega} \Big\Vert_{H^{-\frac{1}{2}}(\partial\Omega)}\\
&\leq C \vert \log( \Vert \Lambda_s^{a_1} - \Lambda_s^{a_2} \Vert_{\widetilde{H}^s(W) \to H^{-s}(W)}) \vert^{-\beta} \Vert f \Vert_{\widetilde{H}^s(W)}.
\end{align*}
In other words, if the difference of the two fractional Dirichlet-to-Neumann maps is small then also the difference of the Cauchy data for the local problem is small (accompanied with a logarithmic loss). We prove this by first reducing the integral defining $g_j$ to a finite height integral and then using quantitative unique continuation arguments. More precisely, we choose $L>0$ large enough and $h>0$ small enough, depending on the modulus of continuity that we seek to prove, such that it is guaranteed that
\begin{align*}
&\Vert \int_0^h t^{1-2s} (\tilde{u}_1^f(\cdot,t) - \tilde{u}_2^f(\cdot,t)) dt \Vert_{H^1(\Omega_1 \setminus \Omega)} + \Vert \int_L^\infty t^{1-2s} (\tilde{u}_1^f(\cdot,t) - \tilde{u}_2^f(\cdot,t)) dt \Vert_{H^1(\Omega_1 \setminus \Omega)}\\
&\hspace{25mm} \leq C \vert \log( \Vert \Lambda_s^{a_1} - \Lambda_s^{a_2} \Vert_{\widetilde{H}^s(W) \to H^{-s}(W)} ) \vert^{-\beta} \Vert f \Vert_{\widetilde{H}^s(W)}.
\end{align*}
In order to estimate the finite height integral $\Vert \int_h^L t^{1-2s} (\tilde{u}_1^f -\tilde{u}_2^f) dt \Vert_{H^1(\Omega_1 \setminus \Omega)}$, we first note that the smallness condition for the difference in the nonlocal problem corresponds to smallness in the boundary data for the Caffarelli-Silvestre type extension $\tilde{u}_1^f - \tilde{u}_2^f$ (on $W \times \{0\}$). Using a quantitative boundary-bulk-unique continuation argument (see Proposition \ref{prop:bbucp}) we transfer the smallness on $W \times \{0\}$ into the bulk $W \times \R_+$. Then, by a three-balls-inequality argument (see Proposition \ref{prop:3ballsinequ2}) we are able to propagate the smallness upwards in space and towards $(\Omega_1 \setminus \Omega) \times (h,L)$. With this approach we also derive that
\begin{align*}
\Big\Vert \int_h^L t^{1-2s} (\tilde{u}_1^f(\cdot,t) - \tilde{u}_2^f(\cdot,t)) dt \Big\Vert_{H^1(\Omega_1 \setminus \Omega)} \leq C \vert \log( \Vert \Lambda_s^{a_1} - \Lambda_s^{a_2} \Vert_{\widetilde{H}^s(W) \to H^{-s}(W)} ) \vert^{-\beta} \Vert f \Vert_{\widetilde{H}^s(W)}.
\end{align*}
In addition to estimating the different parts of the integral, we have to estimate a related inhomogeneous term. Such a contribution comes into play since we consider $\partial\Omega$ as the boundary of $\Omega_1 \setminus \Omega \subset \Omega_e$ and as outside of $\Omega$, the integrated quantity $\int_0^\infty t^{1-2s} \tilde{u}_j^f(\cdot,t) dt$ only satisfies an inhomogeneous elliptic equation.

The second main ingredient, the \emph{quantitative Runge approximation} (see Proposition \ref{prop:QRA}), then allows us to infer smallness for the difference of the full local Dirichlet-to-Neumann maps, not only for the specific boundary data of the form $\restr{\int_0^\infty t^{1-2s} \tilde{u}_j^f(\cdot,t) dt}{\partial\Omega}$. More precisely, it formally states that for any $g \in H^{\frac{1}{2}}(\partial\Omega_1)$, $v_{a_j}^g \in H^1(\Omega_1)$ solving \eqref{eq:Local_Calderon_equation} in $\Omega_1$, and for any $\varepsilon > 0$, there exists $f\in\widetilde{H}^s(W)$ such that for some $C>0$ and $\mu>0$
\begin{align*}
\Big\Vert v_{a_j}^g - \int_0^\infty t^{1-2s} \tilde{u}_j^f(\cdot,t) dt \Big\Vert_{H^1(\Omega)} \leq \varepsilon \Vert g \Vert_{H^{\frac{1}{2}}(\partial\Omega_1)}, \qquad \Vert f \Vert_{\widetilde{H}^s(W)} \leq C e^{C\varepsilon^{-\mu}} \Vert g \Vert_{H^{\frac{1}{2}}(\partial\Omega_1)}.
\end{align*}
In other words, the quantitative Runge approximation provides a quantitative version of a density result. Also here, in the ``cost" of $\Vert f \Vert_{\widetilde{H}^s(W)}$ (i.e. the second bound in the above estimate) a logarithmic loss occurs. The proof of this result, in particular, combines arguments from \cite{RS18a} and \cite{CGRU23}.

We infer the main result of this work, Theorem \ref{thm:quantitative_reduction_anisotropic}, as a consequence of the above described two main ingredients. Note that both ingredients are accompanied with a logarithmic loss, leading eventually to the double logarithmic loss in our quantitative reduction result.

\subsection{Comparison to the source-to-solution setting}

Let us briefly comment on the similarities and differences between the quantitative reduction in this work which is set in the whole space with exterior Dirichlet-to-Neumann data and the quantitative reduction argument in the source-to-solution Calderón problem analysed in \cite{BR25} which is set on a closed manifold. 

While both arguments strongly rely on quantitative unique continuation, the unboundedness of the domain and the fact that we consider exterior rather than source-to-solution data creates novel challenges which need to be overcome.
This is most directly visible in our second main ingredient, the quantitative Runge approximation. Indeed, this result is an essentially completely novel ingredient in the reduction process. In \cite{BR25} this step was not needed, the quantitative unique continuation argument was the only essential ingredient. The reason for this is that the set-up of the fractional and the local source-to-solution Calderón problem are essentially ``the same'' and the data are taken on the same measurement set. In our setting of the (exterior) Dirichlet-to-Neumann map Calderón problem, however, the measurement set and data are different (even of different dimensionalities). This raises the need of an additional density argument to guarantee the smallness for the full measurement set. Here careful approximation arguments become necessary.
While this certainly is the most prominent difficulty, also additional challenges arise in the quantitative unique continuation argument. Although the quantitative unique continuation property in our present article is also based on propagation of smallness arguments as in \cite{BR25}, the passage from source-to-solution to exterior data also here requires additional care, making the argument somewhat more involved. 
In particular, the combination of the two ingredients involve substantial additional difficulties compared to \cite{BR25} which eventually also result in an additional logarithmic loss compared to the compact manifold setting. To be more precise, the second logarithmic loss is due to the additional quantitative density argument, which was not needed in the compact manifold setting.

\subsection{Relation to the literature}

Building on the seminal article \cite{GSU20}, the study of fractional Calder\'on type problems has developed into an extremely active field in the past years. The by now available results are characterized by the presence of genuinely nonlocal properties. For instance, these are reflected in partial data uniqueness results at critical regularity \cite{GSU20, RS20}, optimal partial data stability results \cite{RS20, RS18, BCR25}, uniqueness of potentials in the presence of known background metrics \cite{GLX17} or single measurement results \cite{GRSU20, R21} for fractional Calder\'on problems in the fractional Schrödinger formulation. All of these results represent properties which are in this generality either not known or not true in the local setting of the classical Calder\'on problem. In particular, many of these results rely on the global unique continuation properties of the fractional Laplacian \cite{GSU20}, see also \cite{FF14, R15, Y17} for frequency and Carleman approaches to these. For further striking results on fractional Calder\'on problems we point to \cite{HL19,HL20} for monotonicity methods, to \cite{LL22} for results on nonlinear equations, to \cite{C20} for the presence of a Liouville reduction and to \cite{S17, R18, C24, LL25} as well as the references therein for surveys on these and further results.

In addition to the study of uniqueness, stability and reconstruction of lower order contributions, a particularly striking property of fractional Calder\'on type problems is the unique identifiability of the principal order contribution up to natural gauges. This study had been initiated in the seminal works \cite{F21,FGKU21} in the setting of closed manifolds with source-to-solution measurements. Contrary to the classical Calder\'on problem in which the unique recovery of the metric remains an outstanding problem, in the nonlocal setting, using nonlocality, it is possible to prove uniqueness up to diffeomorphisms fixing the measurement domain. This strategy has by now been extended to various related geometric nonlocal problems, including Dirac operators and connections \cite{GU21,C23}, settings involving potentials \cite{FKU24}, unbounded manifolds with source-to-solution measurements \cite{CO24}, unbounded manifolds with exterior measurements \cite{FGKRSU25} as well as models for parabolic operators \cite{LLU23}. Moreover, also matching ``boundary reconstruction results'' \cite{CR25} and Caffarelli-Silvestre type perspectives \cite{R23,LNZ24} have been obtained.

Due to the strikingly strong results for the fractional Calder\'on problem compared to the classical Calder\'on problem, from the very beginning of the study of these nonlocal inverse problems, it had been a fundamental question to relate these two nonlinear, elliptic inverse problems. First qualitative results on this relation were deduced in \cite{GU21,CGRU23} and \cite{R23}. In these results it is proved that the nonlocal measurement data for the fractional Calder\'on problem (either in the form of the Dirichlet-to-Neumann map or the source-to-solution map) uniquely determine the corresponding local measurement data for the local, classical Calder\'on problem. As a consequence, uniqueness results from the local setting could be transferred to the nonlocal context, e.g., in recovering conformal factors. In order to investigate this relation in more detail, in recent work \cite{BR25} corresponding quantitative properties had been analyzed in the setting of closed manifolds with source-to-solution data. These allowed one to not only transfer uniqueness but also stability results from the local to the nonlocal context and thus to obtain the first stability results of principal terms in the absence of Liouville transforms. In the present article, it is our objective to continue this quantitative investigation in the setting of unbounded manifolds with exterior data. Due to the mismatch of the dimensionalities and geometric settings between the local and nonlocal Calder\'on problems (one posed with data in the exterior domain, one with data on the boundary of an open, sufficiently regular subset) this poses additional challenges which we address in our analysis below.

\subsection{Outline of the article}

The remainder of this article is structured as follows. In Section \ref{sec:preliminaries} we collect some preliminary results and introduce relevant notation for our analysis. Section \ref{sec:quant_UCP} lays out the arguments of the first main ingredient, the quantitative unique continuation argument, in detail. The second main ingredient, the quantitative Runge approximation, is then discussed in Section \ref{sec:QRA}. In Section \ref{sec:proof_main_results}, we then finally combine the results of the previous two sections to deduce the main results of this work. For the convenience of the reader, the main body of our text is complemented by an appendix, in which we recall and shortly prove some rather well-known results related to the fractional Calderón problem.

\section{Preliminaries}\label{sec:preliminaries}

In this section we collect some preliminary results and notation needed for our analysis.

\subsection{Function spaces}\label{sec:prel_FunctionSpaces}

We start by introducing the relevant function spaces. Let $s\in\R$. We define the fractional Sobolev spaces $H^s(\R^n)$ by
\begin{align*}
H^s(\R^n) := \{ u \in \mathcal{S}'(\R^n): \ \Vert u \Vert_{H^s(\R^n)} := \Vert (1+\vert\cdot\vert^2)^{s/2} \mathcal{F}u (\cdot) \Vert_{L^2(\R^n)} < \infty \},
\end{align*}
where $\mathcal{S}'(\R^n)$ denotes the space of tempered distributions and $\mathcal{F}u$ denotes the Fourier-transform applied to $u$. The homogeneous fractional Sobolev space $\dot{H}^s(\R^n)$ we define by
\begin{align*}
\dot{H}^s(\R^n) := \mbox{completion of } C_{c}^{\infty}(\R^n) \mbox{ with respect to }  [u]_{\dot{H}^s(\R^n)} := \Vert \vert \cdot \vert^s \mathcal{F}u(\cdot) \Vert_{L^2(\R^n)}. 
\end{align*}
For an open, bounded, Lipschitz set $W$ we define the fractional Sobolev spaces $H^s(W)$ as the quotient space
\begin{align*}
H^s(W) := \{ \restr{u}{W}: \ u \in H^s(\R^n) \},
\end{align*}
to which we associate the quotient norm $\Vert u \Vert_{H^s(W)} := \inf \{ \Vert U \Vert_{H^s(\R^n)}: \ U \in H^s(\R^n), \  \restr{U}{W} = u \}$. Additionally, we define the spaces
\begin{align*}
\widetilde{H}^s(W) := \overline{C_c^\infty(W)}^{H^s(\R^n)}, \qquad H^s_{\overline{W}} := \{ u \in H^s(\R^n):\ \supp(u) \subset \overline{W} \}
\end{align*}
and we recall that $\widetilde{H}^s(W) = H^s_{\overline{W}}$ for Lipschitz sets $W$ and all $s\in\R$ (see \cite[Theorem 3.29]{McLean}) and the following duality relations
\begin{align*}
(\widetilde{H}^s(W))^* = H^{-s}(W) \qquad \text{and} \qquad (H^s(W))^* = \widetilde{H}^{-s}(W).
\end{align*}
While $\widetilde{H}^s(W)$ is, by definition, naturally endowed with the $H^s(\R^n)$ norm, in order to highlight the function space, we also write $\|\cdot\|_{\widetilde{H}^s(W)}$.

Furthermore, for $s\in(0,1)$ and $\widetilde{\Omega} \subset \R^{n+1}_+$ open, Lipschitz, we define the following weighted Lebesgue and Sobolev-spaces
\begin{align*}
L^2(\widetilde{\Omega}, x_{n+1}^{1-2s}) &:= \{ \tilde{u}: \widetilde{\Omega} \to \R : \ \Vert x_{n+1}^{\frac{1-2s}{2}} \tilde{u} \Vert_{L^2(\widetilde{\Omega})} < \infty \},\\
H^1(\widetilde{\Omega}, x_{n+1}^{1-2s}) &:= \{ \tilde{u}: \widetilde{\Omega} \to \R : \ \Vert x_{n+1}^{\frac{1-2s}{2}} \tilde{u} \Vert_{L^2(\widetilde{\Omega})} + \Vert x_{n+1}^{\frac{1-2s}{2}} \nabla \tilde{u} \Vert_{L^2(\widetilde{\Omega})} < \infty \},\\
\dot{H}^1(\widetilde{\Omega}, x_{n+1}^{1-2s}) &:= \{ \tilde{u}: \widetilde{\Omega} \to \R : \ \Vert x_{n+1}^{\frac{1-2s}{2}} \nabla \tilde{u} \Vert_{L^2(\widetilde{\Omega})} < \infty \}.
\end{align*}
We remark that the trace operator $T: H^1(\R^{n+1}_+, x_{n+1}^{1-2s}) \to H^s(\R^n)$ such that $\tilde{u}(\cdot, x_{n+1}) \to T\tilde{u}$ in $L^2(\R^n)$ as $x_{n+1} \to 0$ is well-defined (see for example Lemma 4.4 in \cite{RS20}).

For the proof of the quantitative Runge approximation we also need to introduce the following compact support spaces. For $\Omega \subset \R^n$ open, bounded, Lipschitz and $\Omega_e:= \R^n \setminus \overline{\Omega}$,
\begin{align*}
H^1_c(\R^{n+1}_+, x_{n+1}^{1-2s}) &:= \{ \tilde{v} \in H^1(\R^{n+1}_+, x_{n+1}^{1-2s}): \tilde{v} \text{ has compact support in } \overline{\R^{n+1}_+}\},\\
H^1_{c,0}(\R^{n+1}_+, x_{n+1}^{1-2s}) &:= \{ \tilde{v} \in H^1(\R^{n+1}_+,x_{n+1}^{1-2s}): \ \tilde{v} \text{ has compact support in } \overline{\R^{n+1}_+}, \ \restr{v}{\Omega_e \times \{0\}} = 0 \}.
\end{align*}

In the context of these spaces, let us recall two essential inequalities which we will apply at different points throughout our analysis. The first one is a weighted Sobolev embedding theorem and the second one is a weighted Gehring Lemma. We recall that our weight $w(x) := x_{n+1}^{1-2s}$ is an $A_2$-Muckenhoupt weight (see Chapter 7 in \cite{Grafakos2014}). In particular, $w$ is a doubling weight. For $E \subset \R^n$ we will use the notation $w(E) := \int_E w \ dx$.

With this notation, the following Sobolev embedding holds.

\begin{prop}[Theorem 1.2 in \cite{FKS82}]
Let $1<p<\infty$ and assume that $w$ is an $A_p$-Muckenhoupt weight. There exist constants $C>0$ and $\delta>0$ such that for all balls $B_R$, all $u \in C_0^\infty(B_R)$ and all $k \in (1,\frac{n}{n-1}+\delta)$, it holds
\begin{align*}
\left( \frac{1}{w(B_R)} \int_{B_R} \vert u \vert^{kp} w \ dx \right)^{\frac{1}{kp}} \leq CR \left( \frac{1}{w(B_R)} \int_{B_R} \vert \nabla u \vert^p w \ dx \right)^{\frac{1}{p}}.
\end{align*}
\end{prop}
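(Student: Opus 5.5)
The plan is the classical potential-theoretic route of Fabes--Kenig--Serapioni, combining a Hedberg-type pointwise estimate with the Muckenhoupt maximal theorem. Write $f := \vert \nabla u\vert$, extended by zero outside $B_R$. Since $u \in C_0^\infty(B_R)$, the standard representation formula gives
\begin{align*}
\vert u(x)\vert \leq C_n \int_{B_R} \frac{f(y)}{\vert x-y\vert^{n-1}}\, dy \leq C \sum_{j\geq 0,\ r_j = 2^{-j}\cdot 2R} r_j \fint_{B(x,r_j)} f\, dy, \qquad x\in B_R .
\end{align*}
Denote the normalized weighted gradient norm by
\begin{align*}
A := \Big(\frac{1}{w(B_R)}\int_{B_R} f^p w\, dx\Big)^{1/p}.
\end{align*}

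The first step bounds the dyadic sum in two ways. For radii $r\leq \delta$, I estimate $\fint_{B(x,r)} f \leq Mf(x)$, where $M$ is the Hardy--Littlewood maximal function. Summing the geometric series gives a contribution $\leq C\delta\, Mf(x)$.

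For radii $r>\delta$, I use the $A_p$ condition through Hölder's inequality:
\begin{align*}
\fint_B f \leq [w]_{A_p}^{1/p} \Big(\frac{1}{w(B)}\int_B f^p w\Big)^{1/p}.
\end{align*}
I combine this with the lower mass bound $w(B(x,r))\geq c\,(r/R)^{np}\, w(B_R)$ for $B(x,r)\subset B(x,4R)$, which follows from $A_p$ together with doubling. This yields $r\fint_{B(x,r)} f \leq C r (R/r)^{n} A$. Summing over $r>\delta$ gives a contribution $\leq C\delta^{1-n}R^nA$. Optimizing in $\delta$ (with the choice capped at $2R$) produces the Hedberg inequality
\begin{align*}
\vert u(x)\vert \leq C\,(Mf(x))^{1-\frac1n}\,(R^nA)^{\frac1n}.
\end{align*}

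The second step takes $k = \frac{n}{n-1}$. Raising the Hedberg inequality to the power $kp$ and integrating against $w$ gives
\begin{align*}
\int_{B_R} \vert u\vert^{kp} w \leq C (R^nA)^{\frac{p}{n-1}} \int (Mf)^p w \leq C (R^nA)^{\frac{p}{n-1}} \int_{B_R} f^p w .
\end{align*}
The last inequality is Muckenhoupt's theorem, i.e.\ $M$ is bounded on $L^p(w)$ for $w\in A_p$. Normalizing by $w(B_R)$ and bookkeeping the powers of $R$ leads to the stated inequality. The homogeneity $u\mapsto u(R\cdot)$ shows that only the factor $R$ can survive, which serves as a check on the exponents.

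To reach $k\in(\frac{n}{n-1},\frac{n}{n-1}+\delta)$, I use the openness of the Muckenhoupt classes: $w\in A_{p-\varepsilon}$ for some $\varepsilon>0$ depending only on $[w]_{A_p}$. This improves the mass bound to $w(B(x,r))\geq c(r/R)^{n(p-\varepsilon)}w(B_R)$. In the large-radius regime it replaces $(R/r)^n$ by $(R/r)^{n(p-\varepsilon)/p}$, so the Hedberg exponent $1-\frac1n$ on $Mf$ becomes smaller. Rerunning the optimization then allows an exponent $kp$ slightly above $\frac{np}{n-1}$ while still only needing $\int (Mf)^p w$. One interpolates Hölder-wise between the $L^\infty$-type large-scale bound and the $L^p(w)$ maximal bound.

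I expect this last bookkeeping to be the main obstacle: tracking exactly how $\varepsilon$ converts into the gain $\delta$, and keeping all constants dependent only on $n$, $p$ and $[w]_{A_p}$, uniformly in $R$.
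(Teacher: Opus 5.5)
Your proposal is correct. The paper gives no proof of this proposition; it is quoted from \cite{FKS82} and used as a black box, so there is no in-paper argument to compare with. Your Hedberg-type argument is a standard self-contained proof. It combines a pointwise interpolation between $Mf$ and an $A_p$-controlled tail, Muckenhoupt's maximal theorem, and the openness $w\in A_p\Rightarrow w\in A_{p-\varepsilon}$ for the gain beyond $\frac{n}{n-1}$. The individual steps check out:
\begin{itemize}
\item the $A_p$--H\"older bound for averages of $f$;
\item the mass bound $w(B(x,r))\ge c(r/R)^{np}w(B_R)$ for $x\in B_R$, $r\le 2R$;
\item the capped optimisation at $2R$;
\item the power count at $k=\frac{n}{n-1}$.
\end{itemize}

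The step you flag as the main obstacle is short, and no separate H\"older interpolation is needed: the pointwise bound already does it. With $w\in A_{p-\varepsilon}$, set $\theta:=n(p-\varepsilon)/p$. Shrink $\varepsilon$ if necessary so that $1<\theta<n$; this is allowed since $A_{p-\varepsilon}\subset A_{p-\varepsilon'}$ for $\varepsilon'<\varepsilon$ and $n\ge 2$. Your two regimes then give, for every cut-off radius $0<\rho\le 2R$,
\[
\vert u(x)\vert\le C\big(\rho\, Mf(x)+\rho^{1-\theta}R^{\theta}A\big).
\]
Optimising in $\rho$ (the capped case $\rho=2R$ is handled exactly as in your endpoint argument) gives
\[
\vert u(x)\vert\le C R\,A^{1/\theta}\,(Mf(x))^{1-1/\theta}.
\]
Now choose $kp:=p\theta/(\theta-1)$, which makes the power of $Mf$ exactly $p$. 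Muckenhoupt's theorem then yields
\[
\frac{1}{w(B_R)}\int_{B_R}\vert u\vert^{kp}w\,dx\le C R^{kp}A^{kp/\theta+p}=C(RA)^{kp}.
\]
So the admissible gain is $\frac{\theta}{\theta-1}-\frac{n}{n-1}>0$. It depends only on $n$, $p$ and $[w]_{A_p}$ through $\varepsilon$, uniformly in the ball.

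Two small points for the write-up:
\begin{itemize}
\item The range $1<k<\frac{n}{n-1}$, which the statement also covers, follows from the endpoint case by Jensen's inequality for the probability measure $w\,dx/w(B_R)$. You should say so explicitly.
\item Rename your cut-off radius, since $\delta$ already denotes the exponent gain in the statement.
\end{itemize}
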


Moreover, also Gehring's lemma is valid in this weighted context.

\begin{lem}[Theorem 1.5 in \cite{Kin94}]\label{lem:Gehring}
Suppose that $w$ is a doubling weight and let $f \in L^p_{\text{loc}}(\Omega,w)$ for some $1<p<\infty$. Assume that $f$ satisfies for each cube $Q$ with $2Q \subset \Omega$
\begin{align*}
\left( \frac{1}{w(Q)} \int_Q \vert f \vert^p w \ dx \right)^{\frac{1}{p}} \leq C_1 \frac{1}{w(2Q)} \int_{2Q} \vert f \vert w \ dx
\end{align*}
for some $C_1>1$ independent of the cube $Q$. Then there exists $q>p$ such that
\begin{align*}
\left( \frac{1}{w(Q)} \int_Q \vert f \vert^q w \ dx \right)^{\frac{1}{q}} \leq C_2 \left( \frac{1}{w(2Q)} \int_{2Q} \vert f \vert^p w \ dx \right)^{\frac{1}{p}},
\end{align*}
where the constant $C_2$ is independent of the cube $Q$. In particular, $f \in L^q_{\text{loc}}(\Omega,w)$.
\end{lem}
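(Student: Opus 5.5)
The plan is to follow the classical Gehring argument, replacing Lebesgue measure by the doubling measure $d\mu := w\,dx$. By the doubling property, all Calder\'on--Zygmund-type covering arguments go through with constants depending only on the doubling constant. Fix a cube $Q_0$ with $2Q_0 \subset \Omega$. By scaling, normalize so that $\left(\frac{1}{\mu(2Q_0)}\int_{2Q_0} |f|^p\,d\mu\right)^{1/p} = 1$. Introduce $g := |f|^p$, so that the hypothesis becomes the reverse H\"older inequality
\[
\frac{1}{\mu(Q)}\int_Q g\,d\mu \leq C_1^p \left(\frac{1}{\mu(2Q)}\int_{2Q} g^{1/p}\,d\mu\right)^{p}
\]
for all admissible cubes $Q$.

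\textbf{Step 1 (good-$\lambda$ / level-set inequality).} For $t$ larger than a fixed multiple of the mean of $g$ over $2Q_0$, perform a Calder\'on--Zygmund stopping-time decomposition of $g$ relative to $\mu$. This uses dyadic subcubes of $Q_0$, or a localized maximal function $M_\mu$ on $Q_0$ together with a Vitali/Besicovitch-type covering valid for doubling measures. The output is a family of disjoint cubes $Q_i$ such that $g \leq t$ $\mu$-a.e.\ outside $\bigcup Q_i$, and such that $t < \frac{1}{\mu(Q_i)}\int_{Q_i} g\,d\mu \leq C_d\, t$.

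On each $Q_i$, apply the reverse H\"older hypothesis. Split $g^{1/p}$ on $2Q_i$ according to whether $g^{1/p} \leq \varepsilon t^{1/p}$ or not, and use the doubling property to compare $\mu(2Q_i)$ with $\mu(Q_i)$. Summing over $i$ yields, for suitable constants $C$ and $\varepsilon \in (0,1)$,
\[
\int_{Q_0\cap\{g>t\}} g\,d\mu \leq C\, t^{1-1/p}\int_{2Q_0\cap\{g>\varepsilon t\}} g^{1/p}\,d\mu .
\]
The subtle point is to keep all enlarged cubes $2Q_i$ inside $2Q_0$. To handle this, I would work with a Whitney-type family of cubes and a weighted distance $\dist(x,\partial(2Q_0))$, as in the Giaquinta--Modica variant of Gehring's lemma.

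\textbf{Step 2 (integration in $t$).} Multiply the level-set inequality by $t^{\delta-1}$ for small $\delta>0$ and integrate over $t \in (t_0,\infty)$. To avoid a priori infinite quantities, first replace $g$ by the truncation $\min(g,k)$, check that the level-set inequality persists for it, and let $k\to\infty$ at the end. Fubini gives
\[
\int_{Q_0} g^{1+\delta}\,d\mu \lesssim t_0^{\delta}\int_{2Q_0} g\,d\mu + \frac{C\delta}{\delta + 1 - 1/p}\,\varepsilon^{-\delta-1+1/p}\int_{2Q_0} g^{1+\delta}\,d\mu .
\]
For $\delta$ small, the coefficient in front of the last term is less than $\tfrac12$. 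Because the domains differ ($Q_0$ on the left, $2Q_0$ on the right), one cannot simply absorb the last term. Instead, I would use the standard localization: apply the estimate on all subcubes, weighted by powers of the distance to the boundary of $2Q_0$, so that the weighted integrals on both sides coincide. Absorption then gives $\int_{Q_0} g^{1+\delta}\,d\mu \leq C\,\mu(2Q_0)$ in the normalized setting. Undoing the normalization yields the claim with $q = p(1+\delta)$.

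\textbf{Main obstacle.} I expect the main difficulty to be this localization and absorption step, because the hypothesis couples $Q$ with $2Q$. Getting the stopping-time cubes and their doubles to fit inside the fixed domain, while keeping the constants uniform via doubling, is exactly where care is needed. The remaining steps are routine once the doubling measure replaces Lebesgue measure.
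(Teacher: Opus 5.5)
The paper does not prove this lemma; it is quoted from \cite{Kin94}, so your sketch has to stand on its own. Its architecture (Gehring's level-set argument for $d\mu = w\,dx$, integration in $t$, then localization) is the standard one, and the splitting and Fubini computations are fine. However, the summation in Step~1 is not justified as written. You apply the hypothesis on each stopping cube $Q_i$, whose right-hand side lives on $2Q_i$, and then sum over $i$. For dyadic Calder\'on--Zygmund cubes the $Q_i$ are disjoint, but disjointness of dyadic cubes gives no control on the overlap of their doubles. For example, in one dimension the non-overlapping dyadic intervals $[\frac12,1]$, $[0,\frac14]$, $[\frac38,\frac12]$, $[\frac14,\frac{5}{16}]$, $[\frac{11}{32},\frac38],\dots$ all have doubles containing $\frac13$. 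So nothing in your argument bounds $\sum_i\int_{2Q_i\cap\{g>\varepsilon t\}}g^{1/p}\,d\mu$ by $C\int_{2Q_0\cap\{g>\varepsilon t\}}g^{1/p}\,d\mu$. On the other hand, the point you single out as subtle is harmless: a dyadic $Q_i\subsetneq Q_0$ has side at most $\ell(Q_0)/2$, hence $2Q_i\subset\frac32 Q_0\subset 2Q_0$, and the hypothesis applies to every $Q_i$.

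The repair is to stop on centred cubes and make the \emph{doubled} cubes disjoint or boundedly overlapping. Write $Q(x,r)$ for the cube with centre $x$ and half-side $r$, and let $C_d$ be the doubling constant.

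Take $t> C(C_d)\,\mu(2Q_0)^{-1}\int_{2Q_0}g\,d\mu$. For each $\mu$-Lebesgue point $x\in Q_0$ with $g(x)>t$, let $r_x$ be the largest $r\le\ell(Q_0)/2$ with $\mu(Q(x,r))^{-1}\int_{Q(x,r)}g\,d\mu\ge t$. This exists because $r\mapsto\int_{Q(x,r)}g\,d\mu$ is continuous ($w\,dx$ does not charge $\partial Q(x,r)$). The size of $t$ forces $10r_x<\ell(Q_0)/2$, so the means of $g$ over $Q(x,2r_x)$ and $Q(x,10r_x)$ are $<t$.

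Now apply Besicovitch to $\{Q(x,2r_x)\}$, or Vitali to obtain disjoint $Q(x_i,2r_i)$ whose dilates $Q(x_i,10r_i)$ cover the level set. Then $\int_{Q_0\cap\{g>t\}}g\,d\mu\le Ct\sum_i\mu(Q(x_i,2r_i))$. The hypothesis on $Q(x_i,r_i)$ together with your splitting gives $t\,\mu(Q(x_i,2r_i))\le 2C_1t^{1-1/p}\int_{Q(x_i,2r_i)\cap\{g>\varepsilon t\}}g^{1/p}\,d\mu$ with $\varepsilon=(2C_1)^{-p}$. Now the sum over $i$ is legitimate.

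Two smaller points.
\begin{itemize}
\item The level-set inequality does not simply pass to $\min(g,k)$: truncation lowers its right-hand side, and it is unclear that the hypothesis survives. Instead, integrate the inequality for $g$ itself over $t\in(t_0,k)$. This gives your Step~2 estimate with $g^{1+\delta}$ replaced by the a priori finite $g\min(g,k)^{\delta}$ on both sides.
\item In the localization, the weighted integrals on the two sides do not coincide; they are only comparable, up to the overlap constant of the enlarged Whitney cubes. This is fine because the absorbed coefficient tends to $0$ as $\delta\to0$. You do need the power of the distance to be at least $\delta\log_2 C_d$, which controls the $t_0^\delta$-terms via $\mu(P)\gtrsim(\ell(P)/\ell(Q_0))^{\log_2 C_d}\mu(2Q_0)$.
\end{itemize}
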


\subsection{Caffarelli-Silvestre type extension}\label{sec:prel_CSExtension}

It was proved in \cite{CS07} (for the constant coefficient case) and in \cite{ST10} (for the variable coefficient case) that the nonlocal elliptic operator $(-\nabla'\cdot a \nabla')^s$ can be interpreted as a local operator at the cost of adding one dimension. More precisely, for $u \in C_c^{\infty}(\R^n)$ (and by density and a-priori estimates also for $u \in H^s(\R^n)$), the operator $(-\nabla'\cdot a \nabla')^s$ can be realized as
\begin{align*}
(-\nabla'\cdot a \nabla')^s u = -c_{s} \lim_{x_{n+1}\to0} x_{n+1}^{1-2s} \partial_{n+1} \tilde{u} (\cdot,x_{n+1}),
\end{align*}
where $\tilde{u} \in \dot{H}^1(\R^{n+1}_+,x_{n+1}^{1-2s})$ is the unique weak solution to
\begin{equation*}
\begin{cases}
\begin{alignedat}{2}
-\nabla \cdot x_{n+1}^{1-2s} \tilde{a} \nabla \tilde{u} &= 0 \quad &&\text{in } \R^{n+1}_+,\\
\tilde{u} &= u \quad &&\text{on } \R^n \times \{0\}.
\end{alignedat}
\end{cases}
\end{equation*}
Here, $\tilde{a} : \R^{n+1}_+ \rightarrow \R^{(n+1) \times (n+1)}_{sym}$ is given as $\tilde{a} = \begin{pmatrix} a & 0\\ 0 & 1 \end{pmatrix}$. We call $\tilde{u}$ the Caffarelli-Silvestre type extension of $u$. If now $u \in H^s(\R^n)$ is the unique solution to
\begin{equation*}
\begin{cases}
\begin{alignedat}{2}
(-\nabla'\cdot a \nabla')^s u &= 0 \quad &&\text{in } \Omega,\\
u &= f \quad &&\text{in } \Omega_e,
\end{alignedat}
\end{cases}
\end{equation*}
for $f\in\widetilde{H}^s(W)$, then using the Caffarelli-Silvestre type extension from above, $u$ is given as $u = \tilde{u}(\cdot,0)$, where $\tilde{u} \in \dot{H}^1(\R^{n+1}_+,x_{n+1}^{1-2s})$ is a solution to the mixed Dirichlet-Neumann problem
\begin{equation*}
\begin{cases}
\begin{alignedat}{2}
-\nabla \cdot x_{n+1}^{1-2s} \tilde{a} \nabla \tilde{u} &= 0 \quad &&\text{in } \R^{n+1}_+,\\
-c_{s} \lim_{x_{n+1}\to0} x_{n+1}^{1-2s} \partial_{n+1} \tilde{u} &= 0 \quad &&\text{on } \Omega\times\{0\},\\
\tilde{u} &= f \quad &&\text{on } \Omega_e \times \{0\}.
\end{alignedat}
\end{cases}
\end{equation*}

\subsection{Additional notation}

Lastly, we collect further notation used in this article. We denote by $\Omega_e$ the complement of $\Omega$, i.e. $\Omega_e := \R^n \setminus \overline{\Omega}$. We define $\R^{n+1}_+ := \{x=(x',x_{n+1}) \in \R^{n+1}: \ x_{n+1} >0\}$. The notation $x \in \R^{n+1}$ vs. $x' \in \R^n$ will be used throughout our article. In particular, we will write $\nabla$ to denote the gradient with respect to all $n+1$ variables, and $\nabla'$ to denote the tangential gradient, i.e. the gradient with respect to the first $n$ variables. Similarly, writing $B_r(x)$ we mean the $(n+1)$-dimensional ball with radius $r$ centered at $x$ and by writing $B_r'(x')$ we refer to the $n$-dimensional ball with radius $r$ centered at $x'$. For $x = (x',0)$ we denote by $B_r^+(x)$ the $(n+1)$-dimensional half ball centered at $x$, i.e. $B_r^+(x) := B_r(x) \cap \R^{n+1}_+$. If $x=0$ or $x'=0$, we occasionally also drop the center of the ball and write $B_r = B_r(0)$ and $B_r' = B_{r}'(0)$, respectively.

The normal derivative  with respect to the metric $a$ of a function $v$, $\partial_\nu^a v$, is defined by $\partial_\nu^a v := \nu \cdot a \nabla' v$, where $\nu$ is the outward unit normal at $\partial\Omega$. By $\tilde{a} \in L^\infty(\R^{n+1}_+, \R^{(n+1) \times (n+1)}_{\text{sym}})$ we denote the extension of $a \in L^\infty(\R^n, \R^{n \times n}_{\text{sym}})$ given by $\tilde{a}(x) = \begin{pmatrix} a(x') & 0\\ 0 & 1 \end{pmatrix}$.

Moreover, given a normed vector space $Y$, we denote by $Y^*$ the continuous dual space of $Y$. We write $\langle \cdot, \cdot \rangle_{Y^*,Y}$ for the dual pairing of $Y^*$ with $Y$. 
Additionally, for a given set $E$ we denote its characteristic function by $\chi_E(x) := \begin{cases} 1 \ \text{ for } x \in E,\\ 0 \ \text{ for } x \not\in E. \end{cases}$

For $A,B \geq 0$, we will write $A \lesssim B$ if there exists a constant $C>0$ such that $A \leq CB$, and we will write $A \sim B$, if $A \lesssim B$ and $B \lesssim A$.

In what follows, we will often use $C$ and $p_h$, $p_L$, $p_\delta$, $p_\omega$, $p$ as generic constants and exponents, respectively, meaning that they only depend on stated quantities but their precise value may change from appearance to appearance.

\section{Quantitative unique continuation}\label{sec:quant_UCP}

In this section we will prove the first important ingredient for deriving our main result. It is a quantitative unique continuation result for the Caffarelli-Silvestre type perspective of the fractional Calderón problem.

\begin{prop}\label{prop:QUCP_1}
Let $\Omega \subset \R^n$ be open, non-empty, bounded and Lipschitz such that $\R^n \setminus \Omega$ is connected and let $W \subset \Omega_e$ be open, bounded, non-empty and Lipschitz such that $\overline{\Omega} \cap \overline{W} = \emptyset$. Let $\theta_1\in(0,1)$, $\theta_2>0$, $\theta_3>0$ and let $\Omega'\Subset\Omega$ be open, Lipschitz. Let $a_1, a_2 \in L^\infty(\R^n, \R^{n \times n}_{\text{sym}}) \cap \mathcal{A}(\theta_1, \theta_2, \theta_3, \Omega')$. Let $\Lambda_s^{a_j}$, $j\in\{1,2\}$, be the fractional Dirichlet-to-Neumann maps as in \eqref{eq:Fractional_DN_map}.

Let $f \in \widetilde{H}^s(W)$ and let $\tilde{u}_j^f \in \dot{H}^1(\R^{n+1}_+, x_{n+1}^{1-2s})$, $j\in\{1,2\}$, satisfy
\begin{equation*}
\begin{cases}
\begin{alignedat}{2}
-\nabla\cdot x_{n+1}^{1-2s} \tilde{a}_j \nabla \tilde{u}_j^f &= 0 \quad &&\text{in } \R^{n+1}_+,\\
-c_s \lim_{x_{n+1}\to0} x_{n+1}^{1-2s} \partial_{n+1} \tilde{u}_j^f &= 0 \quad &&\text{on } \Omega \times \{0\},\\
\tilde{u}_j^{f} &= f \quad &&\text{on } \Omega_e \times \{0\}.
\end{alignedat}
\end{cases}
\end{equation*}
Then there exist constants $C>0$ and $\beta>0$ such that, if for some $\varepsilon \in (0,\frac{1}{2})$
\begin{align*}
\Vert \Lambda_s^{a_1} - \Lambda_s^{a_2} \Vert_{\widetilde{H}^s(W) \to H^{-s}(W)} \leq \varepsilon,
\end{align*}
then it holds that
\begin{align}\label{eq:UCP_final}
\begin{split}
&\left\Vert \int_0^\infty t^{1-2s} \left( \tilde{u}_1^{f} (\cdot,t) - \tilde{u}_2^{f} (\cdot,t) \right) dt \right\Vert_{H^{\frac{1}{2}}(\partial\Omega)} + \left\Vert \partial_\nu^a \left( \int_0^\infty t^{1-2s} \left( \tilde{u}_1^{f} (\cdot,t) - \tilde{u}_2^{f} (\cdot,t) \right) dt \right) \right\Vert_{H^{-\frac{1}{2}}(\partial\Omega)}\\
&\hspace{3cm} \leq C \vert\log(\varepsilon)\vert^{-\beta} \Vert f \Vert_{\widetilde{H}^s(W)}.
\end{split}
\end{align}
Here, $C$ and $\beta$ only depend on $n$, $s$, $\Omega$, $\Omega'$, $W$, $\theta_1$, $\theta_2$, and $\theta_3$.
\end{prop}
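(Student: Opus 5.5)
Set $\tilde w := \tilde u_1^f - \tilde u_2^f$ and $v := \int_0^\infty t^{1-2s}\tilde w(\cdot,t)\,dt$. Since $a_1 = a_2 = \Id$ in $\Omega_e$, $\tilde w$ solves the constant coefficient extension equation $-\nabla\cdot x_{n+1}^{1-2s}\nabla \tilde w = 0$ in $\Omega_e\times\R_+$. It has vanishing Dirichlet data on $\Omega_e\times\{0\}$, and on $W\times\{0\}$ its weighted Neumann data equals $(\Lambda_s^{a_1}-\Lambda_s^{a_2})f$, which is bounded in $H^{-s}(W)$ by $\varepsilon\Vert f\Vert_{\widetilde H^s(W)}$. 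The plan is to fix $\Omega_1\Supset\Omega$ with $\Omega_1\setminus\overline{\Omega}\subset\Omega_e$. By trace estimates on the Lipschitz domain $\Omega_1\setminus\overline\Omega$, it then suffices to bound $\Vert v\Vert_{H^1(\Omega_1\setminus\Omega)}$ and $\Vert \nabla'\cdot\nabla' v\Vert_{L^2(\Omega_1\setminus\Omega)}$, since the conormal derivative of $v$ on $\partial\Omega$ is controlled in $H^{-1/2}$ by the $H^1$-norm together with the $L^2$-norm of the divergence.

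Integrating the equation against $t^{1-2s}\cdot t^{0}$ formally gives $-\Delta' v = \lim_{t\to0}t^{1-2s}\partial_t\tilde w$ on $\Omega_e$, and this vanishes away from $\overline W$ after the boundary contributions are taken into account. This is the inhomogeneous term mentioned in the outline. I will therefore place $\Omega_1\setminus\Omega$ away from $\overline W$ (shrinking $\Omega_1$ if needed, since $\overline\Omega\cap\overline W=\emptyset$), or otherwise bound this term directly by $\varepsilon\Vert f\Vert$.

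I then split $v = \int_0^h + \int_h^L + \int_L^\infty$. For the small heights I use the global energy bound $\Vert x_{n+1}^{(1-2s)/2}\nabla\tilde u_j^f\Vert_{L^2}\lesssim\Vert f\Vert_{\widetilde H^s(W)}$, uniform under (A1), together with interior/boundary regularity near $(\Omega_1\setminus\Omega)\times\{0\}$, where $\tilde w$ has zero Dirichlet data. These give pointwise-in-$t$ bounds $\Vert\tilde w(\cdot,t)\Vert_{H^1(\Omega_1\setminus\Omega)}\lesssim \Vert f\Vert$, hence a contribution $\lesssim h^{2-2s}\Vert f\Vert$. Near $\partial\Omega$ the boundary regularity needs the $C^1$ bound (A4) on $a_j$ in $\Omega\setminus\Omega'$. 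For the large heights I use the heat kernel bounds from Section~\ref{sec:heat}, available via (A2), which give Poisson kernel decay. This yields $\Vert \tilde u_j^f(\cdot,t)\Vert_{H^1(\Omega_1)}\lesssim t^{-n}\Vert f\Vert$ or a similar power law, integrable against $t^{1-2s}$ since $n\ge3$, so the tail is $\lesssim L^{-\gamma}\Vert f\Vert$.

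For the middle part, I apply the boundary-bulk quantitative unique continuation (Proposition~\ref{prop:bbucp}) to $\tilde w$ on $W\times\{0\}$. The Cauchy data there are $(0,O(\varepsilon))$ with an a priori bound $\Vert f\Vert$, and this gives $\Vert\tilde w\Vert_{L^2(W'\times(h_0,2h_0))}\lesssim \varepsilon^{\alpha}\Vert f\Vert$ for some $W'\Subset W$. Next I propagate this smallness with the three-balls inequality (Proposition~\ref{prop:3ballsinequ2}) along a chain of balls in the connected set $\Omega_e\times\R_+$ reaching $(\Omega_1\setminus\Omega)\times(h,L)$. The number of balls grows like $L/h$ (balls near height $h$ must have radius $\sim h$), so I expect a bound of the form $\Vert \tilde w\Vert_{H^1((\Omega_1\setminus\Omega)\times(h,L))}\lesssim e^{C(L/h)^{p}}\varepsilon^{\alpha^{N}}\Vert f\Vert$, possibly with $N\sim \log(L/h)$ when a dyadic ladder is used. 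Here $H^1$ comes from $L^2$ via Caccioppoli, and integrating in $t$ gives the $H^1$-bound on the middle part of $v$.

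Finally, choose $h=|\log\varepsilon|^{-c}$ and $L=|\log\varepsilon|^{c}$ with $c$ small enough that the propagation factor is still $\lesssim \varepsilon^{\alpha'}$. The three contributions then sum to $C|\log\varepsilon|^{-\beta}\Vert f\Vert$. I expect the main obstacle to be this propagation step: the exponents must depend only polynomially on $L/h$, which requires a careful dyadic chain of balls approaching the boundary $\{x_{n+1}=0\}$ and uniform constants in the degenerate weight. I will also have to handle the inhomogeneous term rigorously. In particular, I must justify that the conormal derivative on $\partial\Omega$ can be taken with $a=\Id$ from the exterior side, which matches the $a$-conormal derivative from inside because $a_j=\Id$ in $\Omega_e$.
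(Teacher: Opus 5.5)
Your plan has a genuine gap at the inhomogeneous term. The claim that $\lim_{t\to0}t^{1-2s}\partial_t\tilde w$ vanishes on $\Omega_e\setminus\overline W$, or is $O(\varepsilon\Vert f\Vert_{\widetilde H^s(W)})$, is false. On $\Omega_e\times\{0\}$ only the \emph{Dirichlet} data of $\tilde w$ vanish; the Neumann condition is imposed on $\Omega\times\{0\}$. The weighted Neumann data of $\tilde w$ on $\Omega_e$ equal, up to $c_s$, $(-\nabla'\cdot a_1\nabla')^s u_1-(-\nabla'\cdot a_2\nabla')^s u_2$, which by nonlocality is nonzero on all of $\Omega_e$, so shrinking $\Omega_1$ does not help. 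Since $\Delta' v$ is a multiple of this term on $\Omega_1\setminus\Omega$, it enters the $H^{-1/2}(\partial\Omega)$ bound for $\partial_\nu^a v$. It is a priori only in $H^{-s}$ (it need not be in $L^2$ near $\partial\Omega$), so the right quantity is its $H^{-1}(\Omega_1\setminus\Omega)$-norm, and its smallness is accessible only through unique continuation. The paper proves this in Proposition \ref{prop:inhomogeneous_term}. On a thin annulus $\Omega_{+\delta}\setminus\Omega$ it uses $\Vert F\Vert_{H^{-1}(D_\delta)}\lesssim|D_\delta|^{p_\delta}\Vert F\Vert_{H^{-s}(D_\delta)}$. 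On $\Omega_1\setminus\Omega_{+\delta}$ it bounds the $H^{-s}$-norm by the weighted energy of $\tilde w$ near $\{x_{n+1}=0\}$. It controls the layer $(0,h)$ by a weighted Gehring estimate (after odd reflection, using the zero Dirichlet data) plus Hölder, and the layer $(h,1)$ by propagation of smallness. Finally it balances $h$ and $\delta$.

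The second gap is at $\partial\Omega$. Proposition \ref{prop:3ballsinequ2} requires $B_{4r}\subset\Omega_e\times\R_+$, since in $\Omega\times\R_+$ the difference $\tilde w$ solves no homogeneous equation ($a_1\neq a_2$ there), and its constant blows up as $r\to0$. So no chain of balls reaches $\partial\Omega\times(h,L)$, whereas both traces on $\partial\Omega$ need control of $v$ up to $\partial\Omega$. You must propagate only to $(\Omega_1\setminus\Omega_{+\delta})\times(h,L)$, with chains of length $O(\log L+\log h^{-1}+\log\delta^{-1})$; a length $\sim L/h$ would make $\varepsilon^{\alpha^N}\to1$ for your choice of $h,L$. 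You must also supply a separate a priori bound on $(\Omega_{+\delta}\setminus\Omega)\times(h,L)$ that is small because the annulus is thin. This is where (A4) genuinely enters. Caccioppoli applied to $\partial_i\tilde u_j$ gives second-derivative bounds polynomial in $h^{-1}$ and $L$, and Hölder with the small volume yields a factor $\delta^{p_\delta}$ (Proposition \ref{prop:estimate_thin_annuli}); $\delta$ is then chosen as a further power of $|\log\varepsilon|^{-1}$.

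Your use of (A4) for pointwise-in-$t$ $H^1$ bounds on the lower integral is misplaced. Such bounds are in general not uniform as $t\to0$ near the edge $\partial\Omega\times\{0\}$. They are also unnecessary, because Hölder in $t$ against the global weighted energy already gives $h^{1-s}\Vert f\Vert_{\widetilde H^s(W)}$ (Lemma \ref{lem:apriori_fract_cond}). The remaining steps agree with the paper: the heat-kernel tail bound, the boundary-bulk estimate, and $h,L$ chosen as powers of $|\log\varepsilon|$.
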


To prove this result, we will argue in several steps. First, we will split the integrals on the left hand side of \eqref{eq:UCP_final} into an upper, a lower and an intermediate integral. For each of these parts we will infer a corresponding stability estimate. We note that the mechanism in the proof for the upper and lower integrals is essentially different from the mechanism in the proof of the intermediate integral. The first one relies on the solution representation for the Caffarelli-Silvestre type extension equation via the Poisson-kernel, whereas the latter one works on the level of the equation and uses rather robust propagation of smallness arguments. It is the latter part of the argument which requires the most careful arguments.

For abbreviation, let us now write $v_{a_j}^{f} := \int_0^\infty t^{1-2s} \tilde{u}_j^f (\cdot,t) dt$ and let $\Omega_1 \Supset \Omega$. In addition to the above splitting of the integral, we will also need to estimate an inhomogeneous term which arises in $\Omega_1 \setminus \Omega$. Indeed, since by Theorem 3 in \cite{CGRU23} the function $v_{a_1}^f - v_{a_2}^f$ solves in $\Omega_1 \setminus \Omega$ (recall that $a_1 = a_2 = a$ in $\Omega_e$)
\begin{align*}
\nabla'\cdot a \nabla' (v_{a_1}^f - v_{a_2}^f) = (-\nabla'\cdot a_1 \nabla')^s u_1^f - (-\nabla'\cdot a_2 \nabla')^s u_2^f = -c_s \lim_{x_{n+1}\to0} x_{n+1}^{1-2s} \partial_{n+1} (\tilde{u}_1^f - \tilde{u}_2^f),
\end{align*}
in order to estimate $\Vert \partial_\nu^a (v_{a_1}^f - v_{a_2}^f) \Vert_{H^{-\frac{1}{2}}(\partial(\Omega_1 \setminus \Omega))}$ we also need to estimate the inhomogeneous term $\Vert \lim_{x_{n+1}\to0} x_{n+1}^{1-2s} \partial_{n+1} (\tilde{u}_1^f - \tilde{u}_2^f) \Vert_{H^{-1}(\Omega_1\setminus\Omega)}$.

\begin{rmk*}
Note that for Proposition \ref{prop:QUCP_1} we do not assume that the metrics $a_j$ are isotropic. In particular, the quantitative unique continuation result is the same for isotropic and anisotropic conductivities. The stronger assumption (A4') for the anisotropic than the assumption (A4'') for the isotropic setting is due to the application of the second main ingredient, the quantitative Runge approximation, Proposition \ref{prop:QRA}.

Let us also remark that, as one can see from the proof, the result of Proposition \ref{prop:QUCP_1} can also be stated as a single measurement result. More precisely, it would suffice to assume that for some non-trivial $f \in \widetilde{H}^s(W)$
\begin{align*}
\Vert (\Lambda_s^{a_1} - \Lambda_s^{a_2}) f \Vert_{H^{-s}(W)} \leq \varepsilon \Vert f \Vert_{\widetilde{H}^s(W)}
\end{align*}
to arrive at an analogous conclusion. In order to more easily combine this result with the second main ingredient, the quantitative Runge approximation, we opted to state it in the above infinite measurement form.

Moreover, in Proposition \ref{prop:QUCP_1} it would suffice to assume that $n\geq2$, instead of $n\geq3$.
\end{rmk*}

In the following subsections, we now begin to estimate the individual ingredients which eventually lead to the proof of Proposition \ref{prop:QUCP_1}.

\subsection{Estimates for the upper and lower integrals}\label{sec:heat}

We start by estimating the upper and lower integrals in the decomposition of the left hand side of \eqref{eq:UCP_final}. Here, for $f \in \widetilde{H}^s(W)$ let $u \in H^s(\R^n)$ be a solution to the fractional conductivity equation
\begin{equation}\label{eq:fract_cond}
\begin{cases}
\begin{alignedat}{2}
(- \nabla' \cdot a \nabla')^s u &= 0 \quad &&\text{in } \Omega,\\
u &= f \quad &&\text{in } \Omega_e := \R^n \setminus \overline{\Omega},
\end{alignedat}
\end{cases}
\end{equation}
and let $\tilde{u}^f \in \dot{H}^1(\R^{n+1}_+, x_{n+1}^{1-2s})$ be the solution to the Caffarellli-Silvestre extension formulation of \eqref{eq:fract_cond}
\begin{equation}\label{eq:CS_fract_cond}
\begin{cases}
\begin{alignedat}{2}
-\nabla \cdot x_{n+1}^{1-2s} \tilde{a} \nabla\tilde{u}^{f} &= 0 \quad &&\text{in } \R^{n+1}_+,\\
-c_s \lim_{x_{n+1} \to 0} x_{n+1}^{1-2s} \partial_{n+1} \tilde{u}^{f} &= 0 \quad &&\text{on } \Omega\times\{0\},\\
\tilde{u}^{f} &= f \quad &&\text{on } \Omega_e \times \{0\}.
\end{alignedat}
\end{cases}
\end{equation}
Under these conditions the following decay properties in the vertical direction hold.

\begin{lem}\label{lem:apriori_decay}
Let $\Omega \subset \R^n$ be open, non-empty, bounded and Lipschitz and let $W \subset \Omega_e$ be open, bounded, non-empty and Lipschitz. Let $\theta_1\in(0,1)$, $\theta_2>0$. Let $a \in L^\infty(\R^n, \R^{n \times n}_{\text{sym}})$ satisfy the assumptions (A1) and (A2) with the given $\theta_1$ and $\theta_2$, respectively. Let $f \in \widetilde{H}^{s}(W)$, let $u \in H^s(\R^n)$ be the solution to \eqref{eq:fract_cond} with Dirichlet data $f$, and let $\tilde{u}^f \in \dot{H}^1(\R^{n+1}_+, x_{n+1}^{1-2s})$ be the Caffarelli-Silvestre type extension of $u$. In particular, $\tilde{u}^f$ is a solution to \eqref{eq:CS_fract_cond} with boundary data $f$.

Then, there exist constants $C_0,C_1 > 0$ such that for all $x_{n+1} \geq 1$ and $x' \in \R^n$ it holds that
\begin{align*}
\vert \tilde{u}^f(x',x_{n+1}) \vert \leq C_0 x_{n+1}^{-n} \Vert u \Vert_{L^1(\R^n)} \quad \text{and} \quad \vert \nabla' \tilde{u}^f(x',x_{n+1}) \vert \leq C_1 x_{n+1}^{-n} \Vert u \Vert_{L^1(\R^n)},
\end{align*}
Moreover, if $1 \leq p,q,r \leq \infty$ are such that $1+\frac{1}{r} = \frac{1}{p}+\frac{1}{q}$, then $\tilde{u}^f$ also satisfies
\begin{align*}
\Vert \partial_{n+1} \tilde{u}^f(\cdot,x_{n+1}) \Vert_{L^r(\R^n)} \leq C_2 x_{n+1}^{\frac{n}{p}-n-1} \Vert u \Vert_{L^q(\R^n)}.
\end{align*}
Here the constants $C_0,C_1$ and $C_2$ only depend on $n$, $s$, $\theta_1$ and $\theta_2$.
\end{lem}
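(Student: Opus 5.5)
We will work with the Poisson kernel representation of the Caffarelli--Silvestre type extension in terms of the heat semigroup of $L_a := -\nabla'\cdot a\nabla'$. For the extension of $u$ one has (Stinga--Torrea)
\begin{align*}
\tilde{u}^f(x',x_{n+1}) = \int_{\R^n} P^a_{x_{n+1}}(x',y')\, u(y')\, dy', \qquad P^a_{t}(x',y') = \frac{t^{2s}}{4^s\Gamma(s)} \int_0^\infty e^{-\frac{t^2}{4\tau}} p^a_\tau(x',y')\, \frac{d\tau}{\tau^{1+s}},
\end{align*}
where $p^a_\tau$ is the heat kernel of $L_a$. The plan is to bound $P^a_t$, $\nabla'_{x'}P^a_t$ and $\partial_t P^a_t$ pointwise via heat kernel bounds, and then read off the claimed estimates by $L^\infty$--$L^1$ duality resp. Young's inequality.

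First, by (A1) alone, Aronson's estimates give Gaussian upper bounds $p^a_\tau(x',y')\le C\tau^{-n/2}e^{-c|x'-y'|^2/\tau}$, with constants depending only on $n,\theta_1$. For the tangential gradient we use (A2): on a manifold with $\operatorname{Ric}\ge-\theta_2$ the Li--Yau gradient estimate yields $|\nabla'_{x'} p^a_\tau(x',y')|\le C(\tau^{-1/2}+1)\tau^{-n/2}e^{-c|x'-y'|^2/\tau}$ up to exponential-in-$\tau$ factors; since (A1) makes the Riemannian distance comparable to the Euclidean one and volumes comparable to $\tau^{n/2}$, these become Euclidean Gaussian bounds with constants depending on $n,\theta_1,\theta_2$. (Here we regard $L_a$ as comparable to the Laplace--Beltrami operator of the metric $a^{-1}$; the density factor is bounded by (A1).) The derivative $\partial_t P^a_t$ is handled without gradient bounds: differentiating the subordination formula in $t$ only produces the factors $2s/t$ and $-t/(2\tau)$.

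Inserting these bounds into the subordination integral and substituting $\tau = t^2\sigma$, we expect
\begin{align*}
|P^a_t(x',y')| + t\,|\partial_t P^a_t(x',y')| \le C\, \frac{t^{2s}}{(t^2+|x'-y'|^2)^{\frac{n}{2}+s}}, \qquad |\nabla'_{x'}P^a_t(x',y')|\le C\, \frac{t^{2s}}{(t^2+|x'-y'|^2)^{\frac{n}{2}+s}}\Big(\frac{1}{t}+1\Big),
\end{align*}
so for $t\ge1$ both $P^a_t$ and $\nabla' P^a_t$ are bounded by $Ct^{-n}$ uniformly in $x',y'$, giving the first two claims with $\|u\|_{L^1}$. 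For the third, $\|\partial_t P^a_t(x',\cdot)\|_{L^p}$ and its transpose analogue are bounded by $t^{-1}\cdot t^{-n+n/p}$ (scaling of the kernel $t^{-n}\phi((x'-y')/t)$ with $\phi\in L^p$ for all $p\ge1$), and a Young/Schur-type inequality for kernels with uniform $L^p$ bounds in each variable gives $\|\partial_{n+1}\tilde u^f(\cdot,t)\|_{L^r}\le C t^{\frac np-n-1}\|u\|_{L^q}$.

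The main obstacle is the gradient bound: the Li--Yau estimate with negative Ricci lower bound carries factors like $e^{C\theta_2\tau}$ or $(1+\sqrt{\theta_2\tau})$, which destroy integrability of the subordination integral for large $\tau$ if used naively. I would handle this by splitting the $\tau$-integral at $\tau\sim t^2$ and, for large $\tau$, instead using the semigroup property $\nabla' p_\tau = \nabla' p_1 * p_{\tau-1}$ (gradient bound only at time $1$, Gaussian bound for the rest), which costs only a constant and keeps the decay $\tau^{-n/2}$; the factor $e^{-t^2/4\tau}\tau^{-1-s}$ then yields $t^{-n}$ for $t\ge1$. Also note the required quantitative well-posedness (that $\tilde u^f$ coincides with the Poisson integral of $u$, and $u\in L^1$ since it is in $H^s$ and decays) is taken from the standard theory recalled in Section \ref{sec:prel_CSExtension}.
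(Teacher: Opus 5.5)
Your proposal is correct and follows the paper's proof: the Stinga--Torrea subordination formula, a Gaussian bound for the heat kernel from (A1) and a Ricci-based gradient bound from (A2), reduction to the kernels $t^{2s}(t^2+|x'-z'|^2)^{-\frac n2-s-\tau}$, and then a sup bound resp.\ Young's convolution inequality. Your combined estimate $|P_t|+t|\partial_tP_t|\lesssim T_{0,t}$ is just a tidier form of the paper's $t^{-1}T_{0,t}+tT_{1,t}$.

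The one difference is the large-time gradient bound. The paper quotes Davies' estimate $|\nabla_x K_\tau|\le c\,\tau^{-\frac n2}(\tau^{-1}+\theta_2)^{\frac12}e^{-\bar c|x'-z'|^2/\tau}$, which has no growth in $\tau$, so it needs no workaround. If you do use your semigroup identity $\nabla_x p_\tau(x',y')=\int\nabla_x p_1(x',z')\,p_{\tau-1}(z',y')\,dz'$, apply it for every $\tau\ge 2$, i.e.\ split at a fixed time rather than at $\tau\sim t^2$. On $[2,t^2]$ a factor $e^{C\theta_2\tau}$ would not be absorbed by $e^{-t^2/4\tau}$. Even a factor $\sqrt{\theta_2\tau}$ there would cost a power of $t$ and leave only $t^{-(n-1)}$.
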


\begin{rmk*}
We remark that for the sharp gradient estimate, we expect a decay of the rate $x_{n+1}^{-(n+1)}$ (c.f. Lemma 6.2 in \cite{CGRU23}). As the slightly weaker decay estimate from above suffices for our analysis and as it actually does not affect the final result up to possibly a constant factor, we have opted to work with the above (slightly suboptimal) estimate.

Moreover, we note that the defining equation for $u$, \eqref{eq:fract_cond}, is not used in the proof. We only use that $\tilde{u}^f$ is the Caffarelli-Silvestre type extension of $u$. Thus, the statement holds for any $u$ and $\tilde{u}$, with the latter one being the Caffarelli-Silvestre type extension of $u$ (see equation \eqref{eq:CS_intro}).
\end{rmk*}

\begin{proof}
The proof follows along the lines of the proof of Lemma 6.2 in \cite{CGRU23}. By Corollary 3.2.8 in \cite{DaviesBook1989}, for any $\delta>0$ and $x',z' \in \R^n$ it holds that
\begin{align}\label{eq:estimate_heat_kernel}
0 \leq K_t(x',z') \leq c_{\delta,\theta_1} t^{-\frac{n}{2}} \exp\left( - \frac{\vert x'-z' \vert^2}{4(1+\delta) \theta_1^{-1} t} \right)
\end{align}
and for the gradient, by Theorem 6, Case 1, in \cite{D89},
\begin{align*}
\vert \nabla_x K_t(x',z') \vert \leq c_{\delta,n} t^{-\frac{n}{2}} (t^{-1} + \theta_2)^{\frac{1}{2}} \exp\left( - \frac{d_g(x',z')^2}{4(1+\delta)t} \right).
\end{align*}
Here, $d_g(\cdot,\cdot)$ denotes the Riemannian distance with respect to the manifold $(M,g) = (\R^n, a)$. We easily verify that $\sqrt{\theta_1} \vert x'-z' \vert \leq d_g(x',z') \leq \sqrt{\theta_1^{-1}} \vert x'-z' \vert$, and thus the latter estimate turns into
\begin{align}\label{eq:estimate_heat_kernel_gradient}
\vert \nabla_x K_t(x',z') \vert \leq c_{\delta,n} t^{-\frac{n}{2}} (t^{-1} + \theta_2)^{\frac{1}{2}} \exp\left( - \bar{c}_{\delta,\theta_1} \frac{\vert x'-z' \vert^2}{t} \right).
\end{align}
By Theorem 2.1 in \cite{ST10}, for $x' \in \R^n, y \in \R_+$, a solution to the Caffarelli-Silvestre extension is given by
\begin{align*}
\tilde{u}(x',y) = c_s y^{2s} \int_{\R^n} \int_0^\infty K_t(x',z') e^{-\frac{y^2}{4t}} \frac{dt}{t^{1+s}} u(z') dz'.
\end{align*}
We will apply \eqref{eq:estimate_heat_kernel} and \eqref{eq:estimate_heat_kernel_gradient} and use that
\begin{align*}
\int_0^\infty \frac{e^{-\frac{A}{4t}}}{t^{\frac{n}{2}+\sigma+1}} dt \sim A^{-\frac{n}{2}-\sigma}.
\end{align*}
On the one hand, we then infer that for $x' \in \R^n, y \in \R_+$
\begin{align*}
\vert \tilde{u}^f(x',y) \vert &\leq C y^{2s} \int_{\R^n} \int_0^\infty \vert K_t(x',z') \vert e^{-\frac{y^2}{4t}} \frac{dt}{t^{1+s}} \vert u(z') \vert dz'\\
&\leq C y^{2s} \int_{\R^n} \vert u(z') \vert \int_0^\infty t^{-(\frac{n}{2}+1+s)} e^{-(c\frac{\vert x'-z' \vert^2}{t} + \frac{y^2}{4t})} dt dz'\\
&\leq C y^{2s} \int_{\R^n} \frac{\vert u(z') \vert}{(\vert x'-z' \vert^2 + y^2)^{\frac{n}{2}+s}} dz'\\
&=C (\vert u \vert \ast T_{0,y}) (x'),
\end{align*}
where, for $\tau \in \R, x' \in \R^n, y \in \R_+$, $T_{\tau,y}(x') := \frac{y^{2s}}{(\vert x' \vert^2 + y^2)^{\frac{n}{2}+s+\tau}}$. By Young's convolution inequality we then obtain
\begin{align*}
\vert \tilde{u}^f(x',y) \vert \leq Cy^{-n} \Vert u \Vert_{L^1(\R^n)} \quad \text{for } x' \in \R^n, y \in \R_+,
\end{align*}
where $C$ depends on $n$, $s$, $\theta_1$. On the other hand, we derive similarly
\begin{align*}
\vert \nabla' \tilde{u}^f(x',y) \vert &\leq C y^{2s} \int_{\R^n} \int_0^\infty \vert \nabla_{x'} K_t(x',z') \vert e^{-\frac{y^2}{4t}} \frac{dt}{t^{1+s}} \vert u(z') \vert dz'\\
&\leq C y^{2s} \int_{\R^n} \vert u(z') \vert \int_0^\infty (t^{-(\frac{n+1}{2}+1+s)} + t^{-(\frac{n}{2}+1+s)}) e^{-(c\frac{\vert x'-z' \vert^2}{t} + \frac{y^2}{4t})} dt dz'\\
&\leq C y^{2s} \int_{\R^n} \frac{\vert u(z') \vert}{(\vert x'-z' \vert^2 + y^2)^{\frac{n+1}{2}+s}} dz + C y^{2s} \int_{\R^n} \frac{\vert u(z') \vert}{(\vert x'-z' \vert^2 + y^2)^{\frac{n}{2}+s}} dz'\\
&=C (\vert u \vert \ast T_{\frac{1}{2},y}) (x') + C (\vert u \vert \ast T_{0,y}) (x')\\
&\leq C y^{-(n+1)} \Vert u \Vert_{L^1(\R^n)} + C y^{-n} \Vert u \Vert_{L^1(\R^n)}\\
&\leq C y^{-n} \Vert u \Vert_{L^1(\R^n)},
\end{align*}
where for the last inequality we have used that $y \geq 1$. Here, the constant $C$ now depends on $n$, $s$, $\theta_1$ and $\theta_2$.

For the normal derivative we observe similarly
\begin{align*}
\vert \partial_y \tilde{u}(x',y) \vert &\leq C y^{2s-1} \int_{\R^n} \int_0^\infty \vert K_t (x',z') \vert e^{-\frac{y^2}{4t}} \frac{dt}{t^{1+s}} \vert u(z') \vert dz'\\
&\hspace{15mm} + y^{2s+1} \int_{\R^n} \int_0^\infty \vert K_t(x',z') \vert e^{-\frac{y^2}{4t}} \frac{dt}{t^{2+s}} \vert u(z') \vert dz'\\
&\leq C y^{-1} (\vert u \vert \ast T_{0,y})(x') + y (\vert u \vert \ast T_{1,y})(x').
\end{align*}
We apply Young's convolution inequality with $1+\frac{1}{r} = \frac{1}{p}+\frac{1}{q}$, $1\leq r,p,q \leq \infty$, to get
\begin{align*}
\Vert \partial_{y} \tilde{u}(\cdot,y) \Vert_{L^r(\R^n)} \leq \Big( y^{-1} \Vert T_{0,y}(\cdot) \Vert_{L^p(\R^n)} + y \Vert T_{1,y}(\cdot) \Vert_{L^p(\R^n)} \Big) \Vert u \Vert_{L^q(\R^n)},
\end{align*}
and by a change of variables $\bar{r}= \frac{r}{y}$ we find that
\begin{align*}
\Vert T_{\tau,y}(\cdot) \Vert_{L^p(\R^n)}^p \sim \int_0^\infty \frac{y^{2sp} r^{n-1}}{(r^2 + y^2)^{p(\frac{n}{2}+s+\tau)}} dr = \int_0^\infty \frac{y^{n-pn-2p\tau} \bar{r}^{n-1}}{(\bar{r}^2 + 1)^{p(\frac{n}{2}+s+\tau)}} d\bar{r} \leq C y^{n-pn-2p\tau}.
\end{align*}
Consequently, it holds that
\begin{align*}
\Vert \partial_y \tilde{u}(\cdot,y) \Vert_{L^r(\R^n)} \leq C y^{\frac{n}{p}-n-1} \Vert u \Vert_{L^q(\R^n)},
\end{align*}
which finishes the proof.
\end{proof}

Relying on the above bounds, we estimate two parts of the integrals which will be studied in what follows below. Using the estimates from above, we show that for suitably chosen parameters $L, h$ both will be well controlled. Thus, in the remainder of our argument, it suffices to study the intermediate integral between these two parameters.

\begin{lem}\label{lem:apriori_fract_cond}
Let $\Omega, W \subset \R^n$, $\theta_1 \in (0,1)$, $\theta_2>0$, $a \in L^\infty(\R^n,\R^{n \times n}_{\text{sym}})$, $f \in \widetilde{H}^{s}(W)$, $u \in H^s(\R^n)$ and $\tilde{u}^f \in \dot{H}^1(\R^{n+1}_+, x_{n+1}^{1-2s})$ be as in Lemma \ref{lem:apriori_decay}. Let $D \subset \R^n$ be some bounded, smooth domain.

Then, there exists a constant $C>0$ such that for $L \geq 1$ it holds that
\begin{align*}
\left\| \int_{L}^{\infty} t^{1-2s} \tilde{u}^f(\cdot,t) dt \right\|_{H^1(D)} \leq C L^{2-n-2s} \|f\|_{\widetilde{H}^s(W)}.
\end{align*}
Moreover, there exists $C>0$ such that for $h \leq 1$
\begin{align*}
\left\| \int_{0}^{h} t^{1-2s} \tilde{u}^f(\cdot,t) dt \right\|_{H^1(D)} \leq C h^{1-s} \|f\|_{\widetilde{H}^s(W)}.
\end{align*}
Here, the constants $C$ only depend on $n$, $s$, $\Omega$, $W$, $\theta_1$, $\theta_2$ and $D$.
\end{lem}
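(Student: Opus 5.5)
For the upper integral I will use the pointwise decay bounds of Lemma \ref{lem:apriori_decay}. Since $D$ is bounded, the $H^1(D)$ norm is controlled by $|D|^{1/2}$ times the sup over $D$ of $|\cdot|+|\nabla'\cdot|$. Differentiation under the integral sign is justified by these same bounds. Integrating in $t$ then gives
\begin{align*}
\Big\| \int_L^\infty t^{1-2s}\tilde u^f(\cdot,t)\,dt\Big\|_{H^1(D)} \le C|D|^{1/2}\int_L^\infty t^{1-2s-n}\,dt\, \|u\|_{L^1(\R^n)} = C L^{2-n-2s}\|u\|_{L^1(\R^n)},
\end{align*}
which converges because $n\ge 3$ (even $n\geq 2$ suffices). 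It remains to bound $\|u\|_{L^1(\R^n)}\lesssim\|f\|_{\widetilde H^s(W)}$. Write $u=f+(u-f)$ with $u-f\in\widetilde H^s(\Omega)$; note that $f$ is supported in the bounded set $\overline W$, and $u-f$ in the bounded set $\overline\Omega$. Hence $\|u\|_{L^1}\le C(|W|^{1/2}\|f\|_{L^2}+|\Omega|^{1/2}\|u-f\|_{L^2})$. The standard well-posedness estimate for \eqref{eq:fract_cond} (Lax--Milgram for the bilinear form on $\widetilde H^s(\Omega)$, coercive by (A1) and the Poincaré inequality on bounded $\Omega$) yields $\|u\|_{H^s(\R^n)}\le C\|f\|_{\widetilde H^s(W)}$. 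I expect this to be available in the appendix.

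For the lower integral the decay lemma is useless near $t=0$, so I will use energy estimates instead. The $L^2$ part is simple: by Minkowski and Cauchy--Schwarz in $t$,
\begin{align*}
\Big\|\int_0^h t^{1-2s}\tilde u^f\,dt\Big\|_{L^2(D)}\le \Big(\int_0^h t^{1-2s}dt\Big)^{1/2}\|t^{\frac{1-2s}{2}}\tilde u^f\|_{L^2(D\times(0,h))}.
\end{align*}
The first factor is $Ch^{1-s}$. The gradient part is handled the same way with $\nabla'\tilde u^f$ in place of $\tilde u^f$, giving $\|\int_0^h t^{1-2s}\nabla'\tilde u^f\,dt\|_{L^2(D)}\le Ch^{1-s}\|t^{\frac{1-2s}2}\nabla\tilde u^f\|_{L^2(\R^{n+1}_+)}$. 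Here I use the energy estimate $\|x_{n+1}^{\frac{1-2s}{2}}\nabla\tilde u^f\|_{L^2(\R^{n+1}_+)}\le C\|u\|_{\dot H^s}\le C\|f\|_{\widetilde H^s(W)}$, which holds because the Caffarelli--Silvestre extension is energy minimizing and (A1) holds.

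The main obstacle is the local weighted $L^2$ bound $\|t^{\frac{1-2s}2}\tilde u^f\|_{L^2(D\times(0,1))}\lesssim\|f\|_{\widetilde H^s(W)}$, since only the homogeneous gradient is controlled globally. I would obtain it from the fundamental theorem of calculus in $t$, $\tilde u^f(x',t)=u(x')+\int_0^t\partial_{n+1}\tilde u^f(x',\tau)\,d\tau$, where the trace is in the $L^2$ sense. Combining this with a weighted Hardy/Cauchy--Schwarz estimate gives
\begin{align*}
\|t^{\frac{1-2s}2}\tilde u^f\|_{L^2(D\times(0,1))}\le C\big(\|u\|_{L^2(\R^n)}+\|t^{\frac{1-2s}2}\partial_{n+1}\tilde u^f\|_{L^2(\R^{n+1}_+)}\big),
\end{align*}
because $\int_0^t\tau^{-(1-2s)}d\tau<\infty$ for $s\in(0,1)$. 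Alternatively, the Poisson kernel representation gives $\|\tilde u^f(\cdot,t)\|_{L^2}\le C\|u\|_{L^2}$ for all $t$, by Young's inequality with $T_{0,t}$ of uniformly bounded $L^1$ norm, and I would prefer this as the cleaner route. Collecting the bounds, all constants depend only on $n,s,\Omega,W,\theta_1,\theta_2,D$, as claimed.
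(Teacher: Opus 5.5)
Your proof is correct and mostly follows the paper. The upper integral is handled the same way: pointwise decay from Lemma \ref{lem:apriori_decay}, integration in $t$, and $\|u\|_{L^1(\R^n)}\lesssim\|u\|_{L^2(\R^n)}\lesssim\|f\|_{\widetilde{H}^s(W)}$ via $\supp(u)\subset\overline{\Omega}\cup\overline{W}$ and well-posedness. The well-posedness bound is not in the appendix, but it is standard and the paper also uses it without proof. The gradient part of the lower integral is also the same (Cauchy--Schwarz in $t$ plus the global energy bound).

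The one real difference is the weighted $L^2$ bound for $\tilde{u}^f$ near $\{x_{n+1}=0\}$.
\begin{itemize}
\item The paper combines H\"older with a weighted Sobolev embedding to get $\|x_{n+1}^{\frac{1-2s}{2}}\tilde{u}^f\|_{L^2(D\times(0,h))}\lesssim h^{(2-2s)(\frac{1}{2}-\frac{1}{p})}\|x_{n+1}^{\frac{1-2s}{2}}\nabla\tilde{u}^f\|_{L^2(\R^{n+1}_+)}$. This uses only the energy and gains an extra power of $h$. However, it needs the global homogeneous weighted Sobolev inequality on $\R^{n+1}_+$, because $\tilde{u}^f$ is neither compactly supported nor zero on $\{x_{n+1}=0\}$; the version recalled in Section \ref{sec:prel_FunctionSpaces} is only for $C_0^\infty(B_R)$.
\item You go through the trace instead. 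Either you use the fundamental theorem of calculus with $\int_0^t\tau^{2s-1}\,d\tau<\infty$, or you use the bound $|\tilde{u}^f(\cdot,y)|\le C\,|u|\ast T_{0,y}$ from the proof of Lemma \ref{lem:apriori_decay}. That bound holds for every $y>0$, and $\|T_{0,y}\|_{L^1(\R^n)}$ is independent of $y$ by scaling.
\end{itemize}
Your route is more elementary and uses only ingredients already present, mainly the bound on $\|u\|_{L^2(\R^n)}$ that the upper integral needs anyway. The Poisson-kernel variant even gives $\|\int_0^h t^{1-2s}\tilde{u}^f(\cdot,t)\,dt\|_{L^2(D)}\lesssim h^{2-2s}\|u\|_{L^2(\R^n)}$ directly by Minkowski. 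In both arguments the gradient term caps the rate at $h^{1-s}$, so the extra powers of $h$ do not matter.
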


\begin{proof}
We begin with the upper integral. Here we will apply the first two estimates from Lemma \ref{lem:apriori_decay} asserting that for $x' \in \R^n, y \in \R_+$
\begin{align*}
\vert \tilde{u}^f (x',y) \vert \leq C y^{-n} \Vert u \Vert_{L^1(\R^n)}, \qquad \vert \nabla'\tilde{u}^f(x',y) \vert \leq C y^{-n} \Vert u \Vert_{L^1(\R^n)}.
\end{align*}
With this at hand we estimate
\begin{align*}
&\left\Vert \int_L^\infty t^{1-2s} \tilde{u}^f (\cdot,t) dt \right\Vert_{L^2(D)}^2 = \int_D \left\vert \int_L^\infty t^{1-2s} \tilde{u}^{f}(x',t) dt \right\vert^2 dx' \leq \int_D \left\vert \int_L^\infty t^{1-2s-n} \Vert u \Vert_{L^1(\R^n)} dt \right\vert^2 dx'\\
&\qquad \leq c_{n,s,D} L^{2(2-2s-n)} \Vert u \Vert_{L^2(\R^n)}^2 \leq c_{n,s,D} L^{2(2-2s-n)} \Vert f \Vert_{\widetilde{H}^s(W)}^2,
\end{align*}
where for the second to last inequality we have used that $\supp(u) \subset \overline{\Omega} \cup \overline{W} \subset B_R(0)$ for $R>0$ sufficiently large and thus $\Vert u \Vert_{L^1(\R^n)} \leq C \Vert u \Vert_{L^2(\R^n)}$, where the constant $C$ depends on $\Omega$ and $W$. Secondly, we estimate in the same way
\begin{align*}
&\left\Vert \nabla' \left( \int_L^\infty t^{1-2s} \tilde{u}^f (\cdot,t) dt \right) \right\Vert_{L^2(D)}^2 = \int_D \left\vert \int_L^\infty t^{1-2s} \nabla' \tilde{u}^{f}(x',t) dt \right\vert^2 dx' \\
& \qquad  \leq \int_D \left\vert \int_L^\infty t^{1-2s-n} \Vert u \Vert_{L^1(\R^n)} dt \right\vert^2 dx'
\leq C L^{2(2-2s-n)} \Vert u \Vert_{L^2(\R^n)}^2\\
& \qquad  \leq C L^{2(2-2s-n)} \Vert f \Vert_{\widetilde{H}^s(W)}^2.
\end{align*}
Combining the two previous inequalities yields the estimate for the upper integral.

For the lower integral we use Hölder's inequality and get
\begin{align*}
&\left\Vert \int_0^h t^{1-2s} \tilde{u}^f (\cdot,t) dt \right\Vert_{L^2(D)}^2 = \int_D \left\vert \int_0^h t^{1-2s} \tilde{u}^f(x',t) dt \right\vert^2 dx'\\
& \qquad \leq \left\Vert t^{\frac{1-2s}{2}} \right\Vert_{L^2((0,h))}^2 \int_D \int_0^h t^{1-2s} \vert \tilde{u}^f(x',t) \vert^2 dt dx' \leq C h^{2-2s} \left\Vert x_{n+1}^{\frac{1-2s}{2}} \tilde{u}^f \right\Vert_{L^2(D\times(0,h))}^2\\
&\qquad \leq C h^{2-2s} h^{2(2-2s)(\frac{1}{2}-\frac{1}{p})} \Vert x_{n+1}^{\frac{1-2s}{2}} \nabla \tilde{u}^f \Vert_{L^2(\R^{n+1}_+)}^2\\
&\qquad \leq C h^{(2-2s)(2-\frac{2}{p})} \Vert f \Vert_{\widetilde{H}^s(W)}^2.
\end{align*}
For the second to last inequality we have used that for $p\in(2,\infty)$ being the Sobolev exponent associated to the Muckenhoupt weight $x_{n+1}^{\frac{1-2s}{2}}$ (c.f. Theorem 1.3 in \cite{FKS82}) and for $q$ satisfying $\frac{1}{p} + \frac{1}{q} = \frac{1}{2}$ we have
\begin{align*}
\Vert x_{n+1}^{\frac{1-2s}{2}} \tilde{u}^f \Vert_{L^2(D\times(0,h))} &\leq \Vert x_{n+1}^{\frac{1-2s}{q}} \Vert_{L^q(D\times(0,h))} \Vert x_{n+1}^{\frac{1-2s}{p}} \tilde{u}^f \Vert_{L^p(D\times(0,h))}\\
&\leq C h^{(2-2s)(\frac{1}{2}-\frac{1}{p})} \Vert x_{n+1}^{\frac{1-2s}{2}} \nabla \tilde{u} \Vert_{L^2(\R^{n+1}_+)}.
\end{align*}
Moreover, for the gradient, we argue similarly
\begin{align*}
\left\Vert \nabla'\left( \int_0^h t^{1-2s} \tilde{u}^f dt \right) \right\Vert_{L^2(D)}^2 &= \int_D \left\vert \int_0^h t^{1-2s} \nabla'\tilde{u}^f(x',t) dt \right\vert^2 dx' \leq C h^{2-2s} \Vert x_{n+1}^{\frac{1-2s}{2}} \nabla'\tilde{u}^f \Vert_{L^2(D\times(0,h))}^2\\
&\leq C h^{2-2s} \Vert f \Vert_{\widetilde{H}^s(W)}^2.
\end{align*}
Combining the estimates for the $L^2$- and the $\dot{H}^1$-norms yields the result for the lower integral.
\end{proof}

\subsection{Estimate for intermediate integral}

In this subsection we will derive an estimate for the intermediate integral on $(\Omega_1 \setminus \Omega) \times (h,L)$, where $\Omega_1$ is some open, bounded, Lipschitz set satisfying $\Omega\Subset\Omega_1$ and $\overline{\Omega_1} \cap \overline{W} = \emptyset$. The main ingredients for the proof will be a quantitative boundary-bulk unique continuation result and a three-balls-inequality. However, to create an offset from $\Omega \times (h,L)$ we will first derive an estimate for the $H^1$-norm of a thin set surrounding $\Omega \times (h,L)$.

\subsubsection{Estimate of the weighted $H^1$-norm on a thin annulus around $\Omega \times (h,L)$}

We aim to prove an estimate for the weighted $H^1$-norm on a thin annulus around $\Omega \times (h,L)$, i.e. on $(\Omega_{+\delta} \setminus \Omega) \times (h,L)$ for some set $\Omega_{+\delta} \Supset \Omega$ with $\dist(\Omega, \partial\Omega_{+\delta}) \sim \delta$. For this we mainly need to observe that $\tilde{u}_j^f$ solves a uniformly elliptic partial differential equation in $\Omega_{1} \times (h,L)$ (with coefficients which depend on $h,L$) and thus $\tilde{u}_j^f$ enjoys higher regularity properties in $\Omega_{1} \times (h,L)$. We start by proving the following lemma.

\begin{lem}\label{lem:estimate_second_order_grad}
Let $\Omega \subset \R^n$ be open, non-empty, bounded and Lipschitz and let $W \subset \Omega_e$ be open, bounded, non-empty and Lipschitz. Let $\theta_1\in(0,1)$. Let $a \in L^\infty(\R^n, \R^{n \times n}_{\text{sym}})$ satisfy the assumption (A1) with the given $\theta_1$. Let $f \in \widetilde{H}^{s}(W)$ and let $\tilde{u}^f \in \dot{H}^1(\R^{n+1}_+, x_{n+1}^{1-2s})$ be the solution to the Caffarelli-Silvestre type extension equation \eqref{eq:CS_fract_cond} with boundary data $f$. Let $\Omega'$, $\Omega_1$, $\Omega_2$ be open, bounded, Lipschitz sets such that $\Omega' \Subset \Omega \Subset \Omega_1 \Subset \Omega_2$ and let $0 < h \leq 1$, $L \geq 1$.

Then there exist constants $C>0$ and $p_h, p_L > 0$ such that
\begin{align*}
\Vert x_{n+1}^{\frac{1-2s}{2}} \nabla^2 \tilde{u}^f \Vert_{L^2((\Omega_1 \setminus \Omega) \times (h,L))} \leq C (L^{p_L} + h^{-p_h}) \Vert f \Vert_{\widetilde{H}^s(W)}.
\end{align*}
Here, the exponents $p_h$ and $p_L$ depend on $n$ and $s$, and the constant $C$ depends on $n$, $s$, $\theta_1$, $\Omega$, $\Omega'$, $\Omega_1$, $\Omega_2$ and $\Vert a \Vert_{C^1(\Omega_2\setminus\Omega')}$.
\end{lem}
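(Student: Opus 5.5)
The plan is interior $H^2$-regularity for a uniformly elliptic equation, obtained by difference quotients. The degeneracy of the weight is harmless here because we stay at heights between $h$ and $L$.

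\textbf{Reduction to an elliptic equation without weight.} Fix the enlarged region $U := (\Omega_2 \setminus \overline{\Omega'}) \times (h/2, 2L)$. It compactly contains $(\Omega_1\setminus\Omega)\times(h,L)$, up to the top and bottom layers, which have width comparable to $h$ and $L$ respectively. On $U$ the weight satisfies
\[
\min\{h^{1-2s},L^{1-2s}\}\lesssim x_{n+1}^{1-2s}\lesssim \max\{h^{1-2s},L^{1-2s}\}.
\]
Expanding $\nabla\cdot(x_{n+1}^{1-2s}\tilde a\nabla \tilde u)=0$ and dividing by the weight, we find that $\tilde u=\tilde u^f$ solves
\[
\nabla\cdot \tilde a\nabla\tilde u + \frac{1-2s}{x_{n+1}}\,\partial_{n+1}\tilde u = 0 \quad \text{in } U .
\]
This uses that the $(n+1,n+1)$ entry of $\tilde a$ is $1$ and the off-diagonal entries vanish. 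The coefficient $\tilde a$ is $C^1$ in $U$, with norm bounded by $\Vert a\Vert_{C^1(\Omega_2\setminus\Omega')}$; this is available from (A3) on $\Omega_2\setminus\overline\Omega$ together with (A4). The drift coefficient is bounded by $Ch^{-1}$ in $U$.

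\textbf{Caccioppoli estimate for difference quotients.} Take a cutoff $\eta$ equal to $1$ on the target set and supported in $U$, with $|\nabla\eta|\lesssim h^{-1}$; near the bottom the cutoff is in $x_{n+1}$, and in $x'$ the gradient is $O(1)$. The standard Nirenberg difference quotient argument (tangential and vertical directions are all interior here) gives
\[
\Vert\nabla^2\tilde u\Vert_{L^2(\{\eta=1\})}\le C\big(1+\Vert\nabla\tilde a\Vert_\infty+h^{-1}+\Vert\nabla\eta\Vert_\infty\big)^{k}\,\Vert\nabla\tilde u\Vert_{L^2(U)}
\]
for some fixed power $k$ (say $k=2$). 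The drift term is treated as a right-hand side $F=-(1-2s)x_{n+1}^{-1}\partial_{n+1}\tilde u\in L^2$, with $\Vert F\Vert_{L^2}\lesssim h^{-1}\Vert\nabla\tilde u\Vert_{L^2(U)}$. Alternatively, one can keep the divergence form with the weight and run the argument directly: the factor $x_{n+1}^{1-2s}$ is smooth in $U$, with derivative bounded by $Ch^{-2s}$ for $s<1/2$ (and similarly otherwise), and the ellipticity ratio of the weighted problem on $U$ is $\lesssim (L/h)^{|1-2s|}$. Either route produces constants polynomial in $h^{-1}$ and $L$.

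\textbf{Converting back to weighted norms.} We use
\[
\Vert\nabla\tilde u\Vert_{L^2(U)}\le \big(\min_U x_{n+1}^{1-2s}\big)^{-1/2}\,\Vert x_{n+1}^{\frac{1-2s}{2}}\nabla\tilde u\Vert_{L^2(\R^{n+1}_+)}
\]
and then multiply the result by $\big(\max x_{n+1}^{1-2s}\big)^{1/2}$. The energy estimate $\Vert x_{n+1}^{\frac{1-2s}{2}}\nabla\tilde u\Vert_{L^2(\R^{n+1}_+)}\lesssim\Vert f\Vert_{\widetilde H^s(W)}$ is the standard well-posedness bound for \eqref{eq:CS_fract_cond}, as already used in Lemma \ref{lem:apriori_fract_cond}. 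Collecting the powers gives a bound of the form $C\,(L/h)^{p}$. Since $L\ge1\ge h$, Young's inequality turns this into $C(L^{p_L}+h^{-p_h})$, where $p_L,p_h$ depend only on $n,s$.

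\textbf{Main obstacle.} The delicate point is the bookkeeping near the lateral boundary $\partial\Omega\times(h,L)$. The target set touches $\partial\Omega$, so the coefficient regularity must extend across $\partial\Omega$ into $\Omega\setminus\overline{\Omega'}$; this is exactly why (A4) and the buffer $\Omega'$ are used. A second point is that the vertical difference quotients must be taken with steps smaller than $h/4$.
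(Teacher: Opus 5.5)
Your proposal is correct and follows essentially the same route as the paper: an interior Caccioppoli/$H^2$ estimate for the differentiated equation on $(\Omega_2\setminus\Omega')\times(h/2,L+1)$. There the weighted operator is uniformly elliptic with constants algebraic in $h^{-1}$ and $L$, and this is combined with the conversion between weighted and unweighted norms and the global energy bound $\Vert x_{n+1}^{\frac{1-2s}{2}}\nabla\tilde u^f\Vert_{L^2(\R^{n+1}_+)}\lesssim\Vert f\Vert_{\widetilde H^s(W)}$. Your difference-quotient formulation, and the option of dividing out the weight to get an unweighted operator with an $O(h^{-1})$ drift, simply make rigorous the paper's formal differentiation $-\nabla\cdot\tilde b\nabla(\partial_i\tilde u^f)=\nabla\cdot(\partial_i\tilde b)\nabla\tilde u^f$.
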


We remark that at this point, we have not sought to provide an optimal regularity estimate using the equation's full structure. By viewing the equation as a uniformly elliptic problem (with coefficients depending on $h,L$) substantial (algebraic) losses occur. Yet, for our purposes, these algebraic losses do not change the overall logarithmic type stability result which will be deduced below. 

We further remark that it is in the following proof that we make use of a uniform bound on the $C^1$-norm of the metric $a$ on $\Omega_1\setminus\Omega'$, i.e. Assumption (A4).

\begin{proof}
We note that on $\Omega_2 \times (h,L)$ the function $\tilde{u}^f$ solves a uniformly elliptic partial differential equation (with ellipticity constants depending on $h$ and $L$). Thus, we can apply the standard Caccioppoli inequality.

First, we observe that
\begin{align}\label{eq:proof_lemma_second_grad_1}
\Vert x_{n+1}^{\frac{1-2s}{2}} \nabla^2 \tilde{u}^f \Vert_{L^2((\Omega_1 \setminus \Omega) \times (h,L))} \leq (h^{ \frac{1-2s}{2}} + L^{\frac{1-2s}{2}}) \Vert \nabla^2 \tilde{u}^f \Vert_{L^2((\Omega_1\setminus\Omega) \times (h,L))}.
\end{align}
Writing $\tilde{b} := x_{n+1}^{1-2s} \tilde{a}$ and differentiating the equation for $\tilde{u}^f$ with respect to $\partial_i$, $i \in \{1,\dots,n+1\}$, we find that $\partial_i \tilde{u}^f$ satisfies
\begin{align*}
- \nabla \cdot \tilde{b} \nabla (\partial_i \tilde{u}^f) = \nabla \cdot (\partial_i \tilde{b}) \nabla \tilde{u}^f.
\end{align*}
Next, we apply the standard Caccioppoli inequality (cf. Lemma \ref{lem:Caccioppoli_inequality} with $s=\frac{1}{2}$) to obtain
\begin{equation}\label{eq:proof_lemma_second_grad_2}
\begin{aligned}
&\Vert \nabla^2 \tilde{u}^f \Vert_{L^2((\Omega_1 \setminus \Omega) \times (h,L))}\\
&\leq C \max \Big\{ \Big(\frac{L}{h}\Big)^{1-2s}, \Big(\frac{h}{L}\Big)^{1-2s} \Big\} \Big( h^{-1} \Vert \nabla \tilde{u}^f \Vert_{L^2(\Omega_2 \setminus \Omega') \times (\frac{h}{2},L+1))} + \Vert (\nabla \tilde{b}) \nabla \tilde{u}^f \Vert_{L^2((\Omega_2 \setminus\Omega') \times (\frac{h}{2},L+1))} \Big)\\
&\leq C \max \Big\{ \Big(\frac{L}{h}\Big)^{1-2s}, \Big(\frac{h}{L}\Big)^{1-2s} \Big\} \Big( h^{-1} (h^{-\frac{1-2s}{2}} + L^{-\frac{1-2s}{2}}) \Vert x_{n+1}^{\frac{1-2s}{2}} \nabla \tilde{u}^f \Vert_{L^2((\Omega_2 \setminus \Omega') \times (\frac{h}{2},L+1))}\\
&\hspace{25mm} + \Vert \tilde{b} \Vert_{C^1((\Omega_2 \setminus \Omega') \times (\frac{h}{2},L+1))} (h^{-\frac{1-2s}{2}} + L^{-\frac{1-2s}{2}}) \Vert x_{n+1}^{\frac{1-2s}{2}} \nabla \tilde{u}^f \Vert_{L^2((\Omega_2 \setminus \Omega') \times (\frac{h}{2},L+1))} \Big).
\end{aligned}
\end{equation}
Here, the factor $\max\{(L/h)^{1-2s},(h/L)^{1-2s}\}$ represents the dependency of the constant in Caccioppoli's inequality on the ellipticity constants $\lambda,\Lambda>0$ of $b$ which satisfy $\lambda \leq \tilde{b} \leq \Lambda$. Also note, that $\Vert \tilde{b} \Vert_{C^1((\Omega_2 \setminus \Omega') \times (\frac{h}{2},L+1))}$ is bounded algebraically in terms of $h^{-1}$, $L$ and $\Vert a \Vert_{C^1(\Omega_2\setminus\Omega')}$. Combining \eqref{eq:proof_lemma_second_grad_1} and \eqref{eq:proof_lemma_second_grad_2} and using that $\Vert x_{n+1}^{\frac{1-2s}{2}} \nabla \tilde{u}^f \Vert_{L^2(\R^{n+1}_+)} \leq C \Vert f \Vert_{\widetilde{H}^s(W)}$, we find that indeed
\begin{align*}
\Vert x_{n+1}^{\frac{1-2s}{2}} \nabla^2 \tilde{u}^{f} \Vert_{L^2((\Omega_1 \setminus \Omega) \times (h,L))} \leq C (L^{p_L} + h^{-p_h}) \Vert f \Vert_{\widetilde{H}^s(W)},
\end{align*}
with the constants $p_h$, $p_L$ and $C$ depending on stated quantities.
\end{proof}

With this lemma in hand, we can derive the following smallness estimate in a suitable neighbourhood of $\Omega \times (h,L)$. In what follows below, this will allow us to keep a suitable ``safety distance'' to $\partial \Omega \times (h,L)$ in our propagation of smallness arguments.

\begin{prop}\label{prop:estimate_thin_annuli}
Let $\Omega \subset \R^n$ be open, non-empty, bounded and Lipschitz and let $W \subset \Omega_e$ be open, bounded, non-empty and Lipschitz. Let $\theta_1\in(0,1)$. Let $a \in L^\infty(\R^n, \R^{n \times n}_{\text{sym}})$ satisfy the assumption (A1) with the given $\theta_1$. Let $f \in \widetilde{H}^{s}(W)$ and let $\tilde{u}^f \in \dot{H}^1(\R^{n+1}_+, x_{n+1}^{1-2s})$ be the solution to the Caffarelli-Silvestre type extension equation \eqref{eq:CS_fract_cond} with boundary data $f$. Let $\Omega'$, $\Omega_{+\delta}$, $\Omega_1$ be open, bounded, Lipschitz sets such that $\Omega' \Subset \Omega \Subset \Omega_{+\delta} \Subset \Omega_1$ and such that $\dist(\Omega, \partial\Omega_{+\delta}) \in (c_1\delta, c_1^{-1}\delta)$ for some $c_1\in(0,1)$ and some $\delta>0$ small. Let $0 < h \leq 1$, $L \geq 1$.

Then there exist $p_\delta, p_h, p_L > 0$ and a constant $C>1$ such that
\begin{align*}
\Vert \tilde{u}^f \Vert_{H^1((\Omega_{+\delta}\setminus\Omega) \times (h,L)), \ x_{n+1}^{1-2s})} \leq C \delta^{p_\delta} (L^{p_L} + h^{-p_h}) \Vert f \Vert_{\widetilde{H}^s(W)}.
\end{align*}
Here, the exponents $p_\delta$, $p_h$ and $p_L$ depend on $n$ and $s$, and the constant $C$ depends on $n$, $s$, $\theta_1$, $c_1$, $\Omega$, $\Omega'$, $\Omega_1$ and $\Vert a \Vert_{C^1(\Omega_{1}\setminus\Omega')}$.
\end{prop}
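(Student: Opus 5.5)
The estimate comes from the thinness of the annulus $(\Omega_{+\delta}\setminus\Omega)\times(h,L)$ combined with higher integrability of $\tilde u^f$ and $\nabla\tilde u^f$ on $(\Omega_1\setminus\Omega)\times(h,L)$. Hölder's inequality then turns the small measure of the annulus into a power $\delta^{p_\delta}$. Throughout we write $A_\delta:=(\Omega_{+\delta}\setminus\Omega)\times(h,L)$. Its Lebesgue measure is bounded by $C\delta (L-h)\le C\delta L$, since $\Omega$ is Lipschitz and $\dist(\Omega,\partial\Omega_{+\delta})\le c_1^{-1}\delta$, so $|\Omega_{+\delta}\setminus\Omega|\le C(c_1,\Omega)\delta$. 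On $(h,L)$ the weight $x_{n+1}^{1-2s}$ is bounded above and below by $h^{1-2s}$ and $L^{1-2s}$ (in some order). We may therefore freely pass between weighted and unweighted norms at the cost of algebraic factors $h^{-p_h}+L^{p_L}$.

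\textbf{Step 1: a priori bounds on the larger region.} Set $G:=(\Omega_1\setminus\Omega)\times(h,L)$. We have $\|x_{n+1}^{\frac{1-2s}{2}}\nabla\tilde u^f\|_{L^2(\R^{n+1}_+)}\le C\|f\|_{\widetilde H^s(W)}$. The zeroth-order term is handled by the weighted Sobolev inequality on a bounded box, or by the pointwise decay of Lemma \ref{lem:apriori_decay} for $x_{n+1}\ge1$ together with the trace/Poincaré-type bound of Lemma \ref{lem:apriori_fract_cond} for $x_{n+1}\le 1$. This gives $\|\tilde u^f\|_{H^1(G,x_{n+1}^{1-2s})}\le C(L^{p_L}+h^{-p_h})\|f\|_{\widetilde H^s(W)}$. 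Lemma \ref{lem:estimate_second_order_grad}, applied with $\Omega_1$ and some $\Omega_2\Supset\Omega_1$ (chosen inside the region where the $C^1$ bound of $a$ is available), gives $\|\nabla^2\tilde u^f\|_{L^2(G)}\le C(L^{p_L}+h^{-p_h})\|f\|_{\widetilde H^s(W)}$. Hence $\tilde u^f\in H^2(G)$ with an algebraic bound.

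\textbf{Step 2: higher integrability and Hölder.} By the Sobolev embedding $H^1(G)\hookrightarrow L^{q}(G)$ with $q=\frac{2(n+1)}{n-1}>2$, applied to both $\tilde u^f$ and $\nabla\tilde u^f$, we get $\|\tilde u^f\|_{L^q(G)}+\|\nabla\tilde u^f\|_{L^q(G)}\le C(L^{p_L}+h^{-p_h})\|f\|_{\widetilde H^s(W)}$. Then Hölder on $A_\delta\subset G$ yields
\begin{align*}
\|\nabla\tilde u^f\|_{L^2(A_\delta)}\le |A_\delta|^{\frac12-\frac1q}\|\nabla\tilde u^f\|_{L^q(G)}\le C(\delta L)^{\frac{1}{n+1}}(L^{p_L}+h^{-p_h})\|f\|_{\widetilde H^s(W)},
\end{align*}
and the same holds for $\tilde u^f$. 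Reinserting the weight and absorbing the powers of $L$ and $h$ into new exponents gives the claim with $p_\delta=\frac1{n+1}$.

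\textbf{Main obstacle.} The delicate point is the uniformity of the Sobolev embedding constant on $G$. The box $(\Omega_1\setminus\Omega)\times(h,L)$ has a long, possibly thin-in-height shape depending on $h$ and $L$, and the Lipschitz regularity of $\Omega_1\setminus\Omega$ enters. To handle this I would first try to cover $G$ by $O((L/h)^{n+1})$ cubes of side comparable to $\min\{h,r_0\}$, where $r_0$ is fixed by the Lipschitz character of $\Omega$. On each cube (intersected with $G$) the embedding holds with a constant depending on $h$ only algebraically, obtained by scaling a fixed Lipschitz reference domain. Summing the $L^q$ estimates (using $\ell^2\subset\ell^q$) then produces only algebraic losses in $h^{-1}$ and $L$, which are harmless for the statement. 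An alternative that avoids the embedding is to use a one-dimensional trace inequality in the normal direction to $\partial\Omega$, i.e. $\int_{\text{layer of width }\delta}|g|^2\le C\delta\|g\|_{H^1}^2$ for $g=\nabla\tilde u^f,\tilde u^f$. This gives $p_\delta=\frac12$ directly from the $H^2$ bound of Step 1, again requiring the Lipschitz character of $\partial\Omega$ and $c_1$ to control the layer geometry.
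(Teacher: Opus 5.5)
Your proposal is correct and follows essentially the paper's argument: the second-order bound from Lemma \ref{lem:estimate_second_order_grad} plus a Sobolev embedding gives higher integrability of $\tilde u^f$ and $\nabla\tilde u^f$, and H\"older on the thin annulus turns $\vert\Omega_{+\delta}\setminus\Omega\vert$ into $\delta^{p_\delta}$. The paper uses the weighted Sobolev exponent of \cite{FKS82} instead of first removing the weight on $(h,L)$, which is only a cosmetic difference. Two bookkeeping fixes are needed. First, run Step 1 on an intermediate set $\Omega_{+\delta}\Subset\Omega_{\frac{1}{2}}\Subset\Omega_1$, with $\Omega_1$ in the role of $\Omega_2$ as the paper does, so that only $\Vert a\Vert_{C^1(\Omega_1\setminus\Omega')}$ enters. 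Second, work on a vertically enlarged box such as $(\frac{h}{2},2L)$, so that the Sobolev constant does not degenerate when $L-h$ is small; your covering by cubes of side $\min\{h,r_0\}$ does not handle that case.
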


\begin{proof}
We bound the weighted $L^2$-norm of the function and of its first order derivative separately. Let $p \in (2,\infty)$ be the Sobolev-exponent associated with the Muckenhoupt-weight $x_{n+1}^{\frac{1-2s}{2}}$ (c.f. Theorem 1.2 and 1.3 in \cite{FKS82}, which is recalled in the preliminaries section). Then, we have for $q$ with $\frac{1}{p}+\frac{1}{q}=\frac{1}{2}$
\begin{align*}
\Vert x_{n+1}^{\frac{1-2s}{2}} \tilde{u}^f &\Vert_{L^2((\Omega_{+\delta} \setminus \Omega) \times (h,L))} \leq \Vert x_{n+1}^{\frac{1-2s}{q}} \Vert_{L^q((\Omega_{+\delta} \setminus \Omega) \times (h,L))} \Vert x_{n+1}^{\frac{1-2s}{p}} \tilde{u}^f \Vert_{L^p((\Omega_{+\delta} \setminus \Omega) \times (h,L))}\\
&\leq C L \big\vert \Omega_{+\delta} \setminus \Omega \big\vert^{1/q} \left( \int_0^L x_{n+1}^{1-2s} dx_{n+1} \right)^{1/q} \Vert x_{n+1}^{\frac{1-2s}{2}} \nabla \tilde{u}^f \Vert_{L^2(\R^{n+1}_+)}\\
&\leq C \delta^{p_\delta} L^{(2-2s)(\frac{1}{2}-\frac{1}{p}) +1} \Vert f \Vert_{\widetilde{H}^s(W)}.
\end{align*}
Let $\Omega_{\frac{1}{2}} \subset \R^n$ be open, bounded and Lipschitz such that $\Omega_{+\delta} \Subset \Omega_{\frac{1}{2}} \Subset \Omega_1$. For the weighted $\dot{H}^1$-norm we deduce by a similar line of inequalities as before
\begin{align*}
\Vert x_{n+1}^{\frac{1-2s}{2}} \nabla \tilde{u}^f \Vert_{L^2((\Omega_{+\delta} \setminus \Omega) \times (h,L))} \leq C \delta^{p_\delta} L^{(2-2s)(\frac{1}{2}-\frac{1}{p}) +1} \Vert x_{n+1}^{\frac{1-2s}{2}} \nabla^2 \tilde{u}^f \Vert_{L^2((\Omega_{\frac{1}{2}} \setminus \Omega) \times (h,L))}.
\end{align*}
Putting both inequalities together and applying the result of Lemma \ref{lem:estimate_second_order_grad} yields
\begin{align*}
\Vert \tilde{u}^f \Vert_{H^1((\Omega_{+\delta} \setminus \Omega) \times (h,L), x_{n+1}^{1-2s})} \leq C \delta^{p_\delta} (L^{p_L} + h^{-p_h}) \Vert f \Vert_{\widetilde{H}^s(W)}
\end{align*}
for some constants $p_\delta, p_h, p_L > 0$ depending on $n$, $s$. The constant $C$ depends on the stated quantities.
\end{proof}

\subsubsection{Two propagation of smallness results}

In this part of the text we provide a quantitative boundary-bulk-unique-continuation statement and a three-balls-inequality. The first one has been proven in \cite{RS20}, the latter one is an adaptation of a three-balls-inequality in \cite{BR25}.

\begin{prop}[Boundary-bulk unique continuation, Proposition 5.13 in \cite{RS20}]\label{prop:bbucp}
Let $\Omega \subset \R^n$ be open, non-empty, bounded and Lipschitz and let $W \subset \Omega_e$ be open, non-empty and Lipschitz. Let $\tilde{w} \in \dot{H}^1(\R^{n+1}_+, x_{n+1}^{1-2s})$ satisfy
\begin{align*}
-\nabla \cdot x_{n+1}^{1-2s} \nabla \tilde{w} = 0 \quad \text{in } \Omega_e \times \R_+,
\end{align*}
with $w(\cdot, 0) \in H^s(\R^n)$. Let $x^0 \in W \times \{0\}$ and let $r>0$ be such that $B_{4r}^+(x^0) \subset W \times \R_+$. Then, there exist constants $\alpha = \alpha(s,n) \in (0,1)$, $c = c(s,n) \in (0,\frac{1}{2})$ and $C = C(s,n) > 0$ such that
\begin{align*}
&\Vert \tilde{w} \Vert_{L^2(B_{cr}^+(x^0), x_{n+1}^{1-2s})}\\
&\qquad \leq C \left( \Vert \tilde{w} \Vert_{H^s(B_{3r}'(x^0))} + \Vert \lim_{x_{n+1}\to0} x_{n+1}^{1-2s} \partial_{n+1} \tilde{w} \Vert_{H^{-s}(B_{3r}'(x^0))} \right)^{1-\alpha}\\
&\qquad\qquad \times \left( \Vert \tilde{w} \Vert_{L^2(B_r^+(x^0), x_{n+1}^{1-2s})} + \Vert w \Vert_{H^s(B_{3r}'(x^0))} + \Vert \lim_{x_{n+1}\to0} x_{n+1}^{1-2s} \partial_{n+1} \tilde{w} \Vert_{H^{-s}(B_{3r}'(x^0))} \right)^{\alpha}.
\end{align*}
\end{prop}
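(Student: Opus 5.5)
The plan is to prove the estimate with a boundary Carleman estimate for the degenerate operator $-\nabla\cdot x_{n+1}^{1-2s}\nabla$ on a half ball, followed by the usual optimisation in the Carleman parameter. Since the statement is scale invariant, we first rescale and translate so that $x^0=0$ and $r=1$. The constants then depend only on $n$ and $s$.

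\textbf{Step 1 (splitting off the Cauchy data).} The boundary data $w=\tilde w(\cdot,0)$ and $\lim_{x_{n+1}\to 0}x_{n+1}^{1-2s}\partial_{n+1}\tilde w$ are only controlled in $H^s(B_3')$ and $H^{-s}(B_3')$, which are too weak to appear directly as boundary terms in a Carleman estimate. Let $\zeta\in C_c^\infty(B_3')$ be a cutoff with $\zeta=1$ on $B_2'$. We build $\tilde w_0\in H^1(B_2^+,x_{n+1}^{1-2s})$ as the sum of two pieces:
\begin{itemize}
\item the Caffarelli--Silvestre extension of $\zeta w$;
\item the weak solution of the Neumann problem with datum $\zeta\lim_{x_{n+1}\to 0}x_{n+1}^{1-2s}\partial_{n+1}\tilde w$ and zero Dirichlet datum on the lateral boundary.
\end{itemize}
By the energy and trace estimates we expect
\begin{align*}
\Vert \tilde w_0\Vert_{H^1(B_2^+,x_{n+1}^{1-2s})}\le C\Big(\Vert w\Vert_{H^s(B_3')}+\Vert \lim_{x_{n+1}\to0} x_{n+1}^{1-2s}\partial_{n+1}\tilde w\Vert_{H^{-s}(B_3')}\Big)=:C\eta.
\end{align*}
The remainder $\tilde w_1=\tilde w-\tilde w_0$ has vanishing Dirichlet datum on $B_2'\times\{0\}$. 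It satisfies $-\nabla\cdot x_{n+1}^{1-2s}\nabla\tilde w_1=F$, where $F=\nabla\cdot x_{n+1}^{1-2s}\nabla\tilde w_0$ lies in the dual of the weighted $H^1$ space with norm at most $C\eta$. Its weighted Neumann datum is controlled only in $H^{-s}$, by $C\eta$. It therefore suffices to prove the estimate for $\tilde w_1$.

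\textbf{Step 2 (Carleman estimate).} We use a Carleman weight $e^{\tau\phi}$ with $\phi=\phi(|x|)$ radially decreasing and pseudoconvex, for instance $\phi(x)=-\log|x|+\epsilon(\log|x|)^2$. The weighted conjugated operator is handled by the standard splitting into symmetric and antisymmetric parts. The degenerate weight is treated as in the Caffarelli--Silvestre literature: the term $x_{n+1}^{1-2s}$ commutes with tangential derivatives and produces only a favourable-sign contribution in the normal direction.

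This should give, for $\tau\ge\tau_0$, a bulk bound for $\tau^{3/2}\Vert e^{\tau\phi}\tilde w\Vert_{L^2(x_{n+1}^{1-2s})}$ and for $\tau^{1/2}\Vert e^{\tau\phi}\nabla\tilde w\Vert$ in terms of three quantities:
\begin{itemize}
\item the weighted dual norm of $e^{\tau\phi}F$;
\item boundary terms on $\{x_{n+1}=0\}$;
\item terms supported where the gradient of a cutoff $\chi$ is nonzero, which we take to be the annulus $B_{1}^+\setminus B_{1/2}^+$.
\end{itemize}
We apply it to $\chi\tilde w_1$, with $\chi$ supported in $B_1^+$ and equal to $1$ on $B_{1/2}^+$. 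Since the Dirichlet datum vanishes, the only boundary term is a pairing of the Neumann datum with $e^{2\tau\phi}\chi\tilde w_1$ on $\{x_{n+1}=0\}$. We bound this pairing by $H^{-s}\times H^s$ duality and then the trace inequality. This costs a factor $e^{C\tau}$ times $\eta$, times a quantity that is absorbed into the left-hand side.

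\textbf{Step 3 (optimisation).} On $B_c^+$ the weight is at least $e^{\tau\phi(c)}$, and on the cutoff region it is at most $e^{\tau\phi(1/2)}$. We remove the gradient terms in the cutoff region by Caccioppoli's inequality, at the cost of the norm $\Vert\tilde w\Vert_{L^2(B_1^+,x_{n+1}^{1-2s})}+\eta$. This yields
\begin{align*}
\Vert\tilde w\Vert_{L^2(B_c^+,x_{n+1}^{1-2s})}\le C\big(e^{C\tau}\eta+e^{-c'\tau}(\Vert\tilde w\Vert_{L^2(B_1^+,x_{n+1}^{1-2s})}+\eta)\big).
\end{align*}
Optimising in $\tau$ gives the claimed interpolation with some $\alpha\in(0,1)$. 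If the optimal $\tau$ falls below $\tau_0$, the bound holds trivially.

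\textbf{Main obstacle.} I expect the hardest step to be the boundary treatment in Step 2. One must verify that the $H^{-s}$ Neumann datum can be paired against the weighted function with at most exponential losses in $\tau$. This requires a trace inequality for $e^{\tau\phi}\chi\tilde w_1$ in the weighted space with constants growing like $e^{C\tau}$, which holds because $\phi$ is bounded on $\operatorname{supp}\chi$ away from the origin. Near the origin the weight is singular, so there I would shift the centre of $\phi$ slightly below the boundary, to $(0,-\kappa)$, to keep it smooth on $\overline{\R^{n+1}_+}$.
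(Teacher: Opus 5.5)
The paper does not prove this proposition; it imports it from \cite{RS20}. So your argument has to stand on its own, and as written it does not. Your overall shape (remove the Dirichlet part, use a Carleman estimate near the boundary, optimise in $\tau$) is reasonable, but Step 1 is wrong and Step 2 relies on an estimate that cannot hold.

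In Step 1, your second piece solves a Neumann problem with zero data on the curved boundary $\partial B_2\cap\R^{n+1}_+$. Its trace on $B_2'\times\{0\}$ is therefore generically nonzero, so $\tilde w_1=\tilde w-\tilde w_0$ does not have vanishing Dirichlet datum. Moreover, both pieces solve the homogeneous equation, so $F\equiv 0$. The conormal derivative of $\tilde w_0$ is $N$ plus that of the extension of $\zeta w$, which is a nonzero multiple of $(-\Delta')^s(\zeta w)$. No solution can match both Cauchy data, and $\tilde w_1$ ends up with exactly the same type of data as $\tilde w$. What actually produces the situation Step 2 assumes is to subtract only the Caffarelli--Silvestre extension of $\zeta w$. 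Then $\tilde w_1$ solves the homogeneous equation in $B_2^+$ and vanishes on $B_2'\times\{0\}$. Its conormal derivative $N_1$ satisfies $\|N_1\|_{H^{-s}(B_2')}\le C\eta$, since $(-\Delta')^s:H^s\to H^{-s}$ is bounded.

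The essential gap is in Step 2. If the trace of $\tilde w_1$ vanishes, the pairing of $N_1$ with the trace of $e^{2\tau\phi}\chi\tilde w_1$ is identically zero. Your Carleman estimate would then contain no term through which the Neumann datum enters, and such an estimate is false. Indeed, $x_{n+1}^{2s}$ solves $\nabla\cdot x_{n+1}^{1-2s}\nabla(x_{n+1}^{2s})=0$, has zero trace, and has conormal derivative $2s$. Apply your inequality to $\chi x_{n+1}^{2s}$ with a weight that is larger on $B_c^+$ than on $\supp\nabla\chi$, as Step 3 requires. Letting $\tau\to\infty$ would force $x_{n+1}^{2s}=0$ on $B_c^+$.

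The terms through which the Neumann datum really enters come from the multiplier $\nabla\phi\cdot\nabla u$ in the conjugated operator. They are quadratic in the normal derivative and carry the factor $\partial_{n+1}\phi$; schematically they look like $\tau\int\partial_{n+1}\phi\,x_{n+1}^{1-2s}|\partial_{n+1}u|^2$ on or near $\{x_{n+1}=0\}$. Since $\partial_{n+1}u\sim N_1x_{n+1}^{2s-1}$, they are $L^2$-type expressions in $N_1$, not a pairing, and $\|N_1\|_{H^{-s}}$ does not control them. For $s<1/2$ the boundary integral is not even finite. These terms become harmless exactly when $\partial_{n+1}\phi=0$ on $\{x_{n+1}=0\}$, i.e. for radial weights centred on the boundary. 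But then the Neumann datum drops out entirely (this is the odd-reflection symmetry), and you only get a three-balls inequality at a boundary point, whose inner ball is not small.

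Shifting the centre to $(0,-\kappa)$, as you propose, is precisely what makes $\partial_{n+1}\phi\neq0$. So the real obstacle is not an $e^{C\tau}$ trace bound. It is the need for a Carleman (or other quantitative unique continuation) estimate for the degenerate operator whose boundary contributions are controlled by the $H^s$-norm of the trace and the $H^{-s}$-norm of the conormal derivative. That is what the cited result from \cite{RS20} has to supply.

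Zero-extending $\tilde w_1$ to $\{x_{n+1}<0\}$ would turn $N_1$ into a source whose dual norm is bounded by $\|N_1\|_{H^{-s}}$, via exactly your duality-plus-trace argument. You would then, however, need propagation of smallness across the degenerate hyperplane. The paper's bulk three-balls inequality requires $r/x_{n+1}\le Q$ and so does not provide this.
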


Next, we provide a version of a three-balls-inequality. Similarly as in \cite{BR25}, the key point of this estimate is to obtain a controlled behaviour of the relevant constants in the vertical direction in which the equation is no longer translation invariant. Compared to the estimate from \cite{BR25}, in what follows below, we are more precise about the dependence of the constants $C$ on the outer radius $R = 4r$ under the additional assumption that the radius is uniformly bounded by the height, i.e. $\frac{r}{x_{n+1}} \leq Q$ for some $Q>0$. This will allow us to apply the three-balls-inequality for balls increasing in size when also their height is increasing simultaneously. The proof is very similar to the one in \cite{BR25}.

\begin{prop}[Three-balls-inequality]\label{prop:3ballsinequ2}
Let $\Omega \subset \R^n$ be open, bounded, non-empty and Lipschitz. Let $\tilde{w} \in \dot{H}^1(\R^{n+1}_+, x_{n+1}^{1-2s})$ satisfy
\begin{align*}
-\nabla \cdot x_{n+1}^{1-2s} \nabla \tilde{w} = 0 \quad \text{in } \Omega_e \times \R_+.
\end{align*}
Let $z = (z', z_{n+1}) \in \Omega_e \times \R_+$ and let $r>0$ be such that $B_{4r}(z) \subset \Omega_e \times \R_+$. Let $Q>0$ and assume that for all $x = (x',x_{n+1}) \in B_{4r}(z)$ it holds that $\frac{r}{x_{n+1}} \leq Q$.

Then there exist constants $C>0$ and $\alpha\in(0,1)$ such that
\begin{align*}
\Vert \tilde{w} \Vert_{L^2(B_{2r}(z), x_{n+1}^{1-2s})} \leq C \max\{r^4,r^{-3}\} \Vert \tilde{w} \Vert_{L^2(B_{r}(z), x_{n+1}^{1-2s})}^{\alpha} \Vert \tilde{w} \Vert_{L^2(B_{4r}(z), x_{n+1}^{1-2s})}^{1-\alpha}.
\end{align*}
Here the constant $C$ depends on $n$, $s$, $Q$ and the parameter $\alpha$ depends (mildly) on $r$, more precisely, $\alpha\in(b_0,b_1)$ for some constants $0<b_0<b_1<1$, which are independent of $r$.
\end{prop}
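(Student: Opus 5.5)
We reduce the estimate to the classical three-balls inequality for uniformly elliptic equations by rescaling, keeping track of how the constants depend on $r$ through the weight. Since $B_{4r}(z)\subset \Omega_e\times\R_+$, the weight $x_{n+1}^{1-2s}$ is smooth and positive on $B_{4r}(z)$. Write $z_{n+1}$ for the height of the centre; the ball lies in $\{x_{n+1}>0\}$, so $4r< z_{n+1}$. The hypothesis $r/x_{n+1}\le Q$ for all $x\in B_{4r}(z)$ then shows that on $B_{4r}(z)$ we have $x_{n+1}\in (z_{n+1}-4r,z_{n+1}+4r)$ and $x_{n+1}\ge r/Q$. Hence the ratio $\sup x_{n+1}/\inf x_{n+1}$ over the ball is bounded by $(z_{n+1}+4r)/(z_{n+1}-4r)$. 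Since $z_{n+1}-4r\ge r/Q$, this ratio is bounded by $1+8Q$. Consequently the weight is comparable to the constant $z_{n+1}^{1-2s}$ on $B_{4r}(z)$, with comparability constants depending only on $s$ and $Q$.

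Rescale by setting $\tilde w_r(y):=\tilde w(z+ry)$ for $y\in B_4(0)$. Then $\tilde w_r$ solves $-\nabla\cdot \rho\nabla\tilde w_r=0$ with $\rho(y)=(z_{n+1}+ry_{n+1})^{1-2s}/z_{n+1}^{1-2s}$. This coefficient satisfies two bounds uniform in $r$:
\[
c(s,Q)\le\rho\le C(s,Q),\qquad |\nabla\rho|\le C\,\frac{r}{z_{n+1}-4r}\le C(s,Q).
\]
So $\rho$ is uniformly Lipschitz and uniformly elliptic on $B_4$. The classical three-balls inequality for such divergence-form equations with Lipschitz coefficients then applies, for instance via Carleman estimates or frequency functions as in Garofalo–Lin. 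It gives
\[
\|\tilde w_r\|_{L^2(B_2)}\le C\|\tilde w_r\|_{L^2(B_1)}^{\alpha}\|\tilde w_r\|_{L^2(B_4)}^{1-\alpha},
\]
with $C$ and $\alpha$ depending only on $n$ and on the ellipticity and Lipschitz bounds, hence only on $n,s,Q$. This is the step needing most care: we must check that the Lipschitz bound on $\rho$ really is uniform, which is exactly where $r/x_{n+1}\le Q$ enters. If the Carleman weights are chosen depending on the local coefficient (as in \cite{BR25}), $\alpha$ may vary with $r$, but it stays in a fixed interval $(b_0,b_1)$ because only the uniform bounds are used.

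Finally, undo the scaling and reinsert the weight. Unweighted $L^2$ norms transform as $\|\tilde w_r\|_{L^2(B_\lambda)}=r^{-(n+1)/2}\|\tilde w\|_{L^2(B_{\lambda r}(z))}$. Since the same factor appears with total power one on both sides, it cancels. Replacing unweighted norms by weighted ones costs only factors $(\sup/\inf)$ of the weight to powers $\pm\frac12$, which are bounded by $C(s,Q)$. The resulting inequality actually holds with a constant independent of $r$, so the stated factor $\max\{r^4,r^{-3}\}$ is harmless. If one instead follows \cite{BR25} literally, with unnormalised Carleman constants, polynomial factors in $r$ appear from absorbing the cutoff and weight derivatives, of size $r^{-1}$ and $r^{-2}$ together with the weight ratio. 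We would bound these crudely by $\max\{r^4,r^{-3}\}$.
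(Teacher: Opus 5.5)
Your argument is correct, but it follows a genuinely different route from the paper. The paper never rescales. It proves a weighted Carleman estimate (Lemma \ref{lem:Carleman_estimate_bulk}) directly on $B_R(z)$ by conjugating with $x_{n+1}^{(1-2s)/2}$. This turns the operator into the flat Laplacian plus the potential $\frac{1-4s^2}{4}x_{n+1}^{-2}$, which is absorbed into the left-hand side using $R/x_{n+1}\le Q$; the absorption forces $\tau\ge\tau_0=C_1(1+\ln(R)^2)^{1/3}$. The usual cutoff argument then gives $\alpha=P_2/(P_1+P_2)$, fixed by the logarithmic weight $\widetilde{\phi}$. The factor $\max\{r^4,r^{-3}\}$ collects the non-scale-invariant normalisation of the Carleman weights, namely $(1+\ln(|x-z|)^2)^{-1/2}|x-z|^{\pm1}$ and the cutoff derivatives, together with $e^{c\tau_0}\le C\max\{r,r^{-1}\}$. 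You instead use the same hypothesis $r/x_{n+1}\le Q$ to make the rescaled coefficient $\rho$ uniformly elliptic and uniformly Lipschitz on $B_4$, and then quote the classical three-spheres inequality for divergence-form equations with Lipschitz coefficients. Your checks are all right: the weight ratio is at most $1+8Q$, $c(s,Q)\le\rho\le C(s,Q)$, $|\nabla\rho|\le C(s,Q)$, the factors $r^{-(n+1)/2}$ and $z_{n+1}^{(1-2s)/2}$ cancel because the exponents sum to one, and $\max\{r^4,r^{-3}\}\ge 1$. Your route buys a sharper, scale-invariant statement: $C$ and $\alpha$ depend only on $n,s,Q$ and not on $r$. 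The polynomial losses in $r$ that the paper carries through the chains of balls in Proposition \ref{prop:estimate_Omega_ext} therefore disappear, although they are harmless there since they are later absorbed into algebraic prefactors. The paper's route buys self-containedness: it uses only the Carleman estimate for the flat Laplacian, in parallel with \cite{BR25}, and gives explicit $b_0,b_1$ depending only on $\widetilde{\phi}$, rather than invoking Garofalo--Lin type theory as a black box. Your $\alpha$ depends on $Q$ as well, but since $Q$ is fixed uniformly in every application, this is immaterial.
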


The proof relies on the following Carleman estimate. The statement and the proof are very similar to the ones of Lemma 3.7 in \cite{BR25}. The important change compared to the result in \cite{BR25} is that we give a precise choice of $\tau_0$ in dependence of $R$ under the assumption that $\frac{R}{x_{n+1}}$ is uniformly bounded. Again the key difficulty is the lack of translation invariance of the equation in the vertical direction if the center point of the balls under consideration is not located at the boundary of $\R^{n+1}_+$.

\begin{lem}\label{lem:Carleman_estimate_bulk}
Let $z = (z',z_{n+1}) \in \Omega_e \times \R_+$ and $R>0$ be such that $B_{R}(z) \subset \Omega_e \times \R_+$. Assume that for some $Q>0$ it holds for all $x = (x',x_{n+1}) \in B_{R}(z)$ that $\frac{R}{x_{n+1}} \leq Q$. Set $\phi(x) = \widetilde{\phi}(\ln(\vert x-z \vert))$ with
\begin{align*}
\widetilde{\phi}(t) = - t + \frac{1}{10} \left( t \arctan(t) - \frac{1}{2} \ln( 1 + t^2) \right).
\end{align*}  
Assume that $\overline{w} \in H^1(\R^{n+1}_+, x_{n+1}^{1-2s})$ satisfies $\supp(\overline{w}) \subset B_{R}(z) \setminus B_r(z)$ for some $0 < r < R$.

Then for any $\tau \geq \tau_0 := C_1 (1+\ln(R)^2)^{\frac{1}{3}}$ we have
\begin{equation}\label{eq:Carleman_estimate}
\begin{aligned}
&\tau \left\Vert e^{\tau\phi} \left( 1 + \ln(\vert x-z \vert)^2 \right)^{-\frac{1}{2}} \vert x-z \vert^{-1} x_{n+1}^{\frac{1-2s}{2}} \overline{w} \right\Vert_{L^2(\Omega_e \times \R_+)}\\
&\hspace{20mm} + \left\Vert e^{\tau\phi} \left( 1 + \ln(\vert x-z \vert)^2 \right)^{-\frac{1}{2}} x_{n+1}^{\frac{1-2s}{2}} \nabla \overline{w} \right\Vert_{L^2(\Omega_e \times \R_+)}\\
&\qquad \leq C \tau^{-\frac{1}{2}} \left\Vert e^{\tau\phi} \vert x-z \vert x_{n+1}^{\frac{2s-1}{2}} \big( \nabla \cdot x_{n+1}^{1-2s} \nabla \overline{w} \big) \right\Vert_{L^2(\Omega_e \times \R_+)}
\end{aligned}
\end{equation}
for some constant $C>0$. Here, $C_1$ and $C$ only depend on $n$, $s$, $Q$ and the precise form of $\widetilde{\phi}$.
\end{lem}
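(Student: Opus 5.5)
We prove the Carleman estimate \eqref{eq:Carleman_estimate} by conjugation in logarithmic polar coordinates centred at $z$, following the approach of Lemma 3.7 in \cite{BR25}. The new point is to track how the lower bound $\tau_0$ depends on $R$. The assumption $R/x_{n+1}\le Q$ on $B_R(z)$ makes the weight $x_{n+1}^{1-2s}$ comparable to the constant $z_{n+1}^{1-2s}$ on the support of $\overline{w}$, with ratio bounded in terms of $Q$ and $s$. Moreover, its logarithmic derivatives are controlled: $|x-z|\,|\nabla \log x_{n+1}^{1-2s}| \le C(s)\, R/x_{n+1} \le C(s,Q)$. Hence the operator $\nabla\cdot x_{n+1}^{1-2s}\nabla$ is a variable-coefficient perturbation of $x_{n+1}^{1-2s}\Delta$. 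Its first order term $(1-2s)x_{n+1}^{-2s}\partial_{n+1}$ has a coefficient which, after scaling by $|x-z|$, is uniformly bounded by $C(s,Q)$.

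The first step is the change of variables $t=\ln|x-z|$, $\omega=(x-z)/|x-z|\in S^n$, together with the substitution $\overline{w}=x_{n+1}^{-\frac{1-2s}{2}}|x-z|^{-\frac{n-1}{2}}e^{-\tau\phi}v$. This turns the operator, multiplied by $|x-z|^2 x_{n+1}^{\frac{2s-1}{2}}$, into $\partial_t^2+\Delta_{S^n}$ plus lower-order terms. Those lower-order terms come from the weight, and their coefficients are bounded by $C(s,Q)$; in particular they do not depend on $\tau$ or $R$. Next we split the conjugated operator $L_\phi$ into symmetric and antisymmetric parts $S$ and $A$ in $L^2(dt\,d\omega)$ and expand $\|L_\phi v\|^2\ge \|Sv\|^2+\|Av\|^2+([S,A]v,v)$. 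The choice of $\widetilde{\phi}$ gives $\widetilde{\phi}'(t)\approx -1$ and the strict convexity $\widetilde{\phi}''(t)=\frac{1}{10}(1+t^2)^{-1}>0$. So the commutator yields the positive contribution $\tau^3\|(\widetilde{\phi}'')^{1/2}v\|^2$ up to errors $C\tau\|v\|^2$ and $C\tau\|\nabla v\|\,\|v\|$. A further gradient estimate, obtained by pairing $Sv$ with $\tau(\widetilde{\phi}'')v$, recovers the $\nabla\overline{w}$ term with the weight $(1+t^2)^{-1/2}$. Undoing the conjugation, using the comparability of the weight and $|x-z|$-scaling, then produces exactly the left and right weights appearing in \eqref{eq:Carleman_estimate}.

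The main obstacle is absorbing the perturbative terms, which are of size $C(Q)\tau^2\|v\|^2$ (one power of $\tau$ from the conjugation times bounded weight-derivative coefficients), into the convexity gain $\tau^3\int(1+t^2)^{-1}|v|^2$. On $\supp v\subset\{t\le \ln R\}$ we have $(1+t^2)^{-1}\ge (1+\ln(R)^2)^{-1}$ once $\ln R$ dominates (the region $t\to-\infty$ also requires care, handled as in \cite{BR25} by using the uniform bound on $|x-z|\nabla\log x_{n+1}$). For the absorption we need $\tau^3(1+\ln(R)^2)^{-1}\gtrsim C\tau^2$, and in the mixed terms, via Young's inequality, a relation of the type $\tau^{3}\gtrsim C(1+\ln(R)^2)$. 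This is guaranteed by $\tau\ge C_1(1+\ln(R)^2)^{1/3}$ with $C_1=C_1(n,s,Q)$. I would first try to run the commutator computation exactly as in \cite{BR25}, keeping every $R$-dependence explicit, and to verify that all weight-derived error coefficients depend only on $Q$. Once this holds, the remaining error terms are routine and are absorbed for $\tau\ge\tau_0$.
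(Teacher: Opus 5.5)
Your architecture is essentially the paper's. The factor $x_{n+1}^{-\frac{1-2s}{2}}$ in your substitution is exactly the paper's reduction $\overline{w}=x_{n+1}^{\frac{2s-1}{2}}\overline{u}$. After that the paper does not redo any commutator computation; it applies the known Carleman estimate for the Laplacian \cite{KT01} to $\overline{u}$.

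The gap is in the one genuinely new step, the bookkeeping that yields $\tau_0$. You claim the perturbation costs $C(Q)\tau^2\|v\|^2$, and that absorbing it into $\tau^3\int(1+t^2)^{-1}|v|^2$ is ensured by $\tau\geq C_1(1+\ln(R)^2)^{1/3}$. But $\tau^3(1+\ln(R)^2)^{-1}\gtrsim\tau^2$ means $\tau\gtrsim 1+\ln(R)^2$, which the $1/3$-threshold does not give. An error of size $\tau\|v\|$ is what a surviving first order term $b\cdot\nabla$ produces after conjugation with $e^{\tau\phi}$, and it really does only give $\tau_0\sim 1+\ln(R)^2$. What rescues the exponent $1/3$ is that your conjugation removes the first order term exactly:
\[
x_{n+1}^{\frac{2s-1}{2}}\,\nabla\cdot x_{n+1}^{1-2s}\nabla\big(x_{n+1}^{\frac{2s-1}{2}}\overline{u}\big)=\Delta\overline{u}+\frac{1-4s^2}{4}x_{n+1}^{-2}\overline{u}.
\]
So the only remainder is a zeroth order potential, which commutes with $e^{\tau\phi}$ and carries no power of $\tau$. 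Move it to the right hand side of the Laplace estimate by the triangle inequality. Absorption then requires, pointwise, $C\tau^{-1/2}|x-z|x_{n+1}^{-2}\leq\frac12\tau(1+\ln(|x-z|)^2)^{-1/2}|x-z|^{-1}$, i.e.\ $\tau^{3/2}\gtrsim|x-z|^2x_{n+1}^{-2}(1+\ln(|x-z|)^2)^{1/2}$. This is where the power $1/3$ comes from.

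Second, the region near the inner radius is not handled. The constants must be independent of $r$, but your inequality $(1+t^2)^{-1}\geq(1+\ln(R)^2)^{-1}$ on $\{t\leq\ln R\}$ fails as $t\to-\infty$. For $R<1$ it even goes the wrong way on the whole support, since there $t^2\geq\ln(R)^2$; this is the regime of the small balls used in the proof of Proposition \ref{prop:estimate_Omega_ext}. A merely uniform bound $C(s,Q)$ on the perturbation coefficient, which is what you get from $|x-z|\leq R$, therefore cannot be absorbed. You must keep the decay coming from $x_{n+1}\geq R/Q$, namely $|x-z|^2x_{n+1}^{-2}\leq Q^2e^{2(t-\ln R)}$. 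Since $\sup_{t\leq\ln R}e^{2(t-\ln R)}(1+t^2)^{1/2}\lesssim(1+\ln(R)^2)^{1/2}$, the absorption is uniform in $r$ once $\tau\geq C_1(1+\ln(R)^2)^{1/3}$.

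With these corrections the remaining step goes through as you indicate. Replacing $\nabla(x_{n+1}^{\frac{1-2s}{2}}\overline{w})$ by $x_{n+1}^{\frac{1-2s}{2}}\nabla\overline{w}$ costs $Cx_{n+1}^{-1}|\overline{u}|\leq CQ|x-z|^{-1}|\overline{u}|$, which is absorbed for $\tau\gtrsim Q$.
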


\begin{proof}
The proof relies on a reduction to the constant coefficient Laplacian. Indeed, for the constant coefficient Laplacian, it holds (see, for instance, \cite{KT01} and the references therein)
\begin{equation}\label{eq:Carleman_estimate_Laplace}
\begin{aligned}
&\tau \left\Vert e^{\tau\phi} \left( 1 + \ln(\vert x-z \vert)^2 \right)^{-\frac{1}{2}} \vert x-z \vert^{-1}  \overline{u} \right\Vert_{L^2(\Omega_e \times \R_+)} + \left\Vert e^{\tau\phi} \left( 1 + \ln(\vert x-z \vert)^2 \right)^{-\frac{1}{2}}  \nabla \overline{u} \right\Vert_{L^2(\Omega_e \times \R_+)}\\
&\qquad \leq C \tau^{-\frac{1}{2}} \left\Vert e^{\tau\phi} \vert x-z \vert \Big(  (-\Delta') - \partial_{n+1}^2 ) \Big) \overline{u} \right\Vert_{L^2(\Omega_e \times \R_+)}.
\end{aligned}
\end{equation}
We rewrite our equation with the weight as follows: For $\overline{w} = x_{n+1}^{\frac{2s-1}{2}} \overline{u}$ (note that for fixed $z$ and fixed $R$ the support of $\overline{w}$ is bounded away from $\R^n \times \{0\}$ and thus no singularities arise), we obtain by expanding the equation
\begin{align*}
x_{n+1}^{\frac{2s-1}{2}} \Big( x_{n+1}^{1-2s} (-\Delta') - \partial_{n+1} (x_{n+1}^{1-2s} \partial_{n+1}) \Big) \overline{w} 
&= x_{n+1}^{\frac{2s-1}{2}} \Big( x_{n+1}^{1-2s} (-\Delta') - \partial_{n+1} (x_{n+1}^{1-2s} \partial_{n+1}) \Big) x_{n+1}^{\frac{2s-1}{2}} \overline{u} \\
&=  ((-\Delta') - \partial_{n+1}^2) \overline{u} - \frac{1-4s^2}{4} x_{n+1}^{-2} \overline{u}.
\end{align*}
Thus, it follows that 
\begin{align*}
((-\Delta') - \partial_{n+1}^2) \overline{u} = x_{n+1}^{\frac{2s-1}{2}} \Big( x_{n+1}^{1-2s} (-\Delta') - \partial_{n+1} (x_{n+1}^{1-2s} \partial_{n+1}) \Big) \overline{w} + \frac{1-4s^2}{4} x_{n+1}^{-2} \overline{u}.
\end{align*}
Plugging this back into our Carleman estimate \eqref{eq:Carleman_estimate_Laplace} for the Laplacian, we obtain
\begin{equation}\label{eq:Carleman_estimate_Laplace1}
\begin{aligned}
\tau &\left\Vert e^{\tau\phi} \left( 1 + \ln(\vert x-z \vert)^2 \right)^{-\frac{1}{2}} \vert x-z \vert^{-1}  \overline{u} \right\Vert_{L^2(\Omega_e \times \R_+)} + \left\Vert e^{\tau\phi} \left( 1 + \ln(\vert x-z \vert)^2 \right)^{-\frac{1}{2}}  \nabla \overline{u} \right\Vert_{L^2(\Omega_e \times \R_+)}\\
& \leq C \tau^{-\frac{1}{2}} \left\Vert e^{\tau\phi} \vert x-z \vert \Big(  (-\Delta') - \partial_{n+1}^2 ) \Big) \overline{u} \right\Vert_{L^2(\Omega_e \times \R_+)} \\
& \leq C\tau^{-\frac{1}{2}} \left\Vert e^{\tau\phi} \vert x-z \vert \left( x_{n+1}^{\frac{2s-1}{2}} \Big( x_{n+1}^{1-2s} (-\Delta') - \partial_{n+1} (x_{n+1}^{1-2s} \partial_{n+1}) \Big) \overline{w} + \frac{1-4s^2}{4} x_{n+1}^{-2} \overline{u} \right) \right\Vert_{L^2(\Omega_e \times \R_+)}\\
& \leq C\tau^{-\frac{1}{2}} \left\Vert e^{\tau\phi} \vert x-z \vert x_{n+1}^{\frac{2s-1}{2}} \Big( x_{n+1}^{1-2s} (-\Delta') - \partial_{n+1} (x_{n+1}^{1-2s} \partial_{n+1}) \Big) \overline{w} \right\Vert_{L^2(\Omega_e \times \R_+)}\\
&\qquad + C\tau^{-\frac{1}{2}} \left\Vert e^{\tau\phi} \vert x-z \vert x_{n+1}^{-2} \overline{u} \right\Vert_{L^2(\Omega_e \times \R_+)}.
\end{aligned}
\end{equation}
By the uniform boundedness assumption, $\frac{R}{x_{n+1}} \leq Q$, we can choose $\tau_0 := C_1 (1+\ln(R)^2)^{\frac{1}{3}}$ (with the constant $C_1$ depending on the stated quantities) such that for all $\tau\geq\tau_0$ it holds
\begin{align*}
C \tau^{-\frac{1}{2}} R \max_{(x',x_{n+1}) \in B_R(z)} \{ x_{n+1}^{-2} \} \leq \frac{1}{2} \tau (1+\ln(R)^2)^{-\frac{1}{2}} R^{-1}.
\end{align*}
Thus, we can absorb the second right hand side contribution in \eqref{eq:Carleman_estimate_Laplace1} into the left hand side. Also, when we plug back into $\overline{u} = x_{n+1}^{\frac{1-2s}{2}} \overline{w}$, this leads to 
\begin{equation}\label{eq:Carleman_estimate_Laplace2}
\begin{aligned}
&\tau \left\Vert e^{\tau\phi} \left( 1 + \ln(\vert x-z \vert)^2 \right)^{-\frac{1}{2}} \vert x-z \vert^{-1} x_{n+1}^{\frac{1-2s}{2}} \overline{w} \right\Vert_{L^2(\Omega_e \times \R_+)}\\
&\qquad\qquad + \left\Vert e^{\tau\phi} \left( 1 + \ln(\vert x-z \vert)^2 \right)^{-\frac{1}{2}}  \nabla ( x_{n+1}^{\frac{1-2s}{2}} \overline{w} ) \right\Vert_{L^2(\Omega_e \times \R_+)}\\
&\qquad \leq C\tau^{-\frac{1}{2}} \left\Vert e^{\tau\phi} \vert x-z \vert x_{n+1}^{\frac{2s-1}{2}} \Big( x_{n+1}^{1-2s} (-\Delta') - \partial_{n+1} (x_{n+1}^{1-2s} \partial_{n+1}) \Big) \overline{w} \right\Vert_{L^2(\Omega_e \times \R_+)}.
\end{aligned}
\end{equation}
We note that (for abbreviation we now write $h = e^{\tau \phi} (1+\ln(\vert \cdot-z \vert)^2)^{-\frac{1}{2}}$)
\begin{equation}\label{eq:Carleman_estimate_Laplace3}
\begin{aligned}
\Vert h \nabla (x_{n+1}^{\frac{1-2s}{2}} \overline{w}) \Vert_{L^2(\Omega_e \times \R_+)} \geq \Vert h x_{n+1}^{\frac{1-2s}{2}} \nabla \overline{w} \Vert_{L^2(\Omega_e \times \R_+)} - C \Vert h x_{n+1}^{\frac{1-2s}{2}} x_{n+1}^{-1} \overline{w} \Vert_{L^2(\Omega_e \times \R_+)}.
\end{aligned}
\end{equation}
Again, we choose $\tau_0$ large enough (actually here it would even suffice to take $\tau_0 \sim O(1)$) such that for all $\tau\geq\tau_0$ it holds
\begin{align*}
C \max_{(x',x_{n+1}) \in B_R(z)} \{x_{n+1}^{-1}\} \leq \frac{1}{2} \tau R^{-1}.
\end{align*}
Finally, we can modify the estimate \eqref{eq:Carleman_estimate_Laplace2} by absorbing the error term in \eqref{eq:Carleman_estimate_Laplace3} into the first contribution on the left hand side to arrive at the desired inequality.
\end{proof}

With the Carleman estimate in hand, we proceed to the proof of the three-balls-inequality, Proposition \ref{prop:3ballsinequ2}. The proof is essentially the same as the one of Proposition 3.6 in \cite{BR25}, with the main difference being a more careful tracking of the dependence of the relevant constants.

\begin{proof}[Proof of Proposition \ref{prop:3ballsinequ2}]
We will denote by $A_{r_1,r_2} := B_{r_2}(z) \setminus \overline{B}_{r_1}(z)$ the annuli with outer radius $r_2$ and inner radius $r_1$ centered at $z$.\\
Define $\overline{w} := \tilde{w}\eta$, where $\eta(x) = \widetilde{\eta}(\vert x-z \vert)$ is a radially symmetric (centred at $z$) cut-off function such that
\begin{align*}
\widetilde{\eta}(t) = 0 \text{ for } t \in (0,\frac{r}{4}) \cup (\frac{7r}{2}, \infty), \quad \widetilde{\eta}(t) = 1 \text{ for } t \in (\frac{r}{2},3r), \quad \vert \widetilde{\eta}' \vert \leq \frac{C}{r}, \quad \vert \widetilde{\eta}'' \vert \leq \frac{C}{r^2}
\end{align*}
for some $C>0$. Since $\supp(\overline{w}) \subset B_{7r/2}(x^0) \setminus B_{r/4}(x^0)$, we can apply the Carleman estimate from Lemma \ref{lem:Carleman_estimate_bulk} to $\overline{w}$.

Expanding the term on the right hand side of the Carleman estimate \eqref{eq:Carleman_estimate} and using the equation for $\tilde{w}$, we arrive at
\begin{align*}
x_{n+1}^{\frac{2s-1}{2}} &\nabla \cdot \left( x_{n+1}^{1-2s} \nabla (\tilde{w}\eta) \right) = x_{n+1}^{\frac{1-2s}{2}} \tilde{w} \left( x_{n+1}^{2s-1} \nabla \cdot (x_{n+1}^{1-2s} \nabla \eta) \right) + 2 x_{n+1}^{\frac{1-2s}{2}} \nabla \tilde{w} \cdot \nabla \eta.
\end{align*}
We insert this into the Carleman inequality \eqref{eq:Carleman_estimate} (only keeping the first term on the left hand side of the inequality) and use that $\widetilde{\phi}$ is monotonically decreasing to infer for $\tau \geq \tau_0 \geq 1$
\begin{align*}
&e^{\tau \widetilde{\phi}(\ln(2r))} \Vert x_{n+1}^{\frac{1-2s}{2}} \tilde{w} \Vert_{L^2(A_{\frac{r}{2}, 2r})}\\
&\qquad \leq C \max\{1,r^3\} \bigg( e^{\tau\widetilde{\phi}(\ln(\frac{r}{4}))} \Vert x_{n+1}^{\frac{1-2s}{2}} \tilde{w} \left( x_{n+1}^{2s-1} \nabla \cdot (x_{n+1}^{1-2s} \nabla \eta) \right) \Vert_{L^2(A_{\frac{r}{4},\frac{r}{2}})}\\
&\hspace{30mm} + e^{\tau\widetilde{\phi}(\ln(3r))} \Vert x_{n+1}^{\frac{1-2s}{2}} \tilde{w} \left( x_{n+1}^{2s-1} \nabla \cdot (x_{n+1}^{1-2s} \nabla \eta) \right) \Vert_{L^2(A_{3r,\frac{7r}{2}})}\\
&\hspace{30mm} + e^{\tau	\widetilde{\phi}(\ln(\frac{r}{4}))} \Vert x_{n+1}^{\frac{1-2s}{2}} \nabla \tilde{w} \cdot \nabla \eta \Vert_{L^2(A_{\frac{r}{4},\frac{r}{2}})} + e^{\tau \widetilde{\phi}(\ln(3r))} \Vert x_{n+1}^{\frac{1-2s}{2}} \nabla \tilde{w} \cdot \nabla \eta \Vert_{L^2(A_{3r,\frac{7r}{2}})} \bigg)\\
&\qquad \leq C \max\{1,r^3\} \max\{1,r^{-2}\} \bigg( e^{\tau	\widetilde{\phi}(\ln(\frac{r}{4}))} \Vert x_{n+1}^{\frac{1-2s}{2}} \tilde{w} \Vert_{L^2(A_{\frac{r}{8},r})} + e^{\tau	\widetilde{\phi}(\ln(3r))} \Vert x_{n+1}^{\frac{1-2s}{2}} \tilde{w} \Vert_{L^2(A_{2r,4r})} \bigg)\\
&\qquad \leq C \max\{r^3,r^{-2}\} \bigg( e^{\tau \widetilde{\phi}(\ln(\frac{r}{4}))} \Vert x_{n+1}^{\frac{1-2s}{2}} \tilde{w} \Vert_{L^2(B_r)} + e^{\tau \widetilde{\phi}(\ln(3r))} \Vert x_{n+1}^{\frac{1-2s}{2}} \tilde{w} \Vert_{L^2(B_{4r})} \bigg).
\end{align*}
Here, for the first inequality we have estimated $(1+\ln(\vert x-z \vert)^2)^{-\frac{1}{2}} \vert x-z \vert^{-1} \geq C \min\{1,r^{-2}\}$ and that $\vert x-z \vert \leq C \max\{1,r\}$ for all $x \in B_{4r}(z)$. For the second inequality we used the bounds on the derivative of $\tilde{\eta}$ and we applied Caccioppoli's inequality (cf. Lemma \ref{lem:Caccioppoli_inequality}) to bound the gradient contributions. Additionally we used the assumption $\frac{r}{x_{n+1}} \leq Q$ and the bound on the derivative of $\eta$ to bound $x_{n+1}^{2s-1} \partial_{n+1} (x_{n+1}^{1-2s} \partial_{n+1} \eta)$ in terms of $\max\{1,r^{-2}\}$ and $Q$ (this is particularly relevant when $x_{n+1}$ and thus $r$ are small). Denote by $C_r$ the constant $C_r = C \max\{r^3,r^{-2}\}$. Dividing the previous estimate by $e^{\tau\widetilde{\phi}(\ln(2r))}$ yields
\begin{align*}
\Vert x_{n+1}^{\frac{1-2s}{2}} \tilde{w} \Vert_{L^2(A_{\frac{r}{2}, 2r})} \leq C_r \left( e^{\tau P_1} \Vert x_{n+1}^{\frac{1-2s}{2}} \tilde{w} \Vert_{L^2(B_r)} + e^{-\tau P_2} \Vert x_{n+1}^{\frac{1-2s}{2}} \tilde{w} \Vert_{L^2(B_{4r})} \right),
\end{align*}
where $P_1 := \widetilde{\phi}(\ln(\frac{r}{4})) - \widetilde{\phi}(\ln(2r))$ and $P_2 := -(\widetilde{\phi}(\ln(3r)) - \widetilde{\phi}(\ln(2r)))$. It holds true that $c^{-1} \leq P_1,P_2 \leq c$ for some constant $c > 0$ uniformly in $r$. In particular, since $P_1$ is uniformly bounded away from $0$, we can add $\Vert x_{n+1}^{\frac{1-2s}{2}} \tilde{w} \Vert_{L^2(B_{\frac{r}{2}})}$ to both sides to fill up the annuli. By possibly changing $C_r$ (note that the $r$ dependence, however, stays the same), we thus infer
\begin{align}\label{eq:preCarleman}
\Vert x_{n+1}^{\frac{1-2s}{2}} \tilde{w} \Vert_{L^2(B_{2r})} \leq C_r \left( e^{\tau P_1} \Vert x_{n+1}^{\frac{1-2s}{2}} \tilde{w} \Vert_{L^2(B_r)} + e^{-\tau P_2} \Vert x_{n+1}^{\frac{1-2s}{2}} \tilde{w} \Vert_{L^2(B_{4r})} \right).
\end{align}
We seek to find $\tau \geq \tau_0$ (recall that $\tau_0$ only depended on $n$, $s$, $Q$ and $r$) such that
\begin{align*}
C_re^{-\tau P_2} \Vert x_{n+1}^{\frac{1-2s}{2}} \tilde{w} \Vert_{L^2(B_{4r})} \leq \frac{1}{2} \Vert x_{n+1}^{\frac{1-2s}{2}} \tilde{w} \Vert_{L^2(B_{2r})}.
\end{align*}
We achieve this by choosing
\begin{align*}
\tau = \tau_0 - \frac{1}{P_2} \ln \left( \frac{\Vert x_{n+1}^{\frac{1-2s}{2}} \tilde{w} \Vert_{L^2(B_{2r})}}{ 2C_r\Vert x_{n+1}^{\frac{1-2s}{2}} \tilde{w} \Vert_{L^2(B_{4r})}} \right).
\end{align*}
From \eqref{eq:preCarleman} we then infer
\begin{align*}
\Vert x_{n+1}^{\frac{1-2s}{2}} \tilde{w} \Vert_{L^2(B_{2r})} &\leq C_r^{\frac{P_1+P_2}{P_2}} e^{P_1\tau_0} \frac{\Vert x_{n+1}^{\frac{1-2s}{2}} \tilde{w} \Vert_{L^2(B_{4r})}^{P_1/P_2}}{\Vert x_{n+1}^{\frac{1-2s}{2}} \tilde{w} \Vert_{L^2(B_{2r})}^{P_1/P_2}} \Vert x_{n+1}^{\frac{1-2s}{2}} \tilde{w} \Vert_{L^2(B_r)}.
\end{align*}
By rearranging terms and choosing $\alpha := \frac{P_2}{P_1+P_2}$ this yields
\begin{align*}
\Vert x_{n+1}^{\frac{1-2s}{2}} \tilde{w} \Vert_{L^2(B_{2r})} \leq C e^{\frac{P_1 P_2}{P_1 + P_2} \tau_0} \max\{r^3,r^{-2}\} \Vert x_{n+1}^{\frac{1-2s}{2}} \tilde{w} \Vert_{L^2(B_r)}^\alpha \Vert x_{n+1}^{\frac{1-2s}{2}} \tilde{w} \Vert_{L^2(B_{4r})}^{1-\alpha}.
\end{align*}
Since $\tau_0 = C_1 (1+\ln(r)^2)^{\frac{1}{3}}$ and since $e^{C(1+\ln(r)^2)^{1/3}} \leq C \max\{r,r^{-1}\}$, we finally arrive at the desired three-balls-inequality
\begin{align*}
\Vert x_{n+1}^{\frac{1-2s}{2}} \tilde{w} \Vert_{L^2(B_{2r})} \leq C \max\{r^4,r^{-3}\} \Vert x_{n+1}^{\frac{1-2s}{2}} \tilde{w} \Vert_{L^2(B_r)}^\alpha \Vert x_{n+1}^{\frac{1-2s}{2}} \tilde{w} \Vert_{L^2(B_{4r})}^{1-\alpha}
\end{align*}
and the proof is finished.
\end{proof}

\subsubsection{Main estimate of this subsection}

We turn to the main proposition of this subsection. It will eventually be applied to the difference of two solutions to our fractional Calder\'on problem (see the proof of Proposition \ref{prop:QUCP_1} below).

\begin{prop}\label{prop:estimate_Omega_ext}
Let $\Omega \subset \R^n$ be open, non-empty, bounded and Lipschitz such that $\R^n \setminus \Omega$ is connected and let $W \subset \Omega_e$ be open, bounded, non-empty and Lipschitz. Let $\theta_1\in(0,1)$ and assume that $a \in L^\infty(\R^n, \R^{n \times n}_{\text{sym}})$ satisfies the assumptions (A1) with the given $\theta_1$, and (A3). Let $\Omega_1$, $\Omega_{+\delta}$ be open, bounded, Lipschitz such that $\Omega \Subset \Omega_{+\delta} \Subset \Omega_1$ with $\dist(\Omega, \partial\Omega_{+\delta}) \in (c_1\delta, c_1^{-1}\delta)$ for some $\delta>0$ small, $c_1 \in (0,1)$. Let $\tilde{w} \in \dot{H}^1(\R^{n+1}_+, x_{n+1}^{1-2s})$ satisfy
\begin{equation*}
\begin{cases}
\begin{alignedat}{2}
-\nabla \cdot x_{n+1}^{1-2s} \tilde{a} \nabla  \tilde{w} &= 0 \quad &&\text{in } \R^{n+1}_+,\\
\tilde{w} &= 0 \quad &&\text{on } W \times \{0\}.
\end{alignedat}
\end{cases}
\end{equation*}
Assume that the following estimate holds for all $\bar{L} \geq 1$
\begin{align*}
\Vert \tilde{w} \Vert_{L^2(B_{\kappa \bar{L}}' \times (0,\kappa \bar{L}), x_{n+1}^{1-2s})} \leq C_1 \bar{L}^{\bar{p}} A
\end{align*}
for some $C_1, \bar{p}, A>0$ and some sufficiently large $\kappa>0$.

Let $0 < h \leq \frac{1}{2}$ and $2 \leq L < \infty$. There exist constants $c\in(0,1)$, $C>0$, $p>0$ and $\alpha \in (0,1)$ such that, if for some $\varepsilon \in (0,\frac{1}{2})$
\begin{align*}
\Vert \restr{\lim_{x_{n+1}\to0} x_{n+1}^{1-2s} \partial_{n+1} \tilde{w}}{W} \Vert_{H^{-s}(W)} \leq \varepsilon A.
\end{align*}
then it holds
\begin{align*}
\Vert \tilde{w} \Vert_{L^2((\Omega_1 \setminus \Omega_{+\delta}) \times (h,L), x_{n+1}^{1-2s})} \leq C A \Big( (&L^{p_L} +\delta^{-p_\delta}) \varepsilon^{c\alpha^{C(\log(L) + \log(\delta^{-1}))}}\\
&+ \big( h^{-p_h} + \delta^{-p_\delta} \big) \varepsilon^{c\alpha^{C(\log(h^{-1}) + \log(\delta^{-1}))}} \Big).
\end{align*}
The constants $c$, $C$, $p_L$, $p_h$ and $p_\delta$ only depend on $n$, $s$, $C_1$, $\bar{p}$, $c_1$, $\Omega$, $\Omega_1$  and $W$.
\end{prop}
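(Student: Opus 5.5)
The plan is a propagation-of-smallness argument: first transfer the smallness from $W\times\{0\}$ into the bulk, then propagate it along chains of balls to $(\Omega_1\setminus\Omega_{+\delta})\times(h,L)$. In $\Omega_e\times\R_+$ we have $a=\Id$ by (A3), so $\tilde w$ solves $-\nabla\cdot x_{n+1}^{1-2s}\nabla\tilde w=0$ there. Hence both Proposition \ref{prop:bbucp} and Proposition \ref{prop:3ballsinequ2} apply, as long as all balls stay inside $\Omega_e\times\R_+$.

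\textbf{Step 1 (boundary to bulk).} Fix $x^0\in W\times\{0\}$ and $r_0>0$, depending only on $W$, with $B^+_{4r_0}(x^0)\subset W\times\R_+$. Since $\tilde w=0$ on $W\times\{0\}$ and the Neumann data on $W$ are bounded by $\varepsilon A$, Proposition \ref{prop:bbucp} gives
\begin{align*}
\Vert\tilde w\Vert_{L^2(B^+_{cr_0}(x^0),x_{n+1}^{1-2s})}\le C(\varepsilon A)^{1-\alpha}\bigl(C_1A+\varepsilon A\bigr)^{\alpha}\le CA\varepsilon^{1-\alpha}.
\end{align*}
Here the upper bound uses the a priori hypothesis with $\bar L=1$ (taking $\kappa$ large enough that $B_{\kappa}'\times(0,\kappa)$ contains $B^+_{r_0}(x^0)$). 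This yields a ball $B_{\rho_0}(z_0)$, with $z_0$ at height $\sim r_0$, on which $\tilde w$ is $O(A\varepsilon^{1-\alpha})$.

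\textbf{Step 2 (chains of balls).} Cover $(\Omega_1\setminus\Omega_{+\delta})\times(h,L)$ by balls $B_r(z)$ satisfying $B_{4r}(z)\subset\Omega_e\times\R_+$ and $r\sim\min\{\delta, z_{n+1}\}$, so that $r/x_{n+1}\le Q$ holds with a fixed $Q$. Connect each such ball to $B_{\rho_0}(z_0)$ by a chain of balls where consecutive balls satisfy $B_{r_{k+1}}(z_{k+1})\subset B_{2r_k}(z_k)$. Connectedness of $\R^n\setminus\Omega$ lets the horizontal part of the chain go around $\Omega$ at distance $\gtrsim\delta$. The chain is built in three pieces:
\begin{itemize}
\item Near $z_0$, shrink the radius geometrically to $\sim\delta$. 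This takes $O(\log\delta^{-1})$ balls.
\item Move horizontally at height $\sim1$. Because the horizontal domain has bounded diameter and the radii are $\sim\delta$, this takes $O(\delta^{-n})$ balls. That is too many, so I would instead travel at scale $\sim1$ away from $\Omega$ and shrink to scale $\delta$ only near $\partial\Omega_{+\delta}$, using dyadic Whitney-type balls. This keeps the count at $C\log\delta^{-1}$.
\item Move vertically, either upward to height $L$ with radii growing geometrically ($O(\log L)$ balls) or downward to height $h$ with radii shrinking geometrically ($O(\log h^{-1})$ balls).
\end{itemize}
Apply Proposition \ref{prop:3ballsinequ2} iteratively along each chain. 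The big-ball norms $\Vert\tilde w\Vert_{L^2(B_{4r})}$ are bounded by $C_1\bar L^{\bar p}A$ with $\bar L\sim L$. The prefactors $\max\{r^4,r^{-3}\}$ are polynomial in $L,h^{-1},\delta^{-1}$. Writing $\eta_k$ for the relative size on the $k$-th ball, each step gives $\eta_{k+1}\le C_k\eta_k^{\alpha}$ with $\alpha\ge b_0$, and after $N$ steps $\eta_N\le(\prod C)\,\varepsilon^{c\alpha^N}$. With $N\le C(\log L+\log\delta^{-1})$ on the upper part and $N\le C(\log h^{-1}+\log\delta^{-1})$ on the lower part, this gives exactly the two exponents in the statement. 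The accumulated constants $\prod_k C_k^{\alpha^{N-k}}$ are dominated by the largest $C_k$ raised to a bounded geometric sum, hence are of the form $L^{p_L}+\delta^{-p_\delta}$ and $h^{-p_h}+\delta^{-p_\delta}$ respectively.

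\textbf{Step 3 (summation).} Cover the target region by at most polynomially many (in $L,h^{-1},\delta^{-1}$) balls and sum. The polynomial count is absorbed into the prefactors.

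I expect the main obstacle to be the bookkeeping in Step 2. The number of three-balls steps must stay logarithmic in $L,h^{-1},\delta^{-1}$, and the hypothesis $r/x_{n+1}\le Q$ must hold near the bottom, where the balls have to shrink like the height. This is why I would use geometric (Whitney-type) chains, with an $L$-dependent a priori bound supplying the large-ball norms.
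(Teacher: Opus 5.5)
Your overall plan matches the paper's: boundary-bulk unique continuation at $W\times\{0\}$ using the a priori bound with $\bar L=1$, then iterated three-balls inequalities along chains of logarithmic length, then a final summation over polynomially many balls. Your Steps 1 and 3 are fine, and so is controlling the accumulated constants by a geometric series using $\alpha_k\in(b_0,b_1)$. The gap is in the chain geometry of Step 2, which is where the substance of the proposition lies. You first move horizontally at height $\sim 1$ until you reach scale $\delta$ near $\partial\Omega_{+\delta}$, and only then move vertically ``with radii growing geometrically''. But Proposition \ref{prop:3ballsinequ2} needs $B_{4r}(z)\subset\Omega_e\times\R_+$. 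So at a horizontal position $z'$ with $\dist(z',\Omega)\sim\delta$, every admissible ball has $r\lesssim\delta$, and radii cannot grow there. Reaching a target at height $t\in(1,L)$ next to $\partial\Omega_{+\delta}$ this way costs $\sim t/\delta$ three-balls steps. Descending from height $1$ to heights $t\in(\delta,1)$ costs up to $\sim\delta^{-1}$ steps. You therefore only get $\varepsilon^{c\alpha^{CL/\delta}}$, not $\varepsilon^{c\alpha^{C(\log L+\log\delta^{-1})}}$. This is not cosmetic: the statement is not proved. Moreover, $L$ and $\delta^{-1}$ are later chosen as powers of $|\log\varepsilon|$, so $\varepsilon^{c\alpha^{CL/\delta}}\to 1$ in that regime and the logarithmic modulus would be lost.

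The fix is to reverse the order, as in the paper's Steps 1.1--1.3:
\begin{enumerate}
\item Ascend along balls whose radius and height grow by the same factor while the balls drift away from $\Omega$. This keeps $B_{4\rho_1^j r}(x_j)\subset\Omega_e\times\R_+$ and $r/x_{n+1}\le Q$ uniformly.
\item At each height, go around $\Omega$ with $O(1)$ balls of that size.
\item Only then approach $\partial\Omega_{+\delta}$ horizontally at the target height, through layers of geometrically shrinking balls. This costs $O(\log(t/\delta))$ steps. The number of balls per layer may grow geometrically, but only the chain length enters the exponent, and the total count stays polynomial.
\end{enumerate}
For heights in $(h,1)$ the balls must shrink towards $\partial\Omega_{+\delta}$ and towards $\{x_{n+1}=h\}$ simultaneously. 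The paper handles this by approaching from $\partial\Omega_1\times\{1\}$ along several directions within each slice, which gives chains of length $O(\log h^{-1}+\log\delta^{-1})$. Equivalently, route your chains through Whitney balls of radius $\sim\min\{\dist(x',\Omega),x_{n+1}\}$, not $\min\{\delta,x_{n+1}\}$, along chains realizing the quasi-hyperbolic distance in $\Omega_e\times\R_+$; these follow exactly this route.
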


Here, the constant $A$ plays the role of $\Vert f \Vert_{\widetilde{H}^s(W)}$ in the later application of this result, see the proof of Proposition \ref{prop:QUCP_1}. The idea of the argument for Proposition \ref{prop:estimate_Omega_ext} is to first transfer the smallness of the data on the boundary into the interior of $\R^{n+1}_+$ via the boundary-bulk unique continuation estimate, Proposition \ref{prop:bbucp}, and then to propagate the smallness along a sequence of balls using the three-balls-inequality, Proposition \ref{prop:3ballsinequ2}.

\begin{proof}
We divide the proof of Proposition \ref{prop:estimate_Omega_ext} into several steps. We first prove the estimate on $(\Omega_1 \setminus \Omega_{+\delta}) \times (1,L)$ and then on $(\Omega_1 \setminus \Omega_{+\delta}) \times (h,1)$.

\textit{Step 1: Estimate on $(\Omega_1 \setminus \Omega_{+\delta}) \times (1,L)$.} We divide this step itself into several substeps. A schematic visualization of this part of the proof, in particular of Step 1.1 and Step 1.3, is depicted in Figure \ref{fig:schematic_argument_propagate_smallness_1}.

Let $\kappa>0$ to be determined later. By assumption, we know that
\begin{align}\label{eq:proof_intermediate_integral_1}
K := \Vert \tilde{w} \Vert_{L^2(B_{\kappa L}' \times (0,\kappa L), x_{n+1}^{1-2s})} \leq C_1 L^{\bar{p}} A.
\end{align}

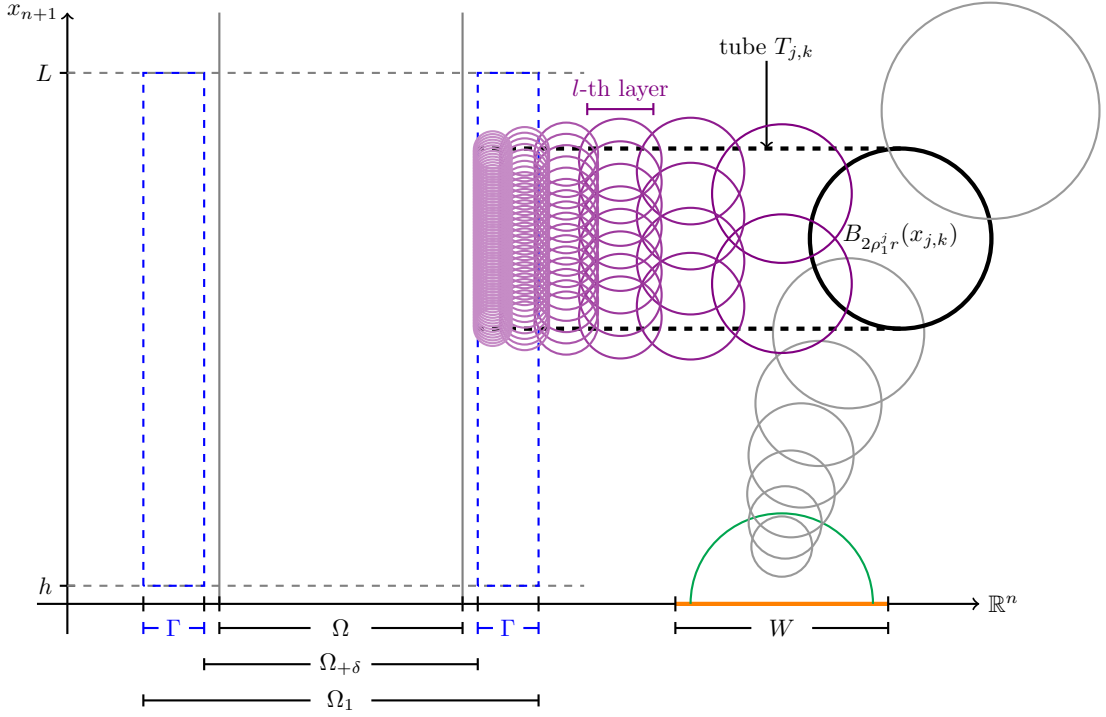
\begin{figure}
	\begin{center}
  	\begin{tikzpicture}[thick,scale=0.8,every node/.style={scale=0.9}]
  	\draw[->] (-0.5,0) -- (15,0) node[right] {$\R^n$};
  	\draw[->] (0,-0.5) -- (0,9.8) node[left] {$x_{n+1}$};
  	\draw (-1/8,8.8) -- (1/8,8.8) node[left=2mm] {$L$};
  	\draw (-1/8,0.3) -- (1/8,0.3) node[left=2mm] {$h$};
  	
  	\draw[gray] (2.5,0) -- (2.5,9.8); \draw[gray] (6.5,0) -- (6.5,9.8);
  	\draw[dashed,gray] (0,0.3) -- (8.5,0.3); \draw[dashed,gray] (0,8.8) -- (8.5,8.8);
  	\draw (2.5,-1/8) -- (2.5,1/8); \draw (6.5,-1/8) -- (6.5,1/8);
  	\draw (1.25,-1/8) -- (1.25,1/8); \draw (7.75,-1/8) -- (7.75,1/8);
  	\draw (2.25,-1/8) -- (2.25,1/8); \draw (6.75,-1/8) -- (6.75,1/8);
  	\draw[thick] (2.5,-0.4) -- (4,-0.4); \draw[thick] (5,-0.4) -- (6.5,-0.4); \draw[thick] (2.5,-0.5) -- (2.5,-0.3); \draw[thick] (6.5,-0.5) -- (6.5,-0.3);
  	\node at (4.5,-0.4) {$\Omega$};
  	\draw[thick] (2.25,-1) -- (4,-1); \draw[thick] (5,-1) -- (6.75,-1); \draw[thick] (2.25,-1.1) -- (2.25,-0.9); \draw[thick] (6.75,-1.1) -- (6.75,-0.9);
  	\node at (4.5,-1) {$\Omega_{+\delta}$};
  	\draw[thick] (1.25,-1.6) -- (4,-1.6); \draw[thick] (5,-1.6) -- (7.75,-1.6); \draw[thick] (1.25,-1.7) -- (1.25,-1.5); \draw[thick] (7.75,-1.7) -- (7.75,-1.5);
  	\node at (4.5,-1.6) {$\Omega_1$};
  	
  	\draw[ultra thick,orange] (10,0) -- (13.5,0); \draw (10,-1/8) -- (10,1/8); \draw (13.5,-1/8) -- (13.5,1/8);
  	\draw[thick] (10,-0.4) -- (11.25,-0.4); \draw[thick] (12.25,-0.4) -- (13.5,-0.4); \draw[thick] (10,-0.5) -- (10,-0.3); \draw[thick] (13.5,-0.5) -- (13.5,-0.3);
  	\node at (11.75,-0.4) {$W$};
  	
  	\draw[thick,blue] (1.25,-0.4) -- (1.5,-0.4); \draw[thick,blue] (2,-0.4) -- (2.25,-0.4); \draw[thick,blue] (1.25,-0.3) -- (1.25,-0.5); \draw[thick,blue] (2.25,-0.3) -- (2.25,-0.5);
  	\node[blue] at (1.75,-0.4) {$\Gamma$};
  	\draw[thick,blue] (6.75,-0.4) -- (7,-0.4); \draw[thick,blue] (7.5,-0.4) -- (7.75,-0.4); \draw[thick,blue] (6.75,-0.3) -- (6.75,-0.5); \draw[thick,blue] (7.75,-0.3) -- (7.75,-0.5);
  	\node[blue] at (7.25,-0.4) {$\Gamma$};
  	\draw[dashed,blue] (1.25,0.3) -- (1.25,8.8) -- (2.25,8.8) -- (2.25,0.3) -- (1.25,0.3);
  	\draw[dashed,blue] (6.75,0.3) -- (6.75,8.8) -- (7.75,8.8) -- (7.75,0.3) -- (6.75,0.3);
  	
  	%%%% set-up for Step 1.1 when propagating smallness upwards in x_{n+1}-direction
  	\draw[Green] (13.25,0) arc(0:180:1.5);  	
  	\tikzmath{\q1 = 1.2; \q2 = 1.35; \x' = 11.75;} % q1 and q2 play the role of rho_1 and rho_2, respectively
  	\draw[gray!80] (\x', 0.95) circle(0.5);
  	\foreach \j in {1,...,7}{
  		\pgfmathparse{\x'+\q1^(3*(\j-1))/18}
    		\xdef\x'{\pgfmathresult}
    		\tikzmath{\y = (\q2^\j)); \r = 2*(\q1^\j)/4);}
    		\ifthenelse{\j = 6}{ % then-part of "if" statement
    			\draw[ultra thick] (\x', \y) circle(\r); \node at (\x',\y) {$B_{2\rho_1^jr}(x_{j,k})$};
    			
    			%%%% for some specific j, we also depict the set-up of balls for Step 1.3
    			\tikzmath{\y1 = \y-\r; \y2 = \y1+2*\r; \q3 = 1.3;}
    			\draw[dashed, ultra thick] (\x',\y1) -- (6.75,\y1);
    			\draw[dashed, ultra thick] (\x',\y2) -- (6.75,\y2);
    			\draw[->] (11.5,9) -- (11.5,\y2); \node at (11.5,9.2) {tube $T_{j,k}$};
    			
    			\tikzmath{\posx = \x';}
    			\foreach \k in {1,...,6}{
    				\tikzmath{\Nk = 2^\k; \radius = \r*(\q3^(-\k)); \opacity = max(40, 100-11*(\k-1));}
    				\pgfmathparse{\posx - 1.7*\radius}
    				\xdef\posx{\pgfmathresult}
    				\foreach \l in {1,...,\Nk}{
    					%\tikzmath{\posx = \x'-2*(\k^(60/100)); \posy = \y1 + ((2*\l-1)/\Nk)*\r;}
    					\tikzmath{\posy = \y1 + ((2*\l-1)/\Nk)*\r;}
   					\draw[violet!\opacity] (\posx, \posy) circle(\radius);
    				}
    				
    				\ifthenelse{\k = 3}{
    					\tikzmath{\leftdash = \posx-(4/5)*\radius; \rightdash = \posx+(4/5)*\radius;}
    					\draw[violet] (\leftdash,8.1) -- (\leftdash,8.3); \draw[violet] (\rightdash,8.1) -- (\rightdash,8.3);
    					\draw[violet] (\leftdash,8.2) -- (\rightdash,8.2) node[above,midway] {$l$-th layer};
    				}
    				{} % else-part of "if" statement
    			}
    			
    			}
    			{\draw[gray!80] (\x', \y) circle(\r);} % else-part of "if" statement
  	}
  	\end{tikzpicture}
	\end{center}
	\caption{Schematic visualization of the propagation of smallness argument in the proof of Proposition \ref{prop:estimate_Omega_ext}. We seek to prove smallness of $\Vert \tilde{w} \Vert_{L^2((\Omega_1 \setminus \Omega_{+\delta}) \times (h,L), x_{n+1}^{1-2s})}$ on $(\Omega_1 \setminus \Omega_{+\delta}) \times (h,L)$ (blue part, in the figure: $\Gamma := \Omega_1 \setminus \Omega_{+\delta}$). In particular, Steps 1.1 and 1.3 of the proof are depicted here. By assumption, we have smallness of the data on the boundary $W \times \{0\}$ (orange part). Applying the quantitative boundary-bulk unique continuation argument, Proposition \ref{prop:bbucp}, we transfer the smallness into the bulk, more precisely, to the $(n+1)$-dimensional (green) half-ball. Then, we propagate the smallness upwards along a sequence of balls increasing exponentially in size, fading away from $\Omega \times (0,\infty)$ (gray balls), using Proposition \ref{prop:3ballsinequ2}. In Step 1.3, we then propagate the smallness back towards $\Omega \times (h,L)$ along a sequence of balls, which cover the tube $T_{j,k}$ and which decrease exponentially in size (violet balls). For visualization, sizes and positions of balls are not true-to-scale.}
\label{fig:schematic_argument_propagate_smallness_1}
\end{figure}

\textit{Step 1.1: Propagation of smallness upwards in $x_{n+1}$-direction.} Let $\bar{x} \in W \times \{0\}$, $\bar{r}>0$ be such that $B_{4\bar{r}}^+(\bar{x}) \subset W \times \R_+$. Using the assumption on the boundary data and the a-priori estimate for the interior (with $\bar{L}=1$), the boundary-bulk unique continuation result, Proposition \ref{prop:bbucp}, implies that for some constants $c \in (0,\frac{1}{2})$, $\bar{\alpha}\in(0,1)$ and $C>0$
\begin{align}\label{eq:proof_intermediate_integral_2}
\Vert \tilde{w} \Vert_{L^2(B_{c\bar{r}}^+(\bar{x}),x_{n+1}^{1-2s})} \leq C A \varepsilon^{1-\bar{\alpha}}.
\end{align}

Let $r>0$ and $x_0 = (x_0', 5r) \in W \times \R_+$ be such that $B_{4r}(x_0) \subset B_{c\bar{r}}^+(\bar{x})$. Note that $r$ only depends on $n$, $s$ and $W$. Let $\rho_1 \in (1,2)$ and $x_j = (x_j', 5\rho_1^j r)$ be such that $B_{\rho_1^j r}(x_j) \subset B_{2\rho_1^{j-1}r}(x_{j-1})$ and $B_{4\rho_1^j r}(x_j) \subset \Omega_e \times \R_+$ for all $j \in \{1,2,\dots,N_j\}$. In other words, in each step the height $x_{n+1}$ and the radius increase with the same factor. Note that, since $B_{4\rho_1^j r}(x_j) \subset \Omega_e \times \R_+$, the balls fade away from $\Omega$ (see also Figure \ref{fig:schematic_argument_propagate_smallness_1}). Moreover, we can choose $Q>0$ uniformly such that for all $j\in\{1,2,\dots,N_j\}$ and all $y = (y',y_{n+1}) \in B_{4\rho_1^j r}(x_j)$ it holds
\begin{align*}
\frac{\rho_1^j r}{y_{n+1}} \leq \frac{\rho_1^j r}{5\rho_1^j r - 4\rho_1^j r} \leq Q.
\end{align*}
Here, $N_j$ is chosen minimal such that $5\rho_1^{N_j} r \geq L$. In particular, it holds that $N_j \sim \log(L)$.

Applying first the boundary-bulk unique continuation result, Proposition \ref{prop:bbucp}, and then iteratively the three-balls-inequality, Proposition \ref{prop:3ballsinequ2}, along the given sequence of balls, we infer
\begin{equation}
\begin{aligned}\label{eq:proof_intermediate_integral_3}
\Vert \tilde{w} &\Vert_{L^2(B_{2\rho_1^j r}(x_j),x_{n+1}^{1-2s})} \leq C \rho_1^{4j} \Vert \tilde{w} \Vert_{L^2(B_{\rho_1^j r}(x_j),x_{n+1}^{1-2s})}^{\alpha_j} \Vert \tilde{w} \Vert_{L^2(B_{4\rho_1^j r}(x_j),x_{n+1}^{1-2s})}^{1-\alpha_j}\\
&\qquad \leq C L^4 K^{1-\alpha_j} \Vert \tilde{w} \Vert_{L^2(B_{2\rho_1^{j-1} r}(x_{j-1}),x_{n+1}^{1-2s})}^{\alpha_j}\\
&\qquad \leq C L^{4 (1 + \sum_{\ell_1 = 0}^{j-2} \prod_{\ell_2=0}^{\ell_1} \alpha_{j-\ell_2} )} K^{\sum_{\ell_1=0}^{j-1} (1-\alpha_{j-\ell_1}) \prod_{\ell_2=0}^{\ell_1 - 1} \alpha_{j-\ell_2}} \Vert \tilde{w} \Vert_{L^2(B_{c\bar{r}}^+(\bar{x}),x_{n+1}^{1-2s})}^{\prod_{\ell_2=0}^{j-1} \alpha_{j-\ell_2}}\\
&\qquad \leq C L^{p_L} A \varepsilon^{(1-\bar{\alpha}) \prod_{\ell_2=0}^{j-1} \alpha_{j-\ell_2}}\\
&\qquad \leq C L^{p_L} A \varepsilon^{(1-\bar{\alpha}) \alpha^j}.
\end{aligned}
\end{equation}
Here, for the second inequality we have used \eqref{eq:proof_intermediate_integral_1} and that $\rho_1^j \leq CL$ for all $j\in\{1,2,\dots,N_j\}$, the third inequality is a consequence of the iterative application of the three-balls-inequality and for the fourth one we used the estimates \eqref{eq:proof_intermediate_integral_1} and \eqref{eq:proof_intermediate_integral_2}. For the last inequality we used that $\alpha_j \in (b_0,b_1)$ for all $j\in\{1,\dots,N_j\}$ for some uniform $0<b_0<b_1<1$, and denoted the lower bound by $\alpha := \min_{j\in\{1,\dots,N_j\}} \alpha_j$. We remark that by the uniform choice of $Q>0$ the constant $C$ is independent of $j$. Also the exponent $p_L>0$ is independent of $j$ since $\alpha_j \in (b_0,b_1)$ for all $j\in\{1,\dots,N_j\}$ (compare to a geometric sum).

\textit{Step 1.2: Propagation of smallness to a ring ``surrounding'' $\Omega \times (1,L)$.} Next, we propagate the smallness from the previous step around the domain $\Omega \times (1,L)$. For this, for fixed $j\in\{1,\dots,N_j\}$, we consider a sequence of points $x_{j,k} = (x_{j,k}',5\rho_1^j r)$, $k\in\{0,1,\dots,N_k\}$, (i.e. the $x_{n+1}$-coordinate for each point $x_{j,k}$ is the same as for the point $x_j$, and also the radius will stay the same) such that $x_{j,0}=x_j$, $B_{\rho_1^jr}(x_{j,k}) \subset B_{2\rho_1^jr}(x_{j,k-1})$, $B_{4\rho_1^jr}(x_{j,k}) \subset \Omega_e \times \R_+$ and such that $B_{2\rho_1^jr}(x_{j,N_k}) \cap B_{2\rho_1^jr}(x_{j,0}) \neq \emptyset$ (the latter condition ensuring that we ``surround'' the domain $\Omega \times (1,L)$). Note that also for all these balls, by construction and since the estimate is true for $B_{4\rho_1^jr}(x_{j})$, it holds that for all $y = (y',y_{n+1}) \in B_{4\rho_1^jr}(x_{j,k})$ we have
\begin{align*}
\frac{\rho_1^j r}{y_{n+1}} \leq Q.
\end{align*}
Moreover, we also note that $N_k$ can be chosen of order $N_k \sim O(1)$.

Applying the three-balls-inequality, Proposition \ref{prop:3ballsinequ2}, iteratively as above, we infer
\begin{align*}
\Vert \tilde{w} \Vert_{L^2(B_{2\rho_1^jr}(x_{j,k}),x_{n+1}^{1-2s})} \leq C L^{p_L} A \varepsilon^{(1-\bar{\alpha})\alpha^{j+k}},
\end{align*}
with the constant $C$ depending on the same quantities as in \eqref{eq:proof_intermediate_integral_3} and $p_L$ can be chosen as in \eqref{eq:proof_intermediate_integral_3} (again compare to a geometric series).

\textit{Step 1.3: Propagation of smallness towards $(\Omega_1 \setminus \Omega_{+\delta}) \times (1,L)$.} Since, $B_{4\rho_1^jr}(x_{j,k}) \subset \Omega_e \times \R_+$, at this point we can now ensure that $\dist(B_{2\rho_1^jr}(x_{j,k}), \Omega) \sim \rho_1^j r$. In order to estimate the contribution
 $\Vert \tilde{w} \Vert_{L^2((\Omega_1 \setminus \Omega_{+\delta}) \times (1,L), x_{n+1}^{1-2s})}$ we need to propagate the smallness back towards $(\Omega_1 \setminus \Omega_{+\delta}) \times (1,L)$. We do this by a sequence of three-balls-inequalities along balls decreasing exponentially in size and by that approaching and covering $(\Omega_1 \setminus \Omega_{+\delta}) \times (1,L)$ (see also Figure \ref{fig:schematic_argument_propagate_smallness_1}).

Denote by $T_{j,k}$ the tube corresponding to moving the ball $B_{2\rho_1^j r}(x_{j,k})$ perpendicular to the $x_{n+1}$-direction and towards $\Omega$ such that $(\Omega_1 \setminus \Omega_{+\delta}) \times (1,L) \subset \bigcup_{j=1}^{N_j} \bigcup_{k=0}^{N_k} T_{j,k}$. By construction, the diameter of such a tube $T_{j,k}$ will be of the order $O(\rho_1^jr)$.

For each tube, we again apply the three-balls-inequality, Proposition \ref{prop:3ballsinequ2}, iteratively. For this, we think of the tubes to be (essentially) divided into $N_l$ layers of balls with each layer having at most $m$-times (where $m$ is some fixed number) as many balls as the previous layer and the radius of the balls in each layer is decreasing exponentially. More precisely, we consider a sequence of centers of balls $x_{j,k,l_{i}}$, where $l\in\{0,1\dots,N_l\}$ denotes the layer and the sub-index $i\in\{1,2,\dots,m^l\}$ denotes one of the (at most) $m^l$-balls in the $l$-th layer, such that $x_{j,k,0_1} = x_{j,k}$, $B_{\rho_1^j \rho_2^{-l} r}(x_{j,k,l_{i_0}}) \subset \bigcup_{i=1}^{m^{l-1}} B_{2\rho_1^j \rho_2^{-(l-1)}r}(x_{j,k,(l-1)_i})$, $B_{4\rho_1^j \rho_2^{-l} r}(x_{j,k,l_i}) \subset \Omega_e \times \R_+$ for all $l\in\{1,\dots,N_l\}$ and $i_0 \in \{1,\dots,m^l\}$. Here $N_l \in \N$, $\rho_2 \in (1,2)$ and the centers of the balls, $x_{j,k,l_i}$, are chosen appropriately such that $(\Omega_1 \setminus \Omega_{+\delta}) \times (1,L) \subset \bigcup_{j=1}^{N_j} \bigcup_{k=0}^{N_k} \bigcup_{l=0}^{N_l} \bigcup_{i=1}^{m^l} B_{2\rho_1^j \rho_2^{-l} r}(x_{j,k,l_i})$. In particular, we can choose $N_l$ such that $N_l \sim O(\log(\frac{\rho_1^jr}{\delta})) \lesssim O(\log(L) + \log(\delta^{-1}))$.

Denote by $r_{j,k,l} := \rho_1^j \rho_2^{-l} r$, the radius of the balls in the $l$-th layer of the tube $T_{j,k}$. As a consequence of the above construction and an iterative application of the three-balls-inequality along the sequence of balls, we infer for the $l$-th layer of the tube $T_{j,k}$
\begin{equation}\label{eq:proof_intermediate_integral_4}
\begin{aligned}
\Vert \tilde{w} \Vert_{L^2(l \text{-th layer}, T_{j,k}, \ x_{n+1}^{1-2s})} &\leq C \max\{r_{j,k,l}^4, r_{j,k,l}^{-3}\} K^{1-\alpha_{j,k,l}} \Vert \tilde{w} \Vert_{L^2( (l-1) \text{-th layer}, T_{j,k}, \ x_{n+1}^{1-2s})}^{\alpha_{j,k,l}}\\
&\leq C L^{p_L} \max\{L^{p_L}, \delta^{-p_\delta}\} A \varepsilon^{(1-\bar{\alpha}) \alpha^{j+k+l}}\\
&\leq C (L^{p_L} + \delta^{-p_\delta}) A \varepsilon^{(1-\bar{\alpha}) \alpha^{j+k+l}}.
\end{aligned}
\end{equation}
For the second to last inequality we have used that $\delta \lesssim r_{j,k,l} \lesssim L$ for all $j\in\{1,\dots,N_j\}$, $k\in\{0,\dots,N_k\}$, $l\in\{0,\dots,N_l\}$. The constants $C$, $p_L$ and $p_\delta$ depend on the same quantities as in \eqref{eq:proof_intermediate_integral_3} (again compare with a geometric sum) and $\alpha$ is chosen similarly as before.

\textit{Step 1.4: Conclusion.} First, we now specify the choice of $\kappa$. We choose $\kappa>0$ so large such that all the balls that were part of the above arguments are contained in $B_{\kappa L}' \times (0,\kappa L)$. More precisely, $\kappa$ is so large such that
\begin{align*}
\bigcup_{j=1}^{N_j} \bigcup_{k=0}^{N_k} \bigcup_{l=0}^{N_l} \bigcup_{i=1}^{m^l} B_{4\rho_1^j \rho_2^{-l} r}(x_{j,k,l_i}) \subset B_{\kappa L}' \times (0,\kappa L).
\end{align*}
Note that the choice of $\kappa$ is independent of the height $L$.

Next, let $l_0^j \in \{0,1,\dots,N_j\}$ be so large such that 
\begin{align*}
(\Omega_1 \setminus \Omega_{+\delta}) \times (1,L) \subset \bigcup_{j=1}^{N_j} \bigcup_{k=0}^{N_k} \bigcup_{l=l_0^j}^{N_l} \{ l \text{-th layer of } T_{j,k} \}.
\end{align*}
 Note that we can choose $l_0^j$ such that $N_l - l_0^j \sim O(\log(\delta^{-1}))$. Using inequality \eqref{eq:proof_intermediate_integral_4} we then estimate
\begin{equation}\label{eq:proof_intermediate_integral_5}
\begin{aligned}
\Vert \tilde{w} \Vert_{L^2((\Omega_1 \setminus \Omega_{+\delta}) \times (1,L),x_{n+1}^{1-2s})} &\leq \sum_{j=1}^{N_j} \sum_{k=0}^{N_k} \sum_{l=l_0^j}^{N_l} \Vert \tilde{w} \Vert_{L^2(l \text{-th layer}, T_{j,k}, \ x_{n+1}^{1-2s})}\\
&\leq C (L^{p_L} +\delta^{-p_\delta}) A \sum_{j=1}^{N_j} \sum_{k=0}^{N_k} \sum_{l=l_0^j}^{N_l} \varepsilon^{(1-\bar{\alpha}) \alpha^{j+k+l}}\\
&\leq C (L^{p_L} +\delta^{-p_\delta}) N_j N_k (N_l - l_0^j) A \varepsilon^{(1-\bar{\alpha})\alpha^{N_j + N_k + N_l}}\\
&\leq C (L^{p_L} +\delta^{-p_\delta}) \log(L) \log(\delta^{-1}) A \varepsilon^{c\alpha^{C (\log(L) + \log(\delta^{-1}))}}\\
&\leq C(L^{p_L} +\delta^{-p_\delta}) A \varepsilon^{c\alpha^{C (\log(L) + \log(\delta^{-1}))}}.
\end{aligned}
\end{equation}
Here, for the second-to-last inequality we have used that $N_j \sim \log(L)$, $N_k \sim O(1)$, $N_l \lesssim O(\log(L) + \log(\delta^{-1}))$ and $N_l - l_0^j \sim O(\log(\delta^{-1}))$. In the last step, we incorporated the logarithmic terms in the algebraic terms (changing the exponents). This finishes the first step.

\textit{Step 2: Estimate on $(\Omega_1 \setminus \Omega_{+\delta}) \times (h,1)$.} For a schematic visualization of this step see Figure \ref{fig:schematic_argument_propagate_smallness_2}. In this second step it suffices to use the a-priori estimate
\begin{align*}
K := \Vert \tilde{w} \Vert_{L^2(B_R'\times(0,2),x_{n+1}^{1-2s})} \leq C_1 A,
\end{align*}
where $R>0$ is chosen large enough such that $\Omega \cup W \subset B_R'(0)$ and such that all the balls which play a role in the following argument are contained in $B_R'\times(0,2)$.

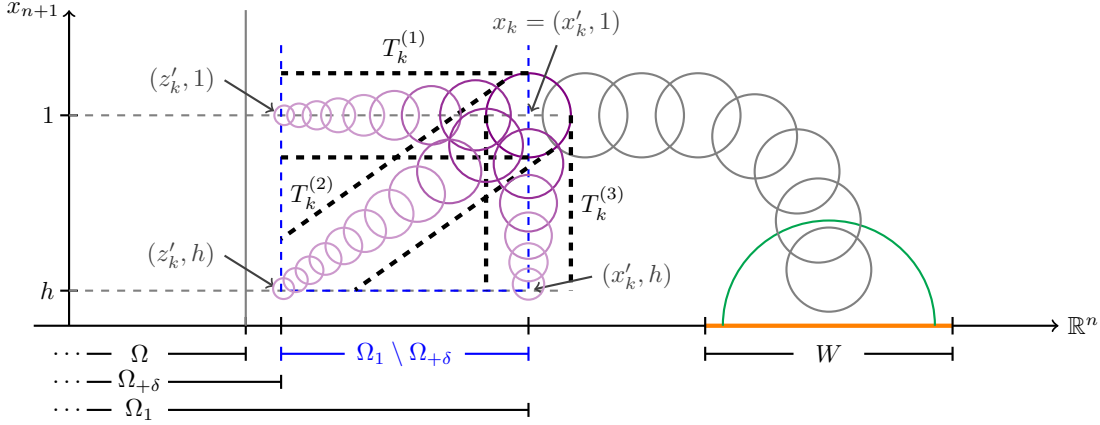
\begin{figure}
	\begin{center}
  	\begin{tikzpicture}[thick,scale=0.93,every node/.style={scale=0.93}]
  	\draw[->] (-0.5,0) -- (14,0) node[right] {$\R^n$};
  	\draw[->] (0,0) -- (0,4.5) node[left] {$x_{n+1}$};
  	\draw (-1/8,3) -- (1/8,3) node[left=2mm] {$1$};
  	\draw (-1/8,0.5) -- (1/8,0.5) node[left=2mm] {$h$};
  	\draw[gray,dashed] (0,0.5) -- (7,0.5);
  	\draw[gray,dashed] (0,3) -- (7,3);
  	
  	\draw (2.5,-1/8) -- (2.5,1/8); \draw[gray] (2.5,0) -- (2.5,4.5);
  	\draw (0.25,-0.4) -- (0.6,-0.4); \draw (1.4,-0.4) -- (2.5,-0.4); \draw (2.5,-0.3) -- (2.5,-0.5); \node at (1,-0.4) {$\Omega$}; \node at (0,-0.4) {$\dots$};
  	
  	\draw[blue] (3,-0.3) -- (3,-0.5); \draw[blue] (3,-0.4) -- (3.9,-0.4); \draw[blue] (5.6,-0.4) -- (6.5,-0.4); \draw[blue] (6.5,-0.3) -- (6.5,-0.5); \node[blue] at (4.75,-0.4) {$\Omega_1 \setminus \Omega_{+\delta}$};
  	
  	\draw (3,-1/8) -- (3,1/8);
  	\draw (0.25,-0.8) -- (0.6,-0.8); \draw (1.4,-0.8) -- (3,-0.8); \draw (3,-0.7) -- (3,-0.9); \node at (1,-0.8) {$\Omega_{+\delta}$}; \node at (0,-0.8) {$\dots$};
  	
  	\draw (6.5,-1/8) -- (6.5,1/8);
  	\draw (0.25,-1.2) -- (0.6,-1-.2); \draw (1.4,-1.2) -- (6.5,-1.2); \draw (6.5,-1.1) -- (6.5,-1.3); \node at (1,-1.2) {$\Omega_1$}; \node at (0,-1.2) {$\dots$};
  	
  	\draw[ultra thick,orange] (9,0) -- (12.5,0); \draw (9,-1/8) -- (9,1/8); \draw (12.5,-1/8) -- (12.5,1/8);
  	\draw[thick] (9,-0.4) -- (10.25,-0.4); \draw[thick] (11.25,-0.4) -- (12.5,-0.4); \draw[thick] (9,-0.5) -- (9,-0.3); \draw[thick] (12.5,-0.5) -- (12.5,-0.3);
  	\node at (10.75,-0.4) {$W$};
  	\draw[Green] (12.25,0) arc(0:180:1.5);
  	
  	\draw[blue,dashed] (3,4) -- (3,0.5) -- (6.5,0.5) -- (6.5,4);
  	
  	\draw[gray] (10.75,0.8) circle(0.6); \draw[gray] (10.6,1.5) circle(0.6); \draw[gray] (10.3,2.2) circle(0.6); \draw[gray] (9.7,2.7) circle(0.6); \draw[gray] (8.9,3) circle(0.6); \draw[gray] (8.1,3) circle(0.6); \draw[gray] (7.3,3) circle(0.6); \draw[violet] (6.5,3) circle(0.6);
  	
  	\draw[black,ultra thick,dashed] (6.5,3.6) -- (3,3.6) node[midway,above] {$T_k^{(1)}$}; \draw[black,ultra thick,dashed] (6.5,2.4) -- (3,2.4);
  	\draw[black,ultra thick,dashed] (5.9,3) -- (5.9,0.5); \draw[black,ultra thick,dashed] (7.1,3) -- (7.1,0.5) node[midway,right] {$T_k^{(3)}$};
  	\draw[black,ultra thick,dashed] (6.151,3.488) -- (3.001,1.238); \draw[black,ultra thick,dashed] (6.848,2.511) -- (4.048,0.511); \node at (3.45,1.9) {$T_k^{(2)}$};
  	
  	\draw[->,darkgray] (6.9,4) -- (6.54,3.05) node[pos=0,above] {$x_k = (x_k',1)$};
  	\draw[->,darkgray] (7.4,0.7) -- (6.58,0.52) node[pos=0,right] {$(x_k',h)$};
  	\draw[->,darkgray] (2.2,3.5) -- (2.9,3.06) node[pos=0,left] {$(z_k',1)$};
  	\draw[->,darkgray] (2.2,1) -- (2.9,0.56) node[pos=0,left] {$(z_k',h)$};
  	
  	\tikzmath{\posx=6.5; \posy=3; \r=0.6; \q4=1.2;}
  	\foreach \j in {1,...,8}{
  		\tikzmath{\radius = \r*\q4^(-\j); \opacity = max(40,100-18*\j);}
  		\pgfmathparse{\posx - 1.5*\radius}
  		\xdef\posx{\pgfmathresult}
  		\draw[violet!\opacity] (\posx,\posy) circle(\radius);
  	}
  	
  	\tikzmath{\posx=6.5; \posy=3; \r=0.6; \q4=1.22;}
  	\foreach \j in {1,...,5}{
  		\tikzmath{\radius = \r*\q4^(-\j); \opacity = max(40,100-18*\j);}
  		\pgfmathparse{\posy - 1.4*\radius}
  		\xdef\posy{\pgfmathresult}
  		\draw[violet!\opacity] (\posx,\posy) circle(\radius);
  	}
  	
  	\tikzmath{\posx=6.5; \posy=3; \r=0.6; \q4=1.15;}
  	\foreach \j in {1,...,10}{
  		\tikzmath{\radius = \r*\q4^(-\j); \opacity = max(40,100-18*\j);}
  		\pgfmathparse{\posx - 1.15*\radius}
  		\xdef\posx{\pgfmathresult}
  		\pgfmathparse{\posy - 0.82*\radius}
%  		\pgfmathparse{\posy - 40*\radius^4 + 125*\radius^6}
  		\xdef\posy{\pgfmathresult}
  		\draw[violet!\opacity] (\posx,\posy) circle(\radius);
  	}
    	\end{tikzpicture}
	\end{center}
	\caption{Schematic visualization of Step 2 in the proof of Proposition \ref{prop:estimate_Omega_ext}. Similarly to Step 1, we propagate the smallness of the boundary data (orange part) into the interior (green half-ball) via the boundary-bulk unique continuation, Proposition \ref{prop:bbucp}, and then to $\partial\Omega_1 \times \{1\}$ via the three-balls-inequality (gray balls), Proposition \ref{prop:3ballsinequ2}. Then we cover $(\Omega_1 \setminus \Omega_{+\delta}) \times (h,1)$ with (violet) balls such that for each ball there exists a sequence of at most $O(\log(h^{-1}) + \log(\delta^{-1}))$ of these balls along which we can apply the three-balls-inequality, Proposition \ref{prop:3ballsinequ2}, to infer smallness on $(\Omega_1 \setminus \Omega_{+\delta}) \times (h,1)$. For visualization, sizes and positions of balls are not true-to-scale.}
\label{fig:schematic_argument_propagate_smallness_2}
\end{figure}

We start similarly as in Step 1. By the quantitative boundary-bulk unique continuation result, Proposition \ref{prop:bbucp}, we transfer the assumed smallness of the boundary data into the bulk. Along a sequence of balls (see Figure \ref{fig:schematic_argument_propagate_smallness_2}) we propagate the smallness with the help of the three-balls-inequality, Proposition \ref{prop:3ballsinequ2}, to $\partial\Omega_1 \times \{1\}$. Note that we can choose the balls to be all of the same radii $2r$. In particular $r \sim O(1)$ and since the first ball will satisfy the condition $\frac{r}{y_{n+1}} \leq Q$ for some $Q>0$ for all $y=(y',y_{n+1}) \subset B_{4r}(x)$, this condition will hold true for all the balls in this argument with the same $Q$. Next, we cover $\partial\Omega_1 \times \{1\}$ (cf. Step 1.2 above) with balls $B_{2r}(x_k)$ with $x_k = (x_k',1)$, $x_k' \in \partial\Omega_1$ such that $\partial\Omega_1 \times \{1\} \subset \bigcup_{k=1}^{N_k} B_{2r}(x_k)$, $N_k \sim O(1)$. Since we need to apply the three-balls-inequality at most $N_1 \sim O(1)$-times, we have for all balls $B_{2r}(x_k)$
\begin{align*}
\Vert \tilde{w} \Vert_{L^2(B_{2r}(x_k), x_{n+1}^{1-2s})} \leq C A \varepsilon^{c \alpha^{N_1}}.
\end{align*}

In a next step, we cover the set $(\Omega_1 \setminus \Omega_{+\delta}) \times (h,1)$ with balls $B_{2r_{k,l}}(x_{k,l})$, $k\in\{1,\dots,N_k\}$, $l\in\{1,\dots,N_l\}$, decreasing exponentially in size when approaching $\partial\Omega_{+\delta} \times (h,1)$ and $(\Omega_1 \setminus \Omega_{+\delta}) \times \{h\}$. These balls satisfy the following conditions:
\begin{itemize}
\item $B_{4r_{k,l}}(x_{k,l}) \subset \Omega_e \times \R_+$.
\item For some $c\in(0,1)$ we have $r_{k,l} \geq c \min\{h,\delta\}$ for all $k\in\{1,\dots,N_k\}$, $l\in\{1,\dots,N_l\}$.
\item For some uniform $Q>0$ (uniform in $k$ and $l$), we have $\frac{r_{k,l}}{y_{n+1}} \leq Q$ for all $y=(y',y_{n+1}) \in B_{4r_{k,l}}(x_{k,l})$ for all $k\in\{1,\dots,N_k\}$, $l \in \{1,\dots,N_l\}$. In particular, this means that $r_{k,l}$ is bounded in terms of the height $x_{n+1}$, which can be achieved easily since, when approaching $(\Omega_1 \setminus \Omega_{+\delta}) \times \{h\}$, we simply decrease the size of the balls appropriately.
\item For each of these balls there exists a sequence of balls starting at $B_{2r}(x_k)$ for some $k\in\{1,\dots,N_k\}$ of length at most $N_{h,\delta} \sim O(\log(h^{-1})+\log(\delta^{-1}))$ such that the three-balls-inequality, Proposition \ref{prop:3ballsinequ2}, can be applied along this sequence (cf. Figure \ref{fig:schematic_argument_propagate_smallness_2}). More precisely, we construct the described sequences of balls by considering similar tubes $T_{k}$ as in Step 1. However, this time, from one ball, $B_{2r}(x_k)$, we do not only consider one tube perpendicular to the $x_{n+1}$-direction towards $\Omega$, but multiple tubes $T_k^{(i)}$, $i\in\{1,\dots,N_i\}$, into different directions such that the full slice $\operatorname{Sl}_k$ is covered by these tubes. Here, $\operatorname{Sl}_k \subset (\Omega_1 \setminus \Omega_{+\delta}) \times (h,1)$ denotes the slice of $(\Omega_1 \setminus \Omega_{+\delta}) \times (h,1)$, spanned by the four points $(x_k',1)$, $(z_k',1)$, $(z_k', h)$ and $(x_k',h)$ but with the width of the ball $B_{2r}(x_k)$. The point $z_k'$ denotes the projection of $x_k'$ onto $\partial \Omega_{+\delta}$ when moving $x_k$ perpendicular to the $x_{n+1}$-direction towards $\Omega$  (see Figure \ref{fig:schematic_argument_propagate_smallness_2} for a schematic visualization). Note that we need at most $N_l \sim O(\log(h^{-1})+\log(\delta^{-1}))$ of these balls to cover the slice $\operatorname{Sl}_k$ in the described manner.
\end{itemize}
For each ball $B_{2r_{k,l}}(x_{k,l})$, $k\in\{1,\dots,N_k\}$, $l\in\{1,\dots,N_l\}$, we then have
\begin{align*}
\Vert \tilde{w} \Vert_{L^2(B_{2r_{k,l}}(x_{k,l}), x_{n+1}^{1-2s})} \leq C \min_{\substack{k\in\{1,\dots,N_k\} \\ l\in\{1,\dots,N_l\}}} \{r_{k,l}\}^{-p_r} A \varepsilon^{c \alpha^{N_1 + N_{h,\delta}}} \leq C (h^{-p_h} + \delta^{-p_\delta}) A \varepsilon^{c \alpha^{N_1 + N_{h,\delta}}}
\end{align*}
for some $p_r, p_h, p_\delta>0$ (again compare with a geometric series to get uniform exponents $p_r$, $p_h$ and $p_\delta$ over $k\in\{1,,\dots,N_k\}$, $l \in \{1,\dots,N_l\}$). Consequently, we estimate
\begin{equation}\label{eq:proof_intermediate_integral_6}
\begin{aligned}
\Vert \tilde{w} \Vert_{L^2((\Omega_1 \setminus \Omega_{+\delta}) \times (h,1), x_{n+1}^{1-2s})} &\leq \sum_{k=1}^{N_k} \sum_{l=1}^{N_l} \Vert \tilde{w} \Vert_{L^2(B_{2r_{k,l}}(x_{k,l}), x_{n+1}^{1-2s})}\\
&\leq C A (h^{-p_h} + \delta^{-p_\delta}) (\log(h^{-1}) + \log(\delta^{-1})) \varepsilon^{c \alpha^{C (\log(h^{-1}) + \log(\delta^{-1}))}}\\
&\leq C A (h^{-p_h} + \delta^{-p_\delta}) \varepsilon^{c \alpha^{C (\log(h^{-1}) + \log(\delta^{-1}))}},
\end{aligned}
\end{equation}
where in the last inequality we have incorporated the logarithmic terms in the algebraic terms (changing the exponents)

\textit{Step 3: Conclusion.} We conclude the proof by combining the estimates \eqref{eq:proof_intermediate_integral_5} and \eqref{eq:proof_intermediate_integral_6}.
\end{proof}

Stepping back, we highlight that both unique continuation estimates in the regimes $(\Omega_1 \setminus \Omega_{+\delta}) \times (1,L)$ and in $(\Omega_1 \setminus \Omega_{+\delta}) \times (h,1)$ rely on propagation of smallness estimates. The detailed strategy however deviates slightly due to the size restrictions on the balls imposed by our three-balls inequality which in turn are a consequence of the lack of translation invariance of our equation. In particular, the propagation direction in the region $(\Omega_1 \setminus \Omega_{+\delta}) \times (h,1)$ is slightly more involved in order to avoid additional (logarithmic) losses due to the decreasing size of the balls which have to decrease both towards $\Omega_{+\delta}$ and towards $x_{n+1}=h$.

\subsection{Estimating the inhomogeneous term}
After having deduced the central propagation of smallness estimates in the previous section, we next turn to the final ingredient in the proof of Proposition \ref{prop:QUCP_1} in which we seek to deduce the following estimate for the inhomogeneous term. As outlined above, such a contribution arises in our analysis in the domain $\Omega_1 \setminus \Omega$.

\begin{prop}\label{prop:inhomogeneous_term}
Let $\Omega \subset \R^n$ be open, non-empty, bounded and Lipschitz such that $\R^n \setminus \Omega$ is connected and let $W \subset \Omega_e$ be open, bounded, non-empty and Lipschitz such that $\overline{\Omega} \cap \overline{W} = \emptyset$. Let $\theta_1\in(0,1)$. Let $a_1, a_2 \in L^\infty(\R^n, \R^{n \times n}_{\text{sym}})$ satisfy the assumptions (A1) with the given $\theta_1$, and (A3). Let $\Omega_1 \subset \R^n$ be an open, bounded, Lipschitz set such that $\Omega \Subset \Omega_1$ and $\overline{\Omega_1} \cap \overline{W} = \emptyset$. Let $f \in \widetilde{H}^s(W)$ and let $\tilde{u}_j^f \in \dot{H}^1(\R^{n+1}_+, x_{n+1}^{1-2s})$, $j \in \{1,2\}$, satisfy
\begin{equation*}
\begin{cases}
\begin{alignedat}{2}
-\nabla \cdot x_{n+1}^{1-2s} \tilde{a}_j \nabla \tilde{u}_j^f &= 0 \quad &&\text{in } \R^{n+1}_+,\\
-c_s \lim_{x_{n+1}\to0} x_{n+1}^{1-2s} \partial_{n+1} \tilde{u}_j^f &= 0 \quad &&\text{on } \Omega\times\{0\},\\
\tilde{u}_j^f &= f \quad &&\text{on } \Omega_e \times \{0\}.
\end{alignedat}
\end{cases}
\end{equation*}

There exist constants $C>0$ and $\beta>0$ such that, if for some $\varepsilon \in (0,\frac{1}{2})$
\begin{align*}
\Vert \restr{\lim_{x_{n+1}\to0} x_{n+1}^{1-2s} \partial_{n+1} (\tilde{u}_1^f - \tilde{u}_2^f)}{W} \Vert_{H^{-s}(W)} \leq \varepsilon \Vert f \Vert_{\widetilde{H}^s(W)}.
\end{align*}
then it holds
\begin{align*}
\Vert \lim_{x_{n+1}\to0} x_{n+1}^{1-2s} \partial_{n+1} (\tilde{u}_1^f - \tilde{u}_2^f) \Vert_{H^{-1}(\Omega_1 \setminus \Omega)} \leq C \vert \log(\varepsilon) \vert^{-\beta} \Vert f \Vert_{\widetilde{H}^s(W)}.
\end{align*}
The constants $C$ and $\beta$ only depend on $n$, $s$, $\Omega$, $\Omega_1$, $W$ and $\theta_1$.
\end{prop}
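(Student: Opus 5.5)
We estimate the $H^{-1}(\Omega_1\setminus\Omega)$-norm by duality. Set $\tilde w := \tilde u_1^f - \tilde u_2^f$. By (A3), $\tilde a_1=\tilde a_2=\Id$ in $\Omega_e\times\R_+$, so $\tilde w$ solves $-\nabla\cdot x_{n+1}^{1-2s}\nabla\tilde w=0$ there. Take $\varphi\in H^1_0(\Omega_1\setminus\Omega)$ with $\|\varphi\|_{H^1}\le 1$, and extend it by zero to $\R^n$. Choose a vertical cut-off $\eta\in C_c^\infty([0,2h))$ with $\eta=1$ on $[0,h]$ and $|\eta'|\lesssim h^{-1}$, where $h\in(0,1)$ is a parameter fixed at the end. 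Testing the equation for $\tilde w$ in $(\Omega_1\setminus\overline\Omega)\times\R_+$ with $\Phi(x',x_{n+1})=\varphi(x')\eta(x_{n+1})$ gives
\begin{align*}
\Big\langle \lim_{x_{n+1}\to0}x_{n+1}^{1-2s}\partial_{n+1}\tilde w,\varphi\Big\rangle
= -\int_{(\Omega_1\setminus\Omega)\times(0,2h)} x_{n+1}^{1-2s}\nabla\tilde w\cdot\nabla\Phi\,dx .
\end{align*}
The boundary term on $\partial(\Omega_1\setminus\Omega)\times\R_+$ vanishes because $\varphi\in H^1_0$. We then split the right-hand side into the region $(\Omega_1\setminus\Omega_{+\delta})\times(h,2h)$ and the remaining region. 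The remaining region consists of $(\Omega_{+\delta}\setminus\Omega)\times(0,2h)$ and of the strip $(\Omega_1\setminus\Omega)\times(0,h)$, where $\nabla\Phi=\eta\nabla'\varphi$ has no $h^{-1}$ factor.

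\emph{Small-scale parts.} On the strip $(0,h)$, Hölder's inequality gives the bound $\|x_{n+1}^{\frac{1-2s}{2}}\nabla\tilde w\|_{L^2((\Omega_1\setminus\Omega)\times(0,h))}$ times $\|x_{n+1}^{\frac{1-2s}{2}}\nabla'\varphi\|_{L^2}\lesssim h^{1-s}$. The first factor is controlled by $\|f\|_{\widetilde H^s(W)}$ through the energy estimate. On the thin annulus $(\Omega_{+\delta}\setminus\Omega)\times(h,2h)$ we use Proposition \ref{prop:estimate_thin_annuli} to get a bound $C\delta^{p_\delta}h^{-p_h}\|f\|$. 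If assumption (A4) is unavailable here, we instead use a higher-integrability bound: weighted Gehring (Lemma \ref{lem:Gehring}) gives $\nabla\tilde w\in L^q$ with $q>2$, and Hölder on a set of measure $\sim\delta$ then gives $\delta^{p}h^{-p'}\|f\|$. Together with the $h^{-1}$ from $\eta'$, this part is bounded by $C\delta^{p_\delta}h^{-p_h}\|f\|$.

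\emph{Main part.} On $(\Omega_1\setminus\Omega_{+\delta})\times(h,2h)$ we pass from $\nabla\tilde w$ to $\tilde w$ via the interior Caccioppoli inequality (Lemma \ref{lem:Caccioppoli_inequality}) on slightly enlarged sets, at the cost of $h^{-p}\delta^{-p}$. We then apply Proposition \ref{prop:estimate_Omega_ext}, with $\delta/2$ in place of $\delta$ and $L=2$, $A=\|f\|$. The a priori growth bound it requires follows from $\tilde u_j^f\in\dot H^1$, via the weighted Sobolev/Poincaré inequality and the boundary condition on $W$, giving $\|\tilde w\|_{L^2(B'_{\kappa\bar L}\times(0,\kappa\bar L),x_{n+1}^{1-2s})}\lesssim \bar L^{\bar p}\|f\|$. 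The smallness hypothesis of that proposition is exactly our assumption. This yields the bound $C(h^{-p}+\delta^{-p})\,\varepsilon^{c\alpha^{C(\log h^{-1}+\log\delta^{-1})}}\|f\|$.

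\emph{Balancing.} Summing the three contributions gives
\begin{align*}
C\|f\|\Big(h^{1-s}+\delta^{p_\delta}h^{-p_h}+(h\delta)^{-p}\varepsilon^{c\,(h\delta)^{\gamma}}\Big)
\end{align*}
for some $\gamma>0$, using $\alpha^{C\log(1/(h\delta))}=(h\delta)^{\gamma}$. We choose $\delta=h^{M}$ with $M$ large, so that $\delta^{p_\delta}h^{-p_h}\le h^{1-s}$. We then choose $h=|\log\varepsilon|^{-\beta_0}$ with $\beta_0$ small enough that $(h\delta)^\gamma|\log\varepsilon|\ge |\log\varepsilon|^{1/2}$. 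This makes the last term $\lesssim e^{-c|\log\varepsilon|^{1/2}}$ up to polynomial factors, which is negligible. The result is $|\log\varepsilon|^{-\beta}$ with $\beta=\beta_0(1-s)$.

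The main obstacle is the thin-annulus term. The dependence on $\Omega'$ and $\|a\|_{C^1}$ is not allowed in the statement, so the Gehring-based route is preferred there. We also need to make sure that the constants in Proposition \ref{prop:estimate_Omega_ext} are uniform when it is applied with these parameters.
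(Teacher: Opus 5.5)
Your argument is correct. Its architecture matches the paper's: a thin layer near $\partial\Omega$, a bottom strip $\{x_{n+1}<h\}$, the bulk treated by propagation of smallness via Proposition~\ref{prop:estimate_Omega_ext}, and $h,\delta$ chosen as powers of $|\log\varepsilon|$. The difference is that the soft argument and the regularity-based argument swap roles between the two small-scale pieces.

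\emph{Bottom strip.} The paper bounds the $H^{-1}$ norm on $\Omega_1\setminus\Omega_{+\delta}$ by the $H^{-s}$ norm, and then by the weighted energy of $\tilde w$ on $(\Omega_2\setminus\Omega_{+\delta/2})\times(0,1)$. It therefore has to show that the energy of $\tilde u_j^f$ in the strip is small. For this it needs Gehring's lemma up to $x_{n+1}=0$ (after an odd reflection, using that the Dirichlet data vanish away from $W$), followed by H\"older. Your product test function $\varphi(x')\eta(x_{n+1})$ with $\varphi\in H^1_0$ has no vertical derivative on the strip. So the strip costs only $h^{1-s}\Vert f\Vert_{\widetilde H^s(W)}$ from the plain energy estimate. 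You exploit that the target norm is $H^{-1}$ rather than $H^{-s}$, and you need no higher integrability near $x_{n+1}=0$.

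\emph{Layer near $\partial\Omega$.} Conversely, the paper treats $\Omega_{+\delta}\setminus\Omega$ purely at the level of the boundary distribution, via $\Vert F\Vert_{H^{-1}(D_\delta)}\lesssim |D_\delta|^{p_\delta}\Vert F\Vert_{H^{-s}(D_\delta)}$. This gives $\delta^{p_\delta}\Vert f\Vert_{\widetilde H^s(W)}$ with no $h$-loss and no regularity of the solutions. Your layer term is a bulk integral at heights $(h,2h)$. You therefore need interior Meyers--Gehring at scale $h$ and H\"older on a set of measure $\sim\delta h$. This produces $\delta^{p}h^{-p'}$, which is why you must couple $\delta=h^M$.

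This step is fine provided you apply the higher integrability to $\tilde u_1^f$ and $\tilde u_2^f$ separately. Balls of radius $\sim h$ around $\partial\Omega\times(h,2h)$ enter $\Omega\times\R_+$, where $\tilde w$ solves an inhomogeneous equation because $a_1\neq a_2$ there. Since the weight is comparable to $h^{1-2s}$ at these heights, the constants depend only on $n$, $s$, $\theta_1$, as the statement requires.

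Two further points are in your favour. Your explicit Caccioppoli step before invoking Proposition~\ref{prop:estimate_Omega_ext} is genuinely needed, since that proposition controls $\tilde w$ rather than $\nabla\tilde w$; the paper passes over it. Testing with a single $\varphi\in H^1_0(\Omega_1\setminus\Omega)$ also avoids the paper's splitting of the $H^{-1}(\Omega_1\setminus\Omega)$ norm across subdomains.
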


\begin{proof}
Let $\Omega_{+\delta}, \Omega_2, \Omega_3 \subset \R^n$ be open, bounded, Lipschitz sets such that $\Omega \Subset \Omega_{+\delta} \Subset \Omega_1 \Subset \Omega_2 \Subset \Omega_3$ with $\overline{\Omega_3} \cap \overline{W} = \emptyset$ and such that $\dist(\Omega, \partial\Omega_{+\delta}) \sim \delta$ for $\delta>0$ small. Note that
\begin{equation}\label{eq:proof_inhomogeneous_term_1}
\begin{aligned}
\Vert \lim_{x_{n+1}\to0} &x_{n+1}^{1-2s} \partial_{n+1} (\tilde{u}_1^f - \tilde{u}_2^f) \Vert_{H^{-1}(\Omega_1 \setminus \Omega)}\\
&\leq \Vert \lim_{x_{n+1}\to0} x_{n+1}^{1-2s} \partial_{n+1} \tilde{u}_1^f \Vert_{H^{-1}(\Omega_{+\delta} \setminus \Omega)} + \Vert \lim_{x_{n+1}\to0} x_{n+1}^{1-2s} \partial_{n+1} \tilde{u}_2^f \Vert_{H^{-1}(\Omega_{+\delta} \setminus \Omega)}\\
&\hspace{10mm}+ \Vert \lim_{x_{n+1}\to0} x_{n+1}^{1-2s} \partial_{n+1} (\tilde{u}_1^f - \tilde{u}_2^f) \Vert_{H^{-1}(\Omega_1 \setminus \Omega_{+\delta})}.
\end{aligned}
\end{equation}
We will estimate the different terms on the right hand side separately.

\textit{Step 1: Estimate on $\Omega_{+\delta} \setminus \Omega$.}
For the first two terms in \eqref{eq:proof_inhomogeneous_term_1} we observe that for $D_\delta \subset \Omega_e$ and $F \in H^{-s}(D_\delta)$ it holds that
\begin{align*}
\Vert F \Vert_{H^{-1}(D_\delta)} &= \sup_{\substack{ G \in \widetilde{H}^1(D_\delta) \\ \Vert G \Vert_{\widetilde{H}^1(D_\delta)}=1}} \langle F, G \rangle_{H^{-1}(D_\delta),\widetilde{H}^1(D_\delta)} \leq \sup_{\substack{ G \in \widetilde{H}^{1}(D_\delta) \\ \Vert G \Vert_{\widetilde{H}^1(D_\delta)}=1}} \langle F, G \rangle_{H^{-s}(D_\delta),\widetilde{H}^s(D_\delta)}\\
&\leq \sup_{\substack{ G \in \widetilde{H}^{1}(D_\delta) \\ \Vert G \Vert_{\widetilde{H}^1(D_\delta)}=1}} \Vert F \Vert_{H^{-s}(D_\delta)} \Vert G \Vert_{\widetilde{H}^s(D_\delta)} \leq \sup_{\substack{ G \in \widetilde{H}^1(D_\delta) \\ \Vert G \Vert_{\widetilde{H}^1(D_\delta)}=1}} C \vert D_\delta \vert^{p_\delta} \Vert F \Vert_{H^{-s}(D_\delta)} \Vert G \Vert_{\widetilde{H}^1(D_\delta)}\\
&\leq C \vert D_\delta \vert^{p_\delta} \Vert F \Vert_{H^{-s}(D_\delta)}.
\end{align*}
Here, for the fourth inequality we used that $\Vert G \Vert_{\widetilde{H}^s(D_\delta)} \leq C \vert D_\delta \vert^{p_\delta} \Vert G \Vert_{\widetilde{H}^1(D_\delta)}$. Indeed, on the one hand we have by Hölder's inequality and Sobolev embedding for $p = \frac{2n}{n-2} > 2$
\begin{align*}
\Vert G \Vert_{L^2(D_\delta)} \leq C \vert D_\delta \vert^{p_\delta} \Vert G \Vert_{L^p(D_\delta)} \leq C \vert D_\delta \vert^{p_\delta} \Vert G \Vert_{\widetilde{H}^1(D_\delta)}.
\end{align*}

On the other hand, for $G \in \widetilde{H}^1(D_{\delta})$
\begin{align*}
\|(-\D)^{s/2} G\|_{L^2(\R^n)} \leq C |D_{\delta}|^{\frac{1-s}{n}} \|(-\D)^{1/2} G\|_{L^2(\R^n)}
\leq C |D_{\delta}|^{\frac{1-s}{n}} \| G\|_{\dot{H}^1(\R^n)},
\end{align*}
see, for instance, \cite[Remark 3.8, Theorems 1.5 and 3.17]{CMR20}, applied to $(-\D)^{s/2}G$.
By the definition of the $L^2$-based (fractional) Sobolev norms as $\|G\|_{H^s(\R^n)} \sim \|G\|_{L^2(\R^n)} + \|(-\D)^{s/2} G\|_{L^2(\R^n)}$, the previous two estimates imply the desired bound.

Consequently, taking $F = \lim_{x_{n+1}\to0} x_{n+1}^{1-2s} \partial_{n+1} \tilde{u}_j^f$ and $D_\delta = \Omega_{+\delta} \setminus \Omega$, we infer
\begin{align}\label{eq:proof_inhomogeneous_term_2}
\Vert \lim_{x_{n+1}\to0} x_{n+1}^{1-2s} \partial_{n+1} \tilde{u}_j^f \Vert_{H^{-1}(\Omega_{+\delta} \setminus \Omega)} \leq C \delta^{p_\delta} \Vert \lim_{x_{n+1}\to0} x_{n+1}^{1-2s} \partial_{n+1} \tilde{u}_j^f \Vert_{H^{-s}(\Omega_{+\delta} \setminus \Omega)} \leq C \delta^{p_\delta} \Vert f \Vert_{\widetilde{H}^s(W)}
\end{align}
for some $p_\delta>0$ depending only on $n$ and $s$.

\textit{Step 2: Estimate on $\Omega_1 \setminus \Omega_{+\delta}$.}
We split this step itself into two substeps.

\textit{Step 2.1: Higher integrability of $\tilde{u}_j^f$.}
We will prove that for some $p>2$ we have $\tilde{u}_j^f \in W^{1,p}((\Omega_2 \setminus \Omega_{+\delta}) \times (0,1), x_{n+1}^{1-2s})$ with
\begin{align}\label{eq:proof_inhomogeneous_term_3}
\Vert x_{n+1}^{\frac{1-2s}{p}} \nabla \tilde{u}_j^f \Vert_{L^p((\Omega_2 \setminus \Omega_{+\delta}) \times(0,1))} \leq C \delta^{-\bar{p}_\delta} \Vert x_{n+1}^{\frac{1-2s}{2}} \nabla \tilde{u}_j^f \Vert_{L^2((\Omega_3 \setminus \Omega_{+\frac{\delta}{2}}) \times (0,2))} \leq C \delta^{-\bar{p}_\delta} \Vert f \Vert_{\widetilde{H}^s(W)}
\end{align}
for some $\bar{p}_\delta>0$. We derive this by applying Gehring's lemma (see Lemma \ref{lem:Gehring} for the precise formulation). To this end, we first verify that the assumptions of Gehring's lemma are satisfied. After an odd reflection (note that we work in $(\Omega_3 \setminus \Omega_{+\frac{\delta}{2}}) \times \R_+$ where we have vanishing Dirichlet data, and thus an odd reflection can be used), we argue by the following line of inequalities for any ball $B_r \subset (\Omega_3 \setminus \Omega_{+\frac{\delta}{2}}) \times (-2,2)$ with $B_{2r} \subset (\Omega_3 \setminus \Omega_{+\frac{\delta}{2}}) \times (-2,2)$: For some $q \in (1,2)$, it holds that
\begin{align*}
\left( \frac{1}{w(B_r)} \int_{B_r} \vert x_{n+1} \vert^{1-2s} \vert \nabla \tilde{u}_j^f \vert^2 dx \right)^{\frac{1}{2}} &\leq Cr^{-1} \left( \frac{1}{w(B_{2r})} \int_{B_{2r}} \vert x_{n+1} \vert^{1-2s} \vert \tilde{u}_j^f \vert^2 dx \right)^{\frac{1}{2}}\\
& \leq C \left( \frac{1}{w(B_{2r})} \int_{B_{2r}} \vert x_{n+1} \vert^{1-2s} \vert \nabla \tilde{u}_j^f \vert^{q} dx \right)^{\frac{1}{q}}.
\end{align*}
Here, we applied Caccioppoli's inequality for the first inequality (cf. Lemma \ref{lem:Caccioppoli_inequality}), and the weighted Sobolev embedding (see Theorem 1.2 in \cite{FKS82}) for the second inequality for some $q<2$ such that $\Vert \tilde{u}_j^f \Vert_{L^2(B_{2r}, x_{n+1}^{1-2s})} \leq \Vert \nabla \tilde{u}_j^f \Vert_{L^q(B_{2r}, x_{n+1}^{1-2s})}$ (i.e. $q$ is chosen such that the weighted Sobolev exponent $q^*$ satisfies $q^* \geq 2$). We used the notation $w(B_r) := \int_{B_r} \vert x_{n+1} \vert^{1-2s} dx$. Consequently, we infer that
\begin{align*}
\left( \frac{1}{w(B_r)} \int_{B_r} \vert x_{n+1} \vert^{1-2s} ( \vert \nabla \tilde{u}_j^f \vert^q )^{\frac{2}{q}} dx \right)^{\frac{q}{2}} \leq C \frac{1}{w(B_{2r})} \int_{B_{2r}} \vert x_{n+1} \vert^{1-2s} \vert \nabla \tilde{u}_j^f \vert^{q} dx.
\end{align*}
Since $q < 2$ and thus $\frac{2}{q}>1$, we can apply the (weighted) Gehring Lemma (see Lemma \ref{lem:Gehring}) and sum over all those balls to get for some $q_1>\frac{2}{q}$ and some $\bar{p}_\delta>0$
\begin{align*}
\left( \int_{(\Omega_2 \setminus \Omega_{+\delta}) \times (0,1)} x_{n+1}^{1-2s} (\vert \nabla \tilde{u}_j^f \vert^{q} )^{q_1} dx \right)^{\frac{1}{q_1}} &\leq C \delta^{-\bar{p}_\delta} \left( \int_{(\Omega_3 \setminus \Omega_{+\frac{\delta}{2}}) \times (0,2)} x_{n+1}^{1-2s} \vert \nabla \tilde{u}_j^f \vert^{2} dx \right)^{\frac{q}{2}}.
\end{align*}
Choosing $p = q q_1 > 2$ finishes the proof of the claim \eqref{eq:proof_inhomogeneous_term_3}.

\textit{Step 2.2: Proof of the main estimate.}
We note that
\begin{equation}\label{eq:proof_inhomogeneous_term_4}
\begin{aligned}
\Vert \lim_{x_{n+1}\to0}& x_{n+1}^{1-2s} \partial_{n+1} (\tilde{u}_1^f - \tilde{u}_2^f) \Vert_{H^{-1}(\Omega_1 \setminus \Omega_{+\delta})} \leq \Vert \lim_{x_{n+1}\to0} x_{n+1}^{1-2s} \partial_{n+1} (\tilde{u}_1^f - \tilde{u}_2^f) \Vert_{H^{-s}(\Omega_1 \setminus \Omega_{+\delta})}\\
& \leq C \delta^{-\bar{p}_\delta} \Vert x_{n+1}^{\frac{1-2s}{2}} \nabla (\tilde{u}_1^f - \tilde{u}_2^f) \Vert_{L^2((\Omega_2 \setminus \Omega_{+\frac{\delta}{2}}) \times (0,1))} \\
&\leq C \delta^{-\bar{p}_\delta} \Big( \Vert x_{n+1}^{\frac{1-2s}{2}} \nabla \tilde{u}_1^f \Vert_{L^2((\Omega_2 \setminus \Omega_{+\frac{\delta}{2}}) \times (0,h))} + \Vert x_{n+1}^{\frac{1-2s}{2}} \nabla \tilde{u}_2^f \Vert_{L^2((\Omega_2 \setminus \Omega_{+\frac{\delta}{2}}) \times (0,h))}\\
&\hspace{20mm} + \Vert x_{n+1}^{\frac{1-2s}{2}} \nabla (\tilde{u}_1^f - \tilde{u}_2^f) \Vert_{L^2((\Omega_2 \setminus \Omega_{+\frac{\delta}{2}}) \times (h,1))} \Big)
\end{aligned}
\end{equation}
for some $\bar{p}_{\delta}>0$. For the first two terms on the right hand side of 
\eqref{eq:proof_inhomogeneous_term_4}, we then estimate, using \eqref{eq:proof_inhomogeneous_term_3}, for $p>2$ as in Step 2.1 and  $q>1$ such that $\frac{1}{p} + \frac{1}{q} = \frac{1}{2}$
\begin{align*}
\Vert x_{n+1}^{\frac{1-2s}{2}} \nabla \tilde{u}_j^f \Vert_{L^2((\Omega_2 \setminus \Omega_{+\frac{\delta}{2}}) \times (0,h))} &\leq \Vert x_{n+1}^{\frac{1-2s}{q}} \Vert_{L^q((\Omega_2 \setminus \Omega_{+\frac{\delta}{2}}) \times (0,h))} \Vert x_{n+1}^{\frac{1-2s}{p}} \nabla \tilde{u}_j^f \Vert_{L^p((\Omega_2 \setminus \Omega_{+\frac{\delta}{2}}) \times (0,h))}\\
&\leq C \delta^{-\bar{p}_\delta} h^{p_h} \Vert f \Vert_{\widetilde{H}^s(W)}.
\end{align*}
For the third term on the right hand side of \eqref{eq:proof_inhomogeneous_term_4}, we observe that we are now exactly in the position to apply Proposition \ref{prop:estimate_Omega_ext} to $\tilde{w} := \tilde{u}_1^f - \tilde{u}_2^f$ with $A := \Vert f \Vert_{\widetilde{H}^s(W)}$ to get
\begin{align*}
\Vert x_{n+1}^{\frac{1-2s}{2}} \nabla (\tilde{u}_1^f - \tilde{u}_2^f) \Vert_{L^2((\Omega_2 \setminus \Omega_{+\frac{\delta}{2}}) \times (h,1))} \leq C (h^{-\bar{p}_h} + \delta^{-\bar{p}_\delta}) \varepsilon^{c \alpha^{C (\log(h^{-1}) + \log(\delta^{-1}))}} \Vert f \Vert_{\widetilde{H}^s(W)}.
\end{align*}
Putting the above observations together, we arrive at
\begin{align*}
\Vert \lim_{x_{n+1}\to0} x_{n+1}^{1-2s}& \partial_{n+1} (\tilde{u}_1^f - \tilde{u}_2^f) \Vert_{H^{-1}(\Omega_1 \setminus \Omega_{+\delta})}\\
&\leq C \delta^{-\bar{p}_\delta} h^{p_h} \Vert f \Vert_{\widetilde{H}^s(W)} + C (h^{-\bar{p}_h} + \delta^{-\bar{p}_{\delta}}) \varepsilon^{c \alpha^{C (\log(h^{-1}) + \log(\delta^{-1}))}} \Vert f \Vert_{\widetilde{H}^s(W)}. 
\end{align*}

\textit{Step 3: Conclusion.}
To conclude the estimate, we just need to balance $h$ and $\delta$ correctly. We abbreviate the notation by writing $\omega(\varepsilon) := \vert\log(\varepsilon)\vert^{-\beta}$, where $\beta>0$ will be determined below. We combine the previous estimates from Step 1 and Step 2 to get
\begin{equation}\label{eq:proof_inhomogeneous_term_5}
\begin{aligned}
\Vert \lim_{x_{n+1}\to0} &x_{n+1}^{1-2s} \partial_{n+1} (\tilde{u}_1^f - \tilde{u}_2^f) \Vert_{H^{-1}(\Omega_1 \setminus \Omega)}\\
&\leq \Vert \lim_{x_{n+1}\to0} x_{n_+1}^{1-2s} \partial_{n+1} \tilde{u}_1^f \Vert_{H^{-1}(\Omega_{+\delta} \setminus \Omega)} + \Vert \lim_{x_{n+1}\to0} x_{n_+1}^{1-2s} \partial_{n+1} \tilde{u}_2^f \Vert_{H^{-1}(\Omega_{+\delta} \setminus \Omega)}\\
&\hspace{10mm}+ \Vert \lim_{x_{n+1}\to0} x_{n_+1}^{1-2s} \partial_{n+1} (\tilde{u}_1^f - \tilde{u}_2^f) \Vert_{H^{-1}(\Omega_1 \setminus \Omega_{+\delta})}\\
&\leq C \delta^{p_\delta} \Vert f \Vert_{\widetilde{H}^s(W)} + C \delta^{-\bar{p}_\delta} h^{p_h} \Vert f \Vert_{\widetilde{H}^s(W)}\\ 
& \qquad + C (h^{-\bar{p}_h} + \delta^{-\bar{p}_{\delta}}) \varepsilon^{c \alpha^{C (\log(h^{-1}) + \log(\delta^{-1}))}} \Vert f \Vert_{\widetilde{H}^s(W)}.
\end{aligned}
\end{equation}
If we choose $\delta = \omega(\varepsilon)^{\frac{1}{p_\delta}}$ and $h = \omega(\varepsilon)^{\frac{1+\frac{\bar{p}_\delta}{p_\delta}}{p_h}}$, the first two terms on the right hand side of \eqref{eq:proof_inhomogeneous_term_5} are already in order. For the last term, we observe that with this choice of $\delta$ and $h$ we have
\begin{align*}
(h^{-\bar{p}_h} + \delta^{-\bar{p}_{\delta}}) \varepsilon^{c \alpha^{C (\log(h^{-1}) + \log(\delta^{-1}))}} &\leq C \omega(\varepsilon)^{-p_\omega} \varepsilon^{c\alpha^{C \vert \log(\omega(\varepsilon)) \vert}} = C\omega(\varepsilon)^{-p_{\omega}} \varepsilon^{c\omega(\varepsilon)^{-C\log(\alpha)}}\\
&= C \omega(\varepsilon)^{-p_\omega} \exp\left( c\log(\varepsilon) \omega(\varepsilon)^{C \vert\log(\alpha)\vert} \right)
\end{align*}
for some $p_\omega>0$. Note that the constant $C$ did not depend on $\beta$. Thus, we can take
\begin{align*}
0 < \beta < \frac{1}{C \vert\log(\alpha)\vert}.
\end{align*}
If we plug back into $\omega(\varepsilon) = \vert\log(\varepsilon)\vert^{-\beta}$, we find that
\begin{align*}
C \omega(\varepsilon)^{-p_\omega} \exp\left( c\log(\varepsilon) \omega(\varepsilon)^{C \vert\log(\alpha)\vert} \right) \leq C \vert\log(\varepsilon)\vert^{\beta p_\omega} \exp\left( -c \frac{\vert\log(\varepsilon)\vert}{\vert\log(\varepsilon)\vert^{C \beta \vert\log(\alpha)\vert}} \right) \leq C_1 \vert\log(\varepsilon)\vert^{-\beta},
\end{align*}
where $C_1$ may now depend on $\beta$. The last inequality holds true since (writing $z=\vert\log(\varepsilon)\vert$) the term $z^a \exp(-cz^b)$ converges quicker to $0$ than $z^{-d}$ for any $a,b,c,d>0$ as $z\to\infty$. Thus, we have shown that with this choice of $\delta$ and $h$ it indeed follows that
\begin{align*}
\Vert \lim_{x_{n+1}\to0}& x_{n+1}^{1-2s} \partial_{n+1} (\tilde{u}_1^f - \tilde{u}_2^f) \Vert_{H^{-1}(\Omega_1 \setminus \Omega)} \leq C_1 \vert \log(\varepsilon) \vert^{-\beta} \Vert f \Vert_{\widetilde{H}^s(W)}
\end{align*}
for $\beta>0$ as above. This finishes the proof.
\end{proof}

\subsection{Proof of Proposition \ref{prop:QUCP_1} -- estimate for full integral}

Now we come to the proof of the main proposition of this section, Proposition \ref{prop:QUCP_1}. Here, we will apply Lemma \ref{lem:apriori_fract_cond}, Propositions \ref{prop:estimate_thin_annuli} and \ref{prop:estimate_Omega_ext} to $v_1^f - v_2^f := \int_0^\infty t^{1-2s} \left( \tilde{u}_1^f (\cdot,t) - \tilde{u}_2^f (\cdot,t) \right) dt$, and Proposition \ref{prop:inhomogeneous_term}.

\begin{proof}[Proof of Proposition \ref{prop:QUCP_1}]
Let $\Omega_{+\delta}, \Omega_1 \subset \R^n$ be open, bounded, Lipschitz such that $\Omega \Subset \Omega_{+\delta} \Subset \Omega_1$ with $\dist(\Omega,\partial\Omega_{+\delta}) \sim \delta$ for $\delta>0$ small and such that $\overline{\Omega_1} \cap \overline{W} = \emptyset$.

In the following, we will write $a := a_1 = a_2$ in $\Omega_e$. Moreover, $C$ will denote a generic constant, which may only depend on $n$, $s$, $\Omega$, $\Omega'$, $\Omega_1$, $W$, $\theta_1$, $\theta_2$ and $\theta_3$, whereas the constants $C_1$ and $C_2$ may additionally depend on $\beta$. In order to save notation, for $\beta_1>0$ to be determined later, we will sometimes write
\begin{align*}
\omega(\varepsilon) := \vert \log(\varepsilon) \vert^{-\beta_1}
\end{align*}
and for $j \in \{1,2\}$ we write
\begin{align*}
v_j^f := \int_0^\infty t^{1-2s} \tilde{u}_j^{f}(\cdot,t) dt.
\end{align*}
We divide the proof into several steps.

\textit{Step 1: Splitting the integral.} By Theorem 3 in \cite{CGRU23} we know that $v_1^f-v_2^f$ solves in $D \subset \Omega_e \setminus \Omega$
\begin{align*}
\nabla'\cdot a\nabla'(v_1^f-v_2^f) = (-\nabla'\cdot a_1 \nabla')^s u_1^f - (-\nabla'\cdot a_2 \nabla')^s u_2^f = - c_s \lim_{x_{n+1} \to 0} x_{n+1}^{1-2s} \partial_{n+1} (\tilde{u}_1^{f} - \tilde{u}_2^{f}),
\end{align*}
where $a = a_1 = a_2$ in $D$ by assumption. Thus, by the weak definition of the normal derivative, we have
\begin{equation}\label{eq:estimate_normal_derivative}
\begin{aligned}
\Vert \partial_\nu^a (v_1^f-v_2^f) &\Vert_{H^{-1/2}(\partial D)} = \sup_{\substack{h \in H^{1/2}(\partial D), \\ \Vert h \Vert_{H^{1/2}(\partial D)} = 1}} \langle( \partial_\nu^a (v_1^f-v_2^f), \ h \rangle_{H^{-1/2}(\partial D),H^{1/2}(\partial D)}\\
&= \sup_{\substack{h \in H^{1/2}(\partial D), \\ \Vert h \Vert_{H^{1/2}(\partial D)} = 1}} \ \left( \int_D a \nabla'(v_1^f-v_2^f) \cdot \nabla'\phi_h \ dx' \right.\\
&\hspace{30mm}  \left. -  c_s \langle \lim_{x_{n+1} \to 0} x_{n+1}^{1-2s} \partial_{n+1} (\tilde{u}_1^{f} - \tilde{u}_2^{f}), \ \phi_h \rangle_{H^{-1}(D),H^{1}(D)} \right)\\
&\leq \sup_{\substack{h \in H^{1/2}(\partial D), \\ \Vert h \Vert_{H^{1/2}(\partial D)} = 1}} \left( \Vert a \Vert_{L^\infty(D)} \Vert \nabla'(v_1^f-v_2^f) \Vert_{L^2(D)} \Vert \nabla'\phi_h \Vert_{L^2(D)} \right.\\
&\hspace{30mm} \left. + C \Vert \lim_{x_{n+1} \to 0} x_{n+1}^{1-2s} \partial_{n+1} (\tilde{u}_1^{f} - \tilde{u}_2^{f}) \Vert_{H^{-1}(D)} \Vert \phi_h \Vert_{H^1(D)} \right)\\
&\leq C \left( \Vert v_1^f - v_2^f \Vert_{H^1(D)} + \Vert \lim_{x_{n+1} \to 0} x_{n+1}^{1-2s} \partial_{n+1} (\tilde{u}_1^{f} - \tilde{u}_2^{f}) \Vert_{H^{-1}(D)} \right),
\end{aligned}
\end{equation}
where $\phi_h \in H^1(D)$ is some extension of $h$, i.e. $\phi|_{\partial D} = h$, with $\Vert \phi_h \Vert_{H^1(D)} \leq C \Vert h \Vert_{H^{1/2}(\partial D)}$.

We then split the desired estimate from Proposition \ref{prop:QUCP_1} into several bounds. Using \eqref{eq:estimate_normal_derivative}, we obtain 
\begin{equation}\label{eq:proof_full_integral_splitting}
\begin{aligned}
&\left\Vert \int_0^\infty t^{1-2s} \left( \tilde{u}_1^{f} (\cdot,t) - \tilde{u}_2^{f} (\cdot,t) \right) dt \right\Vert_{H^{\frac{1}{2}}(\partial\Omega)} + \left\Vert \partial_\nu^a \left( \int_0^\infty t^{1-2s} \left( \tilde{u}_1^{f} (\cdot,t) - \tilde{u}_2^{f} (\cdot,t) \right) dt \right) \right\Vert_{H^{-\frac{1}{2}}(\partial\Omega)}\\
&\leq C \left\Vert \int_0^\infty t^{1-2s} \left( \tilde{u}_1^{f}(\cdot,t) - \tilde{u}_2^{f}(\cdot,t) \right) dt \right\Vert_{H^1(\Omega_1 \setminus \Omega)} + C \Vert \lim_{x_{n+1}\to0} x_{n+1}^{1-2s} \partial_{n+1} (\tilde{u}_1^f - \tilde{u}_2^f) \Vert_{H^{-1}(\Omega_1 \setminus \Omega)}\\
&\leq C \Bigg( \left\Vert \int_0^h t^{1-2s} \left( \tilde{u}_1^{f}(\cdot,t) - \tilde{u}_2^{f}(\cdot,t) \right) dt \right\Vert_{H^1(\Omega_1 \setminus \Omega)} + \left\Vert \int_L^\infty t^{1-2s} \left( \tilde{u}_1^{f}(\cdot,t) - \tilde{u}_2^{f}(\cdot,t) \right) dt \right\Vert_{H^1(\Omega_1 \setminus \Omega)}\\
&\hspace{15mm} + \left\Vert \int_h^L t^{1-2s} \left( \tilde{u}_1^{f}(\cdot,t) - \tilde{u}_2^{f}(\cdot,t) \right) dt \right\Vert_{H^1(\Omega_{+\delta} \setminus \Omega)} \\
&\hspace{15mm}+ \left\Vert \int_h^L t^{1-2s} \left( \tilde{u}_1^{f}(\cdot,t) - \tilde{u}_2^{f}(\cdot,t) \right) dt \right\Vert_{H^1(\Omega_1 \setminus \Omega_{+\delta})}\\
&\hspace{15mm} + \Vert \lim_{x_{n+1}\to0} x_{n+1}^{1-2s} \partial_{n+1} (\tilde{u}_1^f - \tilde{u}_2^f) \Vert_{H^{-1}(\Omega_1 \setminus \Omega)} \Bigg).
\end{aligned}
\end{equation}

In the next steps, the upper and lower integrals will be estimated by Lemma \ref{lem:apriori_fract_cond}, the intermediate integrals by using Propositions \ref{prop:estimate_thin_annuli} and \ref{prop:estimate_Omega_ext}, respectively, and the inhomogeneous term we estimate by Proposition \ref{prop:inhomogeneous_term}.

\textit{Step 2: Estimating the lower and upper integrals.} First of all, we recall that by Lemma \ref{lem:apriori_fract_cond} we have
\begin{align*}
&\left\Vert \int_0^h t^{1-2s} \left( \tilde{u}_1^{f}(\cdot,t) - \tilde{u}_2^{f}(\cdot,t) \right) dt \right\Vert_{H^1(\Omega_1\setminus\Omega)} \\
&\hspace{15mm} \leq \left\Vert \int_0^h t^{1-2s} \tilde{u}_1^{f}(\cdot,t) dt \right\Vert_{H^1(\Omega_1\setminus\Omega)} + \left\Vert \int_0^h t^{1-2s} \tilde{u}_2^{f}(\cdot,t) dt \right\Vert_{H^1(\Omega_1\setminus\Omega)}\\
&\hspace{15mm} \leq C h^{1-s} \Vert f \Vert_{\widetilde{H}^s(W)},
\end{align*}
and similarly
\begin{align*}
\left\Vert \int_L^\infty t^{1-2s} \left( \tilde{u}_1^{f}(\cdot,t) - \tilde{u}_2^{f}(\cdot,t) \right) dt \right\Vert_{H^1(\Omega_1\setminus\Omega)} \leq C L^{2-n-2s} \Vert f \Vert_{\widetilde{H}^s(W)}.
\end{align*}
Hence, if we choose
\begin{align}\label{eq:def_h_L}
h = \omega(\varepsilon)^{\frac{1}{1-s}} \qquad \text{and} \qquad L = \omega(\varepsilon)^{\frac{1}{(2-n-2s)}},
\end{align}
we can guarantee that
\begin{equation}\label{eq:proof_full_integral_lower_upper_integrals}
\begin{aligned}
&C \left( \left\Vert \int_0^h t^{1-2s} \left( \tilde{u}_1^{f}(\cdot,t) - \tilde{u}_2^{f}(\cdot,t) \right) dt \right\Vert_{H^1(\Omega_1\setminus\Omega)} + \left\Vert \int_L^\infty t^{1-2s} \left( \tilde{u}_1^{f}(\cdot,t) - \tilde{u}_2^{f}(\cdot,t) \right) dt \right\Vert_{H^1(\Omega_1\setminus\Omega)} \right)\\
&\hspace{2cm}\leq C\omega(\varepsilon) \Vert f \Vert_{\widetilde{H}^s(W)} = C \vert \log(\varepsilon) \vert^{-\beta_1} \Vert f \Vert_{\widetilde{H}^s(W)}.
\end{aligned}
\end{equation}

\textit{Step 3: Estimating the inhomogeneous term.} The inhomogeneous term was already estimated in Proposition \ref{prop:inhomogeneous_term}. There we proved that there exist constants $C_2>0$ and $\beta_{2}>0$ such that
\begin{equation}\label{eq:proof_full_integral_inhomogeneous_term}
\begin{aligned}
\Vert \lim_{x_{n+1}\to0} x_{n+1}^{1-2s} \partial_{n+1} (\tilde{u}_1^f - \tilde{u}_2^f) \Vert_{H^{-1}(\Omega_1 \setminus \Omega)} \leq C_2 \vert \log(\varepsilon) \vert^{-\beta_{2}} \Vert f \Vert_{\widetilde{H}^s(W)}.
\end{aligned}
\end{equation} 

\textit{Step 4: Estimating the intermediate integrals.} We estimate the two intermediate integrals in \eqref{eq:proof_full_integral_splitting} separately and then we choose $\delta>0$ appropriately to deduce the desired estimate.

\textit{Step 4.1: Estimate for the intermediate integral on $\Omega_{+\delta} \setminus \Omega$.} Using Hölder's inequality with respect to the $x_{n+1}$-variable and applying the result of Proposition \ref{prop:estimate_thin_annuli}, we infer
\begin{equation}\label{eq:proof_full_integral_intermediate_integral_thin}
\begin{aligned}
\Big\Vert \int_h^L t^{1-2s} \left( \tilde{u}_1^f(\cdot,t) - \tilde{u}_2^f(\cdot,t) \right) dt &\Big\Vert_{H^1(\Omega_{+\delta}\setminus\Omega)} \leq \Vert t^{\frac{1-2s}{2}} \Vert_{L^2((h,L))} \Vert \tilde{u}_1^f - \tilde{u}_2^f \Vert_{H^1( (\Omega_{+\delta} \setminus \Omega) \times (h,L), x_{n+1}^{1-2s})}\\
&\leq C \delta^{p_\delta^+} (L^{p_L} + h^{-p_h}) \Vert f \Vert_{\widetilde{H}^s(W)}
\end{aligned}
\end{equation}
for some $p_{\delta}^+, p_L, p_h  >0$.

\textit{Step 4.2: Estimating the intermediate integral on $\Omega_1 \setminus \Omega_{+\delta}$.} For the second intermediate integral we first observe that since 
\begin{align*}
-\nabla'\cdot a \nabla' \Big( \int_0^\infty t^{1-2s} (\tilde{u}_1^f - \tilde{u}_2^f) dt \Big) = c_s \lim_{x_{n+1}\to0} x_{n+1}^{1-2s} \partial_{n+1} (\tilde{u}_1^f - \tilde{u}_2^f) \mbox{ in } \Omega,
\end{align*}
it holds that
\begin{align*}
-\nabla'\cdot a \nabla' &\left( \int_h^L t^{1-2s} (\tilde{u}_1^f(\cdot,t) - \tilde{u}_2^f(\cdot,t)) dt \right)\\
&= \nabla'\cdot a \nabla' \left( \int_0^h t^{1-2s} (\tilde{u}_1^f(\cdot,t) - \tilde{u}_2^f(\cdot,t)) dt + \int_L^\infty t^{1-2s} (\tilde{u}_1^f(\cdot,t) - \tilde{u}_2^f(\cdot,t)) dt \right)\\
&\hspace{15mm} + c_s \lim_{x_{n+1}\to0} x_{n+1}^{1-2s} \partial_{n+1} (\tilde{u}_1^f - \tilde{u}_2^f).
\end{align*}
By the (standard) Caccioppoli inequality (cf. Lemma \ref{lem:Caccioppoli_inequality} with $s=\frac{1}{2}$) and using the estimates \eqref{eq:proof_full_integral_lower_upper_integrals} from Step 2 for the upper and lower integrals and \eqref{eq:proof_full_integral_inhomogeneous_term} from Step 3 for the inhomogeneous term, we infer
\begin{equation}\label{eq:proof_full_integral_intermediate_integral_1}
\begin{aligned}
\Big\Vert \int_h^L t^{1-2s} &(\tilde{u}_1^f(\cdot,t) - \tilde{u}_2^f(\cdot,t)) dt \Big\Vert_{H^1(\Omega_1 \setminus \Omega_{+\delta})}\\
&\leq C\delta^{-1} \Big\Vert \int_h^L t^{1-2s} (\tilde{u}_1^f(\cdot,t) - \tilde{u}_2^f(\cdot,t)) dt \Big\Vert_{L^2(\Omega_2 \setminus \Omega_{+\frac{\delta}{2}})} + C \omega(\varepsilon) \Vert f \Vert_{\widetilde{H}^s(W)}\\
&\hspace{15mm} + C_2 \vert \log(\varepsilon) \vert^{-\beta_2} \Vert f \Vert_{\widetilde{H}^s(W)},
\end{aligned}
\end{equation}
where we have used that $\Vert a \Vert_{L^\infty(\R^n)}$ is uniformly bounded and that $\Omega_2$, $\Omega_{+\frac{\delta}{2}}$ are open, bounded, Lipschitz with $\Omega_2 \Supset \Omega_1$ and $\Omega \Subset \Omega_{+\frac{\delta}{2}} \Subset \Omega_{+\delta}$.

It thus remains to treat the $L^2$-norm of the intermediate integral on $\Omega_2 \setminus \Omega_{+\frac{\delta}{2}}$. For this we first apply Hölder's inequality with respect to the $x_{n+1}$-variable
\begin{equation}\label{eq:proof_full_integral_intermediate_integral_2}
\begin{aligned}
\left\Vert \int_h^L t^{1-2s} \left( \tilde{u}_1^{f}(\cdot,t) - \tilde{u}_2^{f}(\cdot,t) \right) dt \right\Vert_{L^2(\Omega_2 \setminus \Omega_{+\frac{\delta}{2}})} &\leq \Vert t^{\frac{1-2s}{2}} \Vert_{L^2((h,L))} \Vert \tilde{u}_1^{f} - \tilde{u}_2^{f} \Vert_{L^2((\Omega_2 \setminus \Omega_{+\frac{\delta}{2}}) \times (h,L), x_{n+1}^{1-2s})}\\
&\leq L^{1-s} \Vert \tilde{u}_1^{f} - \tilde{u}_2^{f} \Vert_{L^2((\Omega_2 \setminus \Omega_{+\frac{\delta}{2}}) \times (h,L), x_{n+1}^{1-2s})}.
\end{aligned}
\end{equation}

Next, we will apply Proposition \ref{prop:estimate_Omega_ext} to $\tilde{w} := \tilde{u}_1^f - \tilde{u}_2^f$. For this, we first of all note that by the weighted Poincaré inequality (see Theorem 1.2 in \cite{FKS82}) we have for sufficiently large $\kappa>0$
\begin{align*}
\Vert \tilde{w} \Vert_{L^2(B_{\kappa L}' \times (0,\kappa L), x_{n+1}^{1-2s})} &\leq \Vert \tilde{u}_1^f \Vert_{L^2(B_{\kappa L}' \times (0,\kappa L), x_{n+1}^{1-2s})} + \Vert \tilde{u}_2^f \Vert_{L^2(B_{\kappa L}' \times (0,\kappa L), x_{n+1}^{1-2s})}\\
&\leq CL (\Vert \tilde{u}_1^f \Vert_{\dot{H}^1(B_{\kappa L}' \times (0,\kappa L), x_{n+1}^{1-2s})} + \Vert \tilde{u}_2^f \Vert_{\dot{H}^1(B_{\kappa L}' \times (0,\kappa L), x_{n+1}^{1-2s})})\\
&\leq C L \Vert f \Vert_{\widetilde{H}^s(W)},
\end{align*}
where $C>0$ depends on $\kappa$.
Moreover, by assumption it holds true that
\begin{align*}
\Vert \restr{\lim_{x_{n+1}\to0} x_{n+1}^{1-2s} \partial_{n+1} \tilde{w}}{W} \Vert_{H^{-s}(W)} &= \Vert \restr{\lim_{x_{n+1}\to0} x_{n+1}^{1-2s} \partial_{n+1} (\tilde{u}_1^f - \tilde{u}_2^f)}{W} \Vert_{H^{-s}(W)}\\
& = \Vert (\Lambda_s^{a_1} - \Lambda_s^{a_2}) f \Vert_{H^{-s}(W)}\\
&\leq \Vert \Lambda_s^{a_1} - \Lambda_s^{a_2} \Vert_{\widetilde{H}^s(W) \to H^{-s}(W)} \Vert f \Vert_{\widetilde{H}^s(W)} \leq \varepsilon \Vert f \Vert_{\widetilde{H}^s(W)}.
\end{align*}
Applying Proposition \ref{prop:estimate_Omega_ext} to $\tilde{w} := \tilde{u}_1^f - \tilde{u}_2^f$ with $A := \Vert f \Vert_{\widetilde{H}^s(W)}$ yields
\begin{equation}\label{eq:proof_full_integral_intermediate_integral_3}
\begin{aligned}
\Vert &\tilde{u}_1^f - \tilde{u}_2^f \Vert_{L^2((\Omega_2 \setminus \Omega_{+\frac{\delta}{2}}) \times (h,L), x_{n+1}^{1-2s})}\\
&\leq C \Vert f \Vert_{\widetilde{H}^s(W)} \left( (L^{p_L} + \delta^{-p_\delta}) \varepsilon^{c\alpha^{C(\log(L)+\log(\delta^{-1}))}} + (h^{-p_h} + \delta^{-p_\delta}) \varepsilon^{c\alpha^{C(\log(h^{-1})+\log(\delta^{-1}))}} \right).
\end{aligned}
\end{equation}
Thus, combining \eqref{eq:proof_full_integral_intermediate_integral_1}, \eqref{eq:proof_full_integral_intermediate_integral_2} and \eqref{eq:proof_full_integral_intermediate_integral_3}, we obtain 
\begin{equation}\label{eq:proof_full_integral_intermediate_integral_4}
\begin{aligned}
\Big\Vert \int_h^L t^{1-2s} &\left( \tilde{u}_1^{f}(\cdot,t) - \tilde{u}_2^{f}(\cdot,t) \right) dt \Big\Vert_{H^1(\Omega_1\setminus\Omega_{+\delta})}\\
&\leq C \Vert f \Vert_{\widetilde{H}^s(W)} \delta^{-p_\delta} (L^{p_L} + h^{-p_h}) \varepsilon^{c\alpha^{C \left(\log(L)+\log(h^{-1})+\log(\delta^{-1})\right)}} + C \omega(\varepsilon) \Vert f \Vert_{\widetilde{H}^s(W)}\\
&\hspace{20mm} + C_2 \vert \log(\varepsilon) \vert^{-\beta_2} \Vert f \Vert_{\widetilde{H}^s(W)}
\end{aligned}
\end{equation}
for some exponents $p_L, p_h, p_\delta > 0$. 

\textit{Step 4.3: Combining the estimates for the two intermediate integrals.} We recall the choice of $h$ and $L$ from \eqref{eq:def_h_L}, i.e. $h = \omega(\varepsilon)^{\frac{1}{1-s}}$ and $L = \omega(\varepsilon)^{\frac{1}{(2-n-2s)}}$. If we choose
\begin{align*}
\delta = \omega(\varepsilon)^{\frac{1}{p_\delta^+}} \left( \max\{L^{p_L}, h^{-p_h}\} \right)^{-\frac{1}{p_\delta^+}} = \min\left\{ \omega(\varepsilon)^{\frac{p_L}{(n-2+2s)p_\delta^+} + \frac{1}{p_\delta^+}}, \omega(\varepsilon)^{\frac{p_h}{(1-s)p_\delta^+} + \frac{1}{p_\delta^+}} \right\},
\end{align*}
then \eqref{eq:proof_full_integral_intermediate_integral_thin} guarantees that
\begin{align}\label{eq:proof_full_integral_intermediate_integral_thin_2}
\Big\Vert \int_h^L t^{1-2s} \left( \tilde{u}_1^f(\cdot,t) - \tilde{u}_2^f(\cdot,t) \right) dt &\Big\Vert_{H^1(\Omega_{+\delta}\setminus\Omega)} \leq C \vert \log(\varepsilon) \vert^{-\beta_1} \Vert f \Vert_{\widetilde{H}^s(W)}.
\end{align}

Plugging in the choices of $\delta$, $h$ and $L$ into \eqref{eq:proof_full_integral_intermediate_integral_4}, yields
\begin{equation}\label{eq:proof_full_integral_intermediate_integral_5}
\begin{aligned}
\Big\Vert \int_h^L t^{1-2s} \left( \tilde{u}_1^{f}(\cdot,t) - \tilde{u}_2^{f}(\cdot,t) \right) dt \Big\Vert_{H^1(\Omega_1 \setminus \Omega_{+\delta})} 
&\leq C \Vert f \Vert_{\widetilde{H}^s(W)} \omega(\varepsilon)^{-p_{\omega}} \varepsilon^{c\alpha^{C\vert\log(\omega(\varepsilon))\vert}} \\
& \qquad + C \omega(\varepsilon) \Vert f \Vert_{\widetilde{H}^s(W)}\\
&\qquad + C_2 \vert \log(\varepsilon) \vert^{-\beta_2} \Vert f \Vert_{\widetilde{H}^s(W)}
\end{aligned}
\end{equation}
for some $p_{\omega}>0$. Next, we observe that
\begin{align*}
\omega(\varepsilon)^{-p_{\omega}} \varepsilon^{c\alpha^{C\vert\log(\omega(\varepsilon))\vert}} = \omega(\varepsilon)^{-p_{\omega}} \varepsilon^{c\omega(\varepsilon)^{-C\log(\alpha)}} = \omega(\varepsilon)^{-p_{\omega}} \exp \left( c \log(\varepsilon) \omega(\varepsilon)^{C\vert\log(\alpha)\vert} \right).
\end{align*}
At this point it is now essential that the constant $C$ did not depend on $\beta$. Hence, we can now choose
\begin{align*}
0 < \beta_{1} < \frac{1}{C \vert\log(\alpha)\vert}.
\end{align*}
Plugging this choice back into $\omega(\varepsilon) = \vert \log(\varepsilon) \vert^{-\beta_{1}}$, we infer
\begin{align*}
\omega(\varepsilon)^{-p_\omega} \exp\left( c\log(\varepsilon) \omega(\varepsilon)^{C\vert\log(\alpha)\vert} \right) \leq C \vert \log(\varepsilon) \vert^{\beta_{1} p_\omega} \exp\left( - c\frac{\vert\log(\varepsilon)\vert}{\vert\log(\varepsilon)\vert^{C \beta_{1} \vert\log(\alpha)\vert}} \right),
\end{align*}
which converges quicker to $0$ than $\vert\log(\varepsilon)\vert^{-\beta_{1}}$ as $\varepsilon \to 0$ since (writing $z = \vert\log(\varepsilon)\vert$) the term $z^a \exp(-cz^b)$ converges quicker to $0$ than $z^{-d}$ for any $a,b,c,d>0$ as $z \to \infty$. Thus, from \eqref{eq:proof_full_integral_intermediate_integral_5} and the last considerations, it follows that there exists a constant $C_{1}>0$ such that
\begin{equation}\label{eq:proof_full_integral_intermediate_integral_6}
\begin{aligned}
&\left\Vert \int_h^L t^{1-2s} \left( \tilde{u}_1^{f}(\cdot,t) - \tilde{u}_2^{f}(\cdot,t) \right) dt \right\Vert_{H^1(\Omega_1\setminus\Omega_{+\delta})}\\
&\hspace{20mm} \leq C_{1} \vert \log(\varepsilon) \vert^{-\beta_{1}} \Vert f \Vert_{\widetilde{H}^s(W)} + C \omega(\varepsilon) \Vert f \Vert_{\widetilde{H}^s(W)} + C_2 \vert \log(\varepsilon) \vert^{-\beta_2} \Vert f \Vert_{\widetilde{H}^s(W)}.
\end{aligned}
\end{equation}

\textit{Step 5: Conclusion.} Combining the estimates \eqref{eq:proof_full_integral_splitting}, \eqref{eq:proof_full_integral_lower_upper_integrals}, \eqref{eq:proof_full_integral_inhomogeneous_term}, \eqref{eq:proof_full_integral_intermediate_integral_thin_2} and \eqref{eq:proof_full_integral_intermediate_integral_6} and choosing $\beta = \min\{\beta_1,\beta_2\}$ finally finishes the proof.
\end{proof}

\section{Quantitative Runge approximation}\label{sec:QRA}

In this section we will provide the proof of the second main ingredient for the quantitative reduction argument. We prove a quantitative Runge approximation result, Proposition \ref{prop:QRA}.

We consider the following Caffarelli-Silvestre extension interpretation associated with the Calde\-rón problem, i.e. for $f \in \widetilde{H}^s(W)$ we let $\tilde{u}^f \in \dot{H}^1(\R^{n+1}_+, x_{n+1}^{1-2s})$ be the solution to
\begin{equation}\label{eq:QRA_CSExtension}
\begin{cases}
\begin{aligned}
- \nabla \cdot x_{n+1}^{1-2s} \tilde{a} \nabla \tilde{u}^f &= 0 \quad &&\text{in } \R^{n+1}_+,\\
\lim_{x_{n+1}\to0} x_{n+1}^{1-2s} \partial_{n+1} \tilde{u}^f &= 0 \quad &&\text{ on } \Omega\times\{0\},\\
\tilde{u}^f &= f \quad &&\text{ on } \Omega_e\times\{0\}.
\end{aligned}
\end{cases}
\end{equation}

For $\widehat{\Omega} \subset \R^n$ open, bounded, non-empty and Lipschitz, we define the sets
\begin{align*}
S_1(\widehat{\Omega}) &:= \{ v \in H^1(\widehat{\Omega}): \ -\nabla'\cdot a \nabla' v = 0 \text{ in } \widehat{\Omega} \},\\
S_2(\widehat{\Omega}) &:= \left\{ \restr{\int_0^\infty t^{1-2s} \tilde{u}^f(\cdot,t) dt}{\widehat{\Omega}} \in H^1(\widehat{\Omega}): \ \tilde{u}^f \in \dot{H}^1(\R^{n+1}_+, x_{n+1}^{1-2s}) \text{ solves \eqref{eq:QRA_CSExtension}}, f \in \widetilde{H}^s(W) \right\}.
\end{align*}

Note that $\widehat{\Omega}$ only affects the restriction operator in the definition of $S_1(\widehat{\Omega})$ and $S_2(\widehat{\Omega})$. The sets $\Omega$ and $W$ in the defining equation \eqref{eq:QRA_CSExtension} remain as they are. The goal is to prove density of $S_2(\Omega)$ in $S_1(\Omega)$ in a quantitative way.

\begin{prop}\label{prop:QRA}
Let $\Omega \subset \R^n$ be open, non-empty, bounded and Lipschitz such that $\R^n \setminus \Omega$ is connected and let $W \subset \Omega_e$ be open, bounded, non-empty and Lipschitz. Let $\theta_1\in(0,1)$ and let $\Omega_1\Supset\Omega$ be open, bounded, Lipschitz. Let $a \in C^2(\R^n, \R^{n \times n}_{\text{sym}})$ satisfy the assumptions (A1) with the given $\theta_1$, and (A3).

There are $C,\mu>0$ such that for any $v \in S_1(\Omega_1)$ and any $\varepsilon \in (0,1)$ there exists $f \in \widetilde{H}^s(W)$ such that
\begin{align*}
\left\Vert \restr{v}{\Omega} - \restr{\int_0^\infty t^{1-2s} \tilde{u}^f(\cdot,t) dt}{\Omega} \right\Vert_{L^2(\Omega)} \leq \varepsilon \Vert v \Vert_{H^1(\Omega_1)}, \qquad \Vert f \Vert_{\widetilde{H}^s(W)} \leq C e^{C\varepsilon^{-\mu}} \Vert v \Vert_{L^2(\Omega)},
\end{align*}
where $\tilde{u}^f$ is a weak solution to \eqref{eq:QRA_CSExtension} with Dirichlet data $f$. Here, the constants $C$ and $\mu$ only depend on $n$, $s$, $\Omega$, $\Omega_1$, $W$ and $\theta_1$.
\end{prop}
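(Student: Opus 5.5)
We follow the functional-analytic strategy of \cite{RS18a}: a quantitative density statement is obtained from a singular value decomposition of a compact operator, together with a quantitative unique continuation estimate for its adjoint. Define
\begin{align*}
A: \widetilde{H}^s(W) \to L^2(\Omega), \qquad f \mapsto \restr{\int_0^\infty t^{1-2s} \tilde{u}^f(\cdot,t)\, dt}{\Omega}.
\end{align*}
\emph{Step 1.} Show that $A$ is bounded, compact and injective. Boundedness follows by splitting the $t$-integral into $(0,1)$ and $(1,\infty)$ and using the decay bounds of Lemma \ref{lem:apriori_decay}, as in Lemma \ref{lem:apriori_fract_cond}. Compactness follows from the $H^1(\Omega)$ bound and Rellich's theorem. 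Injectivity follows from the qualitative argument of \cite{CGRU23}.

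\emph{Step 2.} Compute the adjoint $A^*: L^2(\Omega)\to H^{-s}(W)$. For $g \in L^2(\Omega)$, let $\tilde{w}_g$ solve the dual Caffarelli--Silvestre problem
\begin{align*}
-\nabla\cdot x_{n+1}^{1-2s}\tilde{a}\nabla \tilde{w}_g = x_{n+1}^{1-2s}\, (g\chi_\Omega)(x')
\end{align*}
in $\R^{n+1}_+$, with homogeneous Neumann data on $\Omega\times\{0\}$ and homogeneous Dirichlet data on $\Omega_e\times\{0\}$. An integration by parts shows that $A^*g$ is, up to the constant $c_s$, the restriction $\lim_{x_{n+1}\to0}x_{n+1}^{1-2s}\partial_{n+1}\tilde{w}_g|_W$. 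The source is integrable against $t^{1-2s}$ only in a weak sense, so we intend to verify that $\tilde{w}_g \in \dot{H}^1$ using the Poisson and heat kernel bounds of Section \ref{sec:heat}; here $n\ge 3$ should enter.

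\emph{Step 3.} Singular value decomposition. Let $(\sigma_j,\varphi_j,\psi_j)$ be the singular system of $A$, with $A\varphi_j=\sigma_j\psi_j$. Given $v\in S_1(\Omega_1)$ and a threshold $\alpha>0$, set
\begin{align*}
f_\alpha:=\sum_{\sigma_j\ge\alpha}\sigma_j^{-1}(v,\psi_j)\varphi_j .
\end{align*}
Then $\|f_\alpha\|\le\alpha^{-1}\|v\|_{L^2(\Omega)}$ and $v-Af_\alpha = r_\alpha := \sum_{\sigma_j<\alpha}(v,\psi_j)\psi_j$ (plus any component of $v$ in $\overline{\mathrm{ran}A}^\perp$, which vanishes by the density result of \cite{CGRU23} restricted to solutions). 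It then remains to show that $\|r_\alpha\|_{L^2(\Omega)}\le \varepsilon\|v\|_{H^1(\Omega_1)}$ once $\alpha=Ce^{-C\varepsilon^{-\mu}}$.

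We estimate $\|r_\alpha\|^2=(v,r_\alpha)_{L^2(\Omega)}$ by duality. Since $v$ solves $\nabla'\cdot a\nabla'v=0$ in $\Omega_1$, we rewrite $(v,r_\alpha)$ through the dual solution $\tilde{w}:=\tilde{w}_{r_\alpha}$. Integrating $\tilde w$ in $t$ against $t^{1-2s}$ yields a function $W_r$ with $-\nabla'\cdot a\nabla' W_r = r_\alpha\chi_\Omega + (\text{boundary flux term})$, following the identity of \cite{CGRU23}. Green's formula on $\Omega$ then gives
\begin{align*}
|(v,r_\alpha)|\le C\|v\|_{H^1(\Omega_1)}\big(\|W_r\|_{H^{1/2}(\partial\Omega)}+\|\partial_\nu^aW_r\|_{H^{-1/2}(\partial\Omega)}\big).
\end{align*}
The Cauchy data of $W_r$ on $\partial\Omega$ are controlled, exactly as in Proposition \ref{prop:QUCP_1}, by $|\log(\|A^*r_\alpha\|_{H^{-s}(W)}/\|r_\alpha\|)|^{-\beta}\|r_\alpha\|$. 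Note that $\tilde w$ has zero Dirichlet data on $W$ and its Neumann data on $W$ equal $A^*r_\alpha$, which satisfies $\|A^*r_\alpha\|\le\alpha\|r_\alpha\|$. This is precisely the setting of Proposition \ref{prop:estimate_Omega_ext}, Lemma \ref{lem:apriori_fract_cond} and Proposition \ref{prop:inhomogeneous_term}, with $\tilde w$ in place of $\tilde u_1^f-\tilde u_2^f$, and with the a priori bound $A:=\|r_\alpha\|_{L^2}$ supplied by energy estimates. The $\Omega_1$ margin enters here: it lets us work with $v\in H^1(\Omega_1)$ and use the boundary of $\Omega$ as the interface. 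We then obtain $\|r_\alpha\|\le C\|v\|_{H^1(\Omega_1)}|\log\alpha|^{-\beta}$, and choosing $\alpha=e^{-C\varepsilon^{-1/\beta}}$ finishes the proof with $\mu=1/\beta$.

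\textbf{Main obstacle.} The hardest step is the duality rewriting of $(v,r_\alpha)$ in terms of boundary Cauchy data of $W_r$. This requires that the dual problem's $t$-integrated function satisfy a local elliptic equation in $\Omega$ with controlled remainder, and that the propagation of smallness from Section \ref{sec:quant_UCP}, which was proved for homogeneous equations in $\Omega_e\times\R_+$, still applies. The source $x_{n+1}^{1-2s}g\chi_\Omega$ is supported over $\Omega$ only, so the equation is homogeneous in $\Omega_e\times\R_+$, and we expect this to go through. The decay of $\tilde w$ at large heights, needed for the upper integrals, must be rederived for the inhomogeneous problem, presumably via the heat kernel representation; this is what we would check first.
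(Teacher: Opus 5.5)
\textbf{There is a genuine gap in your duality step.} The function $W_r=\int_0^\infty t^{1-2s}\tilde w(\cdot,t)\,dt$, on which your whole argument rests, does not exist.

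\emph{Why it fails.} Integrate the dual equation $-\nabla\cdot x_{n+1}^{1-2s}\tilde a\nabla\tilde w=x_{n+1}^{1-2s}r_\alpha\chi_\Omega$ over $x_{n+1}\in(0,\infty)$. The source becomes $r_\alpha\chi_\Omega\int_0^\infty t^{1-2s}\,dt=+\infty$, not $r_\alpha\chi_\Omega$. Equivalently, $\tilde w$ does not decay at large heights. As in Lemma~\ref{lem:QRA_adjoint_problem}, $\tilde w=\tilde u_1-\tilde u_2$, where $\tilde u_1(x',x_{n+1})=u_1(x')$ and $u_1\in\dot H^1(\R^n)$ solves $-\nabla'\cdot a\nabla'u_1=r_\alpha\chi_\Omega$ in $\R^n$. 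So $u_1\neq0$ whenever $r_\alpha\neq0$, and only $\tilde u_2$ decays.

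\emph{Consequences.} The ``upper integral'' is infinite rather than small. The decay check you planned to do first will fail, and Lemma~\ref{lem:apriori_fract_cond} and Proposition~\ref{prop:QUCP_1} cannot be transplanted to $\tilde w$. For the same reason $\tilde w\notin\dot H^1(\R^{n+1}_+,x_{n+1}^{1-2s})$; it lies in this space only locally. Hence the polynomial bound on $B'_{\kappa L}\times(0,\kappa L)$ required by Proposition~\ref{prop:estimate_Omega_ext} does not follow from energy estimates; it needs the decomposition of Lemma~\ref{lem:QRA_adjoint_problem}.

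\emph{A secondary problem.} Even with a finite $W_r$, placing the interface on $\partial\Omega$ brings in Proposition~\ref{prop:estimate_thin_annuli} and the tangential gradient decay of Lemma~\ref{lem:apriori_decay}. Their constants depend on $\|a\|_{C^1}$ near $\partial\Omega$ and on the Ricci bound $\theta_2$. The statement allows dependence only on $n,s,\Omega,\Omega_1,W,\theta_1$.

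\textbf{The missing idea.} Never integrate over all heights. Use a normalized vertical profile at large heights (in effect, work with the large-height limit $u_1$), and move the interface to a fixed distance from $\Omega$.

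\emph{Set-up.} Take $\Omega\Subset\Omega_1'\Subset\Omega_1''\Subset\Omega_1$ and $\eta\in C_c^\infty(\Omega_1'')$ with $\eta=1$ on $\Omega_1'$. Write $\|r_\alpha\|_{L^2(\Omega)}^2=(\eta v,r_\alpha)_{L^2(\Omega)}$. Insert $1=\int_0^\infty t^{1-2s}\beta_k(t)\,dt$, with $\beta_k$ supported in $(k,Ck^{p_2})$ and $|\beta_k|+|\beta_k'|+|\beta_k''|\le C$. Then test the weak dual equation with $\beta_k(t)\eta(x')v(x')$; this produces no boundary term at $t=0$.

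\emph{The $\partial_t$-term.} It involves only $\partial_t\tilde w$ on the support of $\beta_k'$. This \emph{does} decay, because the non-decaying part $u_1$ is $t$-independent. The heat-kernel bound of Lemma~\ref{lem:apriori_decay}, applied to the decaying part, gives $Ck^{-p}\|v\|_{L^2(\Omega_1)}\|r_\alpha\|$.

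\emph{The tangential term.} Move $\nabla'\cdot a\nabla'$ onto $\eta v$. Since $v$ is $a$-harmonic in $\Omega_1$, the result is supported in $\Omega_1''\setminus\Omega_1'$. So only $\tilde w$ on $(\Omega_1''\setminus\Omega_1')\times(k,Ck^{p_2})$ enters. Proposition~\ref{prop:estimate_Omega_ext} with $\delta\sim1$ (Lemma~\ref{lem:UCP_adjoint_problem}) then bounds it by $Ck^{p}\alpha^{c\rho^{C\log k}}\|v\|_{H^1(\Omega_1)}\|r_\alpha\|$, where $\rho\in(0,1)$ is the three-balls exponent.

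\emph{Conclusion.} Divide by $\|r_\alpha\|$ and choose $k\sim\varepsilon^{-1/p}$ and $\alpha\sim e^{-\varepsilon^{-\gamma}}$ with $\gamma$ large; this closes the argument.

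Your SVD set-up in Steps 2--3 is the right one. The injectivity of $A$ claimed in Step 1 is neither needed nor justified; what is actually used is dense range in $\overline{S_1(\Omega)}^{L^2(\Omega)}$.
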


The estimate from Proposition \ref{prop:QRA} is strongly reminiscent of similar bounds from control theory. More precisely, the first estimate encodes the approximability, while the second one measures the ``cost'' of approximation. Also our arguments for this estimate borrow ideas from control theory.

\subsection{The adjoint problem}\label{sec:QRA_AdjointProblem}

Before we turn to the proof of the quantitative Runge approximation of Proposition \ref{prop:QRA}, we recall and collect some preliminary results for the following auxiliary problem, which we will refer to as the adjoint problem. The original result stems from Lemma 3.2 in \cite{CGRU23}. We complement it by some a-priori estimates on the solution.

\begin{lem}\label{lem:QRA_adjoint_problem}
Let $n \geq 3$, let $\Omega \subset \R^n$ be open, non-empty, bounded and Lipschitz and $W \subset \Omega_e$ be open, non-empty, bounded and Lipschitz. Let $\theta_1 \in (0,1)$ and assume that $a \in L^\infty(\R^n, \R^{n \times n}_{\text{sym}})$ satisfies the assumptions (A1) with the given $\theta_1$, and (A3). Let $w \in \widetilde{H}^{-1}(\Omega)$.

Then there exists a weak solution $\tilde{h} \in \dot{H}^1_{\text{loc}}(\R^{n+1}_+,x_{n+1}^{1-2s})$ to the problem
\begin{equation}
\begin{cases}
\begin{alignedat}{2}
-\nabla\cdot x_{n+1}^{1-2s} \tilde{a} \nabla \tilde{h} &= x_{n+1}^{1-2s} w \chi_{\Omega\times(0,\infty)} \quad &&\text{in } \R^{n+1}_+,\\
\lim_{x_{n+1}\to0} x_{n+1}^{1-2s} \partial_{n+1} \tilde{h} &= 0 \quad &&\text{on } \Omega \times \{0\},\\
\tilde{h} &= 0 \quad &&\text{on } \Omega_e \times \{0\}.
\end{alignedat}
\end{cases}
\end{equation}
More precisely, there exists a unique function $\tilde{h} \in \dot{H}^1_{\text{loc}}(\R^{n+1}_+, x_{n+1}^{1-2s})$ with the following properties:
\begin{itemize}
\item its trace vanishes on $\Omega_e \times \{0\}$ and it satisfies
\begin{align*}
\int_{\R^{n+1}_+} x_{n+1}^{1-2s} \nabla \tilde{\phi} \cdot \tilde{a} \nabla \tilde{h} dx = \langle w, \int_0^\infty x_{n+1}^{1-2s} \tilde{\phi}(\cdot,x_{n+1}) dx_{n+1} \rangle_{\widetilde{H}^{-1}(\Omega), H^1(\Omega)}
\end{align*}
for all
\begin{align*}
\tilde{\phi} \in H^1_{c,0}(\R^{n+1}_+, x_{n+1}^{1-2s}) 
&:= \{ \tilde{v} \in H^1(\R^{n+1}_+,x_{n+1}^{1-2s}): \\
& \quad \quad \ \tilde{v} \text{ has compact support in } \overline{\R^{n+1}_+}, \ \restr{\tilde{v}}{\Omega_e \times \{0\}} = 0 \},
\end{align*}
\item $\tilde{h}$ can be decomposed into three auxiliary functions $\tilde{h}= \tilde{u}_1 - \bar{u} + \bar{u}_2$ and for $L\geq1$ the following a-priori estimates hold
\begin{align*}
\Vert \tilde{h} \Vert_{H^1(B_{\kappa L}' \times (0,\kappa L), x_{n+1}^{1-2s})} &\leq C L^p \Vert w \Vert_{H^{-1}(\Omega)},\\
\Vert \tilde{u}_1 (\cdot, 0) \Vert_{L^{q_1}(\R^n)} + \Vert \bar{u} (\cdot, 0) \Vert_{L^{q_1}(\R^n)} &\leq C \Vert w \Vert_{H^{-1}(\Omega)},\\
\Vert \bar{u}_2 (\cdot, 0) \Vert_{L^{q_2}(\R^n)} &\leq C \Vert w \Vert_{H^{-1}(\Omega)}.
\end{align*}
Here, $\kappa>0$ is a possibly large but fixed parameter, and $C>0$ and $p>0$ are suitable constants only depending on $n$, $s$, $\Omega$, $\theta_1$ and $\kappa$. The integrability exponents are given by $q_1 := \frac{2n}{n-2}$ and $q_2 := \frac{2n}{n-2s}$.
\item Lastly, it also holds true that $\restr{-\lim_{x_{n+1}\to0} x_{n+1}^{1-2s} \partial_{n+1} \tilde{h}}{W} \in H^{-s}(W)$.
\end{itemize}
\end{lem}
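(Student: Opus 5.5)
The plan is to build $\tilde h$ explicitly as the three-piece superposition $\tilde h=\tilde u_1-\bar u+\bar u_2$ named in the statement, following the construction of Lemma 3.2 in \cite{CGRU23}, and then to estimate each piece separately. Each piece is chosen to remove one defect of the previous ones.

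\emph{The piece $\tilde u_1$.} Since $n\geq 3$ and $w\in\widetilde H^{-1}(\Omega)$ has compact support, Lax--Milgram on $\dot H^1(\R^n)$ gives a unique $z\in\dot H^1(\R^n)$ with $-\nabla'\cdot a\nabla' z=w$ in $\R^n$. Moreover $\Vert z\Vert_{\dot H^1}\lesssim \Vert w\Vert_{H^{-1}(\Omega)}$, and by Sobolev $\Vert z\Vert_{L^{q_1}(\R^n)}\lesssim\Vert w\Vert_{H^{-1}(\Omega)}$ with $q_1=\frac{2n}{n-2}$. Set $\tilde u_1(x',x_{n+1}):=z(x')$, which is independent of $x_{n+1}$. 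Then $-\nabla\cdot x_{n+1}^{1-2s}\tilde a\nabla\tilde u_1=x_{n+1}^{1-2s}w\chi_{\Omega\times(0,\infty)}$ holds weakly, with vanishing weighted Neumann data everywhere. Its trace $z$, however, does not vanish on $\Omega_e$.

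\emph{The piece $\bar u$.} To fix the trace, let $\bar u$ be the Caffarelli--Silvestre type extension of $z$ from Section \ref{sec:prel_CSExtension}. For this I must check that $z\in\dot H^s(\R^n)$. The bound $\Vert z\Vert_{\dot H^s}^2\sim\int|\xi|^{2s}|\hat z|^2$ is controlled at high frequencies by the $\dot H^1$ norm. At low frequencies I use that $\hat w$ is bounded near $0$, by compact support, and that $|\hat z|\lesssim|\xi|^{-2}|\hat w|$ in the constant coefficient case (for general $a$, I would compare $z$ with the constant-coefficient solution, since $a=\Id$ off $\Omega$). Then $\int_{|\xi|<1}|\xi|^{2s-4}\,d\xi<\infty$ precisely because $n\geq3$. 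Also $\bar u(\cdot,0)=z\in L^{q_1}$.

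\emph{The piece $\bar u_2$.} The difference $\tilde u_1-\bar u$ has the correct equation and zero trace, but on $\Omega\times\{0\}$ its weighted Neumann data is $c_s(-\nabla'\cdot a\nabla')^s z|_\Omega\in H^{-s}(\Omega)$. I would cancel it by letting $u_2\in\widetilde H^s(\Omega)$ solve $(-\nabla'\cdot a\nabla')^s u_2=(-\nabla'\cdot a\nabla')^s z$ in $\Omega$ with $u_2=0$ in $\Omega_e$, using Lax--Milgram and coercivity on $\widetilde H^s(\Omega)$. Then $\bar u_2$ is its extension, and fractional Sobolev gives $\Vert u_2\Vert_{L^{q_2}}\lesssim\Vert u_2\Vert_{H^s}\lesssim\Vert w\Vert_{H^{-1}(\Omega)}$.

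\emph{Verification of the claimed properties.}
\begin{itemize}
\item The weak formulation for test functions $\tilde\phi\in H^1_{c,0}$ follows by summing the three weak formulations. The $x_{n+1}$-independence of $\tilde u_1$ produces exactly the pairing $\langle w,\int_0^\infty x_{n+1}^{1-2s}\tilde\phi\,dx_{n+1}\rangle$, and the boundary terms on $\Omega_e\times\{0\}$ vanish because $\tilde\phi=0$ there.
\item The local bound on $B'_{\kappa L}\times(0,\kappa L)$ comes from three contributions. For $\tilde u_1$, integrating in $x_{n+1}$ gives a factor $L^{1-s}$ times $\Vert z\Vert_{H^1(B'_{\kappa L})}$, where the $L^2$ part is bounded by Hölder with $L^{q_1}$ and a power of $L$. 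For $\bar u$ and $\bar u_2$, I use the global energy bounds plus the weighted Poincaré inequality (Theorem 1.2 in \cite{FKS82}), which costs a factor $L$.
\item The Neumann data on $W$ lies in $H^{-s}(W)$. It equals $-c_s(-\nabla'\cdot a\nabla')^s(z-u_2)|_W$, and $z-u_2\in\dot H^s$, so this follows by duality.
\end{itemize}

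\emph{Uniqueness.} This is the step I expect to be the main obstacle, since $\tilde h$ is only in $\dot H^1_{\rm loc}$. The difference $\tilde d$ of two solutions satisfies the homogeneous mixed problem with zero Dirichlet data on $\Omega_e$. I would impose the class implicitly given by the decomposition, namely traces in $L^{q_1}+L^{q_2}$ together with polynomial growth of local energies. Then I would test with $\tilde d\,\eta_R^2$, where $\eta_R$ is a cutoff at scale $R$, and show the error terms vanish as $R\to\infty$ using the decay estimates of Lemma \ref{lem:apriori_decay} for the extension parts. The delicate point is that the $x_{n+1}$-independent part $\tilde u_1$ does not decay in $x_{n+1}$, and it must be shown that this cannot produce a nonzero homogeneous solution.
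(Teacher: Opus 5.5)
The gap is the claim $z\in\dot H^s(\R^n)$, on which your whole construction of $\bar u$ and $u_2$ rests. The integral $\int_{|\xi|<1}|\xi|^{2s-4}\,d\xi$ over $\R^n$ is finite if and only if $n>4-2s$. So for $n=3$ it diverges whenever $s\le\frac12$.

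This is not a weakness of your bound. Suppose $\langle w,1\rangle\neq0$, e.g.\ $w=\chi_\Omega$. Since $a=\Id$ off $\Omega$, $z=c\,|x'|^{2-n}+O(|x'|^{1-n})$ at infinity with $c\neq0$. For $n=3$ and $s\le\frac12$, the function $|x'|^{-1}$ is not in $L^{6/(3-2s)}$ near infinity, so $z\notin\dot H^s(\R^3)$. In that range the extension $\bar u$ has infinite weighted energy, and every step built on $z\in\dot H^s$ fails:
\begin{itemize}
\item defining $\bar u$ as a finite-energy extension;
\item identifying its Neumann data as an element of $H^{-s}(\Omega)$ by $\dot H^{s}$--$\dot H^{-s}$ duality, which you need for the Lax--Milgram problem for $u_2$ and for $\|u_2\|_{H^s}\lesssim\|w\|_{H^{-1}(\Omega)}$;
\item the ``global energy bound'' for $\bar u$ in the local estimate;
\item the $H^{-s}(W)$ bullet via $z-u_2\in\dot H^s$.
\end{itemize}
This is precisely why the lemma only claims $\tilde h\in\dot H^1_{\mathrm{loc}}$, with local bounds growing like $L^p$.

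The paper avoids this by using only $u_1=z\in\dot H^1\cap L^{q_1}$ together with localized estimates. It takes $\bar u$ to be the \emph{constant}-coefficient extension of $u_1$. Since $\tilde a=\Id$ off $\Omega\times\R_+$, the bulk defect $-\nabla\cdot x_{n+1}^{1-2s}(\tilde a-\Id)\nabla\bar u$ lives in $B_R'\times\R_+$. Hence only $\|x_{n+1}^{\frac{1-2s}{2}}\nabla\bar u\|_{L^2(B_R'\times\R_+)}$ is needed, and it is bounded by $\|(-\Delta')^{s/2}u_1\|_{L^2(B_{2R}')}+\|(-\Delta')^{\frac{s-1}{2}}\nabla'u_1\|_{L^2(B_{2R}')}\lesssim R^p\|w\|_{H^{-1}(\Omega)}$. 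The boundary term is bounded using $(-\Delta')^s:\dot H^1\to\dot H^{1-2s}$, paired with $\|\tilde\varphi(\cdot,0)\|_{\dot H^{2s-1}}\lesssim\|\tilde\varphi(\cdot,0)\|_{H^s}$; the compact support of the trace controls the low frequencies. Only the corrector $\bar u_2$ is then a global finite-energy Lax--Milgram solution. The $H^{-s}(W)$ bound uses a cut-off extension $e^f$ supported in $B_R'\times[0,1)$ together with the local energy of $\tilde h$.

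Your variable-coefficient route can be repaired in the same spirit, but as written it gives existence and the estimates only for $n\ge4$ or $s>\frac12$.

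As for uniqueness, the paper's proof does not treat it, and the class you propose does not yield it. Let $\tilde c$ be the finite-energy function vanishing on $\Omega_e\times\{0\}$ with $\int_{\R^{n+1}_+}x_{n+1}^{1-2s}\nabla\tilde\phi\cdot\tilde a\nabla\tilde c\,dx=-2s\int_\Omega\tilde\phi(\cdot,0)\,dx'$. Then $x_{n+1}^{2s}-\tilde c$ is a nonzero solution of the homogeneous problem whose trace lies in $L^{q_2}$ and whose local energies grow polynomially. So your cut-off argument cannot close in that class.
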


\begin{proof}
We argue in three steps. 

\textit{Step 1: Initial solution.} Firstly, we construct a solution $u_1$ of 
\begin{align*}
-\nabla' \cdot a \nabla' u_1 & = w \chi_{\Omega} \quad \text{in } \R^n.
\end{align*}
The existence and uniqueness of $u_1 \in \dot{H}^1(\R^n) \cap L^{\frac{2n}{n-2}}(\R^n)$ with the bound
\begin{align*}
\|\nabla u_1\|_{L^2(\R^n)} + \|u_1\|_{L^{\frac{2n}{n-2}}(\R^n)} \leq C \|w\|_{H^{-1}(\Omega)}
\end{align*}
 follow from the theorem of Lax-Milgram in the space $\dot{H}^1(\R^n)\cap L^{\frac{2n}{n-2}}(\R^n)$ equipped with the $\dot{H}^1(\R^n)$ inner product. With this auxiliary function in hand, we define $\tilde{u}_1(x, x_{n+1}):= u_1(x)$. By the above considerations, we infer that $\tilde{u}_1 \in L^{\infty}(\R_+, \dot{H}^1(\R^n)) \cap L^{\infty}(\R_+, L^{\frac{2n}{n-2}}(\R^n))$. By construction, it is a weak solution (in the sense of testing with compactly  supported test functions) of the equation
\begin{equation*}
\begin{cases}
\begin{alignedat}{2}
-\nabla \cdot x_{n+1}^{1-2s} \tilde{a} \nabla \tilde{u}_1 &= x_{n+1}^{1-2s} w \chi_{\Omega \times (0,\infty)} \quad &&\text{in } \R^{n+1}_+,\\
\lim\limits_{x_{n+1} \rightarrow 0} x_{n+1}^{1-2s} \p_{n+1} \tilde{u}_1 &= 0 \quad &&\text{on } \Omega \times \{0\},\\
\tilde{u}_1 &= u_1 \quad &&\text{on } \Omega_e \times \{0\}.
\end{alignedat}
\end{cases}
\end{equation*}
It satisfies the bound 
\begin{align*}
\|\tilde{u}_1\|_{L^{\infty}(\R_+, \dot{H}^1(\R^n)) } + \|\tilde{u}_1\|_{L^{\infty}(\R_+, L^{\frac{2n}{n-2}}(\R^n)) } \leq C \|w\|_{H^{-1}(\Omega)}.
\end{align*}

\textit{Step 2: Correcting the initial solution.} Next we ``correct'' the function $\tilde{u}_1$ in order to turn it into a full solution of our original problem. To this end, we solve the problem
\begin{equation}\label{eq:AdjointProblem_CorrectingFunction}
\begin{cases}
\begin{alignedat}{2}
-\nabla \cdot x_{n+1}^{1-2s} \tilde{a} \nabla \tilde{u}_2 &= 0 \quad &&\text{in } \R^{n+1}_+,\\
\lim\limits_{x_{n+1} \rightarrow 0} x_{n+1}^{1-2s} \p_{n+1} \tilde{u}_2 &= 0 \quad &&\text{on } \Omega \times \{0\},\\
\tilde{u}_2 &= u_1 \quad &&\text{on } \Omega_e \times \{0\}.
\end{alignedat}
\end{cases}
\end{equation}

After an extension, this follows from the theorem of Lax-Milgram. Indeed, let $\bar{u}$ be the constant coefficient Caffarelli-Silvestre extension of $u_1$ (see \eqref{eq:CS_intro} with $a=\Id$). We consider a function $\bar{u}_2$ solving
\begin{equation}\label{eq:AdjointProblem_AuxiliaryFunction2}
\begin{cases}
\begin{alignedat}{2}
-\nabla \cdot x_{n+1}^{1-2s} \tilde{a} \nabla \bar{u}_2 &= -\nabla \cdot x_{n+1}^{1-2s} \tilde{a} \nabla \bar{u}  \quad &&\text{in } \R^{n+1}_+,\\
\lim\limits_{x_{n+1} \rightarrow 0} x_{n+1}^{1-2s} \p_{n+1} \bar{u}_2 &= 
\lim\limits_{x_{n+1} \rightarrow 0} x_{n+1}^{1-2s} \p_{n+1} \bar{u} \quad &&\text{on } \Omega \times \{0\},\\
\bar{u}_2 &= 0 \quad &&\text{on } \Omega_e \times \{0\}.
\end{alignedat}
\end{cases}
\end{equation}
Then, $\tilde{u}_2 := \bar{u} - \bar{u}_2$ is a weak solution to \eqref{eq:AdjointProblem_CorrectingFunction}. The weak form of \eqref{eq:AdjointProblem_AuxiliaryFunction2} turns into
\begin{align*}
&\int_{\R^{n+1}_+} x_{n+1}^{1-2s} \tilde{ a}\nabla \bar{u}_2 \cdot \nabla \tilde{\varphi} dx =
\int_{\R^{n+1}_+} x_{n+1}^{1-2s} \tilde{a}\nabla \bar{u} \cdot \nabla \tilde{\varphi} dx \\
&\quad = \int_{\R^{n+1}_+} x_{n+1}^{1-2s} (\tilde{a} - \Id_{(n+1)\times (n+1)}) \nabla \bar{u} \cdot \nabla \tilde{\varphi} dx  +  \int_{\R^{n}} \tilde{\varphi}(\cdot,0) \lim_{x_{n+1} \rightarrow 0} x_{n+1}^{1-2s} \p_{n+1} \bar{u} dx'  \\
&\quad = \int_{B_R' \times \R_+} x_{n+1}^{1-2s} (\tilde{a} - \Id_{(n+1)\times (n+1)}) \nabla \bar{u} \cdot \nabla \tilde{\varphi} dx +  \int_{B_R'} \tilde{\varphi}(\cdot,0) \lim_{x_{n+1} \rightarrow 0} x_{n+1}^{1-2s} \p_{n+1} \bar{u}  dx'  ,
\end{align*}
where the test function $\tilde{\varphi}$ satisfies $\tilde{\varphi} \in \dot{H}^1(\R^{n+1}_+, x_{n+1}^{1-2s})$ with vanishing trace on $\Omega_e \times \{0\}$ and where $R \geq 1$ is such that $\Omega \subset B_R'$. In the above identity, we used that $\bar{u}$ is a solution of the constant coefficient Caffarelli-Silvestre extension and that $\tilde{a} = \Id_{(n+1)\times(n+1)}$ in $\Omega_e \times \R_+$. Now it remains to show that 
\begin{align}\label{eq:bounded_functional}
\tilde{\varphi} \mapsto \int_{B_R' \times \R_+} x_{n+1}^{1-2s} (\tilde{a} - \Id_{(n+1)\times (n+1)}) \nabla \bar{u} \cdot \nabla \tilde{\varphi} dx  + \int_{B_R'} \tilde{\varphi}(\cdot,0) \lim_{x_{n+1} \rightarrow 0} x_{n+1}^{1-2s} \p_{n+1} \bar{u}  dx' 
\end{align}
is a bounded functional on $\dot{H}^1(\R^{n+1}_+, x_{n+1}^{1-2s})$ with vanishing trace on $\Omega_e \times \{0\}$.

\textit{Step 2.1: Boundary term.} We first discuss the boundary contribution in \eqref{eq:bounded_functional}. We note that by duality and the regularity estimates for the Caffarelli-Silvestre extension, we have that
\begin{align*}
\left|\int_{B_R'} \tilde{\varphi}(\cdot,0) \lim_{x_{n+1} \rightarrow 0} x_{n+1}^{1-2s} \p_{n+1} \bar{u}  dx'  \right|
&\leq C \|\lim\limits_{x_{n+1} \rightarrow 0} x_{n+1}^{1-2s} \p_{n+1} \bar{u} \|_{\dot{H}^{1-2s}(\R^n)} \|\tilde{\varphi}(\cdot,0)\|_{\dot{H}^{2s-1}(\R^n)}\\
&\leq  C \| \bar{u}(\cdot,0) \|_{\dot{H}^{1}(\R^n)} \|\tilde{\varphi}(\cdot,0)\|_{\dot{H}^{2s-1}(\R^n)}.
\end{align*}
Here we used that $-c_s \lim\limits_{x_{n+1} \rightarrow 0} x_{n+1}^{1-2s} \p_{n+1} \bar{u} = (-\D')^s \bar{u}$ and that $(-\Delta')^s: \dot{H}^r(\R^n) \to \dot{H}^{r-2s}(\R^n)$ is bounded for any $r\in\R$. It hence remains to bound $\|\tilde{\varphi}(\cdot,0)\|_{\dot{H}^{2s-1}(\R^n)}$ in terms of $\| \tilde{\varphi}(\cdot,0) \|_{H^s(\R^n)}$ as by Poincar\'e's inequality, the compact support of $\tilde{\varphi}(\cdot,0)$ and the Sobolev-trace inequality it holds that 
\begin{align*}
\| \tilde{\varphi}(\cdot,0) \|_{H^s(\R^n)} \leq C_{\Omega} \| \tilde{\varphi}(\cdot,0) \|_{\dot{H}^s(\R^n)} \leq C_{\Omega} \|x_{n+1}^{\frac{1-2s}{2}} \nabla \tilde{\varphi}\|_{L^2(\R^{n+1}_+)}.
\end{align*}
To this end, we note that 
\begin{align*}
\| \tilde{\varphi}(\cdot,0) \|_{\dot{H}^{2s-1}(\R^n)}^2 &= \int_{B_1'}|\xi'|^{4s-2} |\F_{x'} \tilde{\varphi}(\xi',0)|^2 d\xi' + \int_{\R^n \setminus B_1'} |\xi'|^{4s-2} |\F_{x'} \tilde{\varphi}(\xi',0)|^2 d\xi'\\
&\leq \Vert \vert \cdot \vert^{4s-2} \Vert_{L^1(B_1')}^2 \Vert \mathcal{F}_{x'} \tilde{\varphi}(\cdot,0) \Vert_{L^\infty(B_1')}^2 + \int_{\R^n \setminus B_1'} |\xi'|^{2s} |\F_{x'} \tilde{\varphi}(\xi',0)|^2 d\xi'\\
&\leq C \|\tilde{\varphi}(\cdot,0)\|_{L^1(\R^n)}^2 + \|\tilde{\varphi}(\cdot,0) \|_{\dot{H}^s(\R^n)}^2 \leq C_{\Omega} \| \tilde{\varphi}(\cdot,0) \|_{L^2(\R^n)}^2 + \| \tilde{\varphi}(\cdot,0) \|_{\dot{H}^s(\R^n)}^2\\
&\leq C_{\Omega}\| \tilde{\varphi}(\cdot,0) \|_{H^s(\R^n)}^2 .
\end{align*}
Here, we denoted by $\mathcal{F}_{x'}$ the Fourier transform in the tangential variable. In the third line, we used that $\xi' \mapsto |\xi'|^{4s-2}$ is in $L^1(B_1)$ for $n\geq 3$ and in the second-to-last estimate we have invoked the compact support assumption for $\tilde{\varphi}(\cdot,0)$ on $\R^n$. Combined with the above considerations, this concludes the estimate for the boundary contribution.

\textit{Step 2.2: Bulk term.} For the bulk term in \eqref{eq:bounded_functional}, we have that 
\begin{align*}
\| x_{n+1}^{\frac{1-2s}{2}} \nabla \bar{u} \|_{L^2(B_R' \times \R_+)}
\leq \| \eta x_{n+1}^{\frac{1-2s}{2}} \nabla \bar{u} \|_{L^2( \R^{n+1}_+)} 
&\leq \| (\F_{x'}\eta) \ast_{\xi'}  x_{n+1}^{\frac{1-2s}{2}} \F_{x'}(\nabla \bar{u}) \|_{L^2( \R^{n+1}_+)},
\end{align*}
where $\eta$ is a smooth cut-off function in the tangential variables which is equal to one in $B_R'$ and vanishes outside of $B_{2R}'$, $\F_{x'}$ denotes the tangential Fourier transform and $\ast_{\xi'}$ denotes the convolution in the resulting tangential Fourier variable. Now, $\F_{x'}\bar{u}$ is expicitly given in terms of modified Bessel functions of the second kind $K_s$ (see \cite{Olver10} for properties of modified Bessel functions)
\begin{align*}
\F_{x'}\bar{u}(\xi', x_{n+1}) = c_{s} (|\xi'| x_{n+1})^{s} K_s( |\xi'| x_{n+1}) \F_{x'} u_1(\xi').
\end{align*}
Hence, as $\eta$ is independent of the $x_{n+1}$ variable, by carrying out a change of variables in the normal variable, subsequently pulling out the factors which only depend on the normal variable and then performing the normal integration, we obtain for the tangential gradient that
 \begin{align*}
\| (\F_{x'}\eta) &\ast_{\xi'}  x_{n+1}^{\frac{1-2s}{2}} \F_{x'}(\nabla' \bar{u})\|_{L^2(\R_+)}^2
= \int_{\R_+}  x_{n+1}^{1-2s}\left|  \int_{\R^n}((\F_{x'} \eta)(x'-\xi'))\xi' \F_{x'} \bar{u}(\xi') d\xi' \right|^2 dx_{n+1}\\
& = c_s \int_{\R_+}  x_{n+1}^{1-2s} \left|   \int_{\R^n}((\F_{x'} \eta)(x'-\xi'))\xi' (|\xi'| x_{n+1})^{s} K_s( |\xi'| x_{n+1}) \F_{x'} u_1(\xi') d\xi' \right|^2 dx_{n+1}\\
& = c_s \int_{\R_+}  t^{1-2s} \left|   \int_{\R^n}((\F_{x'} \eta)(x'-\xi')) t^{s} K_s(t) \frac{\xi'}{|\xi'|} |\xi'|^{s}\F_{x'} u_1(\xi') d\xi' \right|^2 d t\\ 
& = c_s \int_{\R_+}  t |K_s(t)|^2 dt \left| \int_{\R^n} ((\F_{x'} \eta)(x'-\xi')) \xi' |\xi'|^{s-1}\F_{x'} u_1(\xi') d\xi' \right|^2 \\
& = \tilde{c}_s \left| \int_{\R^n}((\F_{x'} \eta)(x'-\xi')) |\xi'|^{s-1} \F_{x'} (\nabla' u_1) (\xi') d\xi' \right|^2 .
\end{align*}
The argument for the normal gradient is similar. Indeed, using that $K_s'(z) = K_{1-s}(z) - \frac{s}{z} K_s(z)$ (see \cite[Formula 10.29.2]{Olver10}) we derive similarly as before
\begin{align*}
\| (\F_{x'}\eta) \ast_{\xi'} & x_{n+1}^{\frac{1-2s}{2}} \F_{x'}(\partial_{n+1} \bar{u}) \|_{L^2(\R_+)}^2\\
&= c_s \int_{\R_+} t \vert K_{1-s}(t) \vert^2 dt \left\vert \int_{\R^n} (\F_{x'} \eta)(x'-\xi') |\xi'|^{s} \F_{x'} u_1(\xi') d\xi' \right\vert^2\\
&= \tilde{c}_s \left\vert \int_{\R^n} (\F_{x'} \eta)(x'-\xi') |\xi'|^{s} \F_{x'} u_1(\xi') d\xi' \right\vert^2.
\end{align*}

As a consequence,
\begin{equation}\label{eq:integral}
\begin{aligned}
\| (\F_{x'}&\eta) \ast_{\xi'}  x_{n+1}^{\frac{1-2s}{2}} \F_{x'}(\nabla \bar{u}) \|_{L^2( \R^{n+1}_+)}\\
& \leq C_s \big( \| (\F_{x'}\eta) \ast_{\xi'} ( |\cdot'|^{s-1} \F_{x'}(\nabla' u_1)) \|_{L^2( \R^{n})} +  \| (\F_{x'}\eta) \ast_{\xi'} ( |\cdot'|^{s} \F_{x'}(u_1)) \|_{L^2( \R^{n})} \big)\\
& = C_s \big( \| \eta  (-\Delta')^{s/2} u_1 \|_{L^2( \R^{n})} + \| \eta  (-\Delta')^{(s-1)/2} \nabla' u_1 \|_{L^2( \R^{n})} \big)\\
&\leq C_s \big( \|   (-\Delta')^{s/2} u_1 \|_{L^2( B_{2R}')} + \| (-\Delta')^{(s-1)/2} \nabla' u_1 \|_{L^2( B_{2R}')} \big).
\end{aligned}
\end{equation}
It thus remains to estimate $\| (-\Delta')^{s/2} u_1 \|_{L^2( B_{2R}')}$ and $\| (-\Delta')^{(s-1)/2} \nabla' u_1 \|_{L^2(B_{2R}')}$. To this end, we use the compactness of $B_{2R}'$ and the regularity properties of $u_1$. More precisely, we note that
\begin{align*}
(-\Delta')^{s/2} u_1(x') = C_s PV \int_{\R^{n}}\frac{u_1(x')-u_1(y')}{|x'-y'|^{n+s}} dy' ,
\end{align*}
and, with $x' \in B_{2R}'$,
\begin{align}
\label{eq:split_int}
\int_{\R^{n}}\frac{u_1(x')-u_1(y')}{|x'-y'|^{n+s}} dy' 
&= \int_{B_{4R}'}\frac{u_1(x')-u_1(y')}{|x'-y'|^{n+s}} dy' + \int_{\R^n\setminus B_{4R}'}\frac{u_1(x')-u_1(y')}{|x'-y'|^{n+s}} dy'.
\end{align}
We consider the two contributions separately. Using Hölder's inequality, the equivalence of the $H^s(\Omega)$- and $W^{s,2}(\Omega)$-norms (see \cite[Theorem 3.30]{McLean}) and interpolation (see \cite{BL76} and \cite[Corollary 4.7]{CWHM15}) for $\epsilon \in (0,1-s)$ we obtain (for $R\geq 1$)
\begin{align*}
\int_{B_{2R}'} &\left\vert \int_{B_{4R}'} \frac{u_1(x') - u_1(y')}{\vert x'-y' \vert^{n+s}} dy' \right\vert^2 dx'\\
&\leq \int_{B_{4R}'} \left( \int_{B_{4R}'} \frac{\vert u_1(x') - u_1(y') \vert^2}{\vert x'-y' \vert^{n+2s+2\epsilon}} dy' \right) \left( \int_{B_{4R}'} \frac{1}{\vert x'-y' \vert^{n-2\epsilon}} dy' \right) dx'\\
&\leq C R^{2\epsilon} \Vert u_1 \Vert_{W^{s+\epsilon,2}(B_{4R}')}^2  \leq C R^{2s+4\epsilon} \Vert u_1 \Vert_{H^{s+\varepsilon}(B_{4R}')}^2 \leq C R^{4s+6\epsilon} \Vert u_1 \Vert_{H^1(B_{4R}')}^{2s+2\epsilon} \Vert u_1 \Vert_{L^2(B_{4R}')}^{2-2s-2\epsilon}\\
&\leq C R^{n+2+4s+6\epsilon} (\Vert u_1 \Vert_{\dot{H}^1(\R^n)} + \Vert u_1 \Vert_{L^{\frac{2n}{n-2}}(\R^n)}).
\end{align*}
Here, we used that $\int_{B_{4R}'} \frac{1}{\vert x'-y' \vert^{n-2\epsilon}} dy'$ is finite and bounded by $CR^{2\epsilon}$ and for the last inequality, we applied Hölder's inequality in order to bound the $L^2(B_{4R}')$- in terms of the $L^{\frac{2n}{n-2}}(B_{4R}')$-norm. Moreover, for the third inequality we used that by the equivalence of norms and a simple scaling argument $\Vert u_1 \Vert_{W^{s+\epsilon,2}(B_{4R}')} \leq C R^{(s+\epsilon)} \Vert u_1 \Vert_{H^{s+\epsilon}(B_{4R}')}$ and for the fourth inequality we used $\Vert u_1 \Vert_{H^{s+\epsilon}(B_{4R}')} \leq C R^{s+\epsilon} \Vert u_1 \Vert_{H^1(B_{4R}')}^{s+\epsilon} \Vert u_1 \Vert_{L^2(B_{4R}')}^{1-s-\epsilon}$, which follows from interpolation and scaling (recall that $R \geq 1$). For the second contribution in \eqref{eq:split_int}, we in turn obtain
\begin{align*}
\int_{B_{2R}'} &\left| \int_{\R^n\setminus B_{4R}'}\frac{u_1(x')-u_1(y')}{|x'-y'|^{n+s}} dy' \right|^2 dx'\\
&\leq \int_{B_{2R}'} \left\vert \int_{\R^n\setminus B_{4R}'}\frac{|u_1(y')|}{|x'-y'|^{n+s}} dy' \right\vert^2 dx'  + \int_{B_{2R}'} |u_1(x')|^2  \left\vert \int_{\R^n\setminus B_{4R}'}\frac{1}{|x'-y'|^{n+s}} dy' \right\vert^2 dx'\\
&\leq C R^{2(1-s)} \Vert u_1 \Vert_{L^{\frac{2n}{n-2}}(\R^n)}^2 + C R^{2s} \Vert u_1 \Vert_{L^2(B_{2R}')}^2 \leq C \max\{R^{2(1-s)},R^{n+2+2s}\} \Vert u_1 \Vert_{L^{\frac{2n}{n-2}}(\R^n)}^2,
\end{align*}
where we used Hölder's inequality with the exponents $p=\frac{2n}{n-2}$, $p'=\frac{2n}{n+2}$ and the integrability of the resulting integral kernels which do not carry any singularity as $x' \in B_{2R}'$.

The argument for the second contribution in \eqref{eq:integral} is similar. For this, note that $\frac{s-1}{2}<0$ and that the fractional Laplacian of negative order is given via the Riesz potential. We do a similar splitting as above and first observe
\begin{align*}
\int_{B_{2R}'}& \left\vert \int_{B_{4R}'} \frac{\nabla' u_1(y')}{\vert x'-y' \vert^{n+s-1}} dy' \right\vert^2 dx'\\
&\quad \leq \int_{B_{2R}'} \left( \int_{B_{4R}'} \frac{\vert \nabla' u_1(y') \vert^2}{\vert x'-y' \vert^{n+s-1}} dy' \right) \left( \int_{B_{4R}'} \frac{1}{\vert x'-y' \vert^{n+s-1}} dy' \right) dx'\\
&\quad \leq C R^{1-s} \int_{B_{4R}'} \vert \nabla' u_1(y') \vert^2 \left( \int_{B_{2R}'} \frac{1}{\vert x'-y' \vert^{n+s-1}} dx' \right) dy'\\
&\quad \leq C R^{2(1-s)} \Vert \nabla' u_1 \Vert_{L^2(B_{4R}')}^2,
\end{align*}
where we used Hölder's inequality, Fubini and the integrability of $\vert x'-y' \vert^{-n-s+1}$ over $B_R'$. For the second term, we infer by Hölder's inequality and the integrability of $\vert x'-y' \vert^{-2(n+s-1)}$ over $\R^n \setminus B_{4R}'$ for $x' \in B_{2R}'$
\begin{align*}
\int_{B_{2R}'} &\left\vert \int_{\R^n \setminus B_{4R}'} \frac{\nabla' u_1(y')}{\vert x'-y' \vert^{n+s-1}} dy' \right\vert^2 dx'\\
&\quad \leq \int_{B_{2R}'} \left( \int_{\R^n \setminus B_{4R}'} \vert \nabla' u_1(y') \vert^2 dy' \right) \left( \int_{\R^n \setminus B_{4R}'} \frac{1}{\vert x'-y' \vert^{2(n+s-1)}} dy' \right) dx'\\
&\quad \leq C R^{2(1-s)} \Vert \nabla' u_1 \Vert_{L^2(\R^n)}^2.
\end{align*}

Thus, collecting the above arguments we arrive at 
\begin{equation}\label{eq:AdjointProblem_BoundConstCSExt}
\begin{aligned}
\| x_{n+1}^{\frac{1-2s}{2}} \nabla \bar{u} \|_{L^2(B_R' \times \R_+)} \leq C R^p (\|u_1\|_{L^{\frac{2n}{n-2}}(\R^n)} + \|\nabla' u_1\|_{L^{2}(\R^n)}) \leq C R^p \|w\|_{H^{-1}(\Omega)}.
\end{aligned}
\end{equation}
for some $p>0$.

\textit{Step 2.3: Conclusion of Step 2.} As a consequence of Step 2.1 and Step 2.2, by the Theorem of Lax-Milgram, the equation for $\bar{u}_2$ is solvable with (here, $R \sim O(1)$, since we only need that $\Omega \subset B_R'$)
\begin{align}\label{eq:AdjointProblem_BoundAuxilliaryFunction2}
\|\bar{u}_2\|_{\dot{H}^1(\R^{n+1}_+, x_{n+1}^{1-2s})} \leq C \|w\|_{H^{-1}(\Omega)} 
\end{align}
Hence, since $\tilde{u}_2 = \bar{u} - \bar{u}_2$ the estimates for $\bar{u}$ and $\bar{u}_2$ from \eqref{eq:AdjointProblem_BoundConstCSExt} and \eqref{eq:AdjointProblem_BoundAuxilliaryFunction2} imply that
\begin{align*}
\|\tilde{u}_2\|_{\dot{H}^1(B_{\kappa L }\times (0, \kappa L), x_{n+1}^{1-2s})}
&\leq \|\bar{u}\|_{\dot{H}^1(B_{\kappa L} \times (0, \kappa L), x_{n+1}^{1-2s})} + \|\bar{u}_2\|_{\dot{H}^1(\R^{n+1}_+, x_{n+1}^{1-2s})} \\
& \leq CL^p \Vert w \Vert_{H^{-1}(\Omega)}  + C \|w\|_{H^{-1}(\Omega)}.
\end{align*}
By linearity, the function $\tilde{h}:= \tilde{u}_1  -\tilde{u}_2= \tilde{u}_1 - (\bar{u}-\bar{u}_2)$ then satisfies the desired weak equation.

\textit{Step 3: Stability estimates.} Finally, it remains to deduce the desired stability estimates. For the first estimate, we observe that
\begin{align*}
\| x_{n+1}^{\frac{1-2s}{2}} \nabla \tilde{h}\|_{L^2(B_{\kappa L}' \times [0,\kappa L])} 
&\leq \|x_{n+1}^{\frac{1-2s}{2}} \nabla \tilde{u}_1\|_{L^2(B_{\kappa L}' \times [0,\kappa L])}  + \|x_{n+1}^{\frac{1-2s}{2}} \nabla \tilde{u}_2\|_{L^2(B_{\kappa L}' \times [0,\kappa L])} \\
&\leq  C L^{1-s} \|w\|_{H^{-1}(\Omega)} + C L^p \|w\|_{H^{-1}(\Omega)} \leq C L^p \|w\|_{H^{-1}(\Omega)}
\end{align*}
for some $p>0$. This estimate together with the Poincaré inequality (note that $\tilde{h}(\cdot,0) = 0$ in $\Omega_e$) yields the first stability estimate. The second and third estimates from Lemma \ref{lem:QRA_adjoint_problem} follow from the triangle inequality and Sobolev estimates as well as the fact that $\bar{u}(x',0) = \tilde{u}_1(x',0) = u_1(x')$:
\begin{align*}
\|\tilde{u}_1(\cdot,0)\|_{L^{\frac{2n}{n-2}}(\R^n)} + \|\bar{u}(\cdot,0)\|_{L^{\frac{2n}{n-2}}(\R^n)} &\leq C \|w\|_{H^{-1}(\Omega)},\\
\|\bar{u}_2(\cdot,0)\|_{L^{\frac{2n}{n-2s}}(\R^n)} &\leq C \|\bar{u}_2\|_{\dot{H}^1(\R^{n+1}_+, x_{n+1}^{1-2s})} \leq  C \|w\|_{H^{-1}(\Omega)}.
\end{align*}
Finally, we discuss the regularity of the generalized, weighted normal derivative. By definition of the weak normal derivative, for $f \in \widetilde{H}^s(W)$, we have
\begin{align*}
|\langle \lim\limits_{x_{n+1} \rightarrow 0} x_{n+1}^{1-2s} \p_{n+1} \tilde{h}, f \rangle_{H^{-s}(W), \widetilde{H}^s(W)}|
= \left| \int_{\R^{n+1}_+} x_{n+1}^{1-2s} \nabla \tilde{h} \cdot \nabla e^f dx \right|.
\end{align*}
Here $e^f: \R^{n+1}_+ \rightarrow \R$ is an $\dot{H}^{1}(\R^{n+1}_+, x_{n+1}^{1-2s})$-extension of $f$ and we recall that the definition of the Neumann derivative is independent of the precise choice of the extension. In our application above we choose $e^f(x,x_{n+1}):= \tilde{e}^f(x,x_{n+1}) \eta(x,x_{n+1})$, where $\tilde{e}^f$ is the (constant coefficient) Caffarelli-Silvestre extension of the datum $f \in \widetilde{H}^s(W)$ and $\eta: \R^{n+1}_+ \rightarrow [0,1)$ is a smooth cut-off function restricting the support in the vertical direction to be contained in the interval $x_{n+1} \in [0,1)$ and to the ball $B_R' \Supset W$ in the tangential direction. In particular, for $e^f$ we then have the bound
\begin{align*}
\|x_{n+1}^{\frac{1-2s}{2}} \nabla e^f\|_{L^2(\R^{n+1}_+)} 
&\leq C (\|x_{n+1}^{\frac{1-2s}{2}} \nabla \tilde{e}^f\|_{L^2(\R^{n+1}_+)} + \|x_{n+1}^{\frac{1-2s}{2}} \tilde{e}^f \nabla \eta\|_{L^2(\R^{n+1}_+)})\\
&\leq  C(\|f\|_{\widetilde{H}^s(W)} + \|x_{n+1}^{\frac{1-2s}{2}} \tilde{e}^f \|_{L^2(\R^n \times (0,1))})\\
&\leq  C(\|f\|_{\widetilde{H}^s(W)} + \|x_{n+1}^{\frac{1-2s}{2}} \nabla \tilde{e}^f \|_{L^2(\R^{n+1}_+)})\\
& \leq C \|f\|_{\widetilde{H}^s(W)}.
\end{align*}
Here in the second-to-last line, we used Poincar\'e's inequality and the fact that $f\in \widetilde{H}^s(W)$.
Returning to the definition of the weak generalized normal derivative, this then yields
\begin{align*}
|\langle \lim\limits_{x_{n+1} \rightarrow 0} x_{n+1}^{1-2s} \p_{n+1} \tilde{h}, f \rangle_{H^{-s}(W), \widetilde{H}^s(W)}|
&\leq C \|x_{n+1}^{\frac{1-2s}{2}} \nabla \tilde{h}\|_{L^2(B_R' \times (0,1))} \|f\|_{\widetilde{H}^s(W)}\\
&\leq C_{s,R} \|w\|_{H^{-1}(\Omega)}\|f\|_{\widetilde{H}^s(W)}.
\end{align*}
Taking the supremum over $f \in \widetilde{H}^s(W)$ with $\|f\|_{\widetilde{H}^s(W)}=1$, then concludes the argument.
\end{proof}

We will prove the following auxiliary result corresponding to Lemma 4.1 in \cite{RS18a} or to Lemma 4.3 in \cite{BR25}. The proof is very similar to the ones in the references.

\begin{lem}\label{lem:QRA_auxiliary}
Let $\Omega$, $W$ and the metric $a$ be as in Proposition \ref{prop:QRA}. Denote by $X \subset L^2(\Omega)$ the closure of $S_1(\Omega)$ in $L^2(\Omega)$ and define the operator $T: \widetilde{H}^s(W) \to X$, $f \mapsto \restr{\int_0^\infty t^{1-2s} \tilde{u}^f(\cdot,t) dt}{\Omega}$, where $\tilde{u}^f \in \dot{H}^1(\R^{n+1}_+, x_{n+1}^{1-2s})$ is a weak solution to \eqref{eq:QRA_CSExtension}. Then, the operator $T$ is compact and has dense range.

The Banach space adjoint operator $T': X^* \to H^{-s}(W)$ is defined by 
\begin{align*}
T'w = \restr{-\lim_{x_{n+1}\to0} x_{n+1}^{1-2s} \partial_{n+1} \tilde{h}}{W},
\end{align*}
 where $\tilde{h} \in \dot{H}^1_{\text{loc}}(\R^{n+1}_+, x_{n+1}^{1-2s})$ and $w \in X'$ are related through
\begin{equation}
\begin{cases}
\begin{aligned}
- \nabla \cdot x_{n+1}^{1-2s} \tilde{a} \nabla \tilde{h} &= x_{n+1}^{1-2s} w \chi_{\Omega \times (0,\infty)} \quad &&\text{in } \R^{n+1}_+,\\
\lim_{x_{n+1}\to0} x_{n+1}^{1-2s} \partial_{n+1} \tilde{h} &= 0 \quad &&\text{ on } \Omega\times\{0\},\\
\tilde{h} &= 0 \quad &&\text{ on } \Omega_e\times\{0\}.
\end{aligned}
\end{cases}
\end{equation}

The Hilbert space adjoint operator is then given as $T^* = R_{H^{-s}(W)} T' R_X: X \to \widetilde{H}^s(W)$, where $R_H$ are the Riesz isomorphisms between a Hilbert space $H$ and its dual space. It holds that $\Vert T^*w \Vert_{\widetilde{H}^s(W)} = \Vert T'w \Vert_{H^{-s}(W)}$.

Moreover, there exist an orthonormal system $(\varphi_j)_{j\in\N}$ of $\widetilde{H}^s(W)$ (such that, in particular, $\widetilde{H}^s(W) = \operatorname{span}(\varphi_j)_{j\in\N} \bigoplus \operatorname{Null}(T)$) and an orthonormal basis $(\psi_j)_{j\in\N}$ of $X$ such that $T\varphi_j = \sigma_j \psi_j$, where $\sigma_j>0$ are the positive singular values associated with the operator $T$.
\end{lem}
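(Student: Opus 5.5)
We will establish the statements of Lemma \ref{lem:QRA_auxiliary} in four steps: boundedness and compactness of $T$, the identification of the Banach space adjoint $T'$, density of the range, and finally the singular value decomposition.

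\emph{Step 1: $T$ is well defined and compact.} For $f\in\widetilde{H}^s(W)$ we write $v^f=\int_0^\infty t^{1-2s}\tilde{u}^f(\cdot,t)\,dt$ and split the integral at $t=1$.
\begin{itemize}
\item The lower part is controlled by Lemma \ref{lem:apriori_fract_cond} with $h=1$ and $D=\Omega$.
\item The upper part is controlled by the same lemma with $L=1$.
\end{itemize}
Together this gives $\|v^f\|_{H^1(\Omega)}\le C\|f\|_{\widetilde{H}^s(W)}$. By Theorem 3 in \cite{CGRU23}, in $\Omega$ we have $-\nabla'\cdot a\nabla' v^f = c_s\lim x_{n+1}^{1-2s}\partial_{n+1}\tilde{u}^f$, and the right-hand side vanishes on $\Omega\times\{0\}$. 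Hence $v^f|_\Omega\in S_1(\Omega)\subset X$. Since $H^1(\Omega)\hookrightarrow L^2(\Omega)$ is compact (Rellich), $T$ is compact.

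\emph{Step 2: identification of $T'$.} Let $w\in X\subset L^2(\Omega)\subset\widetilde{H}^{-1}(\Omega)$, identified with an element of $X^*$ via Riesz. Let $\tilde{h}$ be the solution from Lemma \ref{lem:QRA_adjoint_problem}, and let $E f$ be a compactly supported extension of $f$ whose support is contained in a neighbourhood of $\overline W\times[0,1)$ that is disjoint from $\Omega\times\R_+$. Let $\eta_R$ be a cut-off equal to $1$ on $B_R'\times(0,R)$.
\begin{itemize}
\item Test the weak equation for $\tilde{h}$ with $(\tilde{u}^f-Ef)\eta_R\in H^1_{c,0}$. Because $Ef$ vanishes on $\Omega\times\R_+$, the right-hand side is $\langle w,\int_0^\infty t^{1-2s}\tilde{u}^f\eta_R\,dt\rangle$.
\item Test the equation for $\tilde{u}^f$ with $\tilde{h}\eta_R$. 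This yields zero, since the weighted Neumann data of $\tilde{u}^f$ vanish on $\Omega$ and the trace of $\tilde{h}$ vanishes on $\Omega_e$.
\end{itemize}
Subtracting the two identities gives, up to commutator terms supported where $\nabla\eta_R\neq0$,
\[
\langle w,Tf\rangle=-\int_{\R^{n+1}_+}x_{n+1}^{1-2s}\tilde{a}\nabla\tilde{h}\cdot\nabla Ef\,dx=\langle -\lim_{x_{n+1}\to0} x_{n+1}^{1-2s}\partial_{n+1}\tilde{h},f\rangle .
\]
The second equality is the definition of the weak normal derivative used at the end of the proof of Lemma \ref{lem:QRA_adjoint_problem}.

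The main obstacle is to show that the commutator terms vanish as $R\to\infty$, because $\tilde{h}$ is only in $\dot H^1_{\mathrm{loc}}$ (the piece $\tilde{u}_1$ is constant in $x_{n+1}$). We plan to use the decomposition $\tilde{h}=\tilde{u}_1-\bar u+\bar u_2$.
\begin{itemize}
\item The term $\bar u_2$ has finite global energy, so its commutator terms vanish in the limit.
\item For $\tilde{u}_1$ and $\bar u$, we pair the polynomial growth bound $\|\tilde{h}\|_{H^1(B_{\kappa R}'\times(0,\kappa R),x_{n+1}^{1-2s})}\le CR^p\|w\|$ with the decay $|\tilde{u}^f|,|\nabla'\tilde{u}^f|\lesssim x_{n+1}^{-n}$ from Lemma \ref{lem:apriori_decay}.
\item For the vertical decay of $\tilde{h}$, we use the $L^{q_1}$ and $L^{q_2}$ bounds on its traces, which are available because $n\ge3$.
\end{itemize}
If the polynomial growth is not beaten directly, we would instead choose $\eta_R$ adapted separately in the vertical and tangential directions, and use the Poisson kernel representation for $\tilde{u}^f$ to obtain additional tangential decay.

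\emph{Step 3: dense range and the Hilbert adjoint.} The Hilbert adjoint is $T^*=R_{H^{-s}(W)}T'R_X$, and $\|T^*w\|=\|T'w\|$ because Riesz maps are isometries. Density of the range follows by Hahn--Banach. If $w\in X$ annihilates the range, then $T'w=0$, so $\tilde{h}$ has vanishing Cauchy data on $W\times\{0\}$. Unique continuation (the qualitative version of Proposition \ref{prop:bbucp} combined with connectedness of $\R^n\setminus\Omega$) then gives $\tilde{h}=0$ in $\Omega_e\times\R_+$. Arguing as in Lemma 3.2 of \cite{CGRU23}, this forces $\langle w,v\rangle=0$ for all $v\in S_1(\Omega)$, and since $w\in X=\overline{S_1(\Omega)}$ we conclude $w=0$.

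\emph{Step 4: singular value decomposition.} The operator $T^*T$ is compact, self-adjoint and nonnegative on $\widetilde{H}^s(W)$. The spectral theorem gives an orthonormal system $(\varphi_j)$ of eigenvectors with eigenvalues $\sigma_j^2>0$, and $\widetilde{H}^s(W)=\overline{\operatorname{span}}(\varphi_j)\oplus\operatorname{Null}(T)$. Setting $\psi_j=\sigma_j^{-1}T\varphi_j$ gives an orthonormal system in $X$ with $T\varphi_j=\sigma_j\psi_j$. It is complete because $\overline{\operatorname{Ran}T}=X$ by Step 3.
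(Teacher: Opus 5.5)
Your outline follows the paper's route. It uses an $H^1(\Omega)$ bound plus Rellich for compactness, identifies $T'$ by testing with truncated functions, and applies the spectral theorem to $T^*T$, with completeness of $(\psi_j)$ coming from the dense range. Step 2, however, has a genuine gap: you never show that the commutator terms
\[
\int_{\R^{n+1}_+} x_{n+1}^{1-2s}\bigl(\tilde u^f\,\tilde a\nabla\tilde h-\tilde h\,\tilde a\nabla\tilde u^f\bigr)\cdot\nabla\eta_R\,dx
\]
tend to zero, and the mechanism you propose cannot give this. The exponent $p$ in the growth bound of Lemma \ref{lem:QRA_adjoint_problem} is not quantified, and its proof (via \eqref{eq:AdjointProblem_BoundConstCSExt} for $\bar u$) only yields $p$ of order $\frac n2+1+2s$.
\begin{itemize}
\item On $B_{2R}'\times(R,2R)$, pairing this with $|\tilde u^f|\lesssim x_{n+1}^{-n}$ and $|\nabla\eta_R|\lesssim R^{-1}$ via Cauchy--Schwarz only gives $R^{-n-1}R^{\frac{n+2-2s}{2}}R^{p}=R^{p-\frac n2-s}$. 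This does not tend to zero.
\item The term with $\tilde h\,\nabla\tilde u^f$ is worse, since bounding $\tilde h$ itself by Poincar\'e costs another factor $R$.
\item On the lateral part $\{R<|x'|<2R\}\times(0,1)$, the decay $x_{n+1}^{-n}$ is not available at all.
\item Your fallback with separate cut-offs does not help while the only information on $\tilde h$ is this polynomially growing box bound. At $|x'|\sim k$ and fixed height, the Poisson-kernel decay $|\tilde u^f|\lesssim y^{2s}k^{-n-2s}$ only produces $k^{p-\frac n2-2s-1}$, which again does not decay.
\end{itemize}
Your density argument in Step 3 rests on this identification of $T'$, so it inherits the gap.

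The paper does not prove this limit itself; it imports it from Step 1b of the proof of Proposition 3.1 in \cite{CGRU23}, and you may do the same. A self-contained argument needs piece-by-piece information on $\tilde h=\tilde u_1-\bar u+\bar u_2$ rather than the crude bound. For instance:
\begin{itemize}
\item $\nabla\tilde u_1=(\nabla'u_1,0)$ is horizontal, with weighted $L^2$ norm only $O(R^{1-s})$ on $B_{2R}'\times(0,2R)$.
\item Terms carrying $\partial_{n+1}\eta_R$ can be handled by the Young-type estimate of Lemma \ref{lem:apriori_decay}. For example, $\|\partial_{n+1}\tilde u^f(\cdot,y)\|_{L^{q_1'}(\R^n)}\lesssim y^{-n/2}\|u\|_{L^1(\R^n)}$ paired with $u_1\in L^{q_1}$ gives $R^{1-2s-\frac n2}$.
\item The lateral terms need the decay of $\tilde u^f$ and $\nabla\tilde u^f$ for large $|x'|$, together with $|u_1(x')|\lesssim|x'|^{2-n}$, which follows from harmonicity of $u_1$ outside $\Omega$.
\end{itemize}
Two minor points. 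First, Lemma \ref{lem:apriori_fract_cond} uses (A2), which is not assumed in Proposition \ref{prop:QRA}. It does hold qualitatively for $C^2$ metrics equal to $\Id$ off a compact set, which suffices here; the paper's route through Lemma \ref{lem:integral_uniform_estimate} avoids (A2) and needs only (A1) and (A3). Second, the step from $\tilde h=0$ in $\Omega_e\times\R_+$ to $(w,v)_{L^2(\Omega)}=0$ is the $\beta_k(x_{n+1})\bar v(x')$ argument of Step 2 of Proposition 3.1 in \cite{CGRU23}, as reused in the proof of Proposition \ref{prop:QRA}. The paper simply quotes that proposition for density.
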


\begin{proof}
\textit{Step 1: Compactness and density.} We first prove compactness of the operator $T$. Let $(f_j)_{j\in\N}$ be a bounded sequence in $\widetilde{H}^s(W)$. By Proposition 1.2 and Theorem 3 in \cite{CGRU23} (see also Lemma \ref{lem:integral_uniform_estimate} below), $\Vert Tf_j \Vert_{H^1(\Omega)}$ is bounded, and for each $j \in \N$, $Tf_j$ satisfies 
\begin{align*}
-\nabla'\cdot a \nabla'(Tf_j) =0 \mbox{ in } \Omega.
\end{align*}
 Thus, there exists an $H^1(\Omega)$-weakly converging subsequence of $(Tf_j)$ to some $\overline{v} \in H^1(\Omega)$. Rellich's theorem implies that this subsequence converges strongly in $L^2(\Omega)$. By weak convergence, $\overline{v}$ also solves $-\nabla'\cdot a \nabla' \overline{v} = 0$ in the weak sense, proving that $\overline{v} \in X$ and that $T$ is indeed compact. It has dense range by the qualitative Runge approximation result from Proposition 3.1 in \cite{CGRU23}.

\textit{Step 2: Adjoint operators.} Next, we verify that the operator $T'$ as defined above is indeed the Banach space adjoint operator. Let $\tilde{u}^f_k \in H^1_c(\R^{n+1}_+,x_{n+1}^{1-2s})$ (recall the definition of the $H_c^1(\R^{n+1}_+, x_{n+1}^{1-2s})$-spaces from Section \ref{sec:prel_FunctionSpaces}) be defined as in the proof of Proposition 3.1 in \cite{CGRU23}, i.e. $\tilde{u}^f_k(x) := \tilde{u}^f(x) \sigma_k(x') \eta_k(x_{n+1})$ for some cut-off functions $\eta_k$ and $\sigma_k$. More precisely, $\eta_k(t) := \eta_1(\frac{t}{k})$, where $\eta_1 \in C_c^\infty([0,2])$ is a smooth cut-off function satisfying $\eta_1 \equiv 1$ in a neighbourhood of $t=0$ and $\int_0^\infty t^{1-2s} \eta_1 dt =1$, and $\sigma_k(x') := \sigma_1(\frac{x'}{k})$, where $\sigma_1 \in C_c^\infty(B_{2R}')$ is a smooth and radially symmetric cut-off function satisfying $\sigma_1 \equiv 1$ in $B_R'$ for $R>0$ large enough such that $\overline{\Omega} \cup \overline{W} \subset B_R'$. Then it holds that
\begin{align*}
\langle f,T'w \rangle &_{\widetilde{H}^s(W),H^{-s}(W)} 
= -\lim_{k\to\infty} \langle \tilde{u}^f_k(\cdot,0), \lim_{x_{n+1}\to0} x_{n+1}^{1-2s} \partial_{n+1} \tilde{h} \rangle_{\widetilde{H}^s(W), H^{-s}(W)}\\
&\qquad = \lim_{k\to\infty} \left( \int_{\R^{n+1}_+} x_{n+1}^{1-2s} w(x') \chi_{\Omega\times(0,\infty)}(x) \tilde{u}^f_k(x) dx - \int_{\R^{n+1}_+} x_{n+1}^{1-2s} \nabla \tilde{h} \cdot \tilde{a} \nabla \tilde{u}^f_k dx \right)\\
&\qquad = \lim_{k\to\infty} \int_{\Omega\times(0,\infty)} x_{n+1}^{1-2s} \tilde{u}^f_k(x',x_{n+1}) w(x') dx' dx_{n+1}\\
&\qquad = \int_{\Omega\times(0,\infty)} x_{n+1}^{1-2s} \tilde{u}^f(x',x_{n+1}) w(x') dx' dx_{n+1}\\
&\qquad = (Tf, w)_{L^2(\Omega)}.
\end{align*}
Here, for the second equality we used the weak equation for $\tilde{h}$ (see Remark 3.3 in \cite{CGRU23}) and for the third equality we made use of the observation that, as it is proven in Step 1b of the proof of Proposition 3.1 in \cite{CGRU23} (using that $\tilde{u}^f$ is a weak solution to \eqref{eq:QRA_CSExtension} and the associated decay estimates in the vertical direction),
\begin{align*}
- \int_{\R^n} \int_0^\infty x_{n+1}^{1-2s} \nabla \tilde{u}^f_k \cdot \tilde{a} \nabla \tilde{h} dx_{n+1} dx' \longrightarrow 0 \quad \text{as } k \to \infty.
\end{align*}
For the fourth equality we noted that (see also \cite{CGRU23} equation after equation (24))
\begin{align*}
\lim_{k\to\infty} \int_0^\infty x_{n+1}^{1-2s} \sigma_k(x') \eta_k(x_{n+1}) \tilde{u}^f(x',x_{n+1}) dx_{n+1} = \int_0^\infty x_{n+1}^{1-2s} \tilde{u}^f (x',x_{n+1}) dx_{n+1} \quad \text{in } H^1(\Omega).
\end{align*}

The statements about the Hilbert space adjoint operator follow from general functional analysis.

\textit{Step 3: Eigenfunction decomposition.} As a consequence of the Steps 1 and 2, the operator $T^*T: \widetilde{H}^s(W) \to \widetilde{H}^s(W)$ is a compact, self-adjoint, positive semi-definite operator. An application of the spectral theorem yields the existence of an orthonormal system (not necessarily basis) $(\varphi_j)_{j\in\N}$ of $\widetilde{H}^s(W)$ and a sequence of positive eigenvalues $(\mu_j)_{j\in\N}$ such that
\begin{align*}
T^*T \varphi_j = \mu_j \varphi_j.
\end{align*}
We define $\sigma_j := \mu_j^{1/2}$ and $\psi_j := \sigma_j^{-1} T\varphi_j \in X$. Then, the functions $(\psi_j)_{j\in\N}$ are orthonormal by definition. By another application of the qualitative Runge approximation they are dense in $X$. Indeed, let $v \in X$ be such that $(v,\psi_j)_{L^2(\Omega)} = 0$ for all $j\in\N$. We extend the orthonormal system $(\varphi_j)_{j\in\N}$ by an orthonormal basis of the null space of $T$, $(n_j^T)_{j\in\N} \subset \widetilde{H}^s(W)$, to obtain an orthonormal basis $(\varphi_j)_{j\in\N} \cup (n_j^T)_{j\in\N}$ of $\widetilde{H}^s(W)$. By density of $\operatorname{span}\left\{ (\varphi_j)_{j\in\N} \cup (n_j^T)_{j\in\N} \right\}$ in $\widetilde{H}^s(W)$, it follows that $(v,Tf)_{L^2(\Omega)} = 0$ for all $f \in \widetilde{H}^s(W)$. The density of $T$ in turn implies that $(v,w)_{L^2(\Omega)} = 0$ for all $w \in X$. The choice $w = v$ then entails that $v = 0$. Hence, the functions $(\psi_j)_{j\in\N}$ indeed form an orthonormal basis and the proof is completed.
\end{proof}

In the proof of the quantitative Runge approximation, we will once more also need a quantitative unique continuation argument, this time applied to the adjoint problem.

\begin{lem}\label{lem:UCP_adjoint_problem}
Let $\Omega \subset \R^n$ be open, non-empty, bounded and Lipschitz such that $\R^n \setminus \Omega$ is connected and let $W \subset \Omega_e$ be open, bounded, non-empty and Lipschitz. Let $\theta_1 \in (0,1)$ and assume that $a \in L^\infty(\R^n, \R^{n \times n}_{\text{sym}})$ satisfies the assumptions (A1) with the given $\theta_1$, and (A3). Let $\Omega_1, \Omega_1' \subset \R^n$ be open, bounded, Lipschitz such that $\Omega \Subset \Omega_1' \Subset \Omega_1$. Let $\tau \in (0,\frac{1}{2})$, $r_\tau \in L^2(\Omega)$ and let $\tilde{h}_\tau \in \dot{H}^1_{\text{loc}}(\R^{n+1}_+, x_{n+1}^{1-2s})$ be a weak solution (in the sense of Lemma \ref{lem:QRA_adjoint_problem}) to
\begin{equation*}
\begin{cases}
\begin{aligned}
- \nabla \cdot x_{n+1}^{1-2s} \tilde{a} \nabla \tilde{h}_\tau &= x_{n+1}^{1-2s} r_\tau \chi_{\Omega \times (0,\infty)} \quad &&\text{in } \R^{n+1}_+,\\
\lim_{x_{n+1}\to0} x_{n+1}^{1-2s} \partial_{n+1} \tilde{h}_\tau &= 0 \quad &&\text{ on } \Omega\times\{0\},\\
\tilde{h}_\tau &= 0 \quad &&\text{ on } \Omega_e\times\{0\}.
\end{aligned}
\end{cases}
\end{equation*}
Assume that
\begin{align*}
\Vert \restr{\lim_{x_{n+1}\to0} x_{n+1}^{1-2s} \partial_{n+1} \tilde{h}_\tau}{W} \Vert_{H^{-s}(W)} \leq \tau \Vert r_\tau \Vert_{L^2(\Omega)}.
\end{align*}
Then for $L \geq 2$ it holds that
\begin{align*}
\Vert \tilde{h}_\tau \Vert_{L^2((\Omega_1 \setminus \Omega_1') \times (1,L), x_{n+1}^{1-2s})} \leq C L^p \Vert r_\tau \Vert_{L^2(\Omega)} \tau^{c\alpha^{C\log(L)}},
\end{align*}
for some $\alpha \in (0,1)$. Here, the constants $C,p>0$ only depend on $n$, $s$, $\Omega$, $\Omega_1$, $\Omega_1'$, $W$ and $\theta_1$.
\end{lem}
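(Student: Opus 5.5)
The plan is to reduce Lemma \ref{lem:UCP_adjoint_problem} to Step 1 of the proof of Proposition \ref{prop:estimate_Omega_ext}, applied with $\tilde{w} := \tilde{h}_\tau$, $A := \Vert r_\tau \Vert_{L^2(\Omega)}$ and $\varepsilon := \tau$. The key structural observation is that, although $\tilde{h}_\tau$ solves an inhomogeneous equation, the source $x_{n+1}^{1-2s} r_\tau \chi_{\Omega\times(0,\infty)}$ is supported in $\Omega \times (0,\infty)$. By (A3), $\tilde{a} = \Id$ in $\Omega_e \times \R_+$. Hence $-\nabla\cdot x_{n+1}^{1-2s}\nabla \tilde{h}_\tau = 0$ in $\Omega_e \times \R_+$, with $\tilde{h}_\tau = 0$ on $W\times\{0\}$. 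This is exactly the setting in which Proposition \ref{prop:bbucp} and Proposition \ref{prop:3ballsinequ2} apply, since both only use the equation in $\Omega_e\times\R_+$.

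First, I will supply the a priori bound required by Proposition \ref{prop:estimate_Omega_ext}. By Lemma \ref{lem:QRA_adjoint_problem}, for $\bar{L}\geq 1$ we have
\[
\Vert \tilde{h}_\tau \Vert_{L^2(B_{\kappa\bar{L}}'\times(0,\kappa\bar{L}),x_{n+1}^{1-2s})} \leq C\bar{L}^{p}\Vert r_\tau\Vert_{H^{-1}(\Omega)} \leq C\bar{L}^p \Vert r_\tau \Vert_{L^2(\Omega)}.
\]
The boundary smallness is given by the hypothesis. A technical point is that Proposition \ref{prop:bbucp} is stated for $\dot{H}^1(\R^{n+1}_+,x_{n+1}^{1-2s})$ functions, whereas $\tilde{h}_\tau$ is only in $\dot H^1_{\text{loc}}$. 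Since that estimate is local near $W\times\{0\}$ (a Carleman argument on $B^+_{4r}(x^0)$), I will apply it after a cutoff, or note that its proof only uses local regularity. The needed data are $\Vert \tilde{h}_\tau(\cdot,0)\Vert_{H^s(B'_{3r})}=0$ and the $H^{-s}(W)$ bound on the weighted normal derivative. The interior $L^2$ norm is controlled by the a priori estimate with $\bar L = 1$. This yields $\Vert \tilde{h}_\tau\Vert_{L^2(B^+_{c\bar r}(\bar x),x_{n+1}^{1-2s})}\leq CA\tau^{1-\bar\alpha}$.

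Next, I will repeat Steps 1.1--1.4 of the proof of Proposition \ref{prop:estimate_Omega_ext} verbatim. The smallness is propagated upwards along balls $B_{2\rho_1^j r}(x_j)$ with radii comparable to height, so the uniform-$Q$ condition holds. It is then propagated around at each height level, and finally inward toward $\Omega_1\setminus\Omega_1'$. The essential simplification is that the target region $(\Omega_1\setminus\Omega_1')\times(1,L)$ stays at a fixed positive distance $\dist(\Omega,\partial\Omega_1')>0$ from $\Omega\times\R_+$. So no $\delta$-dependent layers are needed: all balls can be chosen with radius $\gtrsim 1$ and $4$-times-dilations contained in $\Omega_e\times\R_+$. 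The number of layers is then $N_l \lesssim \log L$ instead of $\log L + \log\delta^{-1}$. Summing the layer estimates as in \eqref{eq:proof_intermediate_integral_5} gives $CL^{p}A\,\tau^{c\alpha^{C\log L}}$. Here $C$ and $p$ depend on $\Omega_1$, $\Omega_1'$ and the constants from Lemma \ref{lem:QRA_adjoint_problem}, and the logarithmic factors $N_jN_kN_l$ are absorbed into $L^p$.

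The main obstacle is verifying that the chain of balls never enters $\Omega\times\R_+$, where the equation is inhomogeneous and the three-balls inequality is unavailable. This is handled as in the earlier proof: the balls drift away from $\Omega$ as they grow, using connectedness of $\R^n\setminus\Omega$ to surround $\Omega$. A second concern is justifying the local applicability of Proposition \ref{prop:bbucp} for the merely locally finite-energy $\tilde{h}_\tau$.
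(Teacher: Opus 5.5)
Your proposal is correct and is essentially the paper's argument. The paper simply invokes Proposition \ref{prop:estimate_Omega_ext} with $\tilde w=\tilde h_\tau$, $\Omega_{+\delta}=\Omega_1'$ (so $\delta\sim 1$), $A=\Vert r_\tau\Vert_{L^2(\Omega)}$ and $\varepsilon=\tau$, using the a priori bound from Lemma \ref{lem:QRA_adjoint_problem} and the fact that, by (A3), the equation is homogeneous in $\Omega_e\times\R_+$. Your restriction to Step 1 of that proof (only heights in $(1,L)$ are needed) and your remark that the boundary-bulk and three-balls estimates are only used locally (so $\tilde h_\tau\in\dot H^1_{\text{loc}}$ with a source in $\Omega\times(0,\infty)$ suffices) are a more careful unpacking of the same reduction, not a different route.
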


\begin{proof}
Note that we are exactly in the setting of Proposition \ref{prop:estimate_Omega_ext}, i.e. $\tilde{h}_\tau$ satisfies
\begin{equation*}
\begin{cases}
\begin{aligned}
- \nabla \cdot x_{n+1}^{1-2s} \tilde{a} \nabla \tilde{h}_\tau &= 0 \quad &&\text{in } \Omega_e \times \R_+,\\
\tilde{h}_\tau &= 0 \quad &&\text{ on } W\times\{0\}.
\end{aligned}
\end{cases}
\end{equation*}
Moreover, by Lemma \ref{lem:QRA_adjoint_problem} it holds for some sufficiently large $\kappa>0$ and a suitable exponent $\bar{p}>0$ that
\begin{align}\label{eq:proof_UCP_adjoint_1}
\Vert \tilde{h}_\tau \Vert_{L^2(B_{\kappa L}' \times (0,\kappa L), x_{n+1}^{1-2s})} \leq C L^{\bar{p}} \Vert r_\tau \Vert_{L^2(\Omega)}
\end{align}
and by assumption
\begin{align*}
\Vert \restr{\lim_{x_{n+1}\to0} x_{n+1}^{1-2s} \partial_{n+1} \tilde{h}_\tau}{W} \Vert_{H^{-s}(W)} \leq \tau \Vert r_\tau \Vert_{L^2(\Omega)}.
\end{align*}
Then, applying the result of Proposition \ref{prop:estimate_Omega_ext} with $\Omega_{+\delta} = \Omega_1'$ (note in particular that in this case $\delta = \dist(\Omega, \partial\Omega_1') \sim O(1)$), $A = \Vert r_\tau \Vert_{L^2(\Omega)}$ and $\varepsilon = \tau$, we get
\begin{align*}
\Vert \tilde{h}_\tau \Vert_{L^2((\Omega_1\setminus\Omega_1') \times (1,L), x_{n+1}^{1-2s})} \leq C L^p \Vert r_\tau \Vert_{L^2(\Omega)} \tau^{c\alpha^{C\log(L)}}.
\end{align*}
This is the claimed estimate and the proof is finished.
\end{proof}

\subsection{Proof of the quantitative Runge approximation}\label{sec:QRA_Proof}

With the preliminary work regarding the adjoint problem in hand, we next turn to the proof of the quantitative Runge approximation result of Proposition \ref{prop:QRA}.

\begin{proof}[Proof of Proposition \ref{prop:QRA}]
Let $X$, $T$, $T'$, $T^*$, $(\sigma_j)_{j\in\N}$, $(\varphi_j)_{j\in\N}$ and $(\psi_j)_{j\in\N}$ be the space, operators, singular values and bases from Lemma \ref{lem:QRA_auxiliary} corresponding to the sets $S_1(\Omega)$ and $S_2(\Omega)$.

\textit{Step 1: Strategy.} Since $\restr{v}{\Omega} \in S_1(\Omega) \subset X$, we may write
\begin{align*}
\restr{v}{\Omega} = \sum_{j=1}^\infty \beta_j \psi_j.
\end{align*}
Define
\begin{align*}
f := \sum_{\sigma_j \geq \tau} \frac{\beta_j}{\sigma_j} \varphi_j,
\end{align*}
where $\tau>0$ small will be determined later. We will show that $v_\varepsilon^f := Tf = \restr{\int_0^\infty t^{1-2s} \tilde{u}^f(\cdot,t) dt}{\Omega}$ satisfies the desired estimates. By orthonormality we have
\begin{align}\label{eq:QRA_Proof_1}
\Vert f \Vert_{\widetilde{H}^s(W)}^2 = \sum_{\sigma_j \geq \tau} \frac{\beta_j^2}{\sigma_j^2} \leq \frac{1}{\tau^2} \sum_{\sigma_j \geq \tau} \beta_j^2 \leq \frac{1}{\tau^2} \Vert \restr{v}{\Omega} \Vert_{L^2(\Omega)}^2.
\end{align}
We set $r_\tau := \restr{v}{\Omega} - Tf = \sum_{\sigma_j < \tau} \beta_j \psi_j$ and define $\tilde{h}_\tau \in \dot{H}^1_{\text{loc}}(\R^{n+1}_+, x_{n+1}^{1-2s})$ as the weak solution (in the sense of Lemma \ref{lem:QRA_adjoint_problem}) to
\begin{equation*}
\begin{cases}
\begin{aligned}
- \nabla \cdot x_{n+1}^{1-2s} \tilde{a} \nabla \tilde{h}_\tau &= x_{n+1}^{1-2s} r_\tau \chi_{\Omega \times (0,\infty)} \quad &&\text{in } \R^{n+1}_+,\\
\lim_{x_{n+1}\to0} x_{n+1}^{1-2s} \partial_{n+1} \tilde{h}_\tau &= 0 \quad &&\text{ on } \Omega\times\{0\},\\
\tilde{h}_\tau &= 0 \quad &&\text{ on } \Omega_e\times\{0\}.
\end{aligned}
\end{cases}
\end{equation*}

We claim that for $k$ large to be chosen, it holds for some $\overline{p}_1, \overline{p}_2 > 0$ and some $\alpha \in (0,1)$ that
\begin{align}\label{eq:claim_QRA}
\Vert Tf - \restr{v}{\Omega} \Vert_{L^2(\Omega)} \leq C k^{-\overline{p}_1} \Vert v \Vert_{L^2(\Omega_1)} + C k^{\overline{p}_2} \tau^{c\alpha^{C\log(k)}} \Vert v \Vert_{H^1(\Omega_1)}.
\end{align}
We postpone the proof of \eqref{eq:claim_QRA} to Step 2. Assuming that the claim is true, we rewrite the expression on the right hand side to infer
\begin{align*}
\Vert Tf - \restr{v}{\Omega} \Vert_{L^2(\Omega)} &\leq C k^{-\overline{p}_1} \Vert v \Vert_{H^1(\Omega_1)} + C k^{\overline{p}_2} \tau^{ck^{C\log(\alpha)}} \Vert v \Vert_{H^1(\Omega_1)}\\
&= C k^{-\overline{p}_1} \Vert v \Vert_{H^1(\Omega_1)} + C k^{\overline{p}_2} \tau^{ck^{-\gamma}} \Vert v \Vert_{H^1(\Omega_1)}
\end{align*}
for some $\gamma = C \vert\log(\alpha)\vert >0$. Then choosing $k \sim \varepsilon^{-\frac{1}{\overline{p}_1}}$ and $\tau \sim e^{-\varepsilon^{-\gamma_1}}$ for some $\gamma_1 > \frac{\gamma}{\overline{p}_1}$ yields that
\begin{align*}
\Vert Tf - \restr{v}{\Omega} \Vert_{L^2(\Omega)} \leq \varepsilon \Vert v \Vert_{H^1(\Omega_1)}.
\end{align*}
Plugging $\tau$ into \eqref{eq:QRA_Proof_1} then finishes the proof.

It thus only remains to prove the claim \eqref{eq:claim_QRA}.

\textit{Step 2: Proof of claim \eqref{eq:claim_QRA}.}
In the following $p_i$ for $i\in\N$ will denote some positive constants only depending on $n$, $s$, $\Omega$, $\Omega_1$, $W$ and $\theta_1$.

First of all, using the relation between $T^*$ and $T'$ and the fact that for the adjoint operator $T^*$ it holds that $T^*\psi_j = \sigma_j \varphi_j$, we derive
\begin{equation}\label{eq:QRA_Proof_2}
\begin{aligned}
\Vert \restr{\lim_{x_{n+1}\to0} x_{n+1}^{1-2s} \partial_{n+1} \tilde{h}_\tau}{W} \Vert_{H^{-s}(W)}^2 &= \Vert T' r_\tau \Vert_{H^{-s}(W)}^2 = \Vert T^* r_\tau \Vert_{\widetilde{H}^s(W)}^2 \leq \sum_{\sigma_j < \tau} \beta_j^2 \Vert T^* \psi_j \Vert_{\widetilde{H}^s(W)}^2\\
&\leq \sum_{\sigma_j < \tau} \sigma_j^2 \beta_j^2 \Vert \varphi_j \Vert_{\widetilde{H}^s(W)}^2 \leq \tau^2 \Vert r_\tau \Vert_{L^2(\Omega)}^2.
\end{aligned}
\end{equation}
This bound will be used in our application of Lemma \ref{lem:UCP_adjoint_problem} below.

We now borrow the strategy from Step 2 in the proof of Proposition 3.1 in \cite{CGRU23}. Take $\beta_k$ as it is constructed in \cite{CGRU23}. In particular $\beta_k$ satisfies
\begin{itemize}
\item $\supp(\beta_k) \subset (k, R_{k,s}+1) \subset (k, Ck^{p_2})$ for some $p_2>0$, $R_{k,s} := k + \frac{1}{1-b_{k,s}}$, $b_{k,s} \in (0,1)$,
\item $\beta_k(t) = b_{k,s}$ for $t \in (k+1, R_{k,s})$,
\item $\vert \nabla^l \beta_k(t) \vert \leq C$ for $l \in \{0,1,2\}$ and $C$ independent of $b_{k,s} \in (0,1)$,
\item $\int_0^\infty t^{1-2s} \beta_k(t) dt = 1$.
\end{itemize}
The first property, more precisely the estimate $R_{k,s} \leq Ck^{p_2}$, is not explicitly stated in \cite{CGRU23}. We provide an argument for this in the appendix, Lemma \ref{lem:supp_beta_k}.

Define $\overline{v} := \eta v$, where $\eta \in C_c^\infty(\Omega_1'')$ is a suitable cut-off function with $\eta = 1$ in $\Omega_1'$ for $\Omega \Subset \Omega_1' \Subset \Omega_1'' \Subset \Omega_1$. We will in particular use that $\beta_k \overline{v} \in H^1_c(\R^{n+1}_+, x_{n+1}^{1-2s})$. We note that since $v$ solves $-\nabla'\cdot a \nabla'v = 0$ in $\Omega_1$, $\overline{v} = \eta v$ solves
\begin{align}\label{eq:QRA_Proof_4}
-\nabla'\cdot a \nabla' \overline{v} = - (\nabla'\cdot a \nabla'\eta) v - 2 \nabla' \eta \cdot a \nabla' v \quad \text{in } \Omega_1.
\end{align}

By the definition of $r_\tau$, the series representation of $\restr{v}{\Omega}$ and orthogonality, we infer
\begin{equation}\label{eq:QRA_Proof_5}
\begin{aligned}
\Vert Tf - \restr{v}{\Omega} &\Vert_{L^2(\Omega)}^2 = \Vert r_\tau \Vert_{L^2(\Omega)}^2 = \left( \restr{v}{\Omega}, r_\tau \right)_{L^2(\Omega)} = \left( \overline{v}, r_\tau \right)_{L^2(\Omega)}\\
& = \langle r_\tau, \overline{v} \int_0^\infty t^{1-2s} \beta_k(t) dt \rangle_{\widetilde{H}^{-1}(\Omega), H^1(\Omega)} = \langle r_\tau, \int_0^\infty t^{1-2s} \beta_k(t) \overline{v}(\cdot) dt \rangle_{\widetilde{H}^{-1}(\Omega), H^1(\Omega)}.
\end{aligned}
\end{equation}
Since $\beta_k \overline{v} \in H^1_c(\R^{n+1}_+,x_{n+1}^{1-2s})$, continuing on \eqref{eq:QRA_Proof_5}, we deduce by the weak equation for $\tilde{h}_\tau$ (see also Remark 3.3 in \cite{CGRU23})
\begin{align*}
&\Vert Tf - \restr{v}{\Omega} \Vert_{L^2(\Omega)}^2\\
&\qquad = \int_{\R^{n+1}_+} x_{n+1}^{1-2s} \nabla (\beta_k \overline{v}) \cdot \tilde{a} \nabla \tilde{h}_\tau dx - \int_{\R^n} \beta_k(0) \overline{v}(x') \left( \lim_{x_{n+1}\to0} x_{n+1}^{1-2s} \partial_{n+1} \tilde{h}_\tau(x',x_{n+1}) \right) dx'.
\end{align*}
Since due to the support of $\beta_k$ the boundary conditions on $\R^n \times \{0\}$ vanish and by the support condition on $\overline{v}$, we obtain
\begin{equation}\label{eq:QRA_Proof_6}
\begin{aligned}
&\Vert Tf - \restr{v}{\Omega} \Vert_{L^2(\Omega)}^2 = \int_{\R^n} \int_0^\infty t^{1-2s} \nabla (\beta_k \overline{v}) \cdot \tilde{a} \nabla \tilde{h}_\tau dt dx'\\
&\qquad = \left[ \int_{\Omega_1} \int_0^\infty t^{1-2s} \overline{v} (\partial_t \beta_k) (\partial_t \tilde{h}_\tau) dt dx'+ \int_{\Omega_1} a \nabla' \overline{v} \cdot \nabla'\left( \int_0^\infty t^{1-2s} \beta_k(t) \tilde{h}_\tau (x',t) dt \right) dx' \right].
\end{aligned}
\end{equation}

On the one hand, for the first term on the right hand side of \eqref{eq:QRA_Proof_6} we argue similarly as in \cite{CGRU23}, Step 2 in the proof of Proposition 3.1 (i.e., by the support of $\partial_t \beta_k$ and the decay behaviour of $\partial_t \tilde{h}_{\tau}$ in the vertical direction). More precisely, using Lemma \ref{lem:apriori_decay} with $r=\infty$ and $p=\frac{q_1}{q_1-1}$ (also recall the remark after Lemma \ref{lem:apriori_decay}), we find that
\begin{equation}\label{eq:QRA_Proof_7}
\begin{aligned}
&\left\vert \int_{\Omega_1} \int_0^\infty t^{1-2s} \overline{v} (\partial_t \beta_k) (\partial_t \tilde{h}_\tau) dt dx' \right\vert \leq \int_{(k,k+1) \cup (R_{k,s},R_{k,s}+1)} t^{1-2s} \vert \partial_t \beta_k \vert \int_{\Omega_1} \vert \overline{v} \vert \vert \partial_t \tilde{h}_\tau \vert dx'dt\\
&\qquad \leq C \Vert v \Vert_{L^1(\Omega_1)} \int_{(k,k+1) \cup (R_{k,s},R_{k,s}+1)} t^{1-2s} \vert \partial_t \beta_k \vert t^{\frac{n(q_1-1)}{q_1} -n -1} \Vert \tilde{h}_\tau(\cdot,0) \Vert_{L^{q_1}(\R^n)} dt\\
&\qquad \leq C \Vert v \Vert_{L^1(\Omega_1)} \Vert \tilde{h}_\tau(\cdot,0) \Vert_{L^{q_1}(\R^n)} \int_{(k,k+1) \cup (R_{k,s},R_{k,s}+1)} t^{n(\frac{q_1-1}{q_1} - 1) -2s} \vert \partial_t \beta_k \vert dt\\
&\qquad \leq C (k^{-p_3} + R_{k,s}^{-p_4}) \Vert \overline{v} \Vert_{L^1(\Omega_1)} \Vert \tilde{h}_\tau (\cdot,0) \Vert_{L^{q_1}(\R^n)}\\
&\qquad \leq C k^{-p_5} \Vert v \Vert_{L^2(\Omega_1)} \Vert r_\tau \Vert_{L^2(\Omega)},
\end{aligned}
\end{equation}
where $q_1 = \frac{2n}{n-2}$ if $\tilde{h}_\tau = \bar{u}$ and $q_1 = \frac{2n}{n-2s}$ if $\tilde{h}_\tau = \bar{u}_2$ ($\bar{u}$ and $\bar{u}_2$ as in Lemma \ref{lem:QRA_adjoint_problem}). Here, for the last inequality we have used that $\Vert v \Vert_{L^1(\Omega_1)} \leq C \Vert v \Vert_{L^2(\Omega_1)}$ by the boundedness of $\Omega_1$ and that $\Vert \tilde{h}_\tau(\cdot,0) \Vert_{L^{q_1}(\R^n)} \leq C \Vert r_\tau \Vert_{L^2(\Omega)}$ by Lemma \ref{lem:QRA_adjoint_problem}.

On the other hand, for the second term on the right hand side of \eqref{eq:QRA_Proof_6}, applying integration by parts and using \eqref{eq:QRA_Proof_4}, we get
\begin{equation}\label{eq:QRA_Proof_8}
\begin{aligned}
&\int_{\Omega_1} a \nabla' \overline{v} \cdot \nabla' \left( \int_0^\infty t^{1-2s} \beta_k \tilde{h}_\tau dt \right) dx' = \int_{\Omega_1} (-\nabla'\cdot a \nabla' \overline{v}) \left( \int_0^\infty t^{1-2s} \beta_k \tilde{h}_\tau dt \right) dx'\\
&\qquad =\int_{\Omega_1 \setminus \Omega_1'} \Big( -(\nabla'\cdot a \nabla'\eta) v - 2 \nabla'\eta \cdot a \nabla'v \Big) \Big( \int_0^\infty t^{1-2s} \beta_k \tilde{h}_\tau dt \Big) dx'\\
&\qquad \leq C \Vert v \Vert_{H^1(\Omega_1)} \Vert \int_0^\infty t^{1-2s} \beta_k \tilde{h}_\tau dt \Vert_{L^2(\Omega_1'' \setminus \Omega_1')},
\end{aligned}
\end{equation}
where for the first equality we do not have any boundary term since $\supp(\eta v) \Subset \Omega_1$. Then, recalling \eqref{eq:QRA_Proof_2} and applying Lemma \ref{lem:UCP_adjoint_problem} (for $\Omega_1'' \setminus \Omega_1'$ and with $L = Ck^{p_2}$), we derive after an application of Hölder's inequality with respect to the $x_{n+1}$-variable
\begin{equation}\label{eq:QRA_Proof_9}
\begin{aligned}
\Vert \int_0^\infty t^{1-2s} \beta_k \tilde{h}_\tau dt \Vert_{L^2(\Omega_1''\setminus\Omega_1')} \leq Ck^{p_6} \Vert \tilde{h}_\tau \Vert_{L^2((\Omega_1''\setminus\Omega_1') \times (k, Ck^{p_2}), x_{n+1}^{1-2s})} \leq C k^{p_7} \Vert r_\tau \Vert_{L^2(\Omega)} \tau^{c \alpha^{C \log(k)}},
\end{aligned}
\end{equation}
for some $\alpha \in (0,1)$.

Thus, combining \eqref{eq:QRA_Proof_6}, \eqref{eq:QRA_Proof_7}, \eqref{eq:QRA_Proof_8} and \eqref{eq:QRA_Proof_9}, we obtain
\begin{align*}
\Vert Tf - \restr{v}{\Omega} \Vert_{L^2(\Omega')}^2 \leq Ck^{-p_5} \Vert v \Vert_{L^2(\Omega_1)} \Vert r_\tau \Vert_{L^2(\Omega)} + Ck^{p_7} \tau^{c \alpha^{C \log(k)}} \Vert v \Vert_{H^1(\Omega_1)} \Vert r_\tau \Vert_{L^2(\Omega)}.
\end{align*}
Dividing by $\Vert Tf - \restr{v}{\Omega} \Vert_{L^2(\Omega)} = \Vert r_\tau \Vert_{L^2(\Omega)}$ proves the claim and thus finishes the proof of the proposition.
\end{proof}

Let us remark that, as we can see from the proof, the logarithmic loss in the quantitative Runge approximation is a consequence of the application of the quantitative unique continuation argument for the adjoint problem (Lemma \ref{lem:UCP_adjoint_problem}).

\section{Proof of the main results}\label{sec:proof_main_results}

To deduce the main results, we will apply the quantitative Runge approximation result, Proposition \ref{prop:QRA}, together with the quantitative unique continuation result, Proposition \ref{prop:QUCP_1}. We will first give the proof for anisotropic coefficients, Theorem \ref{thm:quantitative_reduction_anisotropic}, and then move on to the results for the isotropic setting, Theorem \ref{thm:quantitative_reduction_isotropic} and Corollary \ref{cor:stability_fract_Calderon_isotropic}.

With a slight abuse of notation, in what follows, we will write $(\cdot,\cdot)_{L^2(\partial\Omega)}$ to denote the dual pairing $\langle \cdot,\cdot \rangle_{H^{-\frac{1}{2}}(\partial\Omega), H^{\frac{1}{2}}(\partial\Omega)}$.

\subsection{Anisotropic setting}\label{sec:proof_main_results_anisotropic}

First, we recall some elementary properties of the local Dirichlet-to-Neumann map. Let $\Omega_1\subset\R^n$ be open, bounded, non-empty, Lipschitz, $g \in H^{\frac{1}{2}}(\partial\Omega_1)$ and consider the following conductivity equation for $v_a^g \in H^1(\Omega_1)$
\begin{equation}\label{eq:local_conductivity}
\begin{cases}
\begin{aligned}
-\nabla' \cdot a \nabla' v_a^g &= 0 \quad &&\text{in } \Omega_1,\\
v_a^g &= g \quad &&\text{on } \partial\Omega_1.
\end{aligned}
\end{cases}
\end{equation}
The associated Dirichlet-to-Neumann map $\Lambda_{1,\Omega_1}^a$ is defined by
\begin{align*}
\Lambda_{1,\Omega_1}^a: H^{\frac{1}{2}}(\partial \Omega_1) \to H^{-\frac{1}{2}}(\partial \Omega_1), \quad g \mapsto \partial_{\nu}^a v_a^g|_{\partial \Omega_1} := \nu \cdot a \nabla' v_a^g|_{\partial \Omega_1}  ,
\end{align*}
where $v_a^g \in H^1(\Omega_1)$ is a weak solution to \eqref{eq:local_conductivity}. For $g_1, g_2 \in H^{\frac{1}{2}}(\partial\Omega_1)$, the operator $\Lambda_{1,\Omega_1}^a$ is defined weakly by
\begin{align*}
(\Lambda_{1,\Omega_1}^a g_1, g_2)_{L^2(\partial\Omega_1)} := \int_{\Omega_1} \nabla' v_a^{g_1} \cdot a \nabla' e^{g_2} dx,
\end{align*}
where $v_a^{g_1} \in H^1(\Omega_1)$ is a weak solution to \eqref{eq:local_conductivity} with metric $a$ and Dirichlet boundary data $g_1$, and $e^{g_2} \in H^1(\Omega_1)$ is any extension of $g_2$.
 We note that this weak definition is independent of the choice of the extension. By standard arguments, we infer the following Alessandrini-type identity
\begin{align}\label{eq:Alessandrini_identity}
\left( (\Lambda_{1,\Omega_1}^{a_1} - \Lambda_{1,\Omega_1}^{a_2}) g_1, g_2 \right)_{L^2(\partial\Omega_1)} = \int_{\Omega_1} \nabla' v_{a_1}^{g_1} \cdot (a_1 -a_2) \nabla' v_{a_2}^{g_2} dx.
\end{align}

With the previous work at our disposal, we will now prove the first main result, Theorem \ref{thm:quantitative_reduction_anisotropic}.

\begin{proof}[Proof of Theorem \ref{thm:quantitative_reduction_anisotropic}]
In order to save some notation in what follows we will write $\Vert \Lambda_s^{a_1} - \Lambda_s^{a_2} \Vert_{\text{Op}}$ to denote the operator-norm $\Vert \Lambda_s^{a_1} - \Lambda_s^{a_2} \Vert_{\widetilde{H}^s(W) \to H^{-s}(W)}$.

By the Alessandrini-type identity, \eqref{eq:Alessandrini_identity}, and using that $a_1 = a_2$ in $\Omega_1 \setminus \Omega'$ we infer
\begin{equation}\label{eq:proof_main_eq1}
\begin{aligned}
&\big\vert \langle (\Lambda_{1,\Omega_1}^{a_1} - \Lambda_{1,\Omega_1}^{a_2}) g_1, g_2 \rangle_{H^{-\frac{1}{2}}(\partial\Omega_1), H^{\frac{1}{2}}(\partial\Omega_1)} \big\vert = \big\vert (\nabla' v_{a_1}^{g_1}, (a_1 - a_2) \nabla' v_{a_2}^{g_2})_{L^2(\Omega_1)} \big\vert\\
&\hspace{2cm} = \big\vert (\nabla' v_{a_1}^{g_1}, (a_1 - a_2) \nabla' v_{a_2}^{g_2})_{L^2(\Omega')} \big\vert.
\end{aligned}
\end{equation}
Let $\varepsilon > 0$ to be determined later. The quantitative Runge approximation result, Proposition \ref{prop:QRA}, implies that for $v_{a_j}^{g_j} \in H^1(\Omega_1)$, there exists $f_j \in \widetilde{H}^s(W)$ and $\tilde{u}^{f_j}_{a_j} \in \dot{H}^1(\R^{n+1}_+,x_{n+1}^{1-2s})$ solving \eqref{eq:QRA_CSExtension} with metric $\tilde{a}_j$ and Dirichlet boundary data $f_j$ such that
\begin{align*}
\left\Vert \restr{v_{a_j}^{g_j}}{\Omega} - \restr{\int_0^\infty t^{1-2s} \tilde{u}_{a_j}^{f_j}(\cdot,t) dt}{\Omega} \right\Vert_{L^2(\Omega)} &\leq \varepsilon \Vert v_{a_j}^{g_j} \Vert_{H^1(\Omega_1)},\\
\Vert f_j \Vert_{\widetilde{H}^s(W)} &\leq C_1 e^{C_1\varepsilon^{-\mu}} \Vert v_{a_j}^{g_j} \Vert_{L^2(\Omega)},
\end{align*}
for some constants $C_1 \geq 1$ and $\mu > 0$. Since $-\nabla'\cdot a_j \nabla'(v_{a_j}^{g_j} - \int_0^\infty t^{1-2s} \tilde{u}_{a_j}^{f_j}(\cdot,t) dt) =0$ in $\Omega$ (see Theorem 3 in \cite{CGRU23}), we apply Caccioppoli's inequality to infer
\begin{align}
\label{eq:appl_Cacc_loss}
\begin{split}
\left\Vert \restr{v_{a_j}^{g_j}}{\Omega'} - \restr{\int_0^\infty t^{1-2s} \tilde{u}_{a_j}^{f_j}(\cdot,t) dt}{\Omega'} \right\Vert_{H^1(\Omega')} &\leq C \left\Vert \restr{v_{a_j}^{g_j}}{\Omega} - \restr{\int_0^\infty t^{1-2s} \tilde{u}_{a_j}^{f_j}(\cdot,t) dt}{\Omega} \right\Vert_{L^2(\Omega)}\\
&\leq C \varepsilon \Vert v_{a_j}^{g_j} \Vert_{H^1(\Omega_1)}.
\end{split}
\end{align}
Continuing on \eqref{eq:proof_main_eq1} we then deduce
\begin{equation}\label{eq:proof_main_eq2}
\begin{aligned}
&\big\vert \langle (\Lambda_{1,\Omega_1}^{a_1} - \Lambda_{1,\Omega_1}^{a_2}) g_1, g_2 \rangle_{H^{-\frac{1}{2}}(\partial\Omega_1), H^{\frac{1}{2}}(\partial\Omega_1)} \big\vert\\
&\hspace{1.5cm} \leq \left| \left(\nabla' \left(\int_0^\infty t^{1-2s} \tilde{u}_{a_1}^{f_1}(\cdot,t) dt \right), (a_1 - a_2) \nabla' \left( \int_0^\infty t^{1-2s} \tilde{u}_{a_2}^{f_2} (\cdot,t) dt \right) \right)_{L^2(\Omega')} \right| \\
&\hspace{2.5cm} + C \varepsilon \Vert v_{a_1}^{g_1} \Vert_{H^1(\Omega_1)} \Vert v_{a_2}^{g_2} \Vert_{H^1(\Omega_1)}\\
&\hspace{1.5cm} \leq \left| \left(\nabla' \left(\int_0^\infty t^{1-2s} \tilde{u}_{a_1}^{f_1}(\cdot,t) dt \right), (a_1 - a_2) \nabla' \left( \int_0^\infty t^{1-2s} \tilde{u}_{a_2}^{f_2} (\cdot,t) dt \right) \right)_{L^2(\Omega)} \right| \\
&\hspace{2.5cm} + C \varepsilon \Vert g_1 \Vert_{H^{\frac{1}{2}}(\partial\Omega_1)} \Vert g_2 \Vert_{H^{\frac{1}{2}}(\partial\Omega_1)},
\end{aligned}
\end{equation}
where for the last inequality we have used that $a_1 = a_2$ in $\Omega\setminus\Omega'$ and the a-priori estimates $\Vert v_{a_j}^{g_j} \Vert_{H^1(\Omega_1)} \leq C \Vert g_j \Vert_{H^{\frac{1}{2}}(\partial\Omega_1)}$.

We seek to apply the result from Proposition \ref{prop:QUCP_1} to the previous estimate. For abbreviation we introduce the notation $v_{a_j}^{f_i} := \int_0^\infty t^{1-2s} \tilde{u}_{a_j}^{f_i} (\cdot,t) dt$ and observe that
\begin{equation}\label{eq:proof_main_eq3}
\begin{aligned}
& \left(\nabla' \left(\int_0^\infty t^{1-2s} \tilde{u}_{a_1}^{f_1}(\cdot,t) dt \right), (a_1 - a_2) \nabla' \left( \int_0^\infty t^{1-2s} \tilde{u}_{a_2}^{f_2} (\cdot,t) dt \right) \right)_{L^2(\Omega)}\\
&\qquad = (\nabla' v_{a_1}^{f_1}, a_1 \nabla' v_{a_2}^{f_2})_{L^2(\Omega)} - (\nabla' v_{a_2}^{f_1}, a_2 \nabla' v_{a_2}^{f_2})_{L^2(\Omega)}\\
& \qquad \qquad + (\nabla' v_{a_2}^{f_1}, a_2 \nabla' v_{a_2}^{f_2})_{L^2(\Omega)} - (\nabla' v_{a_1}^{f_1}, a_2 \nabla' v_{a_2}^{f_2})_{L^2(\Omega)}\\
&\qquad = (\partial_\nu^{a_1} v_{a_1}^{f_1}, v_{a_2}^{f_2})_{L^2(\partial\Omega)} - (\partial_\nu^{a_2} v_{a_2}^{f_1}, v_{a_2}^{f_2})_{L^2(\partial\Omega)} + (v_{a_2}^{f_1}, \partial_\nu^{a_2} v_{a_2}^{f_2})_{L^2(\partial\Omega)} - (v_{a_1}^{f_1}, \partial_\nu^{a_2} v_{a_2}^{f_2})_{L^2(\partial\Omega)}\\
&\qquad = (\partial_\nu^{a_1} v_{a_1}^{f_1} - \partial_\nu^{a_2} v_{a_2}^{f_1}, v_{a_2}^{f_2})_{L^2(\partial\Omega)} + (v_{a_2}^{f_1} - v_{a_1}^{f_1}, \partial_\nu^{a_2} v_{a_2}^{f_2})_{L^2(\partial\Omega)}\\
&\qquad = \left( \partial_\nu^a \left( \int_0^\infty t^{1-2s} (\tilde{u}_{a_1}^{f_1}(\cdot,t) - \tilde{u}_{a_2}^{f_1}(\cdot,t)) dt \right), \left( \int_0^\infty t^{1-2s} \tilde{u}_{a_2}^{f_2}(\cdot,t) dt \right) \right)_{L^2(\partial\Omega)}\\
&\hspace{1.5cm} + \left( \left( \int_0^\infty t^{1-2s} (\tilde{u}_{a_2}^{f_1}(\cdot,t) - \tilde{u}_{a_1}^{f_1}(\cdot,t)) dt \right), \partial_\nu^a \left( \int_0^\infty t^{1-2s} \tilde{u}_{a_2}^{f_2}(\cdot,t) dt \right) \right)_{L^2(\partial\Omega)},\\
\end{aligned}
\end{equation}
where for the second equality we used integration by parts and that $\nabla'\cdot a_j \nabla' v_{a_j}^{f_i} = 0$ in $\Omega$, and for the last equality we used that $a_1 = a_2 = a$ on $\partial \Omega$. By Proposition \ref{prop:QUCP_1} it holds that
\begin{equation}\label{eq:proof_main_eq4}
\begin{aligned}
\left\| \partial_\nu^a \left( \int_0^\infty t^{1-2s} (\tilde{u}_{a_1}^{f_1}(\cdot,t) - \tilde{u}_{a_2}^{f_1}(\cdot,t)) dt \right) \right\|_{H^{-\frac{1}{2}}(\partial\Omega)} &\leq C \vert \log( \Vert \Lambda_s^{a_1} - \Lambda_s^{a_2} \Vert_{H^{-s}(W)} ) \vert^{-\beta} \Vert f_1 \Vert_{\widetilde{H}^s(W)},\\
\left\| \left( \int_0^\infty t^{1-2s} (\tilde{u}_{a_2}^{f_1}(\cdot,t) - \tilde{u}_{a_1}^{f_1}(\cdot,t)) dt \right) \right\|_{H^{\frac{1}{2}}(\partial\Omega)} &\leq C \vert \log( \Vert \Lambda_s^{a_1} - \Lambda_s^{a_2} \Vert_{H^{-s}(W)} ) \vert^{-\beta} \Vert f_1 \Vert_{\widetilde{H}^s(W)},
\end{aligned}
\end{equation}
and by \cite{CGRU23} (see Lemma \ref{lem:integral_uniform_estimate} for the uniform dependence of the constant $C$ on stated quantities)
\begin{equation}\label{eq:proof_main_eq5}
\begin{aligned}
\left\| \int_0^\infty t^{1-2s} \tilde{u}_{a_2}^{f_2}(\cdot,t) dt \right\|_{H^{\frac{1}{2}}(\partial\Omega)} &\leq C \Vert f_2 \Vert_{\widetilde{H}^s(W)},\\
\left\| \partial_\nu^a \left( \int_0^\infty t^{1-2s} \tilde{u}_{a_2}^{f_2}(\cdot,t) dt \right) \right\|_{H^{-\frac{1}{2}}(\partial\Omega)} &\leq C \Vert f_2 \Vert_{\widetilde{H}^s(W)}.
\end{aligned}
\end{equation}

Combining \eqref{eq:proof_main_eq2}, \eqref{eq:proof_main_eq3}, \eqref{eq:proof_main_eq4} and \eqref{eq:proof_main_eq5}, we deduce
\begin{align*}
&\big\vert \langle (\Lambda_{1,\Omega_1}^{a_1} - \Lambda_{1,\Omega_1}^{a_2}) g_1, g_2 \rangle_{H^{-\frac{1}{2}}(\partial\Omega_1), H^{\frac{1}{2}}(\partial\Omega_1)} \big\vert\\
&\hspace{3mm} \leq C \vert \log( \Vert \Lambda_s^{a_1} - \Lambda_s^{a_2} \Vert_{\text{Op}} ) \vert^{-\beta} \Vert f_1 \Vert_{\widetilde{H}^s(W)} \Vert f_2 \Vert_{\widetilde{H}^s(W)} + C \varepsilon \Vert g_1 \Vert_{H^{\frac{1}{2}}(\partial\Omega_1)} \Vert g_2 \Vert_{H^{\frac{1}{2}}(\partial\Omega_1)}\\
&\hspace{3mm} \leq C e^{2C_1\varepsilon^{-\mu}} \left\vert \log\left( \Vert \Lambda_s^{a_1} - \Lambda_s^{a_2} \Vert_{\text{Op}} \right) \right\vert^{-\beta} \Vert g_1 \Vert_{H^{\frac{1}{2}}(\partial\Omega_1)} \Vert g_2 \Vert_{H^{\frac{1}{2}}(\partial\Omega_1)} + C \varepsilon \Vert g_1 \Vert_{H^{\frac{1}{2}}(\partial\Omega_1)} \Vert g_2 \Vert_{H^{\frac{1}{2}}(\partial\Omega_1)}.
\end{align*}
In order to optimize the contribution on the right hand side in terms of $\Vert \Lambda_s^{a_1} - \Lambda_s^{a_2} \Vert_{\text{Op}}$, we choose $\varepsilon = \frac{1}{2C_1} \log( \vert \log( \Vert \Lambda_s^{a_1} - \Lambda_s^{a_2} \Vert_{\text{Op}} ) \vert^{\frac{\beta}{2}})^{-\frac{1}{\mu}}$ to get
\begin{align*}
\big\vert \langle (\Lambda_{1,\Omega_1}^{a_1} - \Lambda_{1,\Omega_1}^{a_2}) &g_1, g_2 \rangle_{H^{-\frac{1}{2}}(\partial\Omega_1), H^{\frac{1}{2}}(\partial\Omega_1)} \big\vert\\
&\quad \leq C \vert \log( \Vert \Lambda_s^{a_1} - \Lambda_s^{a_2} \Vert_{\text{Op}} ) \vert^{-\frac{\beta}{2}} \Vert g_1 \Vert_{H^{\frac{1}{2}}(\partial\Omega_1)} \Vert g_2 \Vert_{H^{\frac{1}{2}}(\partial\Omega_1)}\\
&\hspace{1.5cm} + C \log( \vert \log( \Vert \Lambda_s^{a_1} - \Lambda_s^{a_2} \Vert_{\text{Op}} ) \vert)^{-\frac{1}{\mu}} \Vert g_1 \Vert_{H^{\frac{1}{2}}(\partial\Omega_1)} \Vert g_2 \Vert_{H^{\frac{1}{2}}(\partial\Omega_1)}\\
&\quad \leq C \log( \vert \log( \Vert \Lambda_s^{a_1} - \Lambda_s^{a_2} \Vert_{\text{Op}} ) \vert)^{-\frac{1}{\mu}} \Vert g_1 \Vert_{H^{\frac{1}{2}}(\partial\Omega_1)} \Vert g_2 \Vert_{H^{\frac{1}{2}}(\partial\Omega_1)}.
\end{align*}
Taking the supremum in $\Vert g_1 \Vert_{H^{\frac{1}{2}}(\partial\Omega_1)} = 1$ and $\Vert g_2 \Vert_{H^{\frac{1}{2}}(\partial\Omega_1)} = 1$, yields
\begin{align*}
\Vert \Lambda_{1,\Omega_1}^{a_1} - \Lambda_{1,\Omega_1}^{a_2} \Vert_{H^{\frac{1}{2}}(\partial\Omega_1) \to H^{-\frac{1}{2}}(\partial\Omega_1)} \leq C \log( \vert \log( \Vert \Lambda_s^{a_1} - \Lambda_s^{a_2} \Vert_{\text{Op}} ) \vert)^{-\frac{1}{\mu}},
\end{align*}
which finishes the proof.
\end{proof}

Let us briefly comment on where our assumption (A4'), that the conductivity is known and all admissible metrics are identical near the boundary, comes into play. It is used when we reduce the Alessandrini-type identity on $\Omega$ to an Alessandrini-type identity on $\Omega'$ (see equation \eqref{eq:proof_main_eq1}). As a consequence, in \eqref{eq:appl_Cacc_loss}, we can apply Caccioppoli's inequality to 
\begin{align*}
v_{a_j}^{g_j} - \int_0^\infty t^{1-2s} \tilde{u}_{a_j}^{f_j} (\cdot,t) dt 
\end{align*}
 to estimate the $H^1$-norm on $\Omega'$ by the $L^2$-norm on $\Omega$. In particular, we stay inside the set $\Omega$ in which both contributions satisfy a homogeneous equation. Since the metric is already predetermined on $\R^n \setminus \Omega$ and since we are already considering a Dirichlet-to-Neumann map for a larger set, one could also try to work with the larger set $\Omega_1 \Supset \Omega$ in the anisotropic setting here. However, in this set $\Omega_1 \setminus \Omega$, the function $\int_0^\infty t^{1-2s} \tilde{u}_{a_j}^{f_j} (\cdot,t) dt$ does not solve the homogeneous elliptic equation. Indeed, in $\Omega_1 \setminus \Omega$ we have that $\int_0^\infty t^{1-2s} \tilde{u}_{a_j}^{f_j} (\cdot,t) dt$ satisfies 
\begin{align*} 
 -\nabla'\cdot a_j \nabla' \Big( \int_0^\infty t^{1-2s} \tilde{u}_{a_j}^{f_j} (\cdot,t) dt \Big) = (-\nabla'\cdot a_j \nabla')^s u_j^f
\end{align*} 
  (see Theorem 3 in \cite{CGRU23}). Thus, in this set Caccioppoli's inequality would always involve some norm of $(-\nabla'\cdot a_j \nabla')^s u_j^f$ on $\Omega_1 \setminus \Omega$, which can only be bounded in terms of $C \Vert f \Vert_{\widetilde{H}^s(W)}$ which does not suffice for our purposes.

\subsection{Isotropic setting}\label{sec:proof_main_results_isotropic}

Applying a Liouville reduction (on the level of the local equation), in the isotropic setting, we can avoid the additional structural assumption that $a$ is known near the boundary. The arguments are similar as in the previous section. We start by recalling some elementary properties of the Dirichlet-to-Neumann map specific to the isotropic setting.

Let $\Omega_1 \subset \R^n$ be open, bounded and Lipschitz. Let $a = \gamma \Id \in L^\infty(\R^n, \R^{n \times n}_{\text{sym}})$ be isotropic conductivities, where $\gamma \in C^2(\R^n)$ with $\gamma = 1$ on $\partial\Omega_1$. We recall the Liouville reduction. For $g \in H^{\frac{1}{2}}(\partial\Omega_1)$, if $v_a^g \in H^1(\Omega_1)$ is a weak solution to
\begin{equation*}
\begin{cases}
\begin{alignedat}{2}
- \nabla'\cdot a \nabla' v_a^g &= 0 \quad &&\text{in } \Omega_1,\\
v_a^g &= g \quad &&\text{on } \partial\Omega_1,
\end{alignedat}
\end{cases}
\end{equation*}
then $w_q^g := \gamma^{\frac{1}{2}} v_a^g \in H^1(\Omega_1)$ is a solution to the Schrödinger equation with $q := \gamma^{-\frac{1}{2}} \Delta(\gamma^{\frac{1}{2}})$
\begin{equation}\label{eq:Schrödinger_eq}
\begin{cases}
\begin{alignedat}{2}
(-\Delta + q) w_q^g &= 0 \quad &&\text{in } \Omega_1,\\
w_q^g &= g \quad &&\text{on } \partial\Omega_1.
\end{alignedat}
\end{cases}
\end{equation}
Here, we already used the assumption that $\gamma = 1$ on $\partial\Omega_1$. The Dirichlet-to-Neumann map $\Lambda_{q,\Omega_1}$ associated to the Schrödinger equation is defined by
\begin{align*}
\Lambda_{q,\Omega_1}: H^{\frac{1}{2}}(\partial\Omega_1) \to H^{-\frac{1}{2}}(\partial\Omega_1), \qquad g \mapsto \restr{\partial_\nu w_q^g}{\partial\Omega_1},
\end{align*}
where $w_q^g \in H^1(\Omega_1)$ is a weak solution to \eqref{eq:Schrödinger_eq} with boundary data $g$ and $\nu$ is the outward unit normal at $\partial\Omega_1$. In weak terms, for $g_1, g_2 \in H^{\frac{1}{2}}(\partial\Omega_1)$ the operator $\Lambda_{q,\Omega_1}$ is given by
\begin{align*}
(\Lambda_{q,\Omega_1} g_1, g_2)_{L^2(\partial\Omega_1)} := \int_{\Omega_1} \nabla' w_q^{g_1} \cdot \nabla' e^{g_2} + q w_q^{g_1} e^{g_2} dx,
\end{align*}
where $w_q^{g_1} \in H^1(\Omega_1)$ is a weak solution to \eqref{eq:Schrödinger_eq} with potential $q$ and Dirichlet boundary data $g_1$, and $e^{g_2} \in H^1(\Omega_1)$ is any extension of $g_2$. As in the previous subsection, we note that this weak definition is independent of the precise choice of the extension $e^{g_2}$. In this case we obtain the following Alessandrini-type identity
\begin{align}\label{eq:Alessandrini_Id_Schrödinger_eq}
( (\Lambda_{q_1,\Omega_1} - \Lambda_{q_2,\Omega_1})g_1, g_2 )_{L^2(\partial\Omega_1)} = (w_{q_1}^{g_1}, (q_1 - q_2) w_{q_2}^{g_2})_{L^2(\Omega_1)}.
\end{align}

With these observations in hand, we now prove the main quantitative reduction result for isotropic coefficients, Theorem \ref{thm:quantitative_reduction_isotropic}. The proof is very similar to the proof in the anisotropic case from the previous section. However, noting that the Alessandrini identity does not contain a gradient term of $w_{q_j}^{g_j}$, there is no need to apply Caccioppoli's inequality and thus the assumption that $a$ is known near the boundary can be removed (in contrast to our proof of the anisotropic setting).

\begin{proof}
By the result of Proposition \ref{prop:QRA}, there exist some constants $C_1,\mu > 0$ such that for any $v_{a_j}^{g_j}$ solving
\begin{equation*}
\begin{cases}
\begin{alignedat}{2}
-\nabla'\cdot a_j \nabla' v_{a_j}^{g_j} &= 0 \quad &&\text{in } \Omega_1,\\
v_{a_j}^{g_j} &= g_j \quad &&\text{on } \partial\Omega_1,
\end{alignedat}
\end{cases}
\end{equation*}
and any $\varepsilon \in (0, \varepsilon_0)$ there exists $f_j \in \widetilde{H}^s(W)$ such that
\begin{align*}
\Vert \restr{v_{a_j}^{g_j}}{\Omega} - \restr{\int_0^\infty t^{1-2s} \tilde{u}_{a_j}^{f_j}(\cdot,t) dt}{\Omega} \Vert_{L^2(\Omega)} \leq \varepsilon \Vert v_{a_j}^{g_j} \Vert_{H^1(\Omega_1)}, \qquad \Vert f_j \Vert_{\widetilde{H}^s(W)} \leq C_1 e^{C_1\varepsilon^{-\mu}} \Vert \restr{v_{a_j}^{g_j}}{\Omega} \Vert_{L^2(\Omega)},
\end{align*}
where $\tilde{u}_{a_j}^{f_j}$ is a weak solution to \eqref{eq:QRA_CSExtension} with metric $\tilde{a}_j$ and Dirichlet data $f_j$.

Recalling that $w_{q_j}^{g_j} = \gamma_j^{\frac{1}{2}} v_{a_j}^{g_j}$, we infer as a consequence of the previous estimates
\begin{equation}\label{eq:proof_main_iso_1}
\begin{aligned}
\Vert \restr{w_{q_j}^{g_j}}{\Omega} - &\restr{\gamma_j^{\frac{1}{2}} \int_0^\infty t^{1-2s} \tilde{u}_{a_j}^{f_j} (\cdot,t) dt}{\Omega} \Vert_{L^2(\Omega)} = \Vert \restr{\gamma_j^{\frac{1}{2}} v_{a_j}^{g_j}}{\Omega} - \restr{\gamma_j^{\frac{1}{2}} \int_0^\infty t^{1-2s} \tilde{u}_{a_j}^{f_j} (\cdot,t) dt}{\Omega} \Vert_{L^2(\Omega)}\\
&\leq \theta_1^{-\frac{1}{2}} \Vert \restr{v_{a_j}^{g_j}}{\Omega} - \restr{\int_0^\infty t^{1-2s} \tilde{u}_{a_j}^{f_j} (\cdot,t) dt}{\Omega} \Vert_{L^2(\Omega)}\\
&\leq \theta_1^{-\frac{1}{2}} \varepsilon \Vert v_{a_j}^{g_j} \Vert_{H^1(\Omega_1)} ,
\end{aligned}
\end{equation}
where $\theta_1 \in (0,1)$ is the ellipticity constant from assumption (A1) such that $0 < \theta_1 \leq \gamma \leq \theta_1^{-1}$.

To shorten the notation, we will write $v_{a_j}^{f_i} := \int_0^\infty t^{1-2s} \tilde{u}_{a_j}^{f_i} (\cdot,t) dt$. By the Alessandrini-type identity, \eqref{eq:Alessandrini_Id_Schrödinger_eq}, and using that $\gamma_1 = \gamma_2$, and thus $q_1 = q_2$ in $\Omega_1 \setminus \Omega$, we infer
\begin{align*}
\vert (\Lambda_{q_1,\Omega_1} - &\Lambda_{q_2,\Omega_1}) g_1, \ g_2 )_{L^2(\partial\Omega_1)} \vert = \vert (w_{q_1}^{g_1}, \ (q_1-q_2) w_{q_2}^{g_2})_{L^2(\Omega_1)} \vert = \vert (w_{q_1}^{g_1}, \ (q_1-q_2) w_{q_2}^{g_2})_{L^2(\Omega)} \vert\\
&\leq \vert (\gamma_1^{\frac{1}{2}} \int_0^\infty t^{1-2s} \tilde{u}_{a_1}^{f_1} (\cdot,t) dt, \ (q_1-q_2) \gamma_2^{\frac{1}{2}} \int_0^\infty t^{1-2s} \tilde{u}_{a_2}^{f_2} (\cdot,t) dt)_{L^2(\Omega)} \vert\\
&\hspace{1cm} + C \theta_1^{-\frac{1}{2}} \varepsilon \Vert v_{a_1}^{g_1} \Vert_{H^1(\Omega_{1})} \Vert w_{q_2}^{g_2} \Vert_{L^2(\Omega)} + C \theta_1^{-\frac{1}{2}} \varepsilon \Vert v_{a_2}^{g_2} \Vert_{H^1(\Omega_{1})} \Vert v_{a_1}^{f_1} \Vert_{L^2(\Omega)}\\
&\leq \vert (\gamma_1^{\frac{1}{2}} v_{a_1}^{f_1}, \ (q_1-q_2) \gamma_2^{\frac{1}{2}} v_{a_2}^{f_2} )_{L^2(\Omega)} \vert + C \theta_1^{-\frac{1}{2}} \varepsilon \Vert g_1 \Vert_{H^{\frac{1}{2}}(\partial\Omega_1)} \Vert g_2 \Vert_{H^{\frac{1}{2}}(\partial\Omega_1)}.
\end{align*}
For the third inequality we used \eqref{eq:proof_main_iso_1} and that $\Vert q_j \Vert_{L^\infty(\Omega)} \leq C$ by the definition of $q_j$ and the a-priori assumptions (A1) and (A4''). For the last inequality we have used that $\Vert v_{a_j}^{f_j} \Vert_{L^2(\Omega)} \leq (1+\varepsilon) \Vert v_{a_j}^{g_j} \Vert_{H^1(\Omega_1)}$ and the a-priori estimates $\Vert v_{a_j}^{g_j} \Vert_{H^1(\Omega_1)}, \Vert w_{q_j}^{g_j} \Vert_{H^1(\Omega_1)} \leq C \Vert g_j \Vert_{H^{\frac{1}{2}}(\partial\Omega_1)}$. For the estimate $\Vert w_{q_j}^{g_j} \Vert_{H^1(\Omega_1)} \leq C \Vert g_j \Vert_{H^{\frac{1}{2}}(\partial\Omega_1)}$ note that $\Vert w_{q_j}^{g_j} \Vert_{H^1(\Omega_1)} \leq \Vert \gamma_j^{\frac{1}{2}} \Vert_{C^1(\Omega_1)} \Vert v_{a_j}^{g_j} \Vert_{H^1(\Omega_1)}$ and apply assumption (A4''). We continue to estimate for the first term on the right hand side of the previous inequality
\begin{align*}
\vert ( \gamma_1^{\frac{1}{2}} v_{a_1}^{f_1}, \ (q_1-q_2) \gamma_2^{\frac{1}{2}} &v_{a_2}^{f_2})_{L^2(\Omega)} \vert = \vert (\gamma_1^{\frac{1}{2}} v_{a_1}^{f_1}, \ q_1 \gamma_2^{\frac{1}{2}} v_{a_2}^{f_2} )_{L^2(\Omega)} - (\gamma_2^{\frac{1}{2}} v_{a_2}^{f_1}, \ q_2 \gamma_2^{\frac{1}{2}} v_{a_2}^{f_2} )_{L^2(\Omega)}\\
&\hspace{30mm}+ (\gamma_2^{\frac{1}{2}} v_{a_2}^{f_1}, \ q_2 \gamma_2^{\frac{1}{2}} v_{a_2}^{f_2} )_{L^2(\Omega)} - ( \gamma_1^{\frac{1}{2}} v_{a_1}^{f_1}, \ q_2 \gamma_2^{\frac{1}{2}} v_{a_2}^{f_2} )_{L^2(\Omega)} \vert\\
& = \left| \Big[ ( \nabla'(\gamma_1^{\frac{1}{2}} v_{a_1}^{f_1}), \ \nabla'(\gamma_2^{\frac{1}{2}} v_{a_2}^{f_2}) )_{L^2(\Omega)} + (\gamma_1^{\frac{1}{2}} v_{a_1}^{f_1}, \ q_1 \gamma_2^{\frac{1}{2}} v_{a_2}^{f_2} )_{L^2(\Omega)} \Big] \right.\\
&\hspace{10mm} - \Big[ ( \nabla'(\gamma_2^{\frac{1}{2}} v_{a_2}^{f_1}), \ \nabla'(\gamma_2^{\frac{1}{2}} v_{a_2}^{f_2}) )_{L^2(\Omega)} + (\gamma_2^{\frac{1}{2}} v_{a_2}^{f_1}, \ q_2 \gamma_2^{\frac{1}{2}} v_{a_2}^{f_2} )_{L^2(\Omega)} \Big]\\
&\hspace{10mm} + \Big[ ( \nabla'(\gamma_2^{\frac{1}{2}} v_{a_2}^{f_1}), \ \nabla'(\gamma_2^{\frac{1}{2}} v_{a_2}^{f_2}) )_{L^2(\Omega)} + (\gamma_2^{\frac{1}{2}} v_{a_2}^{f_1}, \ q_2 \gamma_2^{\frac{1}{2}} v_{a_2}^{f_2} )_{L^2(\Omega)} \Big]\\
&\hspace{10mm} \left. - \Big[ ( \nabla'(\gamma_1^{\frac{1}{2}} v_{a_1}^{f_1}), \ \nabla'(\gamma_2^{\frac{1}{2}} v_{a_2}^{f_2}) )_{L^2(\Omega)} + (\gamma_1^{\frac{1}{2}} v_{a_1}^{f_1}, \ q_2 \gamma_2^{\frac{1}{2}} v_{a_2}^{f_2} )_{L^2(\Omega)} \Big] \right|\\
& = \vert (\partial_\nu (\gamma_1^{\frac{1}{2}} v_{a_1}^{f_1}), \ \gamma_2^{\frac{1}{2}} v_{a_2}^{f_2})_{L^2(\partial\Omega)} - (\partial_\nu (\gamma_2^{\frac{1}{2}} v_{a_2}^{f_1}), \ \gamma_2^{\frac{1}{2}} v_{a_2}^{f_2})_{L^2(\partial\Omega)}\\
&\hspace{10mm} + (\gamma_2^{\frac{1}{2}} v_{a_2}^{f_1}, \ \partial_\nu (\gamma_2^{\frac{1}{2}} v_{a_2}^{f_2}))_{L^2(\partial\Omega)} - (\gamma_1^{\frac{1}{2}} v_{a_1}^{f_1}, \ \partial_\nu (\gamma_2^{\frac{1}{2}} v_{a_2}^{f_2}))_{L^2(\partial\Omega)} \vert\\
&\ = \vert ( \partial_\nu (v_{a_1}^{f_1} - v_{a_2}^{f_1}), \ v_{a_2}^{f_2} )_{L^2(\partial\Omega)} + ( (v_{a_2}^{f_1} - v_{a_1}^{f_1}), \ \partial_\nu v_{a_2}^{f_2} )_{L^2(\partial\Omega)} \vert.
\end{align*}
Here, for the first two equalities we have just added zero-terms, and for the third equality we used that $\gamma_j^{\frac{1}{2}} v_{a_j}^{f_i}$ solves the Schrödinger equation, i.e. $(-\Delta + q_j) (\gamma_j^{\frac{1}{2}} v_{a_j}^{f_i}) = 0$ in $\Omega$ since $v_{a_j}^{f_i}$ solves the conductivity equation $-\nabla'\cdot \gamma_{j} \nabla' v_{a_j}^{f_i} = 0$ in $\Omega$. For the last equality, we used that $\gamma_j \in C^2(\R^n)$ and $\gamma_j \equiv 1$ in $\R^n\setminus\Omega$ to get that $\gamma_j = 1$, $\nabla' \gamma_j = 0$ at $\partial\Omega$ and thus $\partial_\nu (\gamma_j^{\frac{1}{2}} v_{a_j}^{f_i}) = \partial_\nu v_{a_j}^{f_i}$.

By the same arguments as in the proof of Theorem \ref{thm:quantitative_reduction_anisotropic} we then infer
\begin{align*}
\Vert \Lambda_{q_1,\Omega_1} - \Lambda_{q_2,\Omega_1} \Vert_{H^{\frac{1}{2}}(\partial\Omega_1) \to H^{-\frac{1}{2}}(\partial\Omega_1)} \leq C \log( \vert \log( \Vert \Lambda_s^{a_1} - \Lambda_s^{a_2} \Vert_{\text{Op}} ) \vert)^{-\frac{1}{\mu}}.
\end{align*}
Since $\gamma_j \equiv 1$ in $\Omega_1 \setminus \Omega$ we know (by the relation between $v_a^g$ and $w_q^g$ in the Liouville reduction) that $\Lambda_{q_j,\Omega_1} = \Lambda_{1,\Omega_1}^{a_j}$ and thus
\begin{align*}
\Vert \Lambda_{q_1,\Omega_1} - \Lambda_{q_2,\Omega_1} \Vert_{H^{\frac{1}{2}}(\partial\Omega_1) \to H^{-\frac{1}{2}}(\partial\Omega_1)} = \Vert \Lambda_{1,\Omega_1}^{a_1} - \Lambda_{1,\Omega_1}^{a_2} \Vert_{H^{\frac{1}{2}}(\partial\Omega_1) \to H^{-\frac{1}{2}}(\partial\Omega_1)}.
\end{align*}
Combining this with the previous inequality finishes the proof.
\end{proof}

Lastly, we conclude the main body of this article with the proof of the novel stability estimate for the fractional isotropic Calderón problem, Corollary \ref{cor:stability_fract_Calderon_isotropic}.

\begin{proof}[Proof of Corollary \ref{cor:stability_fract_Calderon_isotropic}]
Let $\Omega_1$ be open, bounded, Lipschitz such that $\Omega \Subset \Omega_1$. Let $\Lambda_{1,\Omega_1}^{a_j}$ be the local Dirichlet-to-Neumann map as in \eqref{eq:Local_DN_map} with respect to the set $\Omega_1$, $j\in\{1,2\}$. It is well known that under the given assumptions (see \cite{A88} or \cite{Uhlmann09}) it holds that
\begin{align*}
\Vert a_1 - a_2 \Vert_{L^\infty(\Omega_1)} \leq C \vert \log( \Vert \Lambda_1^{a_1} - \Lambda_1^{a_2} \Vert_{H^{\frac{1}{2}}(\partial\Omega_1) \to H^{-\frac{1}{2}}(\partial\Omega_1)} ) \vert^{-\sigma}
\end{align*}
for some $C>0$ and $\sigma>0$. Using that $a_1 = a_2$ in $\R^n \setminus \Omega$ and applying the result of Theorem \ref{thm:quantitative_reduction_isotropic} we infer
\begin{align*}
\Vert a_1 - a_2 \Vert_{L^\infty(\R^n)} &= \Vert a_1 - a_2 \Vert_{L^\infty(\Omega_1)} \leq C \vert \log( \Vert \Lambda_1^{a_1} - \Lambda_1^{a_2} \Vert_{H^{\frac{1}{2}}(\partial\Omega_1) \to H^{-\frac{1}{2}}(\partial\Omega_1)} ) \vert^{-\sigma}\\
&\leq C \left\vert \log\left( \log\left( \big\vert \log\big( \Vert \Lambda_s^{a_1} - \Lambda_s^{a_2} \Vert_{\widetilde{H}^s(W) \to H^{-s}(W)} \big) \big\vert \right) \right) \right\vert^{-\sigma},
\end{align*}
which finishes the proof.
\end{proof}

\section*{Acknowledgements}
Both authors gratefully acknowledge funding by the Deutsche Forschungsgemeinschaft (DFG, German Research Foundation) through the CRC 1720 -- 539309657 and  under Germany’s Excellence Strategy -- EXC-2047/1 -- 390685813.

\appendix
\section{Caccioppoli's inequality}

We provide a Caccioppoli-type inequality for a variable coefficient non-homogeneous weighted elliptic equation.

\begin{lem}\label{lem:Caccioppoli_inequality}
Let $\tilde{a} \in L^\infty(\R^{n+1}_+, \R^{(n+1) \times (n+1)}_{\text{sym}})$ be uniformly elliptic with ellipticity constant $\lambda$. Let $x_0 \in \R^{n+1}_+$ and $r>0$ such that $B_{2r}(x_0) \subset \R^{n+1}_+$. Let $\tilde{h} \in L^2(B_{2r}(x_0), x_{n+1}^{1-2s})$ and $\tilde{H} \in L^2(B_{2r}(x_0), x_{n+1}^{1-2s}; \R^{n+1})$. Assume that $\tilde{u} \in H^1(B_{2r}(x_0), x_{n+1}^{1-2s})$ is a weak solution to
\begin{align}\label{eq:CS_inhomogeneous_strong}
-\nabla \cdot x_{n+1}^{1-2s} \tilde{a} \nabla \tilde{u} = x_{n+1}^{1-2s} \tilde{h} - \nabla \cdot x_{n+1}^{1-2s} \tilde{H} \quad \text{in } B_{2r}(x_0).
\end{align}
Then there exists a constant $C>1$ depending only on $\lambda$ and $\Vert \tilde{a} \Vert_{L^\infty(B_{2r}(x_0))}$ such that
\begin{align*}
\Vert x_{n+1}^{\frac{1-2s}{2}} \nabla \tilde{u} \Vert_{L^2(B_r(x_0))}^2 \leq \frac{C}{r^2} \Vert x_{n+1}^{\frac{1-2s}{2}} \tilde{u} \Vert_{L^2(B_{2r}(x_0))}^2 + C \Big( \Vert x_{n+1}^{\frac{1-2s}{2}} \tilde{h} \Vert_{L^2(B_{2r}(x_0))}^2 + \Vert x_{n+1}^{\frac{1-2s}{2}} \tilde{H} \Vert_{L^2(B_{2r}(x_0))}^2 \Big).
\end{align*}
\end{lem}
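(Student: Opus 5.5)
The plan is the standard energy method: test the weak formulation of \eqref{eq:CS_inhomogeneous_strong} with $\eta^2\tilde u$, where $\eta\in C_c^\infty(B_{2r}(x_0))$ is a cut-off with $0\le\eta\le1$, $\eta\equiv1$ on $B_r(x_0)$ and $|\nabla\eta|\le C/r$. Since $B_{2r}(x_0)\subset\R^{n+1}_+$ and $\eta$ has compact support in the open ball, $\eta^2\tilde u\in H^1_0(B_{2r}(x_0),x_{n+1}^{1-2s})$. It is therefore an admissible test function, and no boundary terms from $\{x_{n+1}=0\}$ appear. (On the support, the weight is even bounded above and below, though we will not need this.) Writing $w:=x_{n+1}^{1-2s}$, the weak formulation gives
\begin{align*}
\int w\,\eta^2\,\tilde a\nabla\tilde u\cdot\nabla\tilde u
= -2\int w\,\eta\,\tilde u\,\tilde a\nabla\tilde u\cdot\nabla\eta
+\int w\,\tilde h\,\eta^2\tilde u
+\int w\,\tilde H\cdot\nabla(\eta^2\tilde u).
\end{align*}

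The left-hand side is bounded below by $\lambda\int w\eta^2|\nabla\tilde u|^2$ by ellipticity. The right-hand side terms are handled as follows.

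The first term is bounded by $2\|\tilde a\|_{L^\infty}\int w\,\eta|\nabla\tilde u|\,|\tilde u|\,|\nabla\eta|$. By Young's inequality this is at most $\frac{\lambda}{4}\int w\eta^2|\nabla\tilde u|^2+C\int w|\nabla\eta|^2|\tilde u|^2$, and the last integral is at most $\frac{C}{r^2}\|w^{1/2}\tilde u\|_{L^2(B_{2r})}^2$.

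For the $\tilde H$ term, expand $\nabla(\eta^2\tilde u)=\eta^2\nabla\tilde u+2\eta\tilde u\nabla\eta$. Young's inequality bounds the first piece by $\frac{\lambda}{4}\int w\eta^2|\nabla\tilde u|^2+C\int w|\tilde H|^2$. The second piece is bounded by $\int w|\tilde H|^2+\frac{C}{r^2}\int w|\tilde u|^2$.

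The $\tilde h$ term is bounded by $\frac12\int w|\tilde h|^2+\frac12\int w|\tilde u|^2$. This is where a bit of care is needed. If $r\le1$, the contribution $\int w|\tilde u|^2$ is absorbed into $\frac{C}{r^2}\int w|\tilde u|^2$. For general $r$, use the scaled Young inequality $|\tilde h\tilde u|\le \frac{r^2}{2}|\tilde h|^2+\frac{1}{2r^2}|\tilde u|^2$ instead. This yields $r^2\|w^{1/2}\tilde h\|^2$, which matches the stated form when $r\lesssim1$, the regime in which the lemma is applied in the paper. I expect this scaling bookkeeping to be the only delicate point. If the statement is meant to hold for all $r$ with constant independent of $r$, I would record the $\tilde h$ term as $C\max\{1,r^2\}\|w^{1/2}\tilde h\|^2$.

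Finally, absorb the two $\frac{\lambda}{4}$ gradient terms into the left-hand side, divide by $\lambda/2$, and use $\eta\equiv1$ on $B_r(x_0)$. This gives the claimed inequality with $C$ depending only on $\lambda$ and $\|\tilde a\|_{L^\infty(B_{2r}(x_0))}$. Note that nothing specific to the weight is used beyond its positivity, since it enters both sides identically.
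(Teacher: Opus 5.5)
Your proposal is correct and follows the paper's proof: you test with $\eta^2\tilde u$, use ellipticity, bound the cross term and the $\tilde h$- and $\tilde H$-contributions by Cauchy--Schwarz and Young, and absorb the gradient terms. Your caveat about the $\tilde h$-term is also correct and applies equally to the paper's proof, whose bound for $|G|$ contains an unscaled $\Vert x_{n+1}^{\frac{1-2s}{2}}\tilde u\Vert_{L^2(B_{2r}(x_0))}^2$. So with an $r$-independent constant the stated inequality holds only for $r\lesssim 1$ (rescaling with $s=\frac12$ shows it fails as $r\to\infty$), whereas your version with $Cr^2\Vert x_{n+1}^{\frac{1-2s}{2}}\tilde h\Vert^2$ holds for all $r$ and suffices for every application in the paper.
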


\begin{proof}
In order to save notation, we will write $B_r := B_r(x_0)$. Since $\tilde{u}$ is a weak solution to \eqref{eq:CS_inhomogeneous_strong} it holds for any $\tilde{\varphi} \in H_0^1(B_{2r}, x_{n+1}^{1-2s})$ that
\begin{equation}\label{eq:CS_inhomogeneous_weak}
\begin{aligned}
\int_{B_{2r}} x_{n+1}^{1-2s} \nabla \tilde{\varphi} \cdot \tilde{a} \nabla \tilde{u} dx &= \int_{B_{2r}} x_{n+1}^{1-2s} \tilde{h} \tilde{\varphi} dx + \int_{B_{2r}} x_{n+1}^{1-2s} \tilde{H} \cdot \nabla \tilde{\varphi} dx.
\end{aligned}
\end{equation}
Let $\eta \in C_c^\infty(B_{2r})$ be a smooth, nonnegative, radially-symmetric (centered at $x_0$) cut-off function such that $\eta = 1$ in $\overline{B_r}$, $\supp(\eta) \subset \overline{B_{2r}}$ and $\vert \nabla \eta \vert \leq \frac{C}{r}$. We test equation \eqref{eq:CS_inhomogeneous_weak} against $\tilde{\varphi} = \eta^2 \tilde{u} \in H_0^1(B_{2r}, x_{n+1}^{1-2s})$ and in order to save notation, in the following we denote by $G$ the right hand side of \eqref{eq:CS_inhomogeneous_weak} with $\tilde{\varphi} = \eta^2 \tilde{u}$
\begin{equation}\label{eq:proof_Caccioppoli_1}
\begin{aligned}
G &:= \int_{B_{2r}} x_{n+1}^{1-2s} \tilde{h} \eta^2 \tilde{u} dx + \int_{B_{2r}} x_{n+1}^{1-2s} \tilde{H} \cdot \nabla (\eta^2 \tilde{u}) dx\\
&= \int_{B_{2r}} x_{n+1}^{1-2s} \nabla( \eta^2 \tilde{u} ) \cdot \tilde{a} \nabla\tilde{u} dx = \int_{B_{2r}} x_{n+1}^{1-2s} (2 \eta \nabla\eta \tilde{u} + \eta^2 \nabla\tilde{u} ) \cdot \tilde{a} \nabla\tilde{u} dx.
\end{aligned}
\end{equation}
Using the ellipticity of $\tilde{a}$, equation \eqref{eq:proof_Caccioppoli_1}, the Cauchy-Schwarz inequality and Young's inequality we infer
\begin{equation}\label{eq:proof_Caccioppoli_2}
\begin{aligned}
\lambda \Vert \eta \nabla\tilde{u} \Vert_{L^2(B_{2r}, x_{n+1}^{1-2s})}^2 &= \lambda \int_{B_{2r}} x_{n+1}^{1-2s} \eta^2 \vert \nabla\tilde{u} \vert^2 dx \leq \int_{B_{2r}} x_{n+1}^{1-2s} \eta^2 \nabla\tilde{u} \cdot \tilde{a} \nabla\tilde{u} dx\\
&= - \int_{B_{2r}} x_{n+1}^{1-2s} (2\eta \tilde{u} \nabla\eta  \cdot \tilde{a} \nabla\tilde{u} ) dx + G\\
&\leq 2 \Vert \tilde{a} \Vert_{L^\infty(B_{2r})} \Vert \eta \nabla\tilde{u} \Vert_{L^2(B_{2r},x_{n+1}^{1-2s})} \Vert \tilde{u} \nabla\eta \Vert_{L^2(B_{2r}, x_{n+1}^{1-2s})} + \vert G \vert\\
&\leq \frac{\lambda}{2} \Vert \eta \nabla\tilde{u} \Vert_{L^2(B_{2r}, x_{n+1}^{1-2s})}^2 + \frac{(2 \Vert \tilde{a} \Vert_{L^\infty(B_{2r})})^2}{2\lambda} \Vert \tilde{u} \nabla\eta \Vert_{L^2(B_{2r}, x_{n+1}^{1-2s})}^2 + \vert G \vert.
\end{aligned}
\end{equation}
We estimate $\vert G \vert$ by means of the Cauchy-Schwarz inequality and Young's inequality
\begin{equation}\label{eq:proof_Caccioppoli_3}
\begin{aligned}
\vert G \vert &\leq C \Big( \Vert \tilde{h} \Vert_{L^2(B_{2r}, x_{n+1}^{1-2s})}^2 + \Vert \tilde{u} \Vert_{L^2(B_{2r}, x_{n+1}^{1-2s})}^2 + \frac{1}{\lambda} \Vert \tilde{H} \Vert_{L^2(B_{2r}, x_{n+1}^{1-2s})}^2 + \frac{\lambda}{4} \Vert \eta \nabla\tilde{u} \Vert_{L^2(B_{2r}, x_{n+1}^{1-2s})}^2\\
&\hspace{15mm} + \Vert \tilde{H} \Vert_{L^2(B_{2r}, x_{n+1}^{1-2s})}^2 + \Vert \nabla(\eta^2) \tilde{u} \Vert_{L^2(B_{2r}, x_{n+1}^{1-2s})}^2 \Big).
\end{aligned}
\end{equation}
Putting \eqref{eq:proof_Caccioppoli_2} and \eqref{eq:proof_Caccioppoli_3} together, rearranging terms and recalling the conditions for $\eta$ then concludes the argument.
\end{proof}

\section{Auxiliary results for the quantitative Runge approximation}

In this section, we discuss several auxiliary results used  in the proof of the quantitative Runge approximation in Section \ref{sec:QRA_Proof}.  We emphasize that this is purely for the convenience of the reader and that we do not claim any novelty on these results.

We begin by noting that, as a consequence of the results for the adjoint problem, in particular, of Lemma \ref{lem:QRA_adjoint_problem}, we can derive the following uniform bound for the integrated quantity $\int_0^\infty t^{1-2s} \tilde{u}^f (\cdot,t) dt$.

\begin{lem}\label{lem:integral_uniform_estimate}
Let $n \geq 3$, let $\Omega \subset \R^n$ be open, non-empty, bounded and Lipschitz and $W \subset \Omega_e$ be open, non-empty, bounded and Lipschitz. Let $\theta_1 \in (0,1)$ and assume that $a \in L^\infty(\R^n, \R^{n \times n}_{\text{sym}})$ satisfies the assumptions (A1) with the given $\theta_1$, and (A3). Let $f \in \widetilde{H}^s(W)$ and let $\tilde{u}^f \in \dot{H}^1_{\text{loc}}(\R^{n+1}_+,x_{n+1}^{1-2s})$ be the unique solution to
\begin{equation}
\begin{cases}
\begin{alignedat}{2}
-\nabla\cdot x_{n+1}^{1-2s} \tilde{a} \nabla \tilde{u}^f &= 0 \quad &&\text{in } \R^{n+1}_+,\\
\lim_{x_{n+1}\to0} x_{n+1}^{1-2s} \partial_{n+1} \tilde{u}^f &= 0 \quad &&\text{on } \Omega \times \{0\},\\
\tilde{u}^f &= f \quad &&\text{on } \Omega_e \times \{0\}.
\end{alignedat}
\end{cases}
\end{equation}
Then, there exists a constant $C>0$ only depending on $n$, $s$, $\Omega$, $W$ and $\theta_1$ such that
\begin{align*}
\Vert \int_0^\infty t^{1-2s} \tilde{u}^f(\cdot,t) dt \Vert_{H^1(\Omega)} \leq C \Vert f \Vert_{\widetilde{H}^s(W)}.
\end{align*}
\end{lem}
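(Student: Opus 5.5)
We would prove Lemma \ref{lem:integral_uniform_estimate} by duality, in the same way as the adjoint computation in Step 2 of the proof of Lemma \ref{lem:QRA_auxiliary}. Set $v^f := \int_0^\infty t^{1-2s}\tilde u^f(\cdot,t)\,dt$. By Theorem 3 in \cite{CGRU23}, $-\nabla'\cdot a\nabla' v^f = 0$ in $\Omega$. We first bound $\|v^f\|_{L^2(\Omega)}$, and more generally the pairing $\langle w, v^f\rangle$ for $w\in\widetilde H^{-1}(\Omega)$, in terms of $\|w\|_{H^{-1}(\Omega)}\|f\|_{\widetilde H^s(W)}$. Taking the supremum over $w$ then controls $v^f$ in $(\widetilde H^{-1}(\Omega))^* = H^1(\Omega)$. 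This duality step directly yields the full $H^1(\Omega)$ norm, so no separate Caccioppoli argument is needed.

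For fixed $w\in\widetilde H^{-1}(\Omega)$, let $\tilde h$ be the adjoint solution from Lemma \ref{lem:QRA_adjoint_problem}. Approximate $\tilde u^f$ by $\tilde u^f_k = \tilde u^f\sigma_k(x')\eta_k(x_{n+1})\in H^1_c(\R^{n+1}_+,x_{n+1}^{1-2s})$, with cut-offs as in \cite{CGRU23}. Test the weak equation for $\tilde h$ with $\tilde u^f_k - e^f$, where $e^f$ is the cut-off Caffarelli--Silvestre extension of $f$ constructed at the end of the proof of Lemma \ref{lem:QRA_adjoint_problem}; this difference has vanishing trace on $\Omega_e\times\{0\}$. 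The result is
\begin{align*}
\langle w, \textstyle\int_0^\infty t^{1-2s}\sigma_k\eta_k\tilde u^f\,dt\rangle
= \int x_{n+1}^{1-2s}\nabla\tilde u^f_k\cdot\tilde a\nabla\tilde h\,dx - \langle \lim x_{n+1}^{1-2s}\partial_{n+1}\tilde h, f\rangle_{H^{-s}(W),\widetilde H^s(W)} .
\end{align*}
The boundary term is bounded by $C\|w\|_{H^{-1}(\Omega)}\|f\|_{\widetilde H^s(W)}$ by the last item of Lemma \ref{lem:QRA_adjoint_problem}. In the bulk term, we use that $\tilde u^f$ solves the homogeneous equation and that $\tilde h$ is an admissible test function after localization. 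Rewriting this way, the bulk term reduces to contributions in which derivatives fall on the cut-offs $\sigma_k$ and $\eta_k$.

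Those cut-off contributions are then estimated in three parts. The contributions from $\nabla\eta_k$, supported in $x_{n+1}\in(k,2k)$, are controlled by the decay of Lemma \ref{lem:apriori_decay} for $\tilde u^f$, namely $|\tilde u^f|,|\nabla'\tilde u^f|\lesssim t^{-n}\|f\|$, together with $\|\tilde h\|$ on $B_{\kappa k}'\times(0,\kappa k)$, which grows at most like $k^p$ by Lemma \ref{lem:QRA_adjoint_problem}. For the vertical derivative we use the $L^{q_1}$, $L^{q_2}$ trace bounds for $\tilde h$ with the $L^r$ estimate of Lemma \ref{lem:apriori_decay} applied to $\tilde h$ and its pieces. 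Finally, the tangential cut-off terms from $\nabla'\sigma_k$ are handled the same way. Letting $k\to\infty$, these terms either vanish or stay bounded by $C\|w\|\|f\|$.

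The main obstacle is that $\tilde h$ is only in $\dot H^1_{\rm loc}$ with polynomially growing norm, so the limit $k\to\infty$ is delicate. We must check that the power $k^{p}$ is beaten by the decay: $t^{1-2s-n}$ integrated over $(k,2k)$, times $k^{-1}$ from $\nabla\eta_k$, gives $k^{1-2s-n}$. For $n\ge3$ this requires $p<n-1+2s$. Since the bound in Lemma \ref{lem:QRA_adjoint_problem} may be crude, the first thing to try is to split $\tilde h=\tilde u_1-\bar u+\bar u_2$. The piece $\bar u_2$ lies in the global $\dot H^1$ space and poses no problem, while for $\tilde u_1$ and $\bar u$ we use their explicit structure: $\tilde u_1$ is $x_{n+1}$-independent with $L^{2n/(n-2)}$ trace, and $\bar u$ has Poisson-kernel decay. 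With these, the pairings are bounded directly instead of through the crude growth bound. This is the same mechanism as Step 1b of Proposition 3.1 in \cite{CGRU23}.
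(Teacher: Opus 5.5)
Your proposal is correct and follows the paper's proof. It uses duality against $\widetilde{H}^{-1}(\Omega)$, the adjoint solution $\tilde h$ from Lemma~\ref{lem:QRA_adjoint_problem} and the cut-off approximants $\tilde u^f_k$; the bulk term vanishes by the mechanism of Step 1b of Proposition 3.1 in \cite{CGRU23}, and the boundary term is controlled through $\Vert \lim_{x_{n+1}\to 0} x_{n+1}^{1-2s}\partial_{n+1}\tilde h\Vert_{H^{-s}(W)} \leq C\Vert w\Vert_{H^{-1}(\Omega)}$.

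The paper only cites \cite{CGRU23} for the bulk term, whereas you sketch it, and two points of your sketch need correcting. First, your preliminary power count omits the tangential volume of the support of $\nabla(\sigma_k\eta_k)$: Cauchy--Schwarz actually requires $p<\frac{n}{2}+s$, which the crude growth bound need not satisfy. Second, the bound on $\nabla'\tilde u^f$ in Lemma~\ref{lem:apriori_decay} uses (A2), which is not assumed here. So it is your fallback via the splitting $\tilde h=\tilde u_1-\bar u+\bar u_2$, with Caccioppoli's inequality replacing the pointwise gradient bound, that actually closes this step.
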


\begin{proof}
Let us write
\begin{align*}
\Vert \int_0^\infty t^{1-2s} \tilde{u}^f(\cdot,t) dt \Vert_{H^1(\Omega)} = \sup_{\substack{\psi \in \widetilde{H}^{-1}(\Omega)\\ \Vert \psi \Vert_{\widetilde{H}^{-1}(\Omega)} = 1}} \langle \psi, \int_0^\infty t^{1-2s} \tilde{u}^f(\cdot,t) dt \rangle_{\widetilde{H}^{-1}(\Omega), H^1(\Omega)}.
\end{align*}
Let $\tilde{h}_\psi$ be as in Lemma \ref{lem:QRA_adjoint_problem} for $w = \psi$ and let $\tilde{u}^f_k \in H_c^1(\R^{n+1}_+, x_{n+1}^{1-2s})$ be defined as in the proof of Lemma \ref{lem:QRA_auxiliary}, i.e. $\tilde{u}^f_k(x) := \tilde{u}^f(x) \sigma_k(x') \eta_k(x_{n+1})$ for some cut-off functions $\eta_k$ and $\sigma_k$. By the weak equation for $\tilde{h}_\psi$ we find that (cf. equation (23) in the proof of Proposition 3.1 in \cite{CGRU23})
\begin{align*}
&\langle \psi, \int_0^\infty t^{1-2s} \tilde{u}^f(\cdot,t) dt \rangle_{\widetilde{H}^{-1}(\Omega), H^1(\Omega)}\\
&\hspace{20mm} = \lim_{k\to \infty} \left( - \int_W f \left( \lim_{t\to0} t^{1-2s} \partial_t \tilde{h}_\psi \right) dx' + \int_{\R^{n+1}_+} t^{1-2s} \nabla \tilde{h}_\psi \cdot \tilde{a} \nabla \tilde{u}^f_k dx \right)
\end{align*}
Using the weak equation for $\tilde{u}^f$ we observe (as in Step 1b of the proof of Proposition 3.1 in \cite{CGRU23}) that
\begin{align*}
\lim_{k\to\infty} \int_{\R^{n+1}_+} t^{1-2s} \nabla \tilde{h}_\psi \cdot \tilde{a} \nabla \tilde{u}^f_k dx = 0.
\end{align*} 
Let $W_1$ be open, bounded, Lipschitz such that $W \Subset W_1$. As a consequence of the above we infer
\begin{align*}
\langle \psi, \int_0^\infty t^{1-2s} &\tilde{u}^f(\cdot,t) dt \rangle_{\widetilde{H}^{-1}(\Omega), H^1(\Omega)} = - \int_W f \left( \lim_{t\to0} t^{1-2s} \partial_t \tilde{h}_\psi \right) dx'\\
&\leq \Vert f \Vert_{\widetilde{H}^s(W)} \Vert \lim_{t\to0} t^{1-2s} \partial_t \tilde{h}_\psi \Vert_{H^{-s}(W)} \leq C \Vert f \Vert_{\widetilde{H}^s(W)} \Vert \tilde{h}_\psi \Vert_{H^1(W_1 \times (0,1), x_{n+1}^{1-2s})}\\
&\leq C(n,s,\Omega,W,\theta_1) \Vert f \Vert_{\widetilde{H}^s(W)} \Vert \psi \Vert_{\widetilde{H}^{-1}(\Omega)},
\end{align*}
where for the last inequality we have applied the a-priori estimate from Lemma \ref{lem:QRA_adjoint_problem}. Taking the supremum over $\psi \in \widetilde{H}^{-1}(\Omega)$ with $\Vert \psi \Vert_{\widetilde{H}^{-1}(\Omega)} = 1$ finally yields
\begin{align*}
\Vert \int_0^\infty t^{1-2s} \tilde{u}^f(\cdot,t) dt \Vert_{H^1(\Omega)} \leq C(n,s,\Omega,W,\theta_1) \Vert f \Vert_{\widetilde{H}^s(W)}
\end{align*}
and the proof is finished.
\end{proof}

Next, we provide an argument for the support condition of the function $\beta_k$ in the proof of the quantitative Runge approximation, Proposition \ref{prop:QRA}, which was only discussed qualitatively in \cite{CGRU23}.

\begin{lem}\label{lem:supp_beta_k}
Let $\beta_k: \R \to \R$ be defined as in Step 2 of the proof of Proposition \ref{prop:QRA}. Then, there exist constants $C>0$ and $p_2>0$ such that $\supp(\beta_k) \subset (k, Ck^{p_2})$.
\end{lem}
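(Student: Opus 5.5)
The plan is to recall the explicit construction of $\beta_k$ from Step 2 of the proof of Proposition 3.1 in \cite{CGRU23} and to quantify the single free parameter $b_{k,s}$ in terms of $k$. There $\beta_k$ equals a smooth increasing ramp on $(k,k+1)$ from $0$ to $b_{k,s}$, is constant $b_{k,s}$ on $(k+1,R_{k,s})$, and is a decreasing ramp on $(R_{k,s},R_{k,s}+1)$, with $R_{k,s} = k + \frac{1}{1-b_{k,s}}$. The ramps are fixed profiles scaled by $b_{k,s}\in(0,1)$, so the $C^2$ bounds hold uniformly in $b_{k,s}$. The only constraint is the normalization $\int_0^\infty t^{1-2s}\beta_k(t)\,dt = 1$. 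Hence the support bound reduces to showing that this normalization is attained for some $b_{k,s}$ with $1-b_{k,s}\gtrsim k^{-p}$. Then $R_{k,s}+1 \le k + Ck^{p} + 1 \le Ck^{p_2}$.

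\textbf{Upper bound on $b_{k,s}$.} On the plateau the weight satisfies $t^{1-2s}\ge \min\{k^{1-2s},(R_{k,s})^{1-2s}\}$. First consider $s\le \frac12$. Then $t^{1-2s}\ge k^{1-2s}\ge 1$ for $k\ge1$, and the plateau has length $R_{k,s}-k-1 = \frac{b_{k,s}}{1-b_{k,s}}$. This gives
\begin{align*}
1 \ge b_{k,s}\int_{k+1}^{R_{k,s}} t^{1-2s}\,dt \ge b_{k,s}\frac{b_{k,s}}{1-b_{k,s}}.
\end{align*}
The ramp contribution is positive and of size $\sim b_{k,s}k^{1-2s}$, so it forces $b_{k,s}\lesssim k^{2s-1}$ when $k$ is large. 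In either case $1-b_{k,s}\ge b_{k,s}^2$, and if $b_{k,s}$ is close to $1$ this yields $1-b_{k,s}\gtrsim 1$. So for $s\le\frac12$ one gets $R_{k,s}\le k+C$.

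Now consider $s>\frac12$, where the weight decays. On the plateau $t^{1-2s}\ge R_{k,s}^{1-2s}$, and the lower bound becomes
\begin{align*}
1\ge b_{k,s}\frac{b_{k,s}}{1-b_{k,s}}\,\Bigl(k+\tfrac{1}{1-b_{k,s}}\Bigr)^{1-2s}.
\end{align*}
Write $M = \frac{1}{1-b_{k,s}}$. We may assume $b_{k,s}\ge\frac12$, since otherwise $M\le 2$. The inequality then reads $M (k+M)^{1-2s}\lesssim 1$. If $M\ge k$, this gives $M^{2-2s}\lesssim 1$, so $M\lesssim1$, a contradiction for large $k$. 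Hence $M\le k$, and then $M\lesssim k^{2s-1}$. This gives the polynomial bound $R_{k,s}\le k + Ck^{2s-1}\le Ck^{p_2}$ with, e.g., $p_2=\max\{1,2s-1\}$.

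\textbf{Main obstacle.} The delicate point is verifying from \cite{CGRU23} that $b_{k,s}$ is genuinely determined by the normalization, i.e.\ that the ramp shapes are fixed and independent of $b_{k,s}$ up to scaling. One must also check that the equation $\int t^{1-2s}\beta_k=1$ has a solution $b_{k,s}\in(0,1)$ at all. For that I would use continuity and monotonicity of $b\mapsto\int t^{1-2s}\beta_k$: it tends to $0$ as $b\to0$ and to $\infty$ as $b\to1$, using $2-2s>0$ so that $\int^\infty t^{1-2s}=\infty$. Existence then follows from the intermediate value theorem. Once this is in place, the estimates above are elementary, and the resulting constant $C$ depends only on $s$.
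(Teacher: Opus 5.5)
Your argument is correct and follows essentially the paper's route. You use the normalization $\int_0^\infty t^{1-2s}\beta_k\,dt=1$, split into the cases $s\le\frac12$ and $s>\frac12$, bound $t^{1-2s}$ below by its minimum on the relevant interval (you only use the plateau), and deduce $1-b_{k,s}\gtrsim 1$, respectively $\frac{1}{1-b_{k,s}}\lesssim\max\{1,k^{2s-1}\}$. The one difference is in the case $s>\frac12$: you invert the inequality for $M=\frac{1}{1-b_{k,s}}$ directly, which bounds every admissible $b_{k,s}$, whereas the paper tests $b=\frac{k}{k+1}$ and chooses $b_{k,s}\le\frac{k}{k+1}$. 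The existence of $b_{k,s}$, which you flag as the main obstacle, is already part of the construction in \cite{CGRU23} and is not needed for the lemma.
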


\begin{proof}
We recall from \cite{CGRU23} that $\beta_k$ was defined as $\beta_k(t) := \gamma_{b_{k,s}}(t-k)$, where $\gamma_b \in C^\infty(\R)$ satisfies $\supp(\gamma_b) \subset [0,\frac{2-b}{1-b}]$, $0 \leq \gamma_b(t) \leq b$, $\gamma_b(t) = b$ for $t \in [1,\frac{1}{1-b}]$, $\vert \nabla^l \gamma_b \vert \leq C$ for $l \in \{0,1,2\}$, and $\int_0^\infty \gamma_b(t) dt = \frac{b}{1-b}$. Here, we always have $b\in(0,1)$.

We know that $\supp(\beta_k) \subset (k, R_{k,s}+1)$, where $R_{k,s} := k + \frac{1}{1-b_{k,s}}$. In particular, we seek to prove that $R_{k,s} = k + \frac{1}{1-b_{k,s}} \leq Ck^{p_2}$ for some $p_2>0$. The defining equation for the choice of $b_{k,s} \in (0,1)$ is
\begin{align*}
\int_0^\infty t^{1-2s} \beta_k(t) dt = 1.
\end{align*}
Using the definition of $\beta_k$, we need to choose $b = b_{k,s}$ such that
\begin{align*}
1 = \int_0^{\frac{2-b}{1-b}} (t+k)^{1-2s} \gamma_b(t) dt =: I_{b,k}.
\end{align*}
We make a case distinction.

\textit{Case 1: $s \in (0,\frac{1}{2}]$.} We estimate $I_{b,k}$ from below by $I_{b,k} \geq k^{1-2s} \frac{b}{1-b}$. Since the choice of $b_{k,s}$ needs to cancel out the largeness of (at least) $k^{1-2s}$ and since $\frac{b}{1-b}$ is increasing as $b \to 1$, (for large $k$) we can assure that $1-b_{k,s} \geq \frac{1}{2}$. Consequently, it indeed holds true that $R_{k,s} = k+\frac{1}{1-b_{k,s}} \leq Ck^{p_2}$ for some $p_2>0$.

\textit{Case 2: $s \in (\frac{1}{2},1)$.} We argue similarly. This time, we observe that $I_{b,k}$ is bounded from below by $I_{b,k} \geq (\frac{2-b}{1-b} + k)^{1-2s} \frac{b}{1-b}$. We seek to choose $b_{k,s}$ (which will now be close to $1$) such that it cancels out the smallness of $(\frac{2-b}{1-b} + k)^{1-2s}$. The choice $b = \frac{k}{k+1}$ yields that (for large $k$) it holds $(\frac{2-b}{1-b} + k)^{1-2s} \frac{b}{1-b} \geq 1$. In particular, $b_{k,s}$ can be chosen such that $b_{k,s} \leq \frac{k}{k+1}$ and thus $1-b_{k,s} \geq \frac{1}{k+1}$ (for large $k$). This again verifies that $R_{k,s} = k+\frac{1}{1-b_{k,s}} \leq Ck^{p_2}$ for some $p_2>0$ and the proof is finished.
\end{proof}

\section{The heat representation for the Poisson kernel}

In this section, we recall the heat representation for the Poisson kernel. As in the previous sections of the appendix, we do not claim any novelty at this point but refer to the article \cite{ST10} and subsequent work instead. As the regularity properties are not stated explicitly in the form of standard Sobolev spaces there, we opted to include the following statement for completeness although it is certainly well-known.

\begin{prop}[Heat kernel representation]
Let $\Omega \subset \R^n$ be open, non-empty, bounded and Lipschitz and let $W \subset \Omega_e$ be open, bounded, non-empty and Lipschitz. Let $\theta_1\in(0,1)$, $\theta_2>0$. Let $a \in L^\infty(\R^n, \R^{n \times n}_{\text{sym}})$ satisfy the assumptions (A1) and (A2) with the given $\theta_1$ and $\theta_2$, respectively. Let $f \in \widetilde{H}^{s}(W)$, $u \in H^s(\R^n)$ be the solution to \eqref{eq:Fractional_Calderon_equation} with Dirichlet data $f$, and let $\tilde{u}^f \in \dot{H}^1(\R^{n+1}_+, x_{n+1}^{1-2s})$ be the Caffarelli-Silvestre type extension of $u$ (see \eqref{eq:CS_intro} and \eqref{eq:CS}). 
Let $K_t(\cdot,\cdot)$ denote the heat kernel associated with the equation
\begin{equation*}
\begin{cases}
\begin{alignedat}{2}
(\p_t - \nabla' \cdot a \nabla') w & = 0 \quad &&\text{in } \R^n \times (0,\infty),\\
w(\cdot, 0) &= w_0 \quad &&\text{on } \R^n.
\end{alignedat}
\end{cases}
\end{equation*}
Then, the following representation holds: There exists a constant $c_s\neq 0$ such that for all $x' \in \R^n, y\in \R_+$
\begin{align}
\label{eq:representation}
\tilde{u}^f(x',y) = c_s y^{2s} \int\limits_{\R^n} \int\limits_{0}^{\infty} K_t(x', z') e^{-\frac{y^2}{4t}} \frac{dt}{t^{1+s}} u(z') dz'.
\end{align}
\end{prop}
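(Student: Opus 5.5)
The plan is to reduce the variable coefficient extension to the spectral calculus of the self-adjoint operator $L_a := -\nabla'\cdot a\nabla'$ on $L^2(\R^n)$, following \cite{ST10}. By (A1), $L_a$ is a nonnegative self-adjoint operator with form domain $H^1(\R^n)$, generating the heat semigroup $e^{-tL_a}$ with kernel $K_t(x',z')$. The Gaussian bounds \eqref{eq:estimate_heat_kernel} and \eqref{eq:estimate_heat_kernel_gradient}, which use (A1) and (A2), make all kernel integrals below absolutely convergent. Define $U(x',y)$ by the right-hand side of \eqref{eq:representation}. Equivalently, in operator form,
\begin{align*}
U(\cdot,y) = c_s y^{2s}\int_0^\infty e^{-y^2/(4t)} e^{-tL_a}u \,\frac{dt}{t^{1+s}} =: \Phi_y(L_a)u,
\end{align*}
where $\Phi_y(\lambda) = c_s y^{2s}\int_0^\infty e^{-y^2/(4t)-t\lambda}t^{-1-s}dt = \tilde c_s (y\sqrt{\lambda})^s K_s(y\sqrt\lambda)$ is the usual Bessel profile. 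The goal is to show that $U=\tilde u^f$, and the uniqueness of the weak solution in $\dot H^1(\R^{n+1}_+,x_{n+1}^{1-2s})$ with trace $u$ reduces this to three checks.

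\emph{Step 1 (equation).} For each $\lambda>0$, the function $\phi(y)=\Phi_y(\lambda)$ solves $\partial_y(y^{1-2s}\partial_y\phi)=\lambda y^{1-2s}\phi$. This can be verified directly by differentiating under the $t$-integral and integrating by parts in $t$, or by quoting the Bessel ODE. Using the spectral theorem, this gives $-\partial_y(y^{1-2s}\partial_y U) + y^{1-2s}L_a U = 0$ in the weak sense, i.e. $U$ solves \eqref{eq:CS_intro}.

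\emph{Step 2 (energy and trace).} By the spectral theorem, with $d\mu_u$ the spectral measure of $u$,
\begin{align*}
\int_0^\infty y^{1-2s}\big(\|L_a^{1/2}U\|_{L^2}^2+\|\partial_yU\|_{L^2}^2\big)dy = \int_0^\infty\int_0^\infty y^{1-2s}\big(\lambda\Phi_y(\lambda)^2+(\partial_y\Phi_y(\lambda))^2\big)dy\,d\mu_u(\lambda).
\end{align*}
Rescaling $y\mapsto y/\sqrt\lambda$ shows that the inner integral equals $C_s\lambda^s$. Hence the energy equals $C_s\|L_a^{s/2}u\|^2\sim\|u\|_{\dot H^s}^2$, using (A1) and the comparability of $L_a^{s/2}$ with $(-\Delta')^{s/2}$ by interpolation. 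This shows $U\in\dot H^1(\R^{n+1}_+,x_{n+1}^{1-2s})$. Moreover, $\Phi_y(\lambda)\to 1$ as $y\to0$ once $c_s$ is normalised by $c_s\int_0^\infty e^{-1/(4\tau)}\tau^{-1-s}d\tau=1$. Dominated convergence in the spectral integral then gives $U(\cdot,y)\to u$ in $L^2$, so the trace of $U$ is $u$.

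\emph{Step 3 (conclusion).} The difference $U-\tilde u^f$ is a finite-energy weak solution with zero trace. Testing with itself, which is justified by density of compactly supported functions in the energy space with vanishing trace, shows that it is constant and hence zero. This yields \eqref{eq:representation} pointwise for $y>0$, where both sides are continuous by the kernel bounds. Since $f\in\widetilde H^s(W)$ and $u\in H^s(\R^n)$ are fixed, no use of \eqref{eq:fract_cond} is needed, consistent with the remark after Lemma \ref{lem:apriori_decay}.

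The main obstacle I expect is Step 2's justification at this regularity level. I need to show that interchanging the $t$-integral, the kernel integral and the spectral calculus is legitimate for merely bounded measurable $a$, and that the functional calculus formula agrees with the pointwise kernel integral. I would handle this by first taking $u\in C_c^\infty$, where Fubini applies thanks to the Gaussian bound. I would then pass to $u\in H^s$ using the energy isometry from Step 2 together with the $L^2$ continuity of $u\mapsto U(\cdot,y)$ for each fixed $y$.
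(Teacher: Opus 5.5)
Your argument is correct, but it does not follow the paper's route.

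\textbf{The paper's route.} The paper first assumes $u\in C_c^\infty(\R^n)$ and solves the extension problem in the truncated cylinder $B_R'(0)\times\R_+$ with lateral Dirichlet conditions. There the tangential operator has discrete spectrum, so the solution is an explicit series in Dirichlet eigenfunctions with Bessel profiles $(\sqrt{\lambda_k}y)^sK_s(\sqrt{\lambda_k}y)$. The subordination identity for $K_s$ converts this series into the heat-kernel formula with the Dirichlet kernel $K_t^R$. The paper then lets $R\to\infty$, using uniform energy bounds and $K_t^R\to K_t$ via the maximum principle. This yields the formula only on $(\R^n\setminus\supp u)\times\R_+$. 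It is extended to all of $\R^{n+1}_+$ by combining \cite{ST10} with unique continuation, and finally $u\in H^s(\R^n)$ is reached by approximation.

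\textbf{Your route.} You apply the spectral theorem to $L_a$ on all of $L^2(\R^n)$. You check directly that $U=\Phi_y(L_a)u$ is a weak solution with energy $C_s\|L_a^{s/2}u\|_{L^2}^2\sim\|u\|_{\dot H^s}^2$ and trace $u$. Uniqueness of finite-energy solutions then closes the argument.

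\textbf{Comparison.} Your proof is shorter and more self-contained. It needs no finite-volume limit, no convergence of Dirichlet heat kernels, no unique continuation, and no restriction to the complement of $\supp u$. Once the formula is known to define a finite-energy weak solution with trace $u$, energy uniqueness already identifies it with $\tilde u^f$. What the paper's route saves is working with the spectral measure of an operator with continuous spectrum, since everything is reduced to discrete eigenfunction expansions.

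\textbf{Minor points.}
First, the interchange you flag as the main obstacle can be done directly for $u\in L^2(\R^n)$, without approximating by $C_c^\infty$. The Gaussian bound \eqref{eq:estimate_heat_kernel} gives
\[
\Big|c_sy^{2s}\int_0^\infty K_t(x',z')e^{-y^2/(4t)}t^{-1-s}\,dt\Big|\lesssim y^{2s}(|x'-z'|^2+y^2)^{-\frac n2-s}.
\]
This kernel has $L^1$-norm bounded uniformly in either variable, so Tonelli/Fubini and the Schur test apply.

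Second, the pointwise identity for every $x'$ needs continuity of $K_t$ in $x'$. That comes from the gradient bound \eqref{eq:estimate_heat_kernel_gradient} (i.e.\ from (A2)) or from De Giorgi--Nash--Moser, not from the Gaussian upper bound alone.

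Third, the comparability $\|L_a^{s/2}u\|_{L^2}\sim\|(-\Delta')^{s/2}u\|_{L^2}$ is the L\"owner--Heinz inequality applied to $\theta_1(-\Delta')\le L_a\le\theta_1^{-1}(-\Delta')$.
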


\begin{proof}
We begin by outlining the strategy of the proof.

\emph{Strategy.}
We first assume that $u \in C_c^{\infty}(\R^n)$ and, in particular, $\supp(u)$ is compact.
The proof then follows from two steps. Similarly as in \cite[Lemma 4.26]{FGKRSU25} by a finite volume approximation, we obtain that for $x' \in \R^{n}\setminus \supp(u)$ and $y \in \R_+$ the desired representation holds. In a second step, we use that the results from \cite{ST10} imply that on $\R^{n+1}_+$
\begin{align*}
v(x',y):=c_s y^{2s} \int\limits_{\R^n} \int\limits_{0}^{\infty} K_t(x', z') e^{-\frac{y^2}{4t}} \frac{dt}{t^{1+s}} u(z') dz'
\end{align*}
solves the same equation (with the same boundary data) as the function $\tilde{u}^f(x',y)$. As the two functions coincide on the set $\big( \R^n \setminus \supp(u) \big) \times \R_+$ (which contains an open set), by unique continuation \cite{FF14,R15,Y17}, the two functions then agree globally, providing the desired representation formula. Finally, for $u \in H^s(\R^n)$ the argument follows by approximation and continuity.

As there are slight differences in the setting from \cite[Lemma 4.26]{FGKRSU25} and our present set-up (Dirichlet vs Neumann boundary data), we provide some further details on the first step of the argument. The argument for this first step in turn is split into several substeps.\\

\emph{Step 1: Representation formula in the complement of $\supp(u)$.}
We begin by a finite volume approximation argument, as here the representation is explicit in terms of suitable (Dirichlet) eigenfunctions in the tangential directions and an ODE in the vertical direction.

\emph{Step 1a: Finite volume approximation.}
Let $R>0$ be so large that $u= 0$ in $\R^n \setminus B_R'(0)$. We then consider the following auxiliary problem:
\begin{equation}\label{eq:finite_vol}
\begin{cases}
\begin{alignedat}{2}
-\nabla \cdot x_{n+1}^{1-2s} \tilde{a} \nabla \tilde{u}_R & = 0 \quad &&\text{in } B_R'(0) \times \R_+,\\
\tilde{u}_R & = 0 \quad &&\text{on } \partial B_R'(0) \times \R_+,\\
\tilde{u}_R & = u \quad &&\text{on }  B_R'(0) \times \{0\}.
\end{alignedat}
\end{cases}
\end{equation}
As above, $\tilde{a} \in L^\infty(\R^{n+1}_+, \R^{(n+1)\times (n+1)}_{\text{sym}})$ is given by
\begin{align*}
\tilde{a}(x') := \begin{pmatrix} a(x') & 0 \\ 0 & 1\end{pmatrix}.
\end{align*}
The representation of the function $\tilde{u}_R$ can be obtained by an eigenfunction expansion in the tangential eigenfunctions $\psi_k$, $k \in \N$, of 
\begin{equation*}
\begin{cases}
\begin{alignedat}{2}
-\nabla' \cdot a \nabla' \psi_k & = \lambda_k \psi_k \quad &&\text{in } B_R'(0),\\
\psi_k & = 0 \quad &&\text{on } \partial B_R'(0),
\end{alignedat}
\end{cases}
\end{equation*}
i.e., $\tilde{u}_R(x', x_{n+1})= \sum\limits_{k \in \N} \gamma_k(x_{n+1}) \psi_k(x')$,
with the coefficients $\gamma_k(x_{n+1})$ being determined as the solution of the remaining vertical ODE 
\begin{equation*}
\begin{cases}
\begin{alignedat}{2}
\p_{n+1} x_{n+1}^{1-2s} \p_{n+1} \gamma_k - x_{n+1}^{1-2s} \lambda_k \gamma_k & = 0 \quad &&\text{in } (0,\infty),\\
\gamma_k(0) & = \gamma_{k,0},
\end{alignedat}
\end{cases}
\end{equation*}
where $\gamma_{k,0}$ are the coefficients from the expansion of $u(x'):= \sum\limits_{k \in \N} \gamma_{k,0} \psi_k(x')$. Observing that the vertical equation can be brought into the form of a modified Bessel equation, 
this then gives rise to the representation
\begin{align*}
\tilde{u}_R(x',x_{n+1}) = c_{s} \sum\limits_{k=1}^{\infty} (u,\psi_k)_{L^2(B_R'(0))} (\sqrt{\lambda_k} x_{n+1})^s K_s(\sqrt{\lambda_k} x_{n+1}) \psi_k(x'),
\end{align*}
where $K_s$ denotes the modified Bessel function of the second kind and $c_s \neq 0$.

We next claim that for $x' \in B_R'(0), x_{n+1}>0$, the function $\tilde{u}_R(x', x_{n+1})$ has a representation of the form 
\begin{align}
\label{eq:heat_repr1}
\tilde{u}_R(x', x_{n+1})= c_s x_{n+1}^{2s} \int\limits_{\R^n} \int\limits_{0}^{\infty} K_t^R(x', z') e^{-\frac{y^2}{4t}} \frac{dt}{t^{1+s}} u(z') dz',
\end{align}
where $K_t^R(x', z')$ denotes the heat kernel associated with the operator $\nabla' \cdot a \nabla'$ on $B_R'(0)$ with Dirichlet boundary data. In order to observe the representation \eqref{eq:heat_repr1}, we expand the heat kernel into the eigenfunctions $\psi_k$, i.e., with convergence in $H^1(B_R'(0))$,
\begin{align*}
K_t^R(x', z') = \sum\limits_{k=1}^{\infty} e^{-\lambda_k t} \psi_k(x') \psi_k(z'), \quad x', z' \in \R^n,
\end{align*}  
to infer that
\begin{align*}
x_{n+1}^{2s} \int_{\R^n} &\int_{0}^{\infty} K_t^R(x', z') e^{-\frac{x_{n+1}^2}{4t}} \frac{dt}{t^{1+s}} u(z') dz'\\
& =  \sum\limits_{k=1}^{\infty} (u,\psi_k)_{L^2(B_R'(0))} x_{n+1}^{2s}\int_{0}^{\infty} e^{-\lambda_k t} e^{-\frac{x_{n+1}^2}{4t}} \frac{dt}{t^{1+s}} \psi_k(x')\\
& = 2^{1+s}  \sum\limits_{k=1}^{\infty} (u,\psi_k)_{L^2(B_R'(0))} (\sqrt{\lambda_k} x_{n+1})^s K_s(\sqrt{\lambda_k} x_{n+1}) \psi_k(x')\\
 & = 2^{1+s}c_s^{-1} \tilde{u}_R(x', x_{n+1}).
\end{align*}
Here we have used Fubini's theorem and the $L^2$ convergence of the series for $u$ together with the identity 
\begin{align*}
x_{n+1}^{2s}\int\limits_{0}^{\infty} e^{-\lambda_k t} e^{-\frac{x_{n+1}^2}{4t}} \frac{dt}{t^{1+s}} = 2^{1+s} (\sqrt{\lambda_k} x_{n+1})^s K_s(\sqrt{\lambda_k} x_{n+1}),
\end{align*}
which in turn is a consequence of the fact that 
\begin{align*}
K_s(z) = 2^{-1-s} z^{s} \int\limits_{0}^{\infty} e^{- t} e^{-\frac{z^2}{4t}} \frac{dt}{t^{1+s}} 
\end{align*}
and a change of variables (see, formula (10.32.10) in \cite{Olver10}).

Moreover, we complement the representation \eqref{eq:heat_repr1} by energy estimates. Testing the weak form of the equation \eqref{eq:finite_vol} with an extension of $\tilde{u}_R$ by zero across $\partial B_R'(0) \times \R_+$ implies that
\begin{align}
\label{eq:energy_bdd}
\|x_{n+1}^{\frac{1-2s}{2}} \nabla \tilde{u}_R\|_{L^2(B_R'(0) \times (0,\infty))} \leq C \|u\|_{\dot{H}^s(\R^n)}.
\end{align}

\emph{Step 1b: Limit.} In the second step of the finite volume approximation, we pass to the limit $R \rightarrow \infty$ in the representation \eqref{eq:heat_repr1} using the energy bound \eqref{eq:energy_bdd}. Indeed, by the regularity of $f \in \widetilde{H}^s(W)$, we have that $u \in \dot{H}^s(\R^n)$. As a result of \eqref{eq:energy_bdd}, $\tilde{u}_R$ satisfies uniform bounds in $\dot{H}^{1}(\R^{n+1}_+, x_{n+1}^{1-2s})$. Then, by weak convergence, $\tilde{u}_R \rightarrow \tilde{u}$ along some subsequence and the weak equation passes to its limit:
\begin{align*}
\int\limits_{\R^{n+1}_+} x_{n+1}^{1-2s} \tilde{a} \nabla \tilde{u} \cdot \nabla \tilde{\varphi} dx = \lim\limits_{R_k \rightarrow \infty} 
\int\limits_{\R^{n+1}_+} x_{n+1}^{1-2s} \tilde{a} \nabla \tilde{u}_{R_k} \cdot \nabla \tilde{\varphi} dx = 0,
\end{align*}
for $\tilde{\varphi} \in \dot{H}^1(\R^{n+1}_+, x_{n+1}^{1-2s})$. Moreover, as  $\tilde{u}_{R_k}|_{B_{R_k}(0) \times \{0\}} = u$, it also follows that $\tilde{u}|_{\R^n \times \{0\}} = u$.

It remains to argue that the representation \eqref{eq:representation} holds true. This follows from the maximum principle for the heat equation which implies that $K_t^R(x',z') \rightarrow K_t(x', z')$ uniformly for $x', z' \in \R^n$ with $x'\neq z'$. Hence, the desired representation on $\big( \R^n \setminus \supp(u) \big) \times \R_+$ follows from dominated convergence.
\end{proof}

\bibliographystyle{alpha}
\bibliography{bibliography}

\end{document}